\definecolor{linkred}{rgb}{0.7,0.2,0.2}
\definecolor{linkblue}{rgb}{0,0.2,0.6}
\numberwithin{figure}{section}
\DeclareFontFamily{OMS}{rsfs}{\skewchar\font'60}
\DeclareFontShape{OMS}{rsfs}{m}{n}{<-5>rsfs5 <5-7>rsfs7 <7->rsfs10 }{}
\DeclareSymbolFont{rsfs}{OMS}{rsfs}{m}{n}
\DeclareSymbolFontAlphabet{\scr}{rsfs}
\DeclareSymbolFontAlphabet{\scr}{rsfs}
\DeclareFontFamily{U}{mathx}{\hyphenchar\font45}
\DeclareFontShape{U}{mathx}{m}{n}{
      <5> <6> <7> <8> <9> <10>
      <10.95> <12> <14.4> <17.28> <20.74> <24.88>
      mathx10
      }{}
\DeclareSymbolFont{mathx}{U}{mathx}{m}{n}
\DeclareMathAccent{\wcheck}{0}{mathx}{"71}
\DeclareMathOperator{\Aut}{Aut}
\DeclareMathOperator{\codim}{codim}
\DeclareMathOperator{\Hom}{Hom}
\DeclareMathOperator{\Id}{Id}
\DeclareMathOperator{\img}{img}
\DeclareMathOperator{\rank}{rank}
\DeclareMathOperator{\reg}{reg}
\DeclareMathOperator{\Sym}{Sym}
\DeclareMathOperator{\supp}{supp}
\newcommand{\sA}{\scr{A}}
\newcommand{\sB}{\scr{B}}
\newcommand{\sC}{\scr{C}}
\newcommand{\sD}{\scr{D}}
\newcommand{\sE}{\scr{E}}
\newcommand{\sF}{\scr{F}}
\newcommand{\sG}{\scr{G}}
\newcommand{\sH}{\scr{H}}
\newcommand{\sJ}{\scr{J}}
\newcommand{\sL}{\scr{L}}
\newcommand{\sN}{\scr{N}}
\newcommand{\sO}{\scr{O}}
\newcommand{\sQ}{\scr{Q}}
\newcommand{\sT}{\scr{T}}
\newcommand{\sV}{\scr{V}}
\newcommand{\sW}{\scr{W}}
\newcommand{\cA}{\mathcal A}
\newcommand{\cC}{\mathcal C}
\newcommand{\cN}{\mathcal N}
\newcommand{\bB}{\mathbb{B}}
\newcommand{\bC}{\mathbb{C}}
\newcommand{\bD}{\mathbb{D}}
\newcommand{\bE}{\mathbb{E}}
\newcommand{\bF}{\mathbb{F}}
\newcommand{\bK}{\mathbb{K}}
\newcommand{\bN}{\mathbb{N}}
\newcommand{\bP}{\mathbb{P}}
\newcommand{\bQ}{\mathbb{Q}}
\newcommand{\bR}{\mathbb{R}}
\newcommand{\bX}{\mathbb{X}}
\newcommand{\bZ}{\mathbb{Z}}
\theoremstyle{plain}
\newtheorem{thm}{Theorem}[section]
\newtheorem{cor}[thm]{Corollary}
\newtheorem{defn}[thm]{Definition}
\newtheorem{fact}[thm]{Fact}
\newtheorem{lem}[thm]{Lemma}
\newtheorem{prop}[thm]{Proposition}
\theoremstyle{remark}
\newtheorem{claim}[thm]{Claim}
\newtheorem{c-n-d}[thm]{Claim and Definition}
\newtheorem{construction}[thm]{Construction}
\newtheorem{example}[thm]{Example}
\newtheorem{explanation}[thm]{Explanation}
\newtheorem{notation}[thm]{Notation}
\newtheorem{obs}[thm]{Observation}
\newtheorem{rem}[thm]{Remark}
\newtheorem*{rem-nonumber}{Remark}
\numberwithin{equation}{thm}
\setlist[enumerate]{label=(\thethm.\arabic*), before={\setcounter{enumi}{\value{equation}}}, after={\setcounter{equation}{\value{enumi}}}}
\newcommand{\into}{\hookrightarrow}
\newcommand{\wtilde}{\widetilde}
\newcommand{\what}{\widehat}
\newcommand\CounterStep{\addtocounter{thm}{1}\setcounter{equation}{0}}
\newcommand{\factor}[2]{\left. \raise 2pt\hbox{$#1$} \right/\hskip -2pt\raise -2pt\hbox{$#2$}}
\newcommand{\Preprint}[1]{#1}
\newcommand{\Publication}[1]{}
\newcommand{\subversionInfo}{}
\newcommand{\svnid}[1]{}
\newcommand{\approvals}[2][Approval]{}
\renewcommand{\phi}{\varphi}
\DeclareMathOperator{\ch}{ch}
\DeclareMathOperator{\diag}{diag}
\DeclareMathOperator{\diam}{diam}
\DeclareMathOperator{\Div}{Div}
\DeclareMathOperator{\End}{End}
\DeclareMathOperator{\Fix}{Fix}
\DeclareMathOperator{\GL}{GL}
\DeclareMathOperator{\GSp}{GSp}
\DeclareMathOperator{\Hol}{Hol}
\DeclareMathOperator{\loc}{loc}
\DeclareMathOperator{\NS}{NS}
\DeclareMathOperator{\pr}{pr}
\DeclareMathOperator{\Ric}{Ric}
\DeclareMathOperator{\Sp}{Sp}
\DeclareMathOperator{\SU}{SU}
\DeclareMathOperator{\SL}{SL}
\DeclareMathOperator{\td}{td}
\DeclareMathOperator{\tr}{tr}
\DeclareMathOperator{\U}{U}
\DeclareMathOperator{\Vol}{Vol}
\newcommand{\cT}{\mathcal T}
\newcommand{\cW}{\mathcal W}
\theoremstyle{theorem}
\newtheorem{bigthm}{Theorem}
\newtheorem{bigprop}[bigthm]{Proposition}
\newtheorem{propnot}[thm]{Proposition and Notation}
\newtheorem{thmdef}[thm]{Theorem and Definition}
\newtheorem{thmnot}[thm]{Theorem and Notation}
\theoremstyle{remark}
\newtheorem*{explanation*}{Explanation}
\newtheorem{consnot}[thm]{Construction and Notation}
\newtheorem{obsnot}[thm]{Observation and Notation}
\newtheorem*{rem*}{Remark}
\newtheorem{reminder}[thm]{Reminder}
\newtheorem{setupnot}[thm]{Setup and Notation}
\author{Daniel Greb}
\address{Daniel Greb, Essener Seminar für Algebraische Geometrie und Arithmetik, Fakultät für Mathematik, Universität Duisburg--Essen, 45117 Essen, Germany}
\email{\href{mailto:daniel.greb@uni-due.de}{daniel.greb@uni-due.de}}
\urladdr{\href{http://www.esaga.uni-due.de/daniel.greb}{http://www.esaga.uni-due.de/daniel.greb}}
\author{Henri Guenancia}
\address{Henri Guenancia, Department of Mathematics, Stony Brook University, Stony Brook, NY 11794-3651, U.S.A.}
\email{\href{mailto:henri.guenancia@stonybrook.edu}{henri.guenancia@stonybrook.edu}}
\urladdr{\href{http://www.math.stonybrook.edu/~guenancia}{http://www.math.stonybrook.edu/$\sim$guenancia}}
\author{Stefan Kebekus}%
\address{Stefan Kebekus, Mathematisches Institut, Albert-Ludwigs-Universität
  Freiburg, Ernst-Zermelo-Straße 1, 79104 Freiburg im Breisgau, Germany \&
  Freiburg Institute for Advanced Studies (FRIAS), Freiburg im Breisgau, Germany
  \& University of Strasbourg Institute for Advanced Study (USIAS), Strasbourg,
  France}%
\email{\href{mailto:stefan.kebekus@math.uni-freiburg.de}{stefan.kebekus@math.uni-freiburg.de}}
\urladdr{\href{https://cplx.vm.uni-freiburg.de}{https://cplx.vm.uni-freiburg.de}}
\thanks{Daniel Greb is partially supported by the DFG-Collaborative Research
  Center SFB/TRR 45.  Henri Guenancia is partially supported by
  the NSF Grant DMS-1510214.  Stefan Kebekus gratefully acknowledges partial
  support through a joint fellowship of the Freiburg Institute of Advanced
  Studies (FRIAS) and the University of Strasbourg Institute for Advanced Study
  (USIAS)}
\keywords{varieties with trivial canonical divisor, klt
  singularities, Kähler-Einstein metrics, stability, holonomy groups, Bochner
  principle, irreducible holomorphic symplectic varieties, Calabi-Yau varieties,
  differential forms, fundamental groups, decomposition theorem}
\subjclass[2010]{14J32, 14E30, 32J27}
\title[Klt varieties with trivial canonical class]{Klt varieties with trivial canonical class.\\ Holonomy, differential forms, and fundamental groups}
\date{\today}
\begin{document}
\begin{abstract}
%
%
\svnid{$Id: abstract.tex 745 2017-11-02 10:38:51Z kebekus $}

We investigate the holonomy group of singular Kähler-Einstein metrics on klt
varieties with numerically trivial canonical divisor.  Finiteness of the number
of connected components, a Bochner principle for holomorphic tensors, and a
connection between irreducibility of holonomy representations and stability of
the tangent sheaf are established.  As a consequence, known decompositions for
tangent sheaves of varieties with trivial canonical divisor are refined.  In
particular, we show that up to finite quasi-étale covers, varieties with
strongly stable tangent sheaf are either Calabi-Yau or irreducible holomorphic
symplectic.  These results form one building block for Höring-Peternell's recent
proof of a singular version of the Beauville-Bogomolov Decomposition Theorem.

\end{abstract}
\maketitle
\enlargethispage{0.1cm}
\approvals[Abstract]{Daniel & yes \\ Henri & yes \\ Stefan & yes}

\tableofcontents

%
%
\svnid{$Id: 01-intro.tex 763 2018-10-27 13:39:14Z kebekus $}

\section{Introduction}
\subversionInfo
\approvals{Daniel & yes \\ Henri & yes \\ Stefan & yes}

The structure of compact Kähler manifolds with vanishing first Chern class is
encapsulated in the following fundamental result.

\begin{thm}[Decomposition Theorem, \cite{Bea83} and references therein]\label{thm:bbd}
  Let $X$ be a compact Kähler manifold with
  $c_1(X) = 0 ∈ H²\bigl(X, \, ℝ \bigr)$.  Then, there exists a finite étale
  cover $\wtilde{X}→ X$ such that $\wtilde{X}$ decomposes as a Kähler manifold
  as follows,
  $$
  \wtilde{X}=T ⨯ \prod\nolimits_i Y_i ⨯ \prod\nolimits_j Z_j,
  $$
  where $T$ is a complex torus, and where the $Y_i$ (resp.\ $Z_j$) are
  irreducible and simply connected Calabi-Yau manifolds (resp.\ holomorphic
  symplectic manifolds).  \qed
\end{thm}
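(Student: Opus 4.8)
The plan is to derive the statement from Yau's solution of the Calabi conjecture together with the classification of Riemannian holonomy groups. First I would invoke the Calabi--Yau theorem: since $c_1(X)=0$, every Kähler class on $X$ contains a unique Ricci-flat Kähler metric $ω$. As $(X,ω)$ is Kähler, its holonomy group lies in $\U(n)$, where $n=\dim_ℂ X$, and Ricci-flatness pushes the restricted holonomy into $\SU(n)$. Passing to the universal cover $p\colon \what X → X$ and pulling back $ω$, I obtain a complete, simply connected, Ricci-flat Kähler manifold $(\what X, p^*ω)$ whose (now full) holonomy group is contained in $\SU(n)$.

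Next I would apply the de Rham decomposition theorem to $\what X$. Because the ambient holonomy is unitary, the parallel distributions are complex subbundles of $T_{\what X}$, so $\what X$ decomposes as a Kähler manifold
\[
\what X \;\cong\; ℂ^a ⨯ \prod\nolimits_i V_i ⨯ \prod\nolimits_j W_j,
\]
where $ℂ^a$ is the Euclidean (flat) factor and the $V_i$, $W_j$ are complete, simply connected, irreducible Kähler manifolds. Each of these factors inherits a Ricci-flat metric, so Berger's list of holonomy groups --- combined with the fact that an irreducible, Ricci-flat, locally symmetric space is flat --- forces the holonomy of each non-flat factor to be either $\SU(n_i)$ or $\Sp(m_j)$. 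A factor with holonomy $\SU(n_i)$ carries a parallel (hence holomorphic) volume form and has $h^{p,0}=0$ for $0<p<n_i$; it is a Calabi--Yau manifold. A factor with holonomy $\Sp(m_j)$ carries a parallel holomorphic symplectic form; it is irreducible holomorphic symplectic.

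It remains to upgrade this decomposition of the (generally infinite) universal cover to one of a \emph{finite} étale cover of $X$, and in particular to see that the factors $V_i$, $W_j$ are compact. Here I would invoke Bochner's principle on the compact Ricci-flat manifold $X$: holomorphic tensors, in particular holomorphic $1$-forms and vector fields, are parallel, and the parallel holomorphic vector fields integrate to a free holomorphic action of a complex torus. Translated into algebraic geometry, this says that the Albanese morphism $X → \mathrm{Alb}(X)$ is a smooth, locally trivial fibration; pulling it back along a suitable isogeny of $\mathrm{Alb}(X)$ then yields a finite étale cover $\wtilde X → X$ isomorphic to a product $T ⨯ Y$, with $T$ a complex torus and $Y$ a compact, simply connected Kähler manifold with $c_1(Y)=0$. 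Applying the de Rham decomposition to $Y$ --- which, being compact and simply connected, has no flat factor --- recovers $Y \cong \prod_i V_i ⨯ \prod_j W_j$ with the $V_i$, $W_j$ now automatically compact (they are quotients of the compact $Y$), and the holonomy computation of the previous paragraph identifies them as Calabi--Yau, respectively irreducible holomorphic symplectic, manifolds. This is the asserted decomposition.

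The step I expect to be the main obstacle is the descent carried out in the last paragraph: converting the product structure on $\what X$ together with the $π_1(X)$-action into a genuine finite étale cover of $X$ that splits --- equivalently, establishing that the Albanese morphism is a locally trivial fibration. Doing this directly requires controlling the image of $π_1(X)$ inside the isometry group of the product, straightening the Euclidean part via Bieberbach's theorems, and using torsion-freeness of $π_1(X)$ (valid because $X$ is a manifold) to eliminate the residual finite action on the compact factors. The holonomy classification (Berger's theorem together with the exclusion of the locally symmetric case) is the other indispensable ingredient, but it can be quoted off the shelf.
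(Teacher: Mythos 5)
The theorem you are proving is stated in the paper as background, attributed to \cite{Bea83}, and closed with an immediate \verb|\qed|; the introduction nonetheless sketches the canonical proof route: Yau's theorem, then the splitting theorems of \emph{de Rham and Cheeger--Gromoll}, then Berger--Simons classification plus the Bochner principle. Your proposal follows this plan at a high level, but there is a genuine gap: you never invoke the Cheeger--Gromoll splitting theorem, and this omission is not a technicality you can circumvent with the Albanese morphism. The de Rham decomposition of the universal cover gives factors $V_i$, $W_j$ that are complete and simply connected but, a priori, non-compact; the sentence where you assert $h^{p,0}=0$ for $0<p<n_i$ already implicitly assumes compactness. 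More seriously, your descent step does not close: from the Albanese isogeny you obtain $T\times Y$ with $Y$ compact K\"ahler, $c_1(Y)=0$, and $q(Y)=0$, but you then claim $Y$ is simply connected. That inference fails---vanishing irregularity does \emph{not} imply finite (let alone trivial) fundamental group without further input. Precisely for compact Ricci-flat manifolds, the implication ``flat factor of the universal cover is trivial $\implies$ $\pi_1$ finite'' and, more fundamentally, the compactness of the non-Euclidean factor $M$ with $\widehat X \cong \mathbb{C}^k\times M$, is the content of Cheeger--Gromoll.

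The remark you make at the end, about ``controlling the image of $\pi_1(X)$ inside the isometry group of the product'' and straightening the Euclidean part with Bieberbach, is actually the skeleton of Beauville's genuine proof---but the reason the isometry group of $M$ is compact, indeed finite (since $M$ is compact, simply connected, Ricci-flat without flat factor and hence carries no Killing fields), is again that $M$ is compact, which is Cheeger--Gromoll. Once you quote that theorem, either the Albanese route (iterating until the flat factor is exhausted) or the direct $\pi_1$-action route becomes valid; without it, both routes have a hole at exactly the same place.
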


In view of a desired birational classification of varieties with Kodaira
dimension zero and in view of the recent progress in the Minimal Model Program,
it is important to extend the Decomposition Theorem, \textit{mutatis mutandis},
to the setting of varieties with mild singularities.  This turns out to be a
very difficult challenge.  Indeed, the strategy of the proof of
Theorem~\ref{thm:bbd} consists in first using Yau's solution to the Calabi
conjecture in order to equip $X$ with a Ricci-flat Kähler metric, and then
applying the deep theorems of De Rham and Cheeger-Gromoll to split a finite
étale cover of $X$ according to its holonomy decomposition.  The identification
of the factors then follows from the Berger-Simons classification of holonomy
groups combined with the Bochner principle, which states that holomorphic
tensors are parallel.

Now, if $X$ is a singular projective variety with klt singularities and
numerically trivial canonical divisor, one can still achieve the first step.
More precisely, Eyssidieux, Guedj, and Zeriahi constructed ``natural''
Ricci-flat Kähler metrics $ω$ on the regular locus $X_{\reg}$ of $X$,
\cite{MR2505296}; see Section~\ref{Section:KE} for further references.  However,
as we will see in Proposition~\ref{prop:incomplete}, the Kähler manifold
$(X_{\reg}, ω)$ is not geodesically complete unless $X$ is smooth.  The
incompleteness of $ω$ is a major obstacle to using the splitting theorems
mentioned above or the Bochner principle directly in our setup.  Consequently,
it is highly challenging to analyse the geometry of $(X_{\reg}, ω)$ using
differential-geometric techniques alone.

In this paper, we use recent advances in higher-dimensional algebraic geometry
to study the geometry of the Kähler manifold $(X_{\reg},ω)$ and its relation to
the global algebraic geometry of the projective variety $X$.  More precisely, we
will investigate the following.
\begin{itemize}
\item The holonomy group $G = \Hol(X_{\reg}, g)$, where $g$ is the Riemannian
  metric on $X_{\reg}$ induced by $ω$.
\item The algebra of global holomorphic forms
  $\cA = \bigoplus_p H⁰ \bigl(X_{\reg},\, Ω_{X_{\reg}}^{p} \bigr)$.
\item The fundamental group $π_1(X_{\reg})$.
\end{itemize}

Motivated by a decomposition theorem for tangent sheaves of klt varieties with
numerically trivial canonical divisor established by Greb, Kebekus, and
Peternell in \cite{GKP16} and building on Bost's criteria for algebraic
integrability of foliations, Druel recently obtained the singular version of the
Decomposition Theorem for such varieties of dimension $\dim X ≤ 5$, see
\cite{Dru16}.  Using Druel's strategy as well as the results presented in this
paper, in particular Proposition~\ref{prop:new_intro_prop} and
Theorem~\ref{thm:C} below, Höring and Peternell very recently gave a proof of
the singular version of the Decomposition Theorem in
\cite{HoeringPeternellbbdecomp}, thus completing the long quest for such a
result in the singular category.

\subsection{The Bochner principle}
\approvals{Daniel & yes \\ Henri & yes \\ Stefan & yes}

There exist strong connections between the three objects listed above.  First of
all, it is in general significantly simpler to compute the neutral component
$G°$ of $G$, as $G°$ is invariant under finite étale covers, and since all
possible isomorphism classes were classified by Berger-Simons.  The group of
connected components, $G/G°$, is then controlled by the fundamental group of the
variety via the canonical surjection $π_1(X_{\reg}) \twoheadrightarrow G/G°$.
Finally, the link between $G$ and $\mathcal A$ is provided in the smooth case by
the Bochner principle, which is a straightforward application of the maximum
principle to Bochner's formula.  One of our main results consists in the
following generalisation of the Bochner principle to the singular setting.

\begin{bigthm}[Bochner principle, Theorem~\ref{thm:holonomy}]\label{thm:A}
  Let $X$ be a projective klt variety with $K_X$ numerically trivial.  Let $H$
  be an ample divisor on $X$, and let $ω_H ∈ c_1(H)$ be the singular Ricci-flat
  Kähler metric constructed by \cite{MR2505296}, with associated Riemannian
  metric $g_H$ on $X_{\reg}$.  Then, every holomorphic tensor on $X_{\reg}$ is
  parallel with respect to $g_H$.
\end{bigthm}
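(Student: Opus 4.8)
The plan is to transplant the classical argument --- Bochner's formula together with an integration by parts --- to the smooth locus $X_{\reg}$, the point being that although $\omega_H$ is incomplete (Proposition~\ref{prop:incomplete}), the complement $X_{\sing}$ has complex codimension at least two and hence is too small to produce a boundary term. As a preliminary reduction, observe that since $X$ is klt with $K_X$ numerically trivial, $K_X$ is torsion; the associated index-one cover $g\colon Y\to X$ is finite, quasi-étale, projective and klt with $\omega_Y\cong\sO_Y$, and $g^*\omega_H$ is again a singular Ricci-flat metric of the kind produced in \cite{MR2505296}. A holomorphic tensor on $X_{\reg}$ pulls back to one on $g^{-1}(X_{\reg})$, extends reflexively to $Y_{\reg}$, and being parallel is a local condition invariant under the local isometry $g$; so it suffices to treat $Y$, and from now on $\omega_X\cong\sO_X$. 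Fix a generator $\sigma\in H^0(X,\omega_X)$; Ricci-flatness of $\omega_H$ on $X_{\reg}$ means $\omega_H^n=c\cdot\sigma\wedge\bar\sigma$ for a constant $c>0$, while the klt hypothesis is equivalent to $\int_{X_{\reg}}\omega_H^n<\infty$. The algebro-geometric input is now this: contraction with $\sigma$ gives a reflexive isomorphism $\sT_X\cong\Omega_X^{[n-1]}$, so every reflexive tensor bundle $(\sT_X)^{\otimes a}\otimes(\Omega_X^{[1]})^{\otimes b}$ is a direct summand of a sheaf of reflexive differential forms, and therefore, by the extension theorem for reflexive differentials on klt varieties (Greb--Kebekus--Kov\'acs--Peternell, Kebekus--Schnell), any holomorphic tensor $\psi$ on $X_{\reg}$ --- regarded, via normality, as a reflexive tensor on $X$ --- extends to a \emph{holomorphic} tensor $\wtilde\psi$ on a fixed resolution $\pi\colon\wtilde X\to X$. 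Granting in addition the bound $M:=\sup_{X_{\reg}}|\psi|^2_{g_H}<\infty$ (the delicate point, discussed below), the rest is analysis.

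View $\psi$ as a holomorphic section of the Hermitian holomorphic bundle $(E,h)$, where $E$ is the relevant tensor bundle and $h$ is induced by $g_H$. Its Chern curvature $\Theta_E$ is assembled from $\Theta_{T_{X_{\reg}}}$, so $\tr_{\omega_H}\Theta_E$ is a constant multiple of the Ricci endomorphism of $(X_{\reg},\omega_H)$ and hence vanishes. The Bochner--Kodaira formula for the $\bar\partial$-closed section $\psi$ then degenerates to the pointwise identity
\[
  \tfrac12\,\Delta_{g_H}\,|\psi|^2_{g_H}\;=\;|\nabla^{g_H}\psi|^2_{g_H}\;\ge\;0\qquad\text{on }X_{\reg},
\]
with $\Delta_{g_H}=\operatorname{div}_{g_H}\operatorname{grad}_{g_H}$ and $\nabla^{g_H}$ the Levi-Civita connection. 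Choose cut-off functions $0\le\chi_\varepsilon\le1$ on $X_{\reg}$, equal to $0$ near $X_{\sing}$, tending to $1$ pointwise, with $\int_{X_{\reg}}|\nabla^{g_H}\chi_\varepsilon|^2_{g_H}\,dV_{g_H}\to0$ (their existence is addressed below). Since $\chi_\varepsilon$ is compactly supported in $X_{\reg}$, integration by parts gives
\[
  \int_{X_{\reg}}\chi_\varepsilon^2\,|\nabla^{g_H}\psi|^2_{g_H}\,dV_{g_H}
  \;=\;-\int_{X_{\reg}}\chi_\varepsilon\,\bigl\langle\nabla^{g_H}\chi_\varepsilon,\,\nabla^{g_H}|\psi|^2_{g_H}\bigr\rangle\,dV_{g_H}.
\]
Estimating the right-hand side by Cauchy--Schwarz, using $\bigl|\nabla|\psi|^2\bigr|\le2|\psi|\,|\nabla\psi|$ and $|\psi|\le\sqrt M$, yields $\int_{X_{\reg}}\chi_\varepsilon^2|\nabla\psi|^2_{g_H}\,dV_{g_H}\le4M\int_{X_{\reg}}|\nabla\chi_\varepsilon|^2_{g_H}\,dV_{g_H}\to0$; letting $\varepsilon\to0$ and applying Fatou's lemma gives $\nabla^{g_H}\psi\equiv0$ on $X_{\reg}$, which is the assertion.

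Two points remain, both concerning the behaviour near $X_{\sing}$. The cut-offs $\chi_\varepsilon$ are built on $\wtilde X$: the reduced exceptional divisor $E_{\mathrm{exc}}$ of $\pi$ has real codimension two, hence carries the classical (double-)logarithmic cut-offs, whose Dirichlet energy with respect to a fixed smooth Kähler metric on $\wtilde X$ tends to zero; transferring this to $g_H$, via the pointwise identity $|\nabla\chi|^2_{g}\,dV_g=c_n\,i\,\partial\chi\wedge\bar\partial\chi\wedge\omega_g^{n-1}$, uses the boundedness of the local potentials of $\omega_H$ and the explicit shape $(\pi^*\omega_H)^n=\prod_i\|s_{E_i}\|^{2a_i}\cdot(\text{smooth volume form})$ forced by Ricci-flatness, where $K_{\wtilde X}=\sum_i a_iE_i$ with $a_i>-1$ by the klt condition. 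The \emph{genuinely hard step} is the bound $M<\infty$: one must control the tensor $\wtilde\psi$, regular on the compact manifold $\wtilde X$, against $g_H$, whose pull-back $\pi^*\omega_H$ degenerates along $E_{\mathrm{exc}}$ --- precisely where the incompleteness of $\omega_H$ has to be confronted. I expect this to follow from the fine analysis of the singular Ricci-flat metric of \cite{MR2505296} near $X_{\sing}$ (comparison with an explicit model metric in local uniformizing coordinates, or the a priori $L^\infty$-estimates underlying its construction), combined with the normalisation $\omega_H^n=c\,\sigma\wedge\bar\sigma$ and the holomorphic extension of $\psi$ across $E_{\mathrm{exc}}$. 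Once $M<\infty$ and the cut-offs are available, the argument above concludes the proof.
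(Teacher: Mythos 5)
Your strategy---Bochner's degenerate identity $\tfrac12\Delta_{g_H}|\psi|^2_{g_H}=|\nabla^{g_H}\psi|^2_{g_H}$ on $X_{\reg}$ combined with cut-offs of small $g_H$-Dirichlet energy near $X_{\sing}$---is the natural direct transplant of the smooth argument, and you have correctly isolated where it stalls: the bound $M=\sup_{X_{\reg}}|\psi|^2_{g_H}<\infty$. This is a genuine gap, and it does \emph{not} ``follow from the fine analysis of \cite{MR2505296}'': the EGZ construction is pluripotential-theoretic and yields only an $L^\infty$ bound on the local \emph{potential} of $\omega_H$, i.e.\ a $C^0$ estimate on $\varphi$ where $\omega_H=\omega_0+dd^c\varphi$, not any pointwise control on $\omega_H$ as a $(1,1)$-tensor near $X_{\sing}$. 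For non-quotient klt singularities essentially nothing is known about the asymptotics of $\omega_H$ at the singular set (indeed, even finiteness of $\diam(X_{\reg},g_H)$ is open in general, as noted after Proposition~\ref{prop:incomplete}), so you cannot a priori compare $|\wtilde\psi|^2_{\pi^*\omega_H}$ with the norm taken in a smooth background metric. Fortunately the $L^\infty$ bound can be dodged by the Cheng--Yau truncation in the range: integrating against $\chi_\varepsilon^2\,\rho_K(|\psi|^2)$ for a decreasing cut-off $\rho_K$ supported in $[0,2K]$ produces, after integration by parts, an extra term $-\tfrac12\int\chi_\varepsilon^2\rho_K'\,|\nabla|\psi|^2|^2\geq 0$ that can be discarded; the Cauchy--Schwarz step then uses only $|\psi|\leq\sqrt{2K}$ on $\supp\rho_K$, and one concludes $\nabla^{g_H}\psi\equiv 0$ on the nonempty open set $\{|\psi|^2<2K\}$, which is then open and closed in the connected manifold $X_{\reg}$ by local constancy of $|\psi|^2$. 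You should also not treat the cut-off estimate $\int|\nabla\chi_\varepsilon|^2_{g_H}\,dV_{g_H}\to0$ as routine: $(\pi^*\omega_H)^{n-1}$ is a positive closed current that is \emph{not} smooth along $\Exc(\pi)$, and to show $\int i\partial\chi_\varepsilon\wedge\bar\partial\chi_\varepsilon\wedge(\pi^*\omega_H)^{n-1}\to0$ via integration by parts you will need cut-offs whose Hessians are dominated by a Poincaré-type metric---the same device used in Claim~\ref{claim:lem2} of the paper, so there is no shortcut on this part of the analysis.

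The paper's own proof is fundamentally different in structure and is worth comparing. Rather than controlling a pointwise quantity, it works with \emph{slopes}: it establishes polystability of every reflexive tensor sheaf with respect to every polarization (Theorem~\ref{generalized:holonomy}) by a degree computation against a two-parameter family of genuinely \emph{smooth} Kähler metrics $\omega_{t,\varepsilon}$ on a log resolution (Construction~\ref{rem:ma}), which solve a perturbed Monge--Amp\`ere equation and converge to $\pi^*\omega_H$; the pointwise ignorance about $\omega_H$ is traded for cohomological/integral error terms that vanish in the double limit. A nonzero section $\psi$ then spans a saturated, hence directly complemented, rank-one subsheaf of slope zero, which corresponds to a $G$-invariant complex line in $E_x$ by the holonomy principle. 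The decisive final step is group-theoretic and has no counterpart in your argument: on the holonomy cover (Theorem~\ref{thm:holonomyCover}), which exists by Druel's integrability theorem and the maximally quasi-\'etale cover of \cite{GKP13}, the holonomy $G_Y$ is connected, hence by Proposition~\ref{prop:factors} a product of $\SU$'s and $\Sp$'s, hence semisimple with no nontrivial characters, so the invariant line is point-wise fixed. Your analytic route, if the gaps are filled, would in fact short-circuit all of that: it proves $\nabla\psi=0$ directly on $X_{\reg}$, with no need for the holonomy cover or the character argument, which would be a genuine structural simplification---but the burden shifted onto the analysis (the Dirichlet-energy estimate for the honest singular current $(\pi^*\omega_H)^{n-1}$) is comparable to the work the paper invests in its error-term estimates.
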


The proof of this result is much more involved than in the smooth case.  We
first establish a Bochner principle for subbundles of tensor bundles,
Theorem~\ref{generalized:holonomy}, using the analysis developed in
\cite{Guenancia}.  To obtain Theorem~\ref{thm:A}, this is subsequently combined
with group-theoretic arguments and with the existence of a certain ``holonomy
cover'', which we explain in Theorem~\ref{thm:B} below.

\subsection{The holonomy cover}
\approvals{Daniel & yes \\ Henri & yes \\ Stefan & yes}

As we explained above, it is difficult in general to compute the full holonomy
group, but rather easy to get our hands on its neutral component $G°$, cf.\
Proposition~\ref{prop:factors}.  In the smooth setup, the passage from this
component to the full holonomy group is facilitated by the \emph{a priori
  control} over the fundamental group of Ricci-flat manifolds given by the
Cheeger-Gromoll Theorem.  In our setup, we have the following major potential
problem: even if the (restricted) holonomy of the metric on $X_{\reg}$ has no
flat factors, the fundamental group $π_1(X_{\reg})$ might be infinite, and it is
therefore not clear that we can make the holonomy group connected by taking a
\textit{finite} étale cover of $X_{\reg}$.

However, we can overcome this potential topological obstruction by relying on
recent progress in higher-dimensional algebraic geometry.  Our two main
technical ingredients for this part are Druel's integrability theorem,
\cite[Thm.~1.4]{Dru16}, and the theorem on the existence of maximally
quasi-étale covers of klt varieties, \cite[Thm.~1.5]{GKP13}.  Combining these
results with more elementary differential-geometric considerations, we get the
following.

\begin{bigthm}[Holonomy cover, Theorem~\ref{thm:holonomyCover} and Proposition~\ref{prop:torusCover:decomposition}]\label{thm:B}
  Setting as in Theorem~\ref{thm:A}.  Then, there exist normal projective
  varieties $A$ and $Z$, and a quasi-étale cover $γ: A⨯Z → X$ such that the
  following properties hold.
  \begin{enumerate}
  \item\label{il:B1} The variety $A$ is Abelian, of dimension
    $\dim A = \wtilde{q}(X)$, the augmented irregularity of $X$.
  \item\label{il:B2} The variety $Z$ has canonical singularities, linearly
    trivial canonical divisor, and augmented irregularity $\wtilde{q}(Z)=0$.
  \item\label{il:B3} There exist a flat Kähler form $ω_A$ on $A$ and a singular
    Ricci-flat Kähler metric $ω_Z$ on $Z$ such that
    $γ^*ω_H ≅ \pr_1^*ω_A + \pr_2^*ω_Z$ and such that the holonomy group of
    the corresponding Riemannian metric on $A ⨯ Z_{\reg}$ is connected.
    \end{enumerate}
\end{bigthm}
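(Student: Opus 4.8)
The plan is to combine Druel's integrability theorem \cite[Thm.~1.4]{Dru16}, the existence of maximally quasi-étale covers \cite[Thm.~1.5]{GKP13}, and the de Rham decomposition of the holonomy representation of $(X_{\reg}, g_H)$. As a preliminary reduction, recall that $K_X$ is torsion; passing to the associated index-one cover makes $K_X$ linearly trivial and Cartier, hence the variety canonical, and applying \cite[Thm.~1.5]{GKP13} afterwards we may moreover assume $X$ to be maximally quasi-étale. Neither operation affects the hypotheses, and $\omega_H$ pulls back to the singular Ricci-flat metric in the pulled-back ample class.

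Next I would isolate the flat part of the holonomy. Let $\sF_0 \subseteq T_{X_{\reg}}$ be the maximal parallel flat subbundle, i.e.\ the one whose fibres are the vectors fixed by the restricted holonomy group. Since the complex structure commutes with parallel transport, $\sF_0$ is a holomorphic subbundle, and, being parallel, it is involutive; as $X$ is normal with $\codim X_{\sing} \geq 2$, the reflexive extension $\sF \subseteq T_X$ of $\sF_0$ is a foliation whose determinant is flat on $X_{\reg}$, so that $\sF$ has numerically trivial canonical class. Druel's integrability theorem then shows $\sF$ to be algebraically integrable and yields a quasi-étale cover along which $X$ splits as a product $A \times Z$ with $A$ an Abelian variety, the pullback of $\sF$ being the relative tangent sheaf of $\pr_1$. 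The factor $Z$ inherits canonical singularities and, after one more index-one cover if necessary, a linearly trivial canonical divisor. Finally, $\dim A = \wtilde{q}(X)$ and $\wtilde{q}(Z) = 0$: the inequality $\wtilde{q}(X) \geq \dim A$ is immediate, while the reverse inequality together with the vanishing follows from the maximality of $\sF_0$, using that the flat part of the holonomy is invariant under finite étale covers and that a holomorphic $1$-form on any quasi-étale cover lies in the flat part (equivalently, an Albanese argument applied to covers of $Z$), so that a nonzero $\wtilde{q}(Z)$ would enlarge $\sF_0$.

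The pulled-back metric then splits. The decomposition $T_{A \times Z} = \pr_1^* T_A \oplus \pr_2^* T_Z$ is orthogonal and parallel for $g_H$, so $g_H$ is locally a Riemannian product; because the flat Kähler metric on $A$ in a fixed class and the singular Ricci-flat metric on $Z$ in a fixed class are each unique, this local structure globalises to $\gamma^* \omega_H \cong \pr_1^* \omega_A + \pr_2^* \omega_Z$ with $\omega_A$ flat and $\omega_Z$ the singular Ricci-flat metric on $Z$.

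The remaining --- and hardest --- point is to make the holonomy of $Z_{\reg}$ connected. By construction $Z_{\reg}$ carries no parallel flat directions, so Berger-Simons together with Ricci-flatness forces $\Hol^\circ(Z_{\reg})$ to be a product of groups of type $\SU$ and $\Sp$. The difficulty, as highlighted in the introduction, is that $\pi_1(Z_{\reg})$ --- which surjects onto $\Hol(Z_{\reg})/\Hol^\circ(Z_{\reg})$ --- may be infinite, so it is not a priori clear that finitely many components can be killed by a \emph{finite} cover. To see that $\Hol(Z_{\reg})/\Hol^\circ(Z_{\reg})$ is nonetheless finite I would combine three observations: the parallel holomorphic volume form of $Z$ (which exists because $K_Z$ is trivial and is parallel by Ricci-flatness) forces $\Hol(Z_{\reg}) \subseteq \SU$; the tangent-sheaf decomposition of \cite{GKP16} together with Druel's integrability theorem control the determinants of the holonomy foliations, so that the determinant of the holonomy on each factor is finite cyclic; and $\Hol(Z_{\reg})$ can permute isomorphic factors in only finitely many ways. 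Granting this, the associated finite étale cover of $Z_{\reg}$ extends, by \cite[Thm.~1.5]{GKP13} applied to $Z$, to a quasi-étale cover $Z' \to Z$ with connected holonomy; replacing $Z$ by $Z'$ and composing all the covers constructed above produces the desired $\gamma \colon A \times Z \to X$. The crux is precisely this finiteness statement --- the genuinely new phenomenon compared with the smooth case, where the Cheeger-Gromoll theorem makes it automatic.
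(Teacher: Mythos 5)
Your overall architecture matches the paper's closely: take an index-one cover and a maximally quasi-étale cover; split off an Abelian factor along the maximal flat parallel subbundle; check $\dim A = \wtilde q(X)$ and $\wtilde q(Z)=0$; show the EGZ metric decomposes as a Riemannian product; and then reduce to proving finiteness of $\Hol/\Hol^\circ$ on the factor $Z$. You even correctly single out that last finiteness statement as the crux. But precisely there your argument has a genuine gap.

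You assert that the decomposition of $\mathcal T_Z$ from \cite{GKP16} together with Druel's integrability theorem ``control the determinants of the holonomy foliations, so that the determinant of the holonomy on each factor is finite cyclic''. This does not follow. What the tangent-sheaf decomposition gives you is that each summand $\mathcal W_i$ has \emph{trivial determinant as a line bundle}. That is very different from the determinant $\emph{character}$ $\pi_1(Z_{\reg}) \to \U(1)$ of the holonomy on $V_i$ having finite image: a flat Hermitian line bundle whose underlying holomorphic line bundle is $\mathcal O$ can perfectly well have infinite monodromy. Nothing in \cite{GKP16} or in Druel's integrability theorem rules this out — indeed this is exactly the ``topological obstruction'' whose resolution is the whole point of the theorem, as the paper's introduction stresses. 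The paper's actual argument (Lemma~\ref{lem:bfjjd0}) is entirely different. After passing to a weak holonomy cover, the normalisers of $\SU(n_i)$ and $\Sp(n_i/2)$ inside $\U(V_i)$ are computed to have quotient $\U(1)$, so $G/G^\circ$ embeds into $\U(1)^{\times m}$ and is hence Abelian. After passing further to a \emph{maximally} quasi-étale $Z$ (so that representations of $\pi_1(Z_{\reg})$ factor through $\pi_1(Z)$, by \cite[Thm.~1.14]{GKP13}), the surjection $\pi_1(Z_{\reg}) \twoheadrightarrow G/G^\circ$ therefore factors through the abelianisation $H_1(Z,\mathbb Z)$. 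Finiteness of $H_1(Z,\mathbb Z)$ is then extracted from $q(Z)=0$ via the exponential sequence and the embedding $H^1(Z,\mathbb Z) \hookrightarrow H^1(Z,\mathcal O_Z)$. This global topological/cohomological step is indispensable and is precisely what your proposal omits.

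Two lesser points. First, your splitting step routes through Druel's integrability theorem applied to the reflexive closure of the parallel flat subbundle; the paper instead quotes \cite[Thm.~1.3]{GKP16} for the splitting and invokes \cite[Cor.~5.8]{Dru16} only in Lemma~\ref{lem:FDC0}, after reducing to a terminal model via terminalisation and after invoking Mehta--Ramanathan and Bogomolov inequalities. Your one-line application of Druel glosses over the terminal/$\mathbb Q$-factorial hypotheses and the Chern-class conditions that make his theorem applicable. Second, in your final paragraph you attribute the extension of a finite étale cover of $Z_{\reg}$ to a quasi-étale cover of $Z$ to \cite[Thm.~1.5]{GKP13}; that step is actually elementary (cf.\ Remark~\ref{rem:qevecxr}), and \cite[Thm.~1.5]{GKP13} is needed earlier, precisely to make the factoring of $\pi_1(Z_{\reg})$-characters through $\pi_1(Z)$ work.
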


\begin{reminder}
  The augmented irregularity of $X$ is defined by Kawamata to be the maximal
  irregularity of any quasi-étale cover of $X$, see Section~\ref{ssec:augIrreg}.
\end{reminder}

The singular Kähler-Einstein metric $ω_H$ constructed by \cite{MR2505296} does
depend on the choice of the ample divisor $H$.  However, using
Theorems~\ref{thm:A} and \ref{thm:B} to relate holonomy, restricted holonomy,
and holomorphic differential forms, we will show that the isomorphism class of
the restricted holonomy group $G°$ is in fact independent of $H$, allowing us to
speak of \emph{the} restricted holonomy.  The following proposition makes this
precise.

\begin{bigprop}[Restricted holonomy is independent of polarisation, Corollary~\ref{cor:independence}]\label{prop:independence}
  Setting as in Theorem~\ref{thm:A}.  Then, the isomorphism class of the
  restricted holonomy group $\Hol(X_{\reg},g_H)°$ does not depend on the ample
  polarisation $H$.
\end{bigprop}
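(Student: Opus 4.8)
The plan is to trace the restricted holonomy through the quasi-étale torus cover of Theorem~\ref{thm:B} down to a variety on which the holonomy is already connected, and then to recover that connected group --- up to isomorphism --- from the algebra of reflexive differential forms by the Bochner principle, an algebra that involves no metric and hence no $H$. Concretely, fix an ample divisor $H$ and let $\gamma\colon A\times Z\to X$ be the cover of Theorem~\ref{thm:B}. The triple $(A,Z,\gamma)$ comes from the torus cover of $X$ (Proposition~\ref{prop:torusCover:decomposition}), a purely algebraic construction; so we may use one and the same $\gamma$ for every ample $H$, only the flat form $\omega_A$ and the singular Ricci-flat metric $\omega_Z$ varying with $H$. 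Let $U\subseteq X_{\reg}$ be the big open locus over which $\gamma$ is étale with smooth source; then $X_{\reg}\setminus U$ and $\gamma^{-1}(X_{\reg}\setminus U)\subseteq A\times Z_{\reg}$ have complex codimension $\ge 2$, hence real codimension $\ge 4$, and deleting such a set changes neither $\pi_1$ nor the collection of null-homotopic loops, hence not the restricted holonomy. Since $\gamma|_{\gamma^{-1}(U)}$ is a Riemannian covering and, by Theorem~\ref{thm:B}, $\gamma^{*}g_H$ agrees on $A\times Z_{\reg}$ with the product metric $g_{\omega_A}\times g_{\omega_Z}$ (both being Ricci-flat Kähler in the class $c_1(\gamma^{*}H)$), we obtain
$$
\Hol(X_{\reg},g_H)°\;\cong\;\Hol\bigl(A\times Z_{\reg},\,g_{\omega_A}\times g_{\omega_Z}\bigr)°\;=\;\Hol(A,g_{\omega_A})°\times\Hol(Z_{\reg},g_{\omega_Z})°\;=\;\Hol(Z_{\reg},g_{\omega_Z})°,
$$
the flat torus $A$ having trivial restricted holonomy. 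Moreover the \emph{full} holonomy of the Riemannian product equals $\Hol(A,g_{\omega_A})\times\Hol(Z_{\reg},g_{\omega_Z})=\Hol(Z_{\reg},g_{\omega_Z})$, which is connected by the last part of Theorem~\ref{thm:B}; thus $\Hol(Z_{\reg},g_{\omega_Z})=\Hol(Z_{\reg},g_{\omega_Z})°$. It therefore suffices to prove that the isomorphism class of $\Hol(Z_{\reg},g_{\omega_Z})$ is independent of $H$.

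Here $Z$ is projective with canonical (hence klt) singularities, $K_Z$ is linearly trivial, and $\wtilde q(Z)=0$. As $(Z_{\reg},\omega_Z)$ is Ricci-flat Kähler with connected holonomy, the holonomy decomposition established in this paper --- Proposition~\ref{prop:factors}, combined with the Berger-Simons classification and with $\wtilde q(Z)=0$ ruling out flat De~Rham factors (such a factor would yield a nonzero $\Hol$-invariant element of $V^{*}$ at a point $z$, hence by the Bochner principle a nonzero reflexive $1$-form on $Z$, contradicting $q(Z)\le\wtilde q(Z)=0$) --- gives an isomorphism
$$
\Hol(Z_{\reg},g_{\omega_Z})\;\cong\;\prod_{i\in I}\SU(m_i)\;\times\;\prod_{j\in J}\Sp(m_j),\qquad m_i\ge 2,
$$
acting in the standard way on $V:=T_zZ_{\reg}\cong\bigoplus_{i\in I}\bC^{m_i}\oplus\bigoplus_{j\in J}\bC^{2m_j}$ (via $\SU(2)\cong\Sp(1)$ we may, if convenient, collect the rank-two factors among the $\Sp$'s). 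By Theorem~\ref{thm:A} applied to the klt variety $A\times Z$ with the ample pull-back $\gamma^{*}H$, every holomorphic form on $A\times Z_{\reg}$, and hence --- restricting to the second factor --- every holomorphic form on $Z_{\reg}$, is $g_{\omega_Z}$-parallel. Therefore evaluation at $z$ is an isomorphism of graded $\bC$-algebras
$$
\cA_Z\;:=\;\bigoplus\nolimits_{p\ge 0}H^0\bigl(Z_{\reg},\Omega_{Z_{\reg}}^{p}\bigr)\;\;\cong\;\;\bigl(\Lambda^{\bullet}V^{*}\bigr)^{\Hol(Z_{\reg},g_{\omega_Z})}
$$
--- injective because a parallel form on the connected manifold $Z_{\reg}$ is determined by its value at $z$, surjective because a holonomy-invariant element of $\Lambda^{\bullet}V^{*}$ extends by parallel transport to a parallel, hence (being of type $(\bullet,0)$ on a Kähler manifold) holomorphic, form. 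By classical invariant theory $\bigl(\Lambda^{\bullet}W^{*}\bigr)^{\SU(m)}=\bC\oplus\bC\cdot\mathrm{vol}_W$ for $W=\bC^{m}$, and $\bigl(\Lambda^{\bullet}W^{*}\bigr)^{\Sp(m)}=\bC[\sigma]/(\sigma^{m+1})$ for $W=\bC^{2m}$, with $\sigma$ the invariant symplectic form; hence the right-hand side above is, as a graded algebra,
$$
R\;:=\;\bigotimes\nolimits_{i\in I}\bigl(\bC[\varepsilon_i]/(\varepsilon_i^{2})\bigr)\;\otimes\;\bigotimes\nolimits_{j\in J}\bigl(\bC[\sigma_j]/(\sigma_j^{\,m_j+1})\bigr),\qquad\deg\varepsilon_i=m_i,\quad\deg\sigma_j=2.
$$

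It remains to observe that the graded-algebra isomorphism type of $R$ determines its factors, hence the group: the number of minimal algebra generators of $R$ in each degree equals $\dim_{\bC}(\mathfrak m/\mathfrak m^{2})$ in that degree, where $\mathfrak m=R_{>0}$; since $R$ is then a graded complete intersection on these generators, the multiset of relation degrees $\{e_k\}$ is read off the Hilbert series $\mathrm{Hilb}(R;t)=\prod_k(1-t^{e_k})/(1-t^{d_k})$ --- for instance by a M\"obius inversion over divisibility applied to the polynomial $\prod_k(1-t^{e_k})$ --- and pairing a generator of degree $d$ with its relation of degree $d\cdot N$ separates the $\SU(m_i)$-factors ($N=2$) from the $\Sp(m_j)$-factors ($d=2$, $N=m_j+1$), up only to the harmless ambiguity $\SU(2)\cong\Sp(1)$. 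Thus the isomorphism $R\cong\cA_Z$ pins down $\prod_{i}\SU(m_i)\times\prod_{j}\Sp(m_j)$ up to isomorphism; and since $\cA_Z$ depends only on the variety $Z$, which by the first paragraph may be chosen the same for all ample $H$, the isomorphism class of $\Hol(Z_{\reg},g_{\omega_Z})\cong\Hol(X_{\reg},g_H)°$ is independent of $H$, as asserted. I expect the one genuinely substantial point to be the holonomy decomposition invoked in the second paragraph --- the singular, geodesically incomplete analogue of the De~Rham splitting theorem together with the Berger-Simons classification --- which is precisely what Theorems~\ref{thm:A} and~\ref{thm:B} were devised to make available; the codimension-two bookkeeping of the first paragraph and the elementary invariant-theoretic combinatorics of the third I regard as routine. (Alternatively, one can bypass the second and third paragraphs by invoking Proposition~\ref{prop:factors} directly to identify the holonomy decomposition with the algebraic decomposition $\cT_Z=\bigoplus\sF_i$ of the tangent sheaf, whose summands, their ranks, and their Calabi-Yau versus irreducible holomorphic symplectic types are manifestly independent of $H$.)
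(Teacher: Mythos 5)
Your strategy --- pass to a cover on which the holonomy is connected, apply the classification of Proposition~\ref{prop:factors}, and read off the isomorphism type of the group from the algebra of reflexive differential forms via the Bochner principle, an algebra which is metric-independent --- is essentially the paper's, and most of the argument is sound. However, the claim in your first paragraph that the holonomy cover $\gamma\colon A\times Z\to X$ of Theorem~\ref{thm:B} is ``a purely algebraic construction,'' usable ``for every ample $H$,'' is not correct, and this is a genuine gap. In the proof of Theorem~\ref{thm:holonomyCover}, $\gamma$ is a \emph{composition} of a torus cover (which is indeed algebraic) with a further cover $\Id_A\times(Z'\to Z)$, where $Z'\to Z$ is the cover of Lemma~\ref{lem:bfjjd0} corresponding to the kernel of $\pi_1(Z_{\reg})\twoheadrightarrow G_Z/G_Z^\circ$. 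That finite quotient depends on the Ricci-flat metric and hence on $H$. Therefore, on a fixed torus cover $A\times Z$, the group $\Hol(Z_{\reg},g_{\omega_Z})$ is connected (as you need) for the $H$ used to build the holonomy cover, but not a priori for a different $H'$; and your isomorphism $\bigoplus_p H^0(Z_{\reg},\Omega^p_{Z_{\reg}})\cong(\Lambda^\bullet V^*)^{\Hol(Z_{\reg},g_{\omega_Z})}$ only pins down the \emph{restricted} holonomy once the right-hand group is already connected. This is exactly where the argument breaks.

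The repair is small and is what the paper's proof does: given $H$ and $H'$, take the $H$-holonomy cover and, on top of it, the $H'$-holonomy cover. Passing to a further quasi-\'etale cover shrinks the full holonomy while preserving the restricted holonomy (Corollary~\ref{cor:bhqec2}), so on the composite cover $Y$ both $G_{Y,H}$ and $G_{Y,H'}$ are connected and equal the respective restricted holonomies. Then $\bigoplus_p H^0\bigl(Y,\Omega^{[p]}_Y\bigr)$ computes, via Theorem~\ref{thm:holonomy}, the invariants of both groups in $\Lambda^\bullet V^*$, and a connected group of the allowed shape $\prod\SU(n_i)\times\prod\Sp(m_j)$ is indeed recovered from that algebra --- either by your Hilbert-series reconstruction, or more simply by writing $\sT_Y\cong\sO_Y^{\oplus r}\oplus\bigoplus_i\sE_i$ as a sum of stable summands, noting that such a decomposition is unique up to isomorphism of the summands, and distinguishing $\SU$ from $\Sp$ on each piece by the degrees $p$ in which $h^0(Y,\Lambda^{[p]}\sE_i)\ne 0$; this is the paper's route and it spares the combinatorics of your third paragraph. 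With that one change your proof is correct; the elimination of flat De~Rham factors using $\wtilde{q}(Z)=0$ and the invariant-theoretic computation of $(\Lambda^\bullet V^*)^G$ are in close agreement with the paper.
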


We will see in Section~\ref{univpropegz} that the construction of Ricci-flat
Kähler metrics by \cite{MR2505296} is well-behaved under quasi-étale cover, and
then so is the restricted holonomy.  In contrast, Section~\ref{ssec:loupbir}
shows by way of example that the restricted holonomy changes dramatically under
birational modifications, even under crepant blowing up.

\subsection{Decomposition of the tangent sheaf}
\approvals{Daniel & yes \\ Henri & yes \\ Stefan & yes}

Using the holonomy principle as well as the classification of Ricci-flat
restricted holonomy groups, and comparing with the decomposition of the tangent
sheaf of varieties with trivial canonical divisor established in \cite{GKP16},
we can obtain more precise information about the tangent sheaf of $Z$, which
splits according to the restricted holonomy representation of $X$.

\begin{bigprop}[Decomposition of the tangent sheaf, Proposition~\ref{prop:nnnew_intro_prop}]\label{prop:new_intro_prop}
  Setup and notation as in Theorem~\ref{thm:B}.  Then, the cover $γ : A ⨯ Z → X$
  can be chosen such that in addition to properties \ref{il:B1}--\ref{il:B3},
  there exists a direct sum decomposition of the tangent sheaf of $Z$,
  $$
  𝒯_Z = \bigoplus\nolimits_{i∈ I} ℰ_i ⊕ \bigoplus\nolimits_{j∈ J}ℱ_j,
  $$
  where the reflexive sheaves $ℰ_i$ (resp.\ $ℱ_j$) satisfy the following
  properties.
  \begin{enumerate}
  \item\label{il:new_i1} The subsheaves $ℰ_i ⊆ 𝒯_Z$ (resp.\ $ℱ_j ⊆ 𝒯_Z$) are
    foliations with trivial determinant, of rank $n_i ≥ 3$ (resp.\ of even rank
    $2m_j ≥ 2$).  Moreover, they are strongly stable in the sense of
    \cite[Def.~7.2]{GKP16}.
  \item\label{il:new_i2} On $Z_{\reg}$, the $ℰ_i$ (resp.\ $ℱ_j$) are locally
    free and correspond to holomorphic subbundles $E_i$ (resp.~$F_j$) of
    $TZ_{\reg}$ that are parallel with respect to the Levi-Civita connection of
    $g_Z$, the Riemannian metric on $Z_{\reg}$ induced by $ω_Z$.  Moreover,
    their holonomy groups are $\SU(n_i)$ and $\Sp(m_j)$, respectively.
  \item\label{il:new_i3} If $x ∈ X_{\reg}$ and $(a,z) ∈ γ^{-1}(x)$, then the
    splitting
    $$
    T_x X ≅ T_{(a,z)} (A ⨯ Z) = T_a A ⊕ \bigoplus\nolimits_{i∈ I} E_{i,z}
    ⊕ \bigoplus\nolimits_{j∈ J}F_{j,z}
    $$
    corresponds to the decomposition of $T_x X$ into irreducible representations
    under the action of the restricted holonomy group $\Hol(X_{\reg},g_H)$ at
    $x$.
  \end{enumerate}
\end{bigprop}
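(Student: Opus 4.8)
The plan is to produce two \emph{a priori} different direct-sum decompositions of $\sT_Z$ — one coming from the De Rham/holonomy splitting of the Riemannian manifold $(Z_{\reg},g_Z)$, the other from the tangent-sheaf decomposition theorem of \cite{GKP16} — and then to use the Bochner principle, Theorem~\ref{thm:A}, as the bridge that forces the two to coincide. Starting from the cover of Theorem~\ref{thm:B}, I would first adjust it: the statement allows replacing $\gamma$ by a further quasi-étale cover, so I would replace $Z$ by a maximally quasi-étale cover of it in the sense of \cite{GKP13} and take the product with $A$. Canonicity of the singularities, linear triviality of $K_Z$, and the vanishing of $\wtilde{q}(Z)$ are all preserved under quasi-étale covers, and by the compatibility of the construction of \cite{MR2505296} with such covers (Section~\ref{univpropegz}) the pulled-back metric on the $Z$-factor is again the singular Ricci-flat metric and the restricted holonomy is unchanged; hence properties \ref{il:B1}--\ref{il:B3} persist, and in particular $\Hol(Z_{\reg},g_Z)$ stays connected. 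After this reduction \cite{GKP16} applies to $Z$ directly and yields $\sT_Z = \sO_Z^{\oplus q}\oplus\bigoplus_{i\in I}\sE_i\oplus\bigoplus_{j\in J}\sF_j$, a decomposition into a flat part and strongly stable foliations, the $\sE_i$ of Calabi--Yau type (rank $\ge 3$) and the $\sF_j$ of irreducible-holomorphic-symplectic type (even rank); since $q = q(Z)\le\wtilde{q}(Z)=0$, the flat summand is absent.

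On the holonomy side, connectedness of $\Hol(Z_{\reg},g_Z)$ allows the tangent bundle to be split as $TZ_{\reg}=\bigoplus_k W_k$ into parallel holomorphic subbundles carrying pairwise non-isomorphic irreducible holonomy representations. There is no flat summand: a flat one would produce a nonzero parallel, hence holomorphic, $1$-form on $Z_{\reg}$, contradicting $h^0(Z_{\reg},\Omega^1_{Z_{\reg}})=q(Z)=0$. The Berger--Simons classification for Ricci-flat Kähler metrics then identifies the holonomy of each $W_k$ with $\SU(n_k)$ for $n_k\ge 3$ or with $\Sp(m_k)$ for $2m_k\ge 2$ (absorbing $\SU(2)=\Sp(1)$ into the symplectic case). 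Each $W_k$ is a parallel — hence involutive — distribution whose determinant is trivial (its holonomy is connected and lies in $\SL$), so it extends to a saturated reflexive foliation $\sW_k\subseteq\sT_Z$ with $\det\sW_k=\sO_Z$, and $\sT_Z=\bigoplus_k\sW_k$. Moreover $\End(\sW_k)=\bC$ by Schur's lemma together with Theorem~\ref{thm:A}, so each $\sW_k$ is indecomposable.

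The key step is to match the two decompositions. Restricting to $Z_{\reg}$, each summand $\sE_i$ (resp.\ $\sF_j$) is the image of a holomorphic idempotent endomorphism of $TZ_{\reg}$; this idempotent is a holomorphic tensor, so Theorem~\ref{thm:A}, applied to $Z$ with its singular Ricci-flat metric $\omega_Z$, forces it to be parallel. Hence $\sE_i|_{Z_{\reg}}$ is a parallel subbundle, therefore a direct sum of some of the $W_k$, and since $\sE_i$ is stable, hence indecomposable with $\End(\sE_i)=\bC$, this sum has a single term; so $\sE_i=\sW_{k(i)}$ and likewise $\sF_j=\sW_{k(j)}$ as reflexive subsheaves of $\sT_Z$. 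Comparing the two decompositions into indecomposables — uniqueness holds by Krull--Schmidt, the relevant endomorphism rings being local — one gets $\{\sW_k\}=\{\sE_i\}_{i\in I}\cup\{\sF_j\}_{j\in J}$. To match the types I would invoke Bochner once more: an IHS-type summand carries a holomorphic symplectic form, which is then parallel and confines the holonomy to $\Sp$, so the $\sF_j$ are precisely the $\Sp(m_j)$-pieces ($2m_j\ge 2$) and the $\sE_i$ the $\SU(n_i)$-pieces ($n_i\ge 3$, since Calabi--Yau type excludes ranks $\le 2$); strong stability of the $\sE_i$ and $\sF_j$ is inherited from \cite{GKP16}. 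This would establish \ref{il:new_i1} and \ref{il:new_i2}. For \ref{il:new_i3}, over the étale locus of $\gamma$ one has $\gamma^*\omega_H=\pr_1^*\omega_A+\pr_2^*\omega_Z$, so by the product structure together with the invariance of the restricted holonomy under étale covers, the holonomy representation of $g_H$ on $T_xX$ is the trivial representation on $T_aA$ together with the $\SU(n_i)$- and $\Sp(m_j)$-representations on the $E_{i,z}$ and $F_{j,z}$, which is exactly the decomposition into the flat part and the irreducible summands.

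The main obstacle is the matching step, specifically the assertion that \emph{every} reflexive direct summand of $\sT_Z$ restricts to a parallel subbundle on $Z_{\reg}$: this is where the full strength of Theorem~\ref{thm:A} — the Bochner principle for all holomorphic tensors, not merely for subbundles of tensor bundles as in Theorem~\ref{generalized:holonomy} — is essential, and it is what allows stability, hence indecomposability, of the $\sE_i$ and $\sF_j$ to pin each of them down to a single holonomy piece. A secondary technical point deserving care is checking that properties \ref{il:B1}--\ref{il:B3}, and in particular connectedness of the holonomy, really do survive the passage to the maximally quasi-étale cover, and that the flat/abelian summand of the \cite{GKP16} decomposition genuinely vanishes by virtue of $\wtilde{q}(Z)=0$.
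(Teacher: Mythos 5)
Your proof is essentially correct, but it takes a genuinely different—and logically heavier—route than the paper's. The paper's own argument (which lives in Part~\ref{part:II} and is quite short) never invokes Theorem~\ref{thm:A}: instead, it observes that the \emph{canonical decomposition} of $\sT_Z$ induced by holonomy already consists of parallel holomorphic subbundles by construction, and then matches these summands against the Guenancia/GKP16 decompositions using only Proposition~\ref{prop:holbun2x}.\ref{il:Bx}. The matching step there is the elementary ``stable-to-semistable projection'' trick (a nonzero homomorphism between slope-zero stable and semistable sheaves is injective, and surjective by the $G$-equivariance of $\pr_i$), which requires nothing beyond Guenancia's polystability result \cite[Thm.~A]{Guenancia} and parallel transport. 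Once Section~\ref{ssect:comp} identifies the canonical decomposition with the GKP16 one on a suitable further cover, strong stability, trivial determinant, and integrability come free, and Corollary~\ref{cor:flatDecomp} plus Proposition~\ref{prop:factors} finish the job.

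Your route is valid because Theorem~\ref{thm:A} is proved (in Section~\ref{ssec:pothf}) using only Theorem~\ref{generalized:holonomy} and Theorem~\ref{thm:holonomyCover}, neither of which depends on the proposition under discussion—so there is no circularity. But your claim that ``the full strength of Theorem~\ref{thm:A} is essential'' for the matching step is an overstatement. Item~\ref{il:l2} of Theorem~\ref{generalized:holonomy} (Bochner for \emph{bundles}, not tensors) already gives the one-to-one correspondence between direct summands of $\sT_Z$ and $G$-invariant subspaces of $V$, which is all you use in the idempotent argument; and the paper shows that one does not even need that, only the parallel structure that Guenancia's decomposition carries \emph{a priori}. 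The paper's approach has the advantage of keeping the proposition inside Part~\ref{part:II} where it logically belongs (it is an input to, not an output of, the Bochner theory of Part~\ref{part:III}). Your Krull--Schmidt matching is a clean way to finish once you have that both decompositions consist of parallel subbundles; it is morally the same as the paper's projection argument.

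One secondary point: you pass to a \emph{maximally} quasi-étale cover of $Z$ and then assert that \cite[Thm.~1.3]{GKP16} applies to $Z$ ``directly.'' That particular claim is not obviously true without inspecting the proof of \emph{loc.~cit.}; the paper sidesteps it by instead replacing $Z$ with the specific quasi-étale cover coming from \cite[Thm.~1.3]{GKP16}, on which (via Section~\ref{ssect:comp}) the canonical and GKP16 decompositions are known to agree. For your Krull--Schmidt argument this distinction is immaterial, since you only need \emph{some} further cover of $Z$ on which the GKP16 decomposition holds, but the phrasing should match the source you are citing.
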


Proposition~\ref{prop:new_intro_prop} is a significant refinement of the
decompositions obtained in \cite{GKP16} and \cite{Guenancia} and, as already
mentioned above, is one of the ingredients in Höring-Peternell's proof for the
singular analogue of the Decomposition Theorem in any dimension, see
\cite{HoeringPeternellbbdecomp}.  The irreducible pieces appearing in their
result are the ones described in \cite[Sect.~8.B]{GKP16}.  We will discuss these
in the next paragraph.

\subsection{Irreducible pieces of the decomposition}
\approvals{Daniel & yes \\ Henri & yes \\ Stefan & yes}

Smooth Calabi-Yau manifolds and irreducible holomorphic symplectic manifolds are
defined by two conditions: one is algebraic, expressed in terms of the algebra
of holomorphic forms, and the other one is topological, namely simple
connectedness.  In particular, their holonomy group is connected and their
algebra of holomorphic forms cannot be made any larger by taking finite étale
covers.  In the singular setting, \cite[Def.~8.16]{GKP16} proposed the following
purely algebro-geometric definition.

\begin{defn}[CY and IHS]\label{def:CY_and_IHS}
  Let $X$ be a normal projective variety with $𝒪_X ≅ ω_X$ of dimension at
  least two, having at worst canonical singularities.
  \begin{enumerate}
  \item We call $X$ \emph{Calabi-Yau (CY)} if
    $H⁰ \bigl(Y, Ω_{Y}^{[p]} \bigr) = \{0\}$ for all numbers $0 < p< \dim X$ and
    all finite, quasi-étale covers $Y → X$.
  \item We call $X$ \emph{irreducible holomorphic symplectic (IHS)} if there
    exists a holomorphic symplectic two-form $σ ∈ H⁰ \bigl(X, Ω_X^{[2]} \bigr)$
    such that for all finite, quasi-étale covers $γ: Y → X$, in particular for
    $X$ itself, the exterior algebra of global reflexive forms is generated by
    $γ^{[*]}σ$.
  \end{enumerate}
\end{defn}

The combination of Theorem~\ref{thm:A} and Theorem~\ref{thm:B} above answers the
natural question posed in \cite{GKP16} concerning the characterisation of these
two classes of varieties in terms of the strong stability of their tangent
sheaf.

\begin{bigthm}[Strongly stable varieties, Corollary~\ref{cor:dichotomy_CY_IHS}]\label{thm:C}
  Let $X$ be a projective klt variety of dimension at least two, with
  numerically trivial canonical divisor.  Assume that $𝒯_X$ is strongly stable
  in the sense of \cite[Def.~7.2]{GKP16}.  Then, there exists a quasi-étale
  cover $γ: Y → X$ such that $Y$ is Calabi-Yau or irreducible holomorphic
  symplectic.
\end{bigthm}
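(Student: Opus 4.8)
The plan is to deduce Theorem~\ref{thm:C} from the holonomy cover of Theorem~\ref{thm:B} together with the tangent-sheaf decomposition of Proposition~\ref{prop:new_intro_prop} and the Berger--Simons classification. Concretely, apply Proposition~\ref{prop:new_intro_prop} to $X$ to obtain a quasi-étale cover $\gamma : A ⨯ Z → X$ with $A$ Abelian, $\wtilde q(Z) = 0$, and a direct sum decomposition $𝒯_Z = \bigoplus_{i∈ I} ℰ_i ⊕ \bigoplus_{j∈ J} ℱ_j$ into strongly stable foliations with trivial determinant. The first step is to show that strong stability of $𝒯_X$ forces this decomposition to be as short as possible: a strongly stable sheaf stays stable after any quasi-étale pullback, so $𝒯_{A⨯Z}$ — hence, via the product structure, each of $𝒯_A$ and $𝒯_Z$ — must remain stable (or at least indecomposable after pullback). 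Since $A$ Abelian has $𝒯_A \cong 𝒪_A^{\dim A}$, which is destabilised by any line subbundle as soon as $\dim A ≥ 1$, strong stability of $𝒯_X$ (which passes to $𝒯_{A⨯Z}$) is incompatible with $\dim A > 0$ unless $\dim A = \dim X$; and if $\dim A = \dim X$ then $X$ itself is quasi-étale covered by an Abelian variety, contradicting $\dim X ≥ 2$ together with stability — so in fact $A$ is a point and $X = Z$ up to the cover. By the same reasoning, the decomposition $𝒯_Z = \bigoplus ℰ_i ⊕ \bigoplus ℱ_j$ can have only one summand: two or more summands would each be a proper foliation with trivial determinant, hence of slope $0 = \mu(𝒯_Z)$, destabilising $𝒯_Z$ and contradicting the stability of $𝒯_Z$ that descends from strong stability of $𝒯_X$.

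So after replacing $X$ by a quasi-étale cover we are in the situation $𝒯_X = ℰ$ with a single summand, whose holonomy on $X_{\reg}$ is either $\SU(n)$ with $n = \dim X ≥ 3$, or $\Sp(m)$ with $2m = \dim X ≥ 2$, by part~\ref{il:new_i2} of Proposition~\ref{prop:new_intro_prop}. Now invoke the Bochner principle, Theorem~\ref{thm:A}: every holomorphic tensor on $X_{\reg}$, in particular every reflexive holomorphic $p$-form on $X$, is parallel with respect to $g_H$, hence is a holonomy-invariant element of the corresponding tensor representation. In the $\SU(n)$ case, the space of $\SU(n)$-invariants in $\bigwedge^p (\bC^n)^*$ is zero for $0 < p < n$ and one-dimensional (spanned by the holomorphic volume form) for $p = n$; since this holds after passing to any further finite quasi-étale cover $Y → X$ — because the restricted holonomy is unchanged and, after a further cover making the full holonomy connected (which exists by Theorem~\ref{thm:B}, here with $A$ trivial), the full holonomy equals $\SU(n)$ as well — we conclude $H⁰\bigl(Y, Ω_Y^{[p]}\bigr) = 0$ for all $0 < p < \dim Y$ and all finite quasi-étale $Y → X$, i.e. $X$ is Calabi-Yau in the sense of Definition~\ref{def:CY_and_IHS}. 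In the $\Sp(m)$ case, the algebra of $\Sp(m)$-invariants in $\bigoplus_p \bigwedge^p (\bC^{2m})^*$ is the polynomial algebra generated by the standard symplectic form $\sigma ∈ \bigwedge^2 (\bC^{2m})^*$; by Bochner this $\sigma$ is a parallel, hence holomorphic, reflexive two-form on $X$, and the same invariant-theory computation applied on every finite quasi-étale cover (again using that the full holonomy is $\Sp(m)$ after a suitable cover) shows the exterior algebra of reflexive forms is generated by the pullback of $\sigma$, so $X$ is irreducible holomorphic symplectic.

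The main obstacle — and the point where one must be careful rather than merely formal — is the passage from the \emph{restricted} holonomy group, which is what Proposition~\ref{prop:new_intro_prop} and the Bochner principle directly see, to the \emph{full} holonomy group, which is what controls invariant forms on \emph{all} finite quasi-étale covers simultaneously (as required by Definition~\ref{def:CY_and_IHS}). The key is that Theorem~\ref{thm:B} provides, after a finite quasi-étale cover, a model on which the full holonomy is already connected, hence equals $\SU(n)$ or $\Sp(m)$ exactly; any further finite quasi-étale cover can then only shrink the holonomy inside this connected group, but Theorem~\ref{thm:A} applied on that further cover shows its holonomy still acts with no new invariant tensors, forcing it to remain the full $\SU(n)$ or $\Sp(m)$ — so the invariant-form count is uniform across the whole tower of covers. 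A secondary technical point is checking that ``strongly stable'' genuinely descends to all the relevant pulled-back tangent sheaves in the way used above; this is the content of \cite[Def.~7.2 and the surrounding discussion]{GKP16}, which defines strong stability precisely so as to be stable under quasi-étale covers, so one only has to quote it. Finally, one should record that the dimension bounds $n_i ≥ 3$ and $2m_j ≥ 2$ in Proposition~\ref{prop:new_intro_prop}\ref{il:new_i1} are exactly what rules out the degenerate $\SU(2) = \Sp(1)$ overlap causing an ambiguity, so the resulting $Y$ is of exactly one of the two types.
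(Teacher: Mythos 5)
Your proposal is correct and follows the same overall chain as the paper: strong stability forces the holonomy-cover decomposition of Proposition~\ref{prop:new_intro_prop} to collapse to a single factor (with $A$ a point) whose now connected holonomy is $\SU(n)$ or $\Sp(m)$, and then the Bochner principle of Theorem~\ref{thm:A} together with classical invariant theory pins down the algebra of reflexive forms on all further quasi-étale covers. Your routing through Proposition~\ref{prop:new_intro_prop} is materially equivalent to the paper's route through Corollary~\ref{cor:sirr2}, Proposition~\ref{prop:factors}, and Theorems~\ref{thm:dichotomy-A}--\ref{thm:dichotomy-B}.

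One spot to tighten, however: in the final paragraph you argue that on a further quasi-étale cover $Y' \to Y$ the holonomy ``remains'' $\SU(n)$ or $\Sp\!\left(\frac n2\right)$ because ``Theorem~\ref{thm:A} applied on that further cover shows its holonomy still acts with no new invariant tensors, forcing it to remain the full $\SU(n)$ or $\Sp(m)$.'' As written that is circular --- you cannot deduce that $G_{Y'}$ is large from the smallness of its invariants, because the latter is precisely what you are trying to establish. The correct (and simpler) step is already available as Corollary~\ref{cor:bhqec2}: for any quasi-étale $Y' \to Y$ one has $G_{Y'}° = G_Y°$ and $G_{Y'} \subseteq G_Y$. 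Since $Y$ is a holonomy cover, $G_Y = G_Y°$, so $G_Y° = G_{Y'}° \subseteq G_{Y'} \subseteq G_Y = G_Y°$ forces $G_{Y'} = G_Y$. With this in hand there is no need for the extra passage to a holonomy cover of $Y'$, and the invariant-theory count, combined with injectivity of reflexive pullback and with Lemma~\ref{lem:symplectic} to confirm that the parallel two-form in the $\Sp\!\left(\frac n2\right)$ case is genuinely holomorphic symplectic, applies uniformly on every $Y'$.
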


In conclusion, a variety with klt singularities and numerically trivial
canonical class admits a quasi-étale cover which is a CY or IHS variety if and
only if its tangent bundle is strongly stable.  Example~\ref{ex:no2form}
discusses a variety with canonical singularities, trivial canonical bundle and
with no reflexive forms of intermediate degree that admits a quasi-étale cover
which is an IHS variety.

In light of these results, one would like to propose an alternative definition
of Calabi-Yau and irreducible holomorphic symplectic varieties that does not
involve looking at quasi-étale covers.  In other words, one would like to
replace the assumption on simple connectedness.  We want to emphasise that there
does not seem to be an easy topological condition that would play its role in
the singular setting.
\begin{itemize}
\item The assumption ``$π_1(X)=\{1\}$'' is not the right one.
  Example~\ref{ex:kummer} discusses a singular Kummer surface that is is simply
  connected and has the same algebra of reflexive forms as a smooth $K3$.
  However, the example is a quotient of an Abelian variety.
\item The assumption ``$π_1(X_{\reg})=\{1\}$`` might seem like a good condition,
  but even in the IHS case, we do not know that $π_1(X_{\reg})$ is actually
  finite.  Even worse, in the CY case $π_1(X_{\reg})$ could \emph{a priori} be
  infinite with infinite completion for all we know.
\end{itemize}

However, due to the finiteness statement for $G/G°$ contained in
Theorem~\ref{thm:B}, a good condition to impose in place of simple connectedness
is \emph{holonomy connectedness}.  This leads to the following characterisation,
proven in Section~\ref{prop:page}.
  
\begin{bigprop}[Characterisation of CY and IHS by holonomy, Proposition~\ref{prop:intrinsicn}]\label{prop:intrinsic}
  Setting as in Theorem~\ref{thm:A}.  Then, the following conditions are
  equivalent.
  \begin{enumerate}
  \item\label{il:intr1} $X$ is a Calabi-Yau variety.
  \item\label{il:intr2} $\Hol(X_{\reg},g_H)$ is connected and
    $H⁰ \bigl( X, Ω_{ X}^{[p]} \bigr) = \{0\}$ for all $0 < p < n$.
  \item\label{il:intr3} $\Hol(X_{\reg},g_H)$ is isomorphic to $\SU(n)$.
  \end{enumerate}
  Analogously, the following conditions are equivalent.
  \begin{enumerate}
  \item\label{il:intr4} $X$ is an irreducible holomorphic symplectic variety.
  \item\label{il:intr5} $\Hol(X_{\reg},g_H)$ is connected, and there exists a
    holomorphic symplectic two-form $σ ∈ H⁰ \bigl(X, Ω_X^{[2]} \bigr)$ such that
    $⊕_{p=0}^n H⁰(X,Ω_X^{[p]})=ℂ[σ]$.
  \item\label{il:intr6} $\Hol(X_{\reg},g_H)$ is isomorphic to
    $\Sp\!\left(\frac n2\right)$.
  \end{enumerate}
\end{bigprop}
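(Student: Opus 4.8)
The plan is to establish both chains by the cycles $\ref{il:intr1}\Rightarrow\ref{il:intr3}\Rightarrow\ref{il:intr2}\Rightarrow\ref{il:intr1}$ and $\ref{il:intr4}\Rightarrow\ref{il:intr6}\Rightarrow\ref{il:intr5}\Rightarrow\ref{il:intr4}$, relying on three inputs. First, the Bochner principle of Theorem~\ref{thm:A}: combined with the reflexivity of $Ω_X^{[p]}$ and $\codim(X\setminus X_{\reg})\ge 2$, it identifies $H^0(X,Ω_X^{[p]})$ with the space of $\Hol(X_{\reg},g_H)$-invariants in $\wedge^p(T^{1,0}_xX)^*$, and similarly on every quasi-étale cover. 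Second, the construction of \cite{MR2505296}, and hence the restricted holonomy, is functorial under quasi-étale covers (Section~\ref{univpropegz}). Third, since $ω_H$ is Ricci-flat we have $\Hol(X_{\reg},g_H)^\circ\subseteq\SU(n)$, and by Proposition~\ref{prop:factors} this connected group is a product of factors $\SU(n_i)$, $\Sp(m_j)$ and a trivial summand acting in the standard way; by the Künneth formula and the classical computation of $\SU$- and $\Sp$-invariants in the exterior algebra of the standard representation, a reducible such product, a trivial summand, or — when $n\ge 3$ — any factor other than a single $\SU(n)$ or a single $\Sp(n/2)$ forces an invariant holomorphic form of some intermediate degree $0<p<n$.

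The key step, which I expect to be the main obstacle, is producing \emph{genuine} connectedness of $\Hol(X_{\reg},g_H)$ in $\ref{il:intr1}\Rightarrow\ref{il:intr3}$ and $\ref{il:intr4}\Rightarrow\ref{il:intr6}$; Bochner on $X$ controls only the invariants of the full group, so this cannot be read off $X$ itself. Assuming $X$ to be CY (resp.\ IHS), the vanishing of $H^0(Y,Ω_Y^{[1]})$ on all quasi-étale covers $Y$ forces $\wtilde q(X)=0$, so the Abelian factor in Theorem~\ref{thm:B} degenerates and we obtain a \emph{finite} quasi-étale cover $γ\colon Z\to X$ with $\sO_Z\cong ω_Z$, canonical singularities, and \emph{connected} holonomy $\Hol(Z_{\reg},g_Z)$, where $g_Z$ is the pullback of $g_H$, equivalently the Ricci-flat metric attached to the polarisation $γ^*H$. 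Since $Z$ inherits the CY (resp.\ IHS) property from $X$, applying Bochner to $Z$ shows that the invariant subalgebra of $\wedge^\bullet(T^{1,0})^*$ under the \emph{connected} group $\Hol(Z_{\reg},g_Z)$ is $\bC$ in degrees $0$ and $n$ only (resp.\ the polynomial algebra $\bC[σ]$ on a symplectic form $σ$); by the third input above this identifies $\Hol(Z_{\reg},g_Z)$ with $\SU(n)$ (resp.\ $\Sp(n/2)$), hence $\Hol(X_{\reg},g_H)^\circ$ equals that group. As $\sO_X\cong ω_X$ (true for a CY, resp.\ IHS, variety), $\Hol(X_{\reg},g_H)$ preserves the nowhere-vanishing reflexive top form — resp.\ the reflexive symplectic two-form — and is therefore itself contained in $\SU(n)$ (resp.\ $\Sp(n/2)$); the chain $\Hol(X_{\reg},g_H)^\circ\subseteq\Hol(X_{\reg},g_H)\subseteq\SU(n)$ collapses to equalities, which is $\ref{il:intr3}$ (resp.\ $\ref{il:intr6}$).

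The remaining implications are comparatively soft. For $\ref{il:intr3}\Rightarrow\ref{il:intr1}$ and $\ref{il:intr6}\Rightarrow\ref{il:intr4}$: the $\SU(n)$-invariant (resp.\ $\Sp$-invariant) top form is parallel, hence a nowhere-zero holomorphic $n$-form on $X_{\reg}$ that extends by reflexivity to an isomorphism $\sO_X\cong ω_X$, so together with kltness the singularities are canonical and $X$ satisfies the standing hypotheses of Definition~\ref{def:CY_and_IHS}. For any quasi-étale $δ\colon Y\to X$ one has $\Hol(Y_{\reg},δ^*g_H)^\circ=\Hol(X_{\reg},g_H)^\circ$ by functoriality, while $\Hol(Y_{\reg},δ^*g_H)$ preserves the pulled-back top form (resp.\ symplectic form) and so lies in $\SU(n)$ (resp.\ $\Sp(n/2)$); hence it equals that group, and Bochner on $Y$ gives $H^0(Y,Ω_Y^{[p]})=0$ for $0<p<n$ (resp.\ $\bigoplus_pH^0(Y,Ω_Y^{[p]})=\bC[δ^{[*]}σ]$), which is exactly the definition of a CY (resp.\ IHS) variety. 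Finally, $\ref{il:intr3}\Rightarrow\ref{il:intr2}$ and $\ref{il:intr6}\Rightarrow\ref{il:intr5}$ are immediate from Bochner on $X$ and the invariant-theory computation, and $\ref{il:intr2}\Rightarrow\ref{il:intr3}$, $\ref{il:intr5}\Rightarrow\ref{il:intr6}$ follow because the assumed connectedness of $\Hol(X_{\reg},g_H)$ makes it coincide with its identity component, so Bochner on $X$ together with Proposition~\ref{prop:factors} and the invariant theory pin down this connected group exactly as in the second paragraph.
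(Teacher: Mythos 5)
Your proposal is correct and follows essentially the same route as the paper's proof: in both cases one combines the Bochner principle (identifying reflexive forms with holonomy invariants), the holonomy cover of Theorem~\ref{thm:B} (to reduce to connected holonomy on a quasi-étale cover that inherits the CY/IHS property), the classification of restricted holonomy (Proposition~\ref{prop:factors}), and classical invariant theory for $\SU(n_i)$ and $\Sp(m_j)$ in the exterior algebra of the standard representation. The only cosmetic differences are organisational — the paper first notes once that every condition forces $K_X\sim 0$, hence $G\subseteq\SU(n)$ via the parallel volume form, and then argues with the factors $G_i^\circ$ directly on the holonomy cover, whereas you extract $\wtilde q(X)=0$ explicitly and add the redundant implications $\ref{il:intr3}\Rightarrow\ref{il:intr1}$ and $\ref{il:intr6}\Rightarrow\ref{il:intr4}$ — but the mathematical content is the same.
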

  
We refer to Section~\ref{sec:examples} where lots of examples are given that
compare our notions of CY and IHS varieties to some other ones existing in the
literature and that emphasise the subtlety of the dichotomy question once
singularities are allowed.

\subsection{Stability and irreducible (restricted) holonomy}
\approvals{Daniel & yes \\ Henri & yes \\ Stefan & yes}

Theorem~\ref{thm:B} and the methods of its proof also establish correspondences
between the algebro-geometric notion of stability and irreducibility of the
differential-geometric holonomy representation.

\begin{bigprop}[Stability and irreducibility, Corollary~\ref{cor:sirr0} and \ref{cor:sirr2}]
  Setting as in Theorem~\ref{thm:A}.  Let $x∈ X_{\reg}$, let $G$ (resp.\ $G°$)
  be the holonomy group (resp.\ restricted holonomy group) of $(X_{\reg}, g_H)$
  at $x$, and let $V:=T_{x}X$.  Then, $𝒯_X$ is stable with respect to any ample
  polarisation (resp.\ strongly stable) if and only if the representation $G↺ V$
  (resp.\ $G°↺~V$) is irreducible.
\end{bigprop}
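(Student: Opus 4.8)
The plan is to break the statement into three linked equivalences: (strong) stability of $\sT_X$ is equivalent to (strong) indecomposability of $\sT_X$, which --- via the Bochner principle, Theorem~\ref{thm:A} --- is equivalent to irreducibility of the (restricted) holonomy representation on $V=T_xX$. The one genuinely nonformal ingredient is that $\sT_X$ is $H$-polystable for every ample $H$, the Hermitian--Einstein metric on $\sT_X$ over $X_{\reg}$ being the one induced by $ω_H$; this is the singular Kobayashi--Hitchin correspondence in the present setting, cf.\ \cite{GKP16} and the references therein. Write $G=\Hol(X_{\reg},g_H)$ and $G°$ for its identity component; since $g_H$ is parallel, $G$ acts unitarily on $V$, so every $G$- or $G°$-invariant subspace of $V$ admits an invariant orthogonal complement, and for these representations ``indecomposable'' and ``irreducible'' coincide. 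The key step is a holonomy dictionary: direct sum decompositions of the $\sO_X$-module $\sT_X$ correspond bijectively to $G$-invariant decompositions of $V$. Indeed, given $\sT_X=\sF\oplus\sF'$, the associated projector is a section of $\sEnd(\sT_X)$, hence parallel on $X_{\reg}$ by Theorem~\ref{thm:A}; being holomorphic and parallel it splits $TX_{\reg}$ into parallel holomorphic subbundles, and evaluation at $x$ gives a $G$-invariant splitting of $V$. Conversely, a $G$-invariant subspace $W\subseteq V$ transports, path-independently, to a parallel holomorphic subbundle of $TX_{\reg}$; the same applies to $W^\perp$, so the orthogonal projector onto it is a holomorphic parallel idempotent, which by reflexivity extends across the codimension-$\geq 2$ set $X\setminus X_{\reg}$ to an idempotent of $\sT_X$, and hence to a direct sum decomposition. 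In particular $\sT_X$ is indecomposable if and only if $G$ acts irreducibly on $V$.

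Since $\sT_X$ is $H$-polystable of slope zero, it is $H$-stable exactly when it is indecomposable, so the dictionary yields ``$\sT_X$ is $H$-stable $\iff$ $G$ acts irreducibly on $V$''. To promote ``$H$-stable'' to ``stable with respect to any ample polarisation'', I would use that a stable reflexive sheaf is simple, while a simple polystable sheaf is stable; as $\sT_X$ is polystable with respect to \emph{every} ample polarisation, $H$-stability for one $H$ forces it for all. (In passing this reproves Proposition~\ref{prop:independence} for the full holonomy.)

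For the strongly stable case, recall that by \cite[Def.~7.2]{GKP16} the sheaf $\sT_X$ is strongly stable if and only if $γ^{[*]}\sT_X\cong\sT_Y$ is stable for every finite quasi-étale cover $γ\colon Y\to X$. Each such $Y$ again satisfies the hypotheses of Theorem~\ref{thm:A}, with singular Ricci-flat metric $γ^*ω_H$ (Section~\ref{univpropegz}), so the previous paragraph applies on $Y$ and identifies stability of $\sT_Y$ with irreducibility of $\Hol(Y_{\reg},g_Y)$ acting on $T_yY$. Since the restricted holonomy is a quasi-étale invariant (Section~\ref{univpropegz}), $dγ$ identifies $\Hol(Y_{\reg},g_Y)°$ with $G°$. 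Hence, if $G°$ acts irreducibly on $V$, so does the larger group $\Hol(Y_{\reg},g_Y)$ on $T_yY$ for every $Y$, and $\sT_X$ is strongly stable. For the converse I would apply strong stability to the holonomy cover $γ\colon A⨯Z\to X$ of Theorem~\ref{thm:B}: using the decomposition $\sT_{A⨯Z}=\sO_{A⨯Z}^{\oplus\dim A}\oplus\bigoplus_i\pr_2^*\sE_i\oplus\bigoplus_j\pr_2^*\sF_j$ from Proposition~\ref{prop:new_intro_prop}, stability of $\sT_{A⨯Z}$ forces $\dim A=0$ and exactly one summand, necessarily of rank $\dim X$; by property~\ref{il:new_i3} of Proposition~\ref{prop:new_intro_prop} this says precisely that $V$ is $G°$-irreducible.

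Everything past the polystability input is bookkeeping: the Bochner principle turns holomorphic idempotents into parallel ones, ``polystable $+$ simple $\Rightarrow$ stable'' disposes of the dependence on the polarisation, and Theorem~\ref{thm:B} together with Proposition~\ref{prop:new_intro_prop} handles the passage from $G$ to $G°$ by covers. The real difficulty is hidden in the dictionary: relating a numerically slope-zero saturated subsheaf of $\sT_X$ to an honest \emph{parallel} subbundle of $(TX_{\reg},g_H)$ is precisely the point at which the singular Kähler--Einstein geometry --- and the incompleteness phenomena emphasised in the introduction --- have to be controlled, and this is where Theorem~\ref{thm:A} and the analysis it rests on do the essential work.
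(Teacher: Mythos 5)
Your proof is correct, but the architecture differs from the paper's in a way worth noting. You use the global Bochner principle (Theorem~\ref{thm:A}, equivalently Theorem~\ref{generalized:holonomy}\ref{il:l2}) to set up a dictionary between \emph{arbitrary} holomorphic direct sum decompositions of $\sT_X$ and $G$-invariant splittings of $V$, and then invoke ``polystable $+$ simple $\Rightarrow$ stable'' to equate $H$-stability for one $H$ with stability for all. The paper instead runs the dictionary in only one direction: it parallel-transports the $G$-irreducible summands of $V$ to get the \emph{canonical} decomposition $\sT_X = \sW_0 \oplus \cdots \oplus \sW_k$ (Construction~\ref{cons:decpW}), then shows each $\sW_i$, $i>0$, is stable for every ample $H$ by matching them (via Item~\ref{il:Ax} of Proposition~\ref{prop:holbun2x}) with the stable summands of Guenancia's polystability decomposition. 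Corollary~\ref{cor:sirr0} drops out immediately. Both arguments work, but the paper's route is deliberately chosen because Corollaries~\ref{cor:sirr0} and \ref{cor:sirr2} are established in Part~\ref{part:II}, \emph{before} Theorem~\ref{thm:A} and the holonomy cover Theorem~\ref{thm:B}; indeed the proof of Theorem~\ref{thm:B} itself relies on Proposition~\ref{prop:holbun2x}, so the order is not a stylistic choice but part of the scaffolding. Your argument is not actually circular — Theorem~\ref{thm:A} depends on Proposition~\ref{prop:holbun2x} but not on the statement of Corollary~\ref{cor:sirr0} — but it reverses the logical flow. Similarly, for the converse in the strongly stable case, you reach for the full holonomy cover (Theorem~\ref{thm:B}, Proposition~\ref{prop:new_intro_prop}), whereas the paper uses only the much lighter \emph{weak} holonomy cover (Proposition~\ref{prop:weakHolonomyCover}) plus the fact from \cite[Prop.~5.7]{GKP16} that stability of $\sT_Y$ on an index-one cover is polarisation-independent; the weak cover already makes the $G_Y$- and $G_Y^\circ$-canonical decompositions of $V$ agree, which is all that is needed. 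Both of your directions are sound — the forward one (irreducibility of $G^\circ\curvearrowright V$ propagates to any $G_Y \supseteq G_Y^\circ = G^\circ$) is the same as the paper's — but the converse relies on heavier machinery than is required.
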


The second result provides an algebro-geometric criterion for the vanishing of
the augmented irregularity of a variety $X$ in terms of properties of
holomorphic tensors on $X$ itself.  This is achieved by considering invariants
of the restricted holonomy representation.

\begin{bigthm}[Augmented regularity and symmetric differentials, Theorem~\ref{thm:augm_irreg_rev}]\label{thm:augm_intro}
  Setting as in Theorem~\ref{thm:A}.  Let $x∈ X_{\reg}$, let $G°$ be the
  restricted holonomy group of $(X_{\reg}, g_H)$ at $x$, and let $V:=T_{x}X$.
  Then, the following are equivalent.
  \begin{enumerate}
  \item We have $H⁰\bigl(X,\, \Sym^{[m]} Ω¹_X \bigr)=\{0\}$ for all $m∈ ℕ^+$.
  \item The augmented irregularity of $X$ vanishes, $\wtilde{q}(X) = 0$.
  \item The set of $G°$-invariant vectors in $V$ is trivial, $V^{G°} = \{0\}$.
  \end{enumerate}
\end{bigthm}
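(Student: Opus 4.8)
The plan is to prove the implications $(1)⇒(2)⇒(3)⇒(1)$ together with $(2) \Leftrightarrow (3)$, drawing on three facts already at our disposal: the holonomy cover of Theorem~\ref{thm:B}, the tangent-sheaf decomposition of Proposition~\ref{prop:new_intro_prop}, and the Bochner principle of Theorem~\ref{thm:A}. The only additional ingredients are the elementary representation theory of $\SU(n)$ and $\Sp(m)$ and the fact---established in Section~\ref{univpropegz}---that the restricted holonomy is unaffected by quasi-étale covers. Throughout write $G := \Hol(X_{\reg},g_H)$, so that $G°$ is its identity component, both realised as subgroups of the unitary group of $V = T_xX$.

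\emph{Step 1: $(2) \Leftrightarrow (3)$.} First I would apply Theorem~\ref{thm:B} and Proposition~\ref{prop:new_intro_prop} to obtain a quasi-étale cover $γ : A ⨯ Z → X$ together with the splitting $\sT_Z = ⊕_{i} ℰ_i ⊕ ⊕_j ℱ_j$. Since $γ$ is étale in codimension one and $γ^*ω_H = \pr_1^*ω_A + \pr_2^*ω_Z$, the restricted holonomy of $(X_{\reg},g_H)$ at $x$ agrees with the restricted holonomy of $A ⨯ Z_{\reg}$ with its product metric at a preimage $(a,z)$ of $x$; by the de Rham decomposition the latter equals $\{e\} ⨯ \Hol(Z_{\reg},g_Z)$, acting on $T_aA ⊕ T_zZ$. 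By Proposition~\ref{prop:new_intro_prop}\ref{il:new_i3} this is exactly the decomposition of $V$ into irreducible $G°$-subrepresentations: the flat summand is $T_aA$, and the remaining ones are the $E_{i,z}$ and $F_{j,z}$, on which $G°$ acts through $\SU(n_i)$ and $\Sp(m_j)$ in their standard representations. As the standard representation of $\SU(n)$ on $\bC^n$ and that of $\Sp(m)$ on $\bC^{2m}$ have no nonzero invariant vector, $V^{G°} = T_aA$. Hence $V^{G°} = \{0\} \Leftrightarrow \dim A = 0 \Leftrightarrow \wtilde q(X) = 0$, the last equivalence by Theorem~\ref{thm:B}\ref{il:B1}.

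\emph{Step 2: $(3) ⇒ (1)$.} By normality of $X$ and reflexivity of $\Sym^{[m]} Ω¹_X$ one has $H⁰\bigl(X, \Sym^{[m]} Ω¹_X\bigr) = H⁰\bigl(X_{\reg}, \Sym^m Ω¹_{X_{\reg}}\bigr)$, and the Bochner principle of Theorem~\ref{thm:A} identifies this space, via parallel transport to $x$, with $(\Sym^m V^*)^{G}$. Because $G° ⊆ G$ it suffices to show $(\Sym^m V^*)^{G°} = \{0\}$ for every $m ≥ 1$. Under $(3)$, Step 1 gives $\dim A = 0$ and $V = ⊕_i E_{i,z} ⊕ ⊕_j F_{j,z}$ with $G°$ acting factor-wise through $\prod_i \SU(n_i) ⨯ \prod_j \Sp(m_j)$. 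Since $\Sym^m$ of a direct sum is the direct sum of the tensor products of the $\Sym^{a}$ of the summands with $\sum a = m$, a $G°$-invariant of $\Sym^m V^*$ is a sum of tensors of invariants of the individual factors; as $m ≥ 1$ at least one exponent is positive, and for $k ≥ 1$ the symmetric power $\Sym^k$ of the standard representation of $\SU(n)$ (respectively of $\Sp(m)$) is a nontrivial irreducible representation, hence has no nonzero invariant. Thus $(\Sym^m V^*)^{G°} = \{0\}$, and $(1)$ follows.

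\emph{Step 3: $(1) ⇒ (2)$.} I would argue contrapositively: if $\wtilde q(X) > 0$, then Step 1 gives $V^{G°} ≠ \{0\}$, and I construct a nonzero symmetric differential on $X$. The subspace $(V^*)^{G°}$ is nonzero and, $G°$ being normal in $G$, is $G$-invariant; pick $0 ≠ φ$ in it. Since $G/G°$ is finite by Theorem~\ref{thm:B}, choose coset representatives $g_1, \dots, g_N$ and set $Ψ := (g_1^*φ)\cdots(g_N^*φ) ∈ \Sym^N V^*$. Each $g_k^*φ$ again lies in $(V^*)^{G°}$ and is nonzero, so $Ψ ≠ 0$ in the integral domain $\Sym^\bullet V^*$; and for every $h ∈ G$ the tuple $(g_kh)_k$ is again a transversal of $G/G°$ and $(g_kh)^*φ$ equals $g_{σ(k)}^*φ$ for the corresponding permutation $σ$ (using that $g_{σ(k)}^*φ$ is $G°$-invariant), whence $h^*Ψ = Ψ$. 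So $Ψ$ is a nonzero $G$-invariant element of $\Sym^N V^*$, which by the Bochner principle corresponds to a nonzero section of $\Sym^{[N]} Ω¹_X$; hence $(1)$ fails. This closes the cycle.

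\emph{Main obstacle.} All of the substantial content is imported wholesale from Theorems~\ref{thm:A} and~\ref{thm:B} and Proposition~\ref{prop:new_intro_prop}. Within the present argument the delicate point is the identification $V^{G°} = T_aA$ in Step 1, which requires matching the de Rham splitting on the holonomy cover with the algebraic decomposition of $\sT_Z$ and keeping precise track of how the restricted --- and then the full --- holonomy group behaves under the quasi-étale map $γ$.
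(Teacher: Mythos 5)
Your proposal is correct and takes essentially the same route as the paper: the equivalence of (2) and (3) is the content of Corollary~\ref{cor:flatDecomp}\ref{il:FDC2} (which, like your invocation of Proposition~\ref{prop:new_intro_prop}, ultimately rests on the holonomy cover of Theorem~\ref{thm:holonomyCover}); the implication (3)$\Rightarrow$(1) is the paper's $\neg$\ref{il:hol}$\Rightarrow\neg$\ref{il:symm} via Lemma~\ref{lem:no_inv_polynomials} for the standard $\SU$ and $\Sp$ representations; and your Step~3 is exactly the paper's \ref{il:hol}$\Rightarrow$\ref{il:symm}, where the product-over-a-finite-group construction you carry out by hand is packaged as Lemma~\ref{lem:finite_inv_symm} applied to the finite image $\Gamma$ of $G$ in $\GL(V_0)$ (so the symmetric degree is $\#\Gamma$ rather than $[G:G^\circ]$, an immaterial difference). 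The one imprecision is your reference to the ``de Rham decomposition'' in Step~1, which strictly speaking requires completeness; what is actually used is the product formula for holonomy of a Riemannian product, as in Proposition~\ref{prop:torusCover:decomposition}.
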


\subsection{Fundamental groups}
\approvals{Daniel & yes \\ Henri & yes \\ Stefan & yes}

In the smooth case, Theorem~\ref{thm:bbd} shows that the fundamental group of
$X$ is virtually Abelian, that is, an extension of $ℤ^r$ by a finite group.  In
particular, if the augmented irregularity of $X$ vanishes, $\wtilde{q}(X)=0$,
then $π_1(X)$ is actually finite.  The proof of this fundamental result relies
on the Cheeger-Gromoll theorem, which says that once an Euclidean space of
maximal dimension is split off the universal cover of a complete Ricci-flat
manifold $X$, the remaining factor is compact.  As already mentioned above, the
proof of the Cheeger-Gromoll theorem uses completeness $X$ in a fundamental way,
so none of these methods apply to the non-complete Riemannian manifolds
$(X_{\reg}, g_H)$ considered here.

Fortunately, the algebraicity of $X$ opens the door to alternative techniques,
which allow us to prove the following finiteness result.

\begin{bigthm}[Finiteness of $π_1$, Theorem~\ref{thm:finite_fundamental}, \ref{thm:oddCY}, Corollary~\ref{cor:finite_fundamental}, \ref{cor:IHS_simplyconnected}, \ref{thm:oddCY2}.]\label{thm:D}
  Let $X$ be a projective klt variety with numerically trivial canonical
  divisor.
  \begin{enumerate}
  \item\label{pi1i} If $𝒯_X$ is strongly stable and if $\dim X$ is even, then
    $π_1(X)$ and $\what{π}_1(X_{\reg})$ are finite.  If $X$ is IHS or an
    even-dimensional CY, then $X$ is simply connected.
  \item\label{pi1ii} If $\wtilde{q}(X)=0$, then $π_1(X_{\reg})$ does not admit
    any finite-dimensional representation with infinite image (over any field).
    Moreover, for each $n ∈ ℕ$, the fundamental group $π_1(X_{\reg})$ admits
    only finitely many $n$-dimensional complex representations up to
    conjugation.
  \end{enumerate}
\end{bigthm}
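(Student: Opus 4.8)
The plan is to treat the two parts separately, in each case descending to the holonomy cover provided by Theorem~\ref{thm:B} and combining the Berger--Simons classification of the restricted holonomy with the Bochner principle of Theorem~\ref{thm:A}.

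\emph{Part~\ref{pi1i}.} The first step is a reduction to a Calabi--Yau or IHS cover. Strong stability of $𝒯_X$ is equivalent to irreducibility of the restricted holonomy representation $G°↺T_xX$ (Corollary~\ref{cor:sirr2}); as $\dim X≥2$ this representation is nontrivial, hence $(T_xX)^{G°}=\{0\}$, so $\wtilde q(X)=0$ by Theorem~\ref{thm:augm_intro}. Thus the Abelian factor in Theorem~\ref{thm:B} is a point, and we obtain a finite quasi-étale cover $γ\colon Z→X$ with $Z$ of canonical singularities, $K_Z$ trivial, $\wtilde q(Z)=0$, and $ω_Z$ a singular Ricci-flat metric of \emph{connected} holonomy. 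Since the restricted holonomy is a quasi-étale invariant and lies in $\SU$, irreducibility pins it down, via Berger--Simons, to $\SU(\dim Z)$ or $\Sp\bigl(\tfrac{1}{2}\dim Z\bigr)$; by the one-summand case of Proposition~\ref{prop:new_intro_prop} (equivalently Theorem~\ref{thm:C}) the variety $Z$ is then Calabi--Yau or irreducible holomorphic symplectic, of even dimension $\dim Z=\dim X$. Finally, since these two properties are inherited by finite quasi-étale covers, I would replace $Z$ by a maximally quasi-étale cover in the sense of \cite[Thm.~1.5]{GKP13} keeping them, so that $\what π_1(Z_\reg)\cong\what π_1(Z)$.

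The second step is an Euler-characteristic computation. For any variety $W$ with canonical (hence rational) singularities and $ω_W\cong 𝒪_W$ one has $H^i(W,𝒪_W)\cong H^i(\wtilde W,𝒪_{\wtilde W})$ for a resolution $\wtilde W$; Hodge symmetry on $\wtilde W$ plus the extension theorem for reflexive differentials on klt spaces identifies $h^i(W,𝒪_W)$ with $h^0\bigl(W,Ω_W^{[i]}\bigr)$. Inserting the definition of Calabi--Yau resp.\ IHS and using that $n=\dim W$ is even gives $χ(W,𝒪_W)=2$ resp.\ $\tfrac{n}{2}+1$, in either case strictly positive. Since the holomorphic Euler characteristic is multiplicative along finite étale covers and the Calabi--Yau/IHS property passes to such covers, $W$ admits no connected finite étale cover of degree $>1$, i.e.\ $\what π_1(W)=\{1\}$. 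Applied to $W=Z$, together with maximal quasi-étaleness, this yields $\what π_1(Z_\reg)=\{1\}$; as $γ$ is étale over $X_\reg$, the complement of $γ^{-1}(X_\reg)$ in $Z_\reg$ has codimension $≥2$, so $\what π_1\bigl(γ^{-1}(X_\reg)\bigr)=\{1\}$, and since $γ^{-1}(X_\reg)→X_\reg$ is a finite étale cover, $\what π_1(X_\reg)$ — hence its quotient $\what π_1(X)$ — is finite.

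The third and, I expect, hardest step is to promote finiteness of the profinite group $\what π_1(X)$ to finiteness of the \emph{topological} group $π_1(X)$: no Cheeger--Gromoll-type compactness is available for the incomplete manifold $(X_\reg,g_H)$, so the argument must be algebraic. The strategy is to combine the profinite statement just obtained with the rigidity of Part~\ref{pi1ii}: since $\wtilde q(X)=0$, the group $π_1(X)$ — a quotient of $π_1(X_\reg)$ — admits no finite-dimensional representation with infinite image; after reducing to a smooth projective fundamental group, using that $π_1$ is insensitive to resolving rational singularities, one rules out the remaining possibility of an infinite such group with finite profinite completion and no unbounded linear representations, concluding that $π_1(X)$ is finite. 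Then $π_1(X)=\what π_1(X)$, and when $X$ itself is IHS or an even-dimensional Calabi--Yau — which by Proposition~\ref{prop:intrinsic} forces $\Hol(X_\reg,g_H)$ to be the connected group $\SU(n)$ resp.\ $\Sp(\tfrac{n}{2})$, so that $X$ satisfies the hypotheses of the first assertion — the Euler-characteristic argument applied to $W=X$ gives $\what π_1(X)=\{1\}$; hence $X$ is simply connected.

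\emph{Part~\ref{pi1ii}.} By Theorem~\ref{thm:augm_intro}, applied to each finite quasi-étale cover $Y→X$ (which again has $\wtilde q(Y)=0$), the hypothesis $\wtilde q(X)=0$ forces $H^0\bigl(Y,\Sym^{[m]}Ω^1_Y\bigr)=\{0\}$ for all $m≥1$ and all such $Y$, and in particular $q(Y)=0$ for all of them. Given a hypothetical finite-dimensional representation of $π_1(X_\reg)$ with infinite image, I would, after extension of scalars and spreading out, reduce to a representation with Zariski-dense image in a positive-dimensional linear algebraic group over $ℂ$. A nontrivial character of that group yields a homomorphism $π_1(X_\reg)→ℂ^*$ with infinite image, hence, on a suitable finite quasi-étale cover and by Hodge theory on a resolution, a nonzero holomorphic one-form — contradicting $q=0$ on covers. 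In the reductive case, non-Abelian Hodge theory (Corlette--Simpson and its quasi-projective refinements) produces either a variation of Hodge structure, whose nonzero Higgs field gives a nonzero section of some $\Sym^{[m]}Ω^1_X$, or a factorisation of the representation through a morphism onto a positive-dimensional Abelian variety or an orbifold curve of general type; the first two are excluded by the vanishings above, and the third again produces symmetric differentials pulled back from the curve. This would prove the first assertion. For the second, the vanishing of symmetric differentials forces the Higgs field of every semisimple representation of $π_1(X_\reg)$ to be zero, so these are exactly the polystable flat unitary bundles; the first-cohomology group governing their deformations vanishes once $\wtilde q(X)=0$, making each one rigid, and treating the non-semisimple strata similarly bounds the number of $n$-dimensional complex representations up to conjugation for each $n$.
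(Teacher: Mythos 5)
Your first two steps of Part~\ref{pi1i} --- reduction to a CY or IHS quasi-étale cover and the Euler--characteristic computation $\chi(\mathcal O) > 0$ --- track the paper closely and are correct. The profinite statement $\what\pi_1(Z_{\reg}) = \{1\}$ on a maximally quasi-étale CY/IHS cover $Z$ also follows as you say.

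The genuine gap is your third step. You try to promote finiteness of the \emph{profinite} group $\what\pi_1(X)$ to finiteness of $\pi_1(X)$ by observing that $\pi_1(X)$ has no finite-dimensional representation with infinite image, and then asserting that an infinite group with finite profinite completion and no such representations ``cannot exist.'' That assertion is false in general: Higman's group is a finitely generated infinite group with trivial profinite completion and no nontrivial finite-dimensional linear representation at all, and there is no known theorem that rules this phenomenon out for Kähler groups. Your argument as stated would not compile. The paper's logic runs in the opposite direction: it first deduces $\pi_1(Y)$ finite (for $Y$ the global index-one cover) directly from $\chi(Y,\mathcal O_Y) \neq 0$, canonicity of the singularities, and numerical triviality of $K_Y$ via \cite[Prop.~8.23]{GKP16}, which is built on Campana's $\Gamma$-reduction techniques \cite{Ca95} --- this is precisely the missing algebraic substitute for Cheeger--Gromoll. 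Only afterwards is $\what\pi_1(X_{\reg})$ shown to be finite, using \cite[Prop.~7.3]{GKP16}, and then the Beauville-style multiplicativity of $\chi$ on the (now a priori compact) universal cover pins down $\pi_1(X) = \{1\}$ in the CY/IHS case.

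For Part~\ref{pi1ii}, you essentially set about re-deriving the Brunebarbe--Klingler--Totaro theorem \cite[Thm.~0.1]{MR3127814} via Corlette--Simpson on the quasi-projective space $X_{\reg}$. That is technically much heavier than needed and raises boundary-behaviour questions (tame versus wild, the Mochizuki correspondence on open varieties) that you do not address. The paper's route is cleaner: pass to a resolution $\wtilde X \to X$, use Takayama's theorem $\pi_1(\wtilde X) = \pi_1(X)$ to place oneself in the smooth projective setting where BKT applies directly, and convert the hypothetical infinite-image representation into a symmetric differential on $X$ contradicting Theorem~\ref{thm:augm_intro}. The transfer from $\pi_1(X)$ to $\pi_1(X_{\reg})$ is then carried out via a maximally quasi-étale Galois cover $Y \to X$ and the exact sequence
\[
1 \to \pi_1(Y_{\reg}) \to \pi_1(X_{\reg}) \to G \to 1,
\]
using that, by maximal quasi-étaleness, every representation of $\pi_1(Y_{\reg})$ factors through $\pi_1(Y)$ --- a step you omit. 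The rigidity input you want for the finiteness of the representation variety is available off the shelf (\cite{MR1978714}, \cite{MR3044124}); you do not need to redo the Higgs-bundle deformation theory.
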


\begin{rem}
  Under the assumption that $\wtilde{q}(X)=0$, the conclusions of
  Item~\ref{pi1ii} remain valid for $π_1(X)$ as well, given the surjection
  $π_1(X_{\reg})\twoheadrightarrow π_1(X)$ induced by the open immersion
  $X_{\reg}↪ X$, cf.~\cite[0.7.B on p.~33]{FL81}.
\end{rem}

About Item~\ref{pi1i}: the proof of finiteness of $π_1(X)$ strongly relies on
the Bochner principle, and on methods introduced by Campana in \cite{Ca95},
which connect positivity properties of cotangent bundles to the size of
fundamental groups.  To pass from $X$ to $X_{\reg}$, we consider a maximally
quasi-étale cover.

As for Item~\ref{pi1ii}, the key point is a result of
Brunebarbe-Klingler-Totaro, \cite{MR3127814}, which links the existence of
representations with infinite image to the existence of symmetric differentials.
This, in turn, can be interpreted via the Bochner principle in terms of
invariant vectors in the symmetric power of the standard representation of
$\SU(n)$ or $\Sp(n)$, see Theorem~\ref{thm:augm_intro} above.

\subsection{Outline of the paper}
\approvals{Daniel & yes \\ Henri & yes \\ Stefan & yes}

The core of the paper consists in proving Theorems~\ref{thm:A} and \ref{thm:B}
above.  Interestingly enough, and unlike the strategy executed in the smooth
case, the proof of Theorem~\ref{thm:A} relies on the conclusions of
Theorem~\ref{thm:B}, which is proved in Part~\ref{part:II}.  Part~\ref{part:III}
is devoted to proving Theorem~\ref{thm:A} using the results achieved in the
earlier parts.  More precisely, the content of the individual sections can be
summarised as follows.

\subsubsection*{Part~\ref*{part:I}}
\approvals{Daniel & yes \\ Henri & yes \\ Stefan & yes}

We recall the definitions and basic properties of the fundamental
differential-geometric objects that will be used throughout the paper, namely
the holonomy groups of a Kähler manifold and singular Kähler-Einstein metrics.
We also analyse the behaviour of these objects with respect to quasi-étale
covers.

\subsubsection*{Part~\ref*{part:II}}
\approvals{Daniel & yes \\ Henri & yes \\ Stefan & yes}

This part is mostly taken up by the proof of Theorem~\ref{thm:holonomyCover}.
The starting point is the classification of restricted holonomy,
Proposition~\ref{prop:factors}, which shows that the usual dichotomy $\SU$ vs.\
$\Sp$ continues to hold in the singular setting.

\subsubsection*{Part~\ref*{part:III}}
\approvals{Daniel & yes \\ Henri & yes \\ Stefan & yes}

We prove Theorem~\ref{generalized:holonomy} stating that $𝒯_X$ or more generally
any of its reflexive tensor powers is the direct orthogonal sum of stable
parallel subbundles; the arguments follows \cite{Guenancia} closely.
Capitalising on this and on the results of Part~\ref{part:II}, we establish the
Bochner principle for forms and unfold a first list of applications in
connection with augmented irregularity and characterisations of quotients of
Abelian varieties, cf.~Theorem~\ref{thm:augm_irreg_rev} as well as
Corollaries~\ref{cor:forms} and \ref{qab1}.

\subsubsection*{Part~\ref*{part:IV}}
\approvals{Daniel & yes \\ Henri & yes \\ Stefan & yes}

We investigate the strongly stable case.  Corollary~\ref{cor:dichotomy_CY_IHS}
explains the relation to CY and IHS varieties in more detail.
Section~\ref{sec:alghol} relates our results to the algebraic holonomy group
introduced by Balaji and Kollár.  A number of examples illustrate the complexity
of the situation.  Finally, we study finiteness properties for the fundamental
groups (resp.\ algebraic fundamental groups) of $X$ and $X_{\reg}$.

\subsection{Acknowledgements}
\approvals{Daniel & --- \\ Henri & yes \\ Stefan & yes}

We would like to thank Stéphane Druel, Jochen Heinloth, Mihai Păun, Wolfgang
Soergel, Matei Toma and Burt Totaro for helpful discussions, and user
\href{https://mathoverflow.net/users/43122/holonomia}{Holonomia} on
\href{https://mathoverflow.net}{mathoverflow.net} for answering our questions.
We thank the anonymous referee for her/his time and efforts.

\part{Preparations}\label{part:I}
%
%
\svnid{$Id: 02-notation.tex 758 2018-10-01 15:48:05Z guenancia $}

\section{Notation and conventions}\label{sec:notation}
\subversionInfo
\approvals{Daniel & yes \\ Henri & yes \\ Stefan & yes}

The main motivation to study varieties with klt singularities comes from Minimal
Model theory.  On the other hand, many of our techniques originate in
Differential Geometry.  Hence, to make the paper more accessible for algebraic
geometers, this chapter carefully sets up notation and gives very quick
explanations of the terminology and of the fundamental principles used later.

\subsection{Global conventions}
\approvals{Daniel & yes \\ Henri & yes \\ Stefan & yes}

Throughout the present paper, all varieties will be defined over the complex
numbers.  We will freely switch between the algebraic and analytic context if no
confusion is likely to arise.  If extra care is warranted, we denote the
analytic space associated with an algebraic variety $X$ by $X^{an}$.  We follow
the notation used in the standard reference books \cite{Ha77, KM98}.  In
particular, varieties are always assumed to be irreducible and reduced.

\subsection{Differential-geometric notions}

\subsubsection{Differentials and vector fields}
\approvals{Daniel & yes \\ Henri & yes \\ Stefan & yes}

Throughout this paper, we will clearly distinguish between bundles and their
associated sheaves of smooth (resp.\ holomorphic) sections.

\begin{notation}[Tangent bundle]\label{not:Apq1}
  Given a connected complex manifold $X$ of complex dimension $n$, denote the
  holomorphic tangent bundle by $TX$, the holomorphic tangent sheaf by $𝒯_X$,
  and the sheaf of $\cC^∞$-sections of $TX$ as $\cT_X$.  If $x ∈ X$ is any
  point, let $T_xX$ be the associated tangent space, $\dim_ℂ T_xX = n$.  The
  complexified tangent bundle decomposes as $TX^{ℂ} = T^{1,0}X ⊕ T^{0,1}X$,
  where $TX$ and $T^{1,0}X$ are naturally isomorphic as complex vector bundles.
\end{notation}

\begin{notation}[$\cC^∞$-functions and forms]\label{not:Apq2}
  Given a connected complex manifold $X$, let $\cA_X$ denote the sheaf of
  complex-valued $\cC^∞$-functions on $X$.  The symbols $\cA^p_X$ denote the
  sheaves of complex, $\cC^∞$-differential $p$-forms.  Likewise, $\cA^{p,q}_X$
  are the sheaves of complex, $\cC^∞$-differential of type $(p,q)$.  The sheaves
  of holomorphic differentials are denoted by $Ω^p_X$.  There are a few standard
  variants of the notation that we will also use.  If $E$ is a complex vector
  bundle, we denote the vector space of global, smooth, $E$-valued $(p,q)$-forms
  by $\cA^{p,q}(X,E)$.  If $E$ is equipped with a Hermitian metric $h$, we write
  $\cA^{p,q}(X,\End(E,h))$ for the space of forms with values in Hermitian
  endomorphisms.  Vector spaces of real forms will be denoted as
  $\cA^{p,q}_{ℝ}(•)$.
\end{notation}

\begin{notation}[Chern connections]\label{not:cc}
  Let $(X,ω)$ be a connected Kähler manifold of dimension $n := \dim_ℂ X$.  Let
  $h$ be the associated Hermitian metric on $T^{1,0}X$ and $g$ the associated
  Riemannian metric on $X$\Preprint{, cf.~\cite[App.~4.A]{Huy05}}.  Owing to the
  Kähler property, the Levi-Civita connection of $g$ coincides with the Chern
  connection of $h$ after identifying $TX$ and $T^{1,0}X$\Preprint{, see
    \cite[Prop.~4.A.9]{Huy05}}.  We obtain induced complex connections on all
  tensor bundles $TX^{⊗ p} ⊗ T^*X^{⊗ q}$, which by minor abuse of notation we
  all denote by $D$.
\end{notation}

\subsubsection{Holonomy}
\approvals{Daniel & yes \\ Henri & yes \\ Stefan & yes}

The holonomy group of a Riemannian manifold is the core notion of this paper.

\begin{notation}\label{nota:holonomy}
  Let $(M,g)$ be a connected Riemannian manifold.  Given a point $m ∈ M$, we
  view $(T_mM, g_m)$ as a Euclidean vector space and denote the associated
  Riemannian holonomy group by $\Hol(M, g)_m$, which we view as a subgroup of
  the orthogonal group $\operatorname{O}(T_mM, g_m)$.  Its identity component,
  the restricted holonomy group, will be denoted by $\Hol(M, g)°_m$.
\end{notation}

The following relation between the holonomy group and the topology of the
underlying manifold is crucial for our arguments.

\begin{reminder}[\protect{Holonomy and fundamental group, \cite[Prop.~2.3.4]{MR1787733}}]\label{rem:fundamentalgroupsurjection}
  Let $(M,g)$ be a connected Riemannian manifold.  Then, for any point $m ∈ M$,
  as the restricted holonomy group coincides with those elements in
  $\Hol(M, g)_m$ that arise via parallel transport along contractible paths, we
  have a surjective group homomorphism
  \begin{equation}\label{eq:fundamentalgroupsurjection}
    π_1(M, m) \twoheadrightarrow \factor{\Hol(M, g)_m}{\Hol(M, g)°_m}.
  \end{equation}
\end{reminder}

\Publication{In the preprint version of this paper,
  \href{https://arxiv.org/abs/1704.01408}{arXiv:1704.01408}, we recall some
  standard facts about holonomy including: the holonomy of a Kähler manifold,
  the holonomy principle and the correspondence between holonomy invariant
  subspaces and parallel subbundles.} 
\Preprint{
  \begin{reminder}[\protect{Holonomy of Kähler manifolds, \cite[Sect.~10.C]{MR867684}}]\label{remi:holonomy}
    Let $(X,ω)$ be a connected Kähler manifold.  Let $h$ be the associated
    Hermitian metric on $TX$.  Given any point $x ∈ X$, view $T_xX$ as a complex,
    Hermitian vector space.  Since the complex structure on $X$ is parallel, the
    Riemannian holonomy group $\Hol(X, g)_x$ is contained in the unitary group
    $\U(T_xX)$.  The Kähler manifold $(X,ω)$ is Ricci-flat if and only if the
    restricted holonomy group $\Hol(X, g)°_x$ is contained in $\SU(T_xX)$.
  \end{reminder}
  
  \begin{reminder}[\protect{The holonomy principle for complex tensors, \cite[Fundamental Principle~10.19]{MR867684}}]\label{remi:cc}
    Let $(X,ω)$ be a connected Kähler manifold with associated Riemannian metric
    $g$.  Choose $x ∈ X$ and consider the holonomy group $\Hol(X, g)_x$.  The
    representation of $\Hol(X, g)_x$ on $T_xX$ naturally can be complexified to
    a representation on $T_x^ℂ X$, and as the original representation commutes
    with the complex structure $J_x ∈ \End(T_xX)$, it also induces
    representations on $T^{1,0}_xX$ and $T_x^{0,1}X$.  The aforementioned
    isomorphism of $TX$ and $T^{1,0}X$ yields an isomorphism of representations
    $T_x X ≅ T^{1,0}_xX$, and, as the representation of $\Hol(X, g)_x$ on
    $T_x X$ is unitary with respect to $h_x$, we hence obtain equivariant
    isomorphisms $T_x^* X ≅ \overline{T_x^{1,0}X} = T_x^{0,1}X$.  By
    standard linear algebra, we obtain induced representations on the tensor
    spaces $T_x X^{⊗ p} ⊗_{ℂ} T_x^* X^{⊗ q}$.  On the other hand, we have seen
    in Notation~\ref{not:cc} that the Levi-Civita- resp.~Chern-connection on
    $TX$ resp.~$T^{1,0}X$, cf.~Notation~\ref{not:cc}, induces connections $D$ on
    all the complex tensor bundles $TX^{⊗ p} ⊗ T^*X^{⊗ q}$.
    
    The \emph{holonomy principle} states that evaluation at $x$ and parallel
    transport establish a $ℂ$-linear one-to-one correspondence between tensors
    $t ∈ H⁰\bigl(X, \cT_X^{⊗ p} ⊗_{\cA_X} (\cT_X^*)^{⊗ q} \bigr)$ with vanishing
    covariant derivative, $D t =0$, which are automatically holomorphic as the
    $(0,1)$-part of $D$ coincides with the complex structure $\bar{∂}$, and
    invariant vectors $t_x$ in the corresponding unitary
    $\Hol(X, g)_x$-representation $T_x X^{⊗ p} ⊗_{ℂ} T_x^* X^{⊗ q}$.  For more
    on the compatibility of parallel transport and the holomorphic structure, we
    refer to Section~\ref{subsect:parallelity_and_holomorphy} below.
  \end{reminder}}

The symplectic group appears prominently in the classification of holonomy
groups.  We use the following convention.

\begin{notation}[Symplectic groups]
  Let $n∈ ℕ^+$ and let $σ := \sum_{k=1}^n dz_k Λ dz_{n+k}$ be the standard
  complex symplectic form on $ℂ^{2n}$.  We denote by $\Sp(n,ℂ)$ the
  \emph{complex symplectic group}, that is, the subgroup of $\GL(2n,ℂ)$
  consisting of transformations preserving $σ$.  We denote by $\Sp(n)$ its
  compact real form, $\Sp(n):=\Sp(n, ℂ) ∩ \U(2n)$, the \emph{unitary symplectic
    group}.
\end{notation}

\subsection{Local decomposition of Kähler manifolds}
\approvals{Daniel & yes \\ Henri & yes \\ Stefan & yes}

The relation between holonomy groups and stability properties of the tangent
sheaves will be established by using the following folklore result, which we
include here for lack of a reference\Publication{, and which we furthermore
  prove in the preprint version of this paper}.

\begin{prop}[Local decomposition of Kähler manifolds]\label{prop:locdecomp}
  Let $(X, ω)$ be a simply connected Kähler manifold (not necessarily compact or
  complete) and $x ∈ X$ a point.  Then there exists an open neighbourhood
  $U=U(x)$, Kähler manifolds $(U_i, ω_i)_{i=0…m}$ and an isomorphism of Kähler
  manifolds
  \begin{equation}\label{eq:isom}
    φ : (U,ω) → \left( \bigtimes^m_{i=0} U_i,\, \sum_{i=0}^m π_i^* ω_i \right)
  \end{equation}
  such that the following holds when we write $(x_0, …, x_m)$ for $φ(x)$ and
  $g$, $g_i$ for the associated Riemannian metrics on $X$ and on the $U_i$,
  respectively.
  \begin{enumerate}
  \item\label{il:p1} The action of the holonomy group
    $H := \Hol\bigl( X,\, g\bigr)_x$ on $T_xX$ respects the orthogonal
    decomposition $T_xX = \bigoplus_{i=0}^m T_{x_i}U_i$ induced by $φ$.
  \item\label{il:p2} The holonomy group is a direct product,
    $H = ⨯_{i=1}^m H_i$, where each factor $H_i$ acts irreducibly on the summand
    $T_{x_i}U_i$ and trivially on all the other summands.\Publication{\qed}
  \end{enumerate}
\end{prop}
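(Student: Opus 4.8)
The plan is to deduce the proposition from the classical local de Rham decomposition theorem, promoted from the Riemannian to the Kähler category. First I would note that since $X$ is simply connected, the surjection in Reminder~\ref{rem:fundamentalgroupsurjection} forces $H := \Hol(X,g)_x$ to coincide with its identity component, so $H$ is connected; as $(X,\omega)$ is Kähler, $H$ lies in $\U(T_xX)$ and hence commutes with the complex structure $J_x$. Being an orthogonal representation on the Euclidean space $T_xX$, the $H$-action is completely reducible, so I can write an orthogonal decomposition
$$
T_xX \;=\; V_0 \oplus V_1 \oplus \cdots \oplus V_m,
$$
where each $V_i$ is a $J_x$-invariant (hence complex) $H$-subspace, $V_0 = (T_xX)^H$ is the fixed subspace, and $V_1,\dots,V_m$ are irreducible. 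Each $V_i$ is $H$-invariant, so propagating it by parallel transport produces a well-defined (path-independence is precisely $H$-invariance of $V_i$) subbundle $\mathcal D_i \subseteq TX$ which is parallel, complex — because $J$ and $\omega$ are parallel and $V_i$ is $J_x$-invariant — and involutive, since for local sections $Y,Z$ of $\mathcal D_i$ torsion-freeness of $D$ gives $[Y,Z] = D_YZ - D_ZY \in \mathcal D_i$. Thus $TX = \bigoplus_{i=0}^m \mathcal D_i$ is a sum of mutually orthogonal, parallel, involutive, complex distributions.

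Next I would invoke the local de Rham decomposition theorem: there is a connected open neighbourhood $U = U(x)$ and an isometry $\varphi$ onto a Riemannian product $\bigl(U_0 \times \cdots \times U_m,\; \sum_i \pi_i^* g_i\bigr)$, where $U_i$ is the $\mathcal D_i$-leaf through $x$ equipped with its induced metric $g_i$, the map $\varphi$ carries $\mathcal D_i|_U$ onto the pullback of $TU_i$, and $T_{x_i}U_i = V_i$ under $\varphi$. To upgrade this to an isomorphism of Kähler manifolds I would argue that each $U_i$ is a complex submanifold — $\mathcal D_i$ being a complex, hence holomorphic, parallel subbundle — carrying the induced complex structure $J_i$, and that $\varphi_*J$ and the product complex structure (block-diagonal in the $J_i$) are both parallel with respect to the product Levi-Civita connection and agree at $\varphi(x)$, hence on all of $U$; so $\varphi$ is biholomorphic. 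Taking $\omega_i$ to be the fundamental two-form of $(U_i,g_i,J_i)$, compatibility of $\varphi$ with metric and complex structure yields $\varphi^*\bigl(\sum_i \pi_i^*\omega_i\bigr) = \omega$, and since $d\omega = 0$ while pullbacks from distinct factors occupy complementary summands of the space of $3$-forms on the product, each $d\omega_i = 0$, so every $(U_i,\omega_i)$ is Kähler. This produces the isomorphism \eqref{eq:isom}, and \ref{il:p1} holds because the decomposition $T_xX = \bigoplus T_{x_i}U_i$ is precisely $\bigoplus V_i$, which $H$ respects by construction.

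For \ref{il:p2} I would use the standard fact that the holonomy group of a Riemannian product is the direct product of the holonomy groups of the factors: $H = \Hol(U,g)_x = \Hol(U_0,g_0)_{x_0} \times \cdots \times \Hol(U_m,g_m)_{x_m}$, with the $i$-th factor acting on $T_{x_i}U_i = V_i$ and trivially on the complementary summands. The factor $\Hol(U_0,g_0)_{x_0}$ maps into $H|_{V_0} = \{1\}$ and is therefore trivial (and $U_0$ is flat); for $i \geq 1$, writing $H_i := \Hol(U_i,g_i)_{x_i}$ we have $H_i|_{V_i} = H|_{V_i}$, which is irreducible by the choice of $V_i$. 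Hence $H = H_1 \times \cdots \times H_m$ with each $H_i$ acting irreducibly on $T_{x_i}U_i$ and trivially on the other summands.

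The only step that requires genuine work beyond assembling classical material is the passage from the real de Rham splitting to a splitting as \emph{Kähler} manifolds — checking that $\varphi$ is biholomorphic and that the factor forms $\omega_i$ are closed — where parallelity of $J$ and $\omega$ enters decisively; I would also take care that the product decomposition $H = H_1 \times \cdots \times H_m$ really uses that $H$ is a holonomy group, via the product formula for the holonomy of a metric product, and not merely that $T_xX$ decomposes into $H$-irreducibles, since a general connected subgroup of $\U(T_xX)$ need not split compatibly with such a decomposition.
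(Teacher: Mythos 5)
Your overall strategy coincides with the paper's: invoke the local de Rham decomposition theorem for the Riemannian structure, then use parallelity of $J$ to upgrade to a decomposition in the Kähler category. The details you fill in (parallel transport of the $V_i$, involutivity, the argument that $\varphi$ is biholomorphic because parallel objects agreeing at a point agree locally, closedness of the $\omega_i$ from the grading of $3$-forms on a product) are all correct.

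There is, however, one genuine gap in your treatment of item~(\ref{il:p2}). You deduce the product structure of $H$ by writing
\[
H \;=\; \Hol(U,g|_U)_x \;=\; \Hol(U_0,g_0)_{x_0} \times \cdots \times \Hol(U_m,g_m)_{x_m},
\]
the second equality being the product formula for holonomy of a Riemannian product applied to $U \cong \bigtimes U_i$. The problem is the \emph{first} equality: by definition $H = \Hol(X,g)_x$, and for a general smooth (non--real-analytic) Kähler metric the local holonomy $\Hol(U,g|_U)_x$ of a small neighbourhood can be a proper subgroup of the restricted holonomy $\Hol(X,g)^\circ_x = \Hol(X,g)_x$. (The paper only proves ``local $=$ restricted'' via the DeTurck--Kazdan analyticity result in the \emph{Einstein} case, Proposition~\ref{prop:dtkd}; the present proposition is stated for an arbitrary Kähler manifold.) Thus you cannot identify $H$ with $\Hol(U)_x$ and the product formula for $\Hol(U)$ does not directly yield a product structure for $H$. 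The statement of \cite[Thm.~5.4]{MR1393940}, which the paper invokes, packages precisely the missing assertion: it produces, as part of the \emph{local} de Rham decomposition, a direct-product decomposition of the \emph{global} restricted holonomy group $\Hol(X,g)^\circ_x = \prod_i \Psi_i^\circ(x)$, proved by a covering/concatenation argument and not merely by applying the product formula on $U$. So the fix is simply to quote that assertion of the theorem rather than arguing through $\Hol(U)_x$.

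One further remark on your first paragraph. You obtain $J_x$-invariance of the $V_i$ from ``$H$ commutes with $J_x$ plus complete reducibility''. This is fine \emph{if} you intend ``irreducible'' over $\mathbb{C}$ (i.e.\ you are decomposing the complex Hermitian $H$-module $(T_xX, J_x)$ into complex-irreducible summands, which you can always do) --- and this is indeed the convention used elsewhere in the paper, cf.\ Definition~\ref{def:totDecomp}. If instead one starts from the real de Rham decomposition of Kobayashi--Nomizu, the $J_x$-invariance of the factors does \emph{not} follow from commutativity with $J_x$ alone; it requires the product structure $H = \prod H_i$ from de Rham (one then checks that $J_x$, being $H$-equivariant and invertible, must permute the $V_i$, but cannot swap two non-trivial factors because they are non-isomorphic as $H$-representations). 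You should flag which of these two routes you are taking, since they have slightly different logical prerequisites.
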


\begin{rem}
  The summand $T_{x_0}U_0$ is precisely the set of $H$-fixed vectors.
\end{rem}

\Preprint{
  \begin{proof}[Proof of Proposition~\ref{prop:locdecomp}]
    Apply \cite[Thm.~5.4 on p.~185]{MR1393940} to the Riemannian manifold
    $(X,g)$, in order to find a neighbourhood $U = U(x) ⊆ X$ that admits an
    isometry to a Riemannian product, $φ : U → ⨯ U_i$, satisfying \ref{il:p1}
    and \ref{il:p2}.  The proof of \cite[Thm.~8.1 on p.~172]{MR1393941} applies
    nearly verbatim to show that the $U_i$ carry natural complex structures with
    respect to which the $g_i$ are Kähler and that make the isometry $φ$
    biholomorphic.
  \end{proof}}

\begin{defn}[Totally decomposed actions]\label{def:totDecomp}
  If $V$ is a Hermitian vector space and $G ⊆ U(V)$ any group, we call the
  action $G ↺ V$ \emph{totally decomposed} if there exists a $G$-invariant
  orthogonal decomposition $V = V_0 ⊕ V_1 ⊕ ⋯ ⊕ V_m$, and a product
  decomposition, $G = G_1 ⨯ ⋯ ⨯ G_m$, where each factor $G_i$ acts non-trivially
  and irreducibly on $V_i$ and trivially on all the other summands.
\end{defn}

\begin{rem}[Uniqueness of decompositions]\label{rem:totDecomp}
  If $V$ is a unitary vector space and $G ⊆ U(V)$ any group, there are usually
  many $G$-invariant orthogonal decompositions of $V$ into complex vector
  spaces, $V = V_0 ⊕ V_1 ⊕ ⋯ ⊕ V_m$, where $G$ acts trivially on $V_0$ and
  non-trivially and irreducibly on the remaining summands.  If the $G$-action is
  totally decomposed, then the decomposition is unique up to permutation of the
  $(V_i)_{i>0}$.  For a proof, observe that the product structure of $G$ implies
  that a vector $\vec v ∈ V$ is contained in one of the $V_i$ if and only if the
  linear span of its $G$-orbit is an irreducible representation space (and then
  equal to $V_i$).  The product structure of $G$ is also unique, as
  $G_i = ∩_{j≠ i} \Fix(V_j)$.
\end{rem}

\subsection{Varieties and sets}
\approvals{Daniel & yes \\ Henri & yes \\ Stefan & yes}

Normal varieties are $S_2$, which implies that regular functions can be extended
across sets of codimension two.  The following notation will be used.

\begin{notation}[Big and small subsets]
  Let $X$ be a normal, quasi-projective variety.  A Zariski-closed subset
  $Z ⊂ X$ is called \emph{small} if $\codim_X Z ≥ 2$.  A Zariski-open subset
  $U ⊆ X$ is called \emph{big} if $X∖ U$ is small.
\end{notation}

\subsection{Morphisms}
\approvals{Daniel & yes \\ Henri & yes \\ Stefan & yes}

Galois morphisms appear prominently in the literature, but their precise
definition is not consistent.  We will use the following definition, which does
not ask Galois morphisms to be étale.

\begin{defn}[Covers and covering maps, Galois morphisms]\label{def:cover}
  A \emph{cover} or \emph{covering map} is a finite, surjective morphism
  $γ : Y → X$ of normal, quasi-projective varieties.  The covering map $γ$ is
  called \emph{Galois} if there exists a finite group $G ⊆ \Aut(Y)$ such that
  $X$ is isomorphic to the quotient map $Y → Y/G$.
\end{defn}

\begin{defn}[Quasi-étale morphisms]\label{defn:quasietale}
  A morphism $γ : Y → X$ between normal varieties is called \emph{quasi-étale}
  if $γ$ is of relative dimension zero and étale in codimension one.  In other
  words, $γ$ is quasi-étale if $\dim Y = \dim X$ and if there exists a closed,
  subset $Z ⊆ Y$ of codimension $\codim_Y Z ≥ 2$ such that
  $γ|_{Y ∖ Z} : Y ∖ Z → X$ is étale.
\end{defn}

\begin{reminder}[KLT is invariant under quasi-étale covers]\label{remi:qec}
  Let $γ: Y → X$ be a quasi-étale cover.  By definition, $γ$ is then
  quasi-étale, finite and surjective.  If $K_X$ is $ℚ$-Cartier, then
  $K_Y = γ^* K_X$ is $ℚ$-Cartier as well.  If there exists a $ℚ$-divisor $D$ on
  $X$ that makes $(X,D)$ is klt, then $(Y,γ^*D)$ is klt as well,
  \cite[Prop.~5.20]{KM98}.
\end{reminder}

\begin{rem}[Quasi-étale covers vs.~étale covers of $X_{\reg}$]\label{rem:qevecxr}
  If $γ: Y → X$ is any quasi-étale cover, purity of the branch locus implies
  that $γ$ is étale over $X_{\reg}$.  Conversely, if $γ° : Y° → X^{an}_{\reg}$
  is a finite and locally biholomorphic morphism of complex manifolds, recall
  from \cite[Sect.~XII.5]{SGA1} that $γ°$ and $Y°$ are algebraic, and can be
  compactified to a quasi-étale cover $γ : Y → X$, see also
  \cite[Thm.~3.4]{DethloffGrauert}.  Consequently, we obtain obvious
  equivalences between the categories of quasi-étale covers of $X$, of finite
  and étale covers of $X_{\reg}$, and of finite sets with transitive action of
  $π_1(X^{an}_{\reg})$.
\end{rem}

Quasi-étale morphisms appear naturally in the context considered in this paper,
as exemplified by the following important result.

\begin{prop}[Global index one cover]\label{prop:global_index_one_cover}
  Let $X$ be a projective, klt variety with numerically trivial canonical
  divisor, $K_X \equiv 0$.  Then, there exists a quasi-étale cover $γ: Y → X$
  such that $Y$ has canonical singularities and linearly trivial canonical
  divisor, $K_Y \sim 0$.
\end{prop}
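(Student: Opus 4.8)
The plan is to realise $Y$ as the \emph{global index-one cover} (canonical cover) of $X$, and then to read off the asserted properties from the standard theory of cyclic covers together with Reminder~\ref{remi:qec}. The construction has four steps.

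First, I would upgrade numerical triviality of $K_X$ to torsion. Since $(X,0)$ is klt and $K_X \equiv 0$, a theorem of Nakayama (a consequence of the base-point-free theorem and log abundance for numerically trivial klt canonical divisors) shows that $K_X$ is a torsion element of the divisor class group, i.e.\ $K_X \sim_{\bQ} 0$, so that $\sO_X(mK_X) \cong \sO_X$ for some $m \in \bN^+$. This is the only genuinely deep input; everything after it is routine bookkeeping. Choosing $m$ minimal with this property and fixing an isomorphism $\phi : \sO_X(mK_X) \xrightarrow{\ \sim\ } \sO_X$, I would then form the cyclic cover
\[
  \gamma : Y := \Spec_X\Bigl( \bigoplus\nolimits_{i=0}^{m-1} \sO_X(iK_X) \Bigr) \longrightarrow X,
\]
where the $\sO_X$-algebra structure on the sheaf in parentheses is induced by $\phi$, cf.\ \cite[\S 2.D and 5.19ff]{KM98}. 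By construction $\gamma$ is finite of degree $m$ and $Y$ is normal; moreover $Y$ is irreducible (equivalently connected), precisely because $m$ was chosen minimal, since a splitting of $Y$ would trivialise a proper power of $K_X$.

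Next, I would check that $\gamma$ is quasi-étale. The section of $\sO_X(-mK_X)$ dual to $\phi$ vanishes nowhere, so $\gamma$ is étale over the open locus on which the Weil divisor $K_X$ is Cartier. As $X$ is normal, this locus contains the big open subset $X_{\reg}$, hence its complement is small and $\gamma$ is étale in codimension one, i.e.\ a quasi-étale cover in the sense of Definitions~\ref{def:cover} and \ref{defn:quasietale}. Finally, I would identify the singularities of $Y$. By the defining property of the index-one cover, $\omega_Y \cong \sO_Y$, so $K_Y \sim 0$ is linearly trivial and $Y$ is in particular Gorenstein. By Reminder~\ref{remi:qec}, $Y$ inherits klt singularities from $X$. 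A Gorenstein klt variety is automatically canonical — all discrepancies are $> -1$ by kltness and integral because $K_Y$ is Cartier, hence $\geq 0$ — so $Y$ has canonical singularities. Setting this $Y$ and $\gamma$ concludes the proof.

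As already indicated, the only step I expect to require work outside the present paper is the passage from $K_X \equiv 0$ to $K_X \sim_{\bQ} 0$; once that is in hand, the cyclic cover construction and the verification of its properties are entirely formal.
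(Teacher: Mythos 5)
Your proof is correct and follows exactly the same route the paper takes: invoke Nakayama's abundance result \cite[Cor.~4.9]{Nakayama04} to pass from $K_X \equiv 0$ to $mK_X \sim 0$, form the global index-one cover in the sense of \cite[Def.~5.19]{KM98}, and conclude that $Y$ is klt and Gorenstein, hence canonical. You have simply unwound the cited construction and the klt-plus-Gorenstein-implies-canonical step in more detail than the paper does.
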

\begin{proof}
  By Nakayama's partial solution of the Abundance Conjecture,
  \cite[Cor.~4.9]{Nakayama04}, there exists a number $m ∈ ℕ^+$ such that $m·K_Y$
  is linearly equivalent to zero.  Let $γ : Y → X$ be the associated global
  index-one cover, \cite[Def.~5.19]{KM98}, which is quasi-étale.  By
  Reminder~\ref{remi:qec}, the variety $Y$ is then klt.  Moreover, by
  construction $K_Y \sim 0$, and the singularities of $Y$ are hence canonical.
\end{proof}

\subsection{Sheaves}
\approvals{Daniel & yes \\ Henri & yes \\ Stefan & yes}

Reflexive sheaves are in many ways easier to handle than arbitrary coherent
sheaves, and we will therefore frequently take reflexive hulls.  The following
notation will be used.

\begin{notation}[Reflexive hull]
  Given a normal, quasi-projective variety $X$ and a coherent sheaf $ℰ$ on $X$
  of rank $r$, write
  $$
  Ω^{[p]}_X := \bigl(Ω^p_X \bigr)^{**}, \quad ℰ^{[m]} := \bigl(ℰ^{⊗ m}
  \bigr)^{**}, \quad \Sym^{[m]} ℰ := \bigl( \Sym^m ℰ \bigr)^{**}
  $$
  and $\det ℰ := \bigl( Λ^r ℰ \bigr)^{**}$.  Given any morphism $f : Y → X$,
  write $f^{[*]} ℰ := (f^* ℰ)^{**}$.
\end{notation}

\subsection{Augmented irregularity}\label{ssec:augIrreg}
\approvals{Daniel & yes \\ Henri & yes \\ Stefan & yes}

The irregularity of normal, projective varieties is generally not invariant
under quasi-étale maps, even if the varieties in question are klt.  The notion
of ``augmented irregularity'' addresses this issue.

\begin{defn}[Augmented irregularity]
  Let $X$ be a normal, projective variety.  We denote the irregularity of $X$ by
  $q(X) := h¹ \bigl( X,\, 𝒪_X \bigr)$ and define the \emph{augmented
    irregularity} as
  $$
  \wtilde{q}(X) := \sup \bigl\{ q(Y) \,\bigl|\, Y \text{ a quasi-étale cover of
  } X \bigr\} ∈ ℕ ∪ \{∞\}.
  $$
\end{defn}

\begin{lem}[Augmented irregularity and quasi-étale covers]\label{lem:aug}
  Let $X$ be a normal, projective variety.  Then, the following holds.
  \begin{enumerate}
  \item\label{il:AUG2} The augmented irregularity is invariant under quasi-étale
    covers.  More precisely, if $Y → X$ is quasi-étale, then
    $\wtilde{q}(Y) = \wtilde{q}(X)$.
  \item\label{il:AUG1} If $X$ is a projective, klt variety with numerically
    trivial canonical class, then $\wtilde{q}(X) ≤ \dim X$.  In particular, the
    augmented irregularity is finite in this case.
  \item\label{il:AUG3} If $\wtilde{X} → X$ is a birational morphism of
    projective varieties with canonical singularities, and if $K_{\wtilde{X}}$
    is numerically trivial, then $\wtilde{q}(\wtilde{X}) ≤ \wtilde{q}(X)$.
  \end{enumerate}
\end{lem}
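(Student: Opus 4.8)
The plan is to treat the three items essentially independently, reducing each to a statement about quasi-étale covers and finite fundamental groups of the regular locus. For item~\ref{il:AUG2}, the key observation is that the composition of two quasi-étale covers is again quasi-étale (étale-in-codimension-one is preserved under composition, since the branch locus can only grow by a small set), and conversely that $X$ itself is trivially a quasi-étale cover of $X$. Hence the set of quasi-étale covers of $Y$ is, up to the obvious identifications via Remark~\ref{rem:qevecxr}, contained in the set of quasi-étale covers of $X$, giving $\wtilde q(Y) \le \wtilde q(X)$; the reverse inequality is not a formal consequence and is the place where one must work a little. Here I would use that a quasi-étale cover $Y \to X$ corresponds to a finite-index subgroup of $π_1(X_{\reg}^{an})$, so any quasi-étale cover $W \to X$ is dominated by a common quasi-étale cover $W' \to Y$ (take the cover of $X$ corresponding to the intersection of the two finite-index subgroups, which factors through $Y$); then $q(W) \le q(W') \le \wtilde q(Y)$ because irregularity is non-decreasing along covers dominating $W$ — this last point follows from the injectivity of pullback on $H^1(\cdot,\sO)$ for finite covers, which holds since the composition with the trace splits it. Taking the supremum over $W$ gives $\wtilde q(X) \le \wtilde q(Y)$.

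For item~\ref{il:AUG1}, I would first reduce to the case $K_X \sim 0$ with canonical singularities using Proposition~\ref{prop:global_index_one_cover} together with item~\ref{il:AUG2}: if $γ : Y \to X$ is the global index-one cover, then $\wtilde q(X) = \wtilde q(Y)$, so it suffices to bound $\wtilde q(Y)$. Now for any quasi-étale cover $W \to Y$, the variety $W$ is again klt with $K_W \sim 0$, and one has $q(W) = h^1(W,\sO_W) = h^0(W, Ω_W^{[1]}) \le \dim W = \dim X$ — the middle equality being the (reflexive) Hodge symmetry for klt varieties with trivial canonical class, and the inequality simply because $Ω_W^{[1]}$ is a reflexive sheaf of rank $\dim W$, so its space of global sections has dimension at most $\dim W$ (a global section that vanishes on a big open set vanishes identically). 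Taking the supremum over all $W$ yields $\wtilde q(Y) \le \dim X$, hence $\wtilde q(X) \le \dim X$.

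For item~\ref{il:AUG3}, let $f : \wtilde X \to X$ be the birational morphism with both varieties canonical and $K_{\wtilde X} \equiv 0$. Given a quasi-étale cover $\wtilde Y \to \wtilde X$, I want to produce a quasi-étale cover $Y \to X$ with $q(Y) \ge q(\wtilde Y)$. The natural candidate is the normalisation $Y$ of $X$ in the function field of $\wtilde Y$; one then gets a commutative square with a birational morphism $\wtilde Y \to Y$ and a finite morphism $Y \to X$. The two things to check are that $Y \to X$ is quasi-étale and that $q(\wtilde Y) \le q(Y)$. For the latter, since $Y$ has rational singularities (canonical singularities are rational, once we know $Y$ is klt) and $\wtilde Y \to Y$ is a birational morphism from a normal variety, $h^1(\wtilde Y, \sO_{\wtilde Y}) = h^1(Y, \sO_Y) = q(Y)$. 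For quasi-étaleness of $Y \to X$: the branch locus of $Y \to X$, being the image under $f$ of a small subset of $\wtilde Y$ union a possible further small locus, lies over a small subset of $X$ away from which $f$ is an isomorphism and $\wtilde Y \to \wtilde X$ is étale, so $Y \to X$ is étale in codimension one there. The main obstacle in this item — and in the lemma overall — is the bookkeeping needed to verify that $Y$ inherits klt/canonical singularities (so that the rational-singularities argument and hence Hodge symmetry apply) and that ramification is genuinely confined to codimension $\ge 2$ in $X$; this is where one must be careful that birational modifications do not introduce new codimension-one branching, using that $f$ is crepant (or at least that $K_{\wtilde X}$ and $K_X$ are both trivial, so discrepancies do not force extra ramification). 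Once the geometry of the square is under control, passing to the supremum over $\wtilde Y$ completes the proof.
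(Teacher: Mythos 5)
Your treatment of item~\ref{il:AUG2} matches the paper's: it is exactly the combination of the Injectivity Lemma (irregularity is non-decreasing along finite surjections, because trace splits the pullback on $H^1(\cdot,\sO)$) with the fact that any two quasi-étale covers are dominated by a common third. Your item~\ref{il:AUG3} is a genuine re-derivation of \cite[Lem.~4.4]{Dru16}, which the paper simply cites as a black box. The outline is sound: the existence of the morphism $\wtilde{Y}\to Y$ to the normalisation of $X$ in $K(\wtilde{Y})$ follows from normality of $\wtilde{Y}$ and Zariski's main theorem (or the universal property, applied after passing to the relevant component of the fibre product $\wtilde Y\times_X Y$), quasi-étaleness of $Y\to X$ follows because the ramification lies over a small subset of $X$, and $h^1(\wtilde Y,\sO_{\wtilde Y})=h^1(Y,\sO_Y)$ because both inherit rational singularities and are dominated by a common resolution. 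The crepancy of $f:\wtilde X\to X$ that you invoke is indeed automatic: writing $K_{\wtilde X}=f^*K_X+E$ with $E\ge 0$ exceptional (from $X$ canonical), $K_{\wtilde X}\equiv 0$ forces $-E\equiv f^*K_X$ to be $f$-nef, so the negativity lemma gives $E=0$ and hence $K_X\equiv 0$.

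However, there is a genuine error in your proof of item~\ref{il:AUG1}. You claim that $h^0\bigl(W,\,\Omega_W^{[1]}\bigr)\le\dim W$ ``simply because $\Omega_W^{[1]}$ is a reflexive sheaf of rank $\dim W$, so its space of global sections has dimension at most $\dim W$.'' This is false as stated: a reflexive sheaf of rank $r$ on a projective variety can have arbitrarily many global sections --- the line bundle $\sO_{\bP^n}(1)$, of rank one with $n+1$ sections, is already a counter\-example. The parenthetical observation that a section vanishing on a big open set vanishes identically is correct but does not bound the dimension of the space of sections; it only rules out sections supported on small subsets. The inequality $h^0\bigl(W,\,\Omega_W^{[1]}\bigr)\le\dim W$ is nevertheless true in your setting, but for a non-trivial reason: $\Omega^{[1]}_W$ is polystable of slope zero with respect to any ample polarisation (by the results of Enoki/Guenancia on klt varieties with numerically trivial canonical class), and for a polystable sheaf of slope zero every non-zero global section splits off a trivial rank-one direct summand, whence $h^0\le\rank$. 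Alternatively, and more in line with the paper's own route, one appeals directly to \cite[Rem.~3.4]{GKP16}, where the bound $q(W)\le\dim W$ is obtained via the Albanese morphism. You should replace your one-line justification by one of these arguments; as written, the step in item~\ref{il:AUG1} does not go through.
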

\begin{proof}
  To prove Item~\ref{il:AUG2}, recall from \cite[Lem.~4.1.14]{Laz04-I}
  (``Injectivity Lemma'') that the irregularity increases in covers and observe
  that any two quasi-étale covers are dominated by a common third.
  Item~\ref{il:AUG3} is shown in \cite[Lem.~4.4]{Dru16}.  To prove
  Item~\ref{il:AUG1}, let $γ: Y → X$ be a global index one cover, whose
  existence is guaranteed by Proposition~\ref{prop:global_index_one_cover}.  We
  may then apply \cite[Rem.~3.4]{GKP16} and Item~\ref{il:AUG2} to conclude.
\end{proof}

%
%
\svnid{$Id: 03-KE.tex 758 2018-10-01 15:48:05Z guenancia $}

\section{Singular Kähler-Einstein metrics}
\label{Section:KE}
\subversionInfo
\approvals{Daniel & yes \\ Henri & yes \\ Stefan & yes}

In this section, we recall the construction of Eyssidieux, Guedj and Zeriahi
\cite{MR2505296}\footnote{but see also \cite{MR0944606, CaLa, Tian-Zhang,
    Zhang06, Paun, DemPal} for further contributions}, which produces so-called
\emph{singular Kähler-Einstein metrics} on certain singular varieties such as
projective klt varieties with torsion canonical bundle.  These objects induce
genuine Kähler-Einstein metrics on the regular locus of the variety, and we will
study the resulting holonomy groups.

\subsection{Existence of Ricci-flat Kähler metrics}
\approvals{Daniel & yes \\ Henri & yes \\ Stefan & yes}

Let $X$ be a projective, klt variety whose canonical class $K_X$ is numerically
trivial.  In this setting, Eyssidieux, Guedj and Zeriahi have shown that each
Kähler class $α ∈ H²\bigl( X,\, ℝ \bigr)$ contains a unique Ricci-flat Kähler
metric $ω_α$ that is smooth on $X_{\reg}$, satisfies $\Ric ω_α = 0$ there, and
has bounded potentials near the singularities.  To give a precise account of
their construction, we need to specify notation first.  We refer to reader to
\cite[Sect.~7.1]{MR2505296} and \cite[Sect.~4.6.2]{BEG} for proofs and further
details.

\begin{notation}\label{not:maiset}
  Let $X$ be a projective, klt variety whose canonical class $K_X$ is
  numerically trivial.  By Proposition~\ref{prop:global_index_one_cover} there
  exists a positive number $m ∈ ℕ^+$ such that $m·K_X$ is linearly trivial.
  Choose one such $m$, choose a global generator
  $σ ∈ H⁰ \bigl( X,\, 𝒪_X(m·K_X) \bigr)$, choose a constant Hermitian metric
  $\|·\|$ on the trivial bundle $𝒪_X(m·K_X)$ and consider the following volume
  form on $X_{\reg}$,
  \begin{equation}\label{eq:rvform}
    i^n · (-1)^{ \frac{n(n+1)}{2}}· \left(\frac{σ Λ \overline{σ}}{\|σ\|²}\right)^{\frac 1m} \quad\text{where } n := \dim X.
  \end{equation}
  Finally, let $μ_X$ be the associated positive measure on $X$ obtained as the
  trivial extension of the measure on $X_{\reg}$ associated with the volume form
  in \eqref{eq:rvform} above.
\end{notation}

\begin{rem}[Independence on choices]\label{rem:measure_choiceindependent}
  The volume form in \eqref{eq:rvform} and the measure $μ_X$ are easily seen to
  be independent of the choice of $m$ and $σ$.  The measure $μ_X(X)^{-1}·μ_X$ is
  independent of the number $m$, the form $σ$ and of the choice of the constant
  Hermitian metric $\|·\|$.
\end{rem}

\begin{thm}[\protect{Existence of Ricci-flat Kähler metrics, cf.~\cite[Thm.~7.5]{MR2505296}}]\label{thm:EGZ}
  Let $X$ be a projective, klt variety whose canonical class $K_X$ is
  numerically trivial.  Given any ample $H ∈ \Div(X)$ with class
  $[H] ∈ H² \bigl(X,\, ℝ \bigr)$, there exists a unique closed positive current
  $ω_H$ on $X$ such that the following holds.
  \begin{enumerate}
  \item\label{il:emerson} Denoting the de Rham cohomology class of the current
    $ω_H$ by $[ω_H]$, we have an equality of cohomology classes,
    $[ω_H]= [H] ∈ H² \bigl(X ,\, ℝ \bigr)$.
  \item\label{il:lake} The current $ω_H$ has bounded potentials.
  \item\label{il:palmer} Using Notation~\ref{not:maiset}, the positive measure
    $(ω_H)^{\dim X}$ obtained as the top intersection of $ω_H$ puts no mass on proper analytic subsets and satisfies
    $$
    (ω_H)^{\dim X} = [H]^{\dim X}·μ_X(X)^{-1}·μ_X.
    $$
  \end{enumerate}
  Furthermore, the current $ω_H$ is smooth on $X_{\reg}$ and induces a genuine
  Ricci-flat Kähler metric there.  \qed
\end{thm}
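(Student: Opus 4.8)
The plan is to reduce the statement to the solvability of a degenerate complex Monge--Ampère equation on $X$, which is precisely the content of \cite[Thm.~7.5]{MR2505296}; the sketch below indicates how the pieces fit together. Fix a smooth Kähler form $\omega_0$ representing the class $[H]$ and set $c := [H]^{\dim X}\cdot\mu_X(X)^{-1}$, the unique positive constant for which $c\,\mu_X$ and $\omega_0^{\dim X}$ have equal total mass. The goal is to find a bounded $\omega_0$-plurisubharmonic function $\varphi$ on $X$ such that the closed positive current $\omega_H := \omega_0 + i\partial\bar\partial\varphi$ satisfies
\[
  (\omega_0 + i\partial\bar\partial\varphi)^{\dim X} \;=\; c\,\mu_X
\]
in the sense of Bedford--Taylor (the product is well defined because $\varphi$ is bounded). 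Granting such a $\varphi$, property~\ref{il:emerson} holds by construction, property~\ref{il:lake} because $\varphi$ is bounded, and property~\ref{il:palmer} because $\mu_X$ is, by Notation~\ref{not:maiset}, the trivial extension of an absolutely continuous measure on $X_{\reg}$ and hence puts no mass on proper analytic subsets; the stated normalisation is then forced by comparing total masses.

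To construct $\varphi$ I would pass to a resolution $\pi\colon\wtilde X\to X$ that is an isomorphism over $X_{\reg}$. The key analytic incarnation of the klt hypothesis (``all discrepancies are $>-1$'') is that $\mu_X$ lifts under $\pi$ to a measure of the form $f\cdot dV$ on $\wtilde X$, with $dV$ a fixed smooth volume form and density $f\in L^p(\wtilde X,dV)$ for some $p>1$. The form $\pi^*\omega_0$ is semipositive and big, so Ko\l odziej-type pluripotential estimates yield a bounded --- in fact continuous --- $\pi^*\omega_0$-plurisubharmonic solution $\wtilde\varphi$ of $(\pi^*\omega_0 + i\partial\bar\partial\wtilde\varphi)^{\dim X} = c\,f\,dV$. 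Transporting $\wtilde\varphi$ to $X_{\reg}$ via the isomorphism and extending the resulting globally bounded $\omega_0$-plurisubharmonic function across the small set $X\setminus X_{\reg}$ by the standard extension theorem for bounded plurisubharmonic functions yields the desired $\varphi$ on $X$; the global Monge--Ampère identity holds because both sides charge no pluripolar set and they agree on $X_{\reg}$. Uniqueness of $\omega_H$ among closed positive currents with bounded potentials in $[H]$ follows from the comparison principle for bounded solutions (Dinew's uniqueness theorem in the big case, or Bedford--Taylor theory on $\wtilde X$).

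For the final assertion, I would argue locally on $X_{\reg}$, where $m K_X$ trivialises: the generator $\sigma$ of $\sO_X(m K_X)$ is then a nowhere-vanishing holomorphic section, so with respect to a local Euclidean volume form $dV_0$ one has $\mu_X = e^{u}\cdot dV_0$ for a \emph{pluriharmonic} function $u$. In particular the density of $\omega_H^{\dim X} = c\,\mu_X$ relative to $dV_0$ is smooth and strictly positive on $X_{\reg}$, so elliptic bootstrapping for the complex Monge--Ampère operator (the Evans--Krylov estimate followed by Schauder theory) gives $\varphi\in C^\infty(X_{\reg})$, whence $\omega_H$ restricts to a genuine Kähler metric there; and
\[
  \Ric\omega_H \;=\; -\,i\partial\bar\partial\log\!\left(\tfrac{\omega_H^{\dim X}}{dV_0}\right) \;=\; -\,i\partial\bar\partial\log\!\bigl(c\,e^{u}\bigr) \;=\; -\,i\partial\bar\partial u \;=\; 0
\]
on $X_{\reg}$, as claimed.

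The main obstacle is the uniform ($L^\infty$) a priori estimate for the Monge--Ampère potential when the density $f$ is merely in $L^p$ rather than bounded: this is the genuinely hard analytic input, it is where the klt hypothesis is really used, and it constitutes the core of \cite{MR2505296}, building on Ko\l odziej's work. By contrast, passing to and descending from the resolution, the interior higher-order estimates on $X_{\reg}$, the uniqueness statement, and the bookkeeping for properties~\ref{il:emerson}--\ref{il:palmer} are routine.
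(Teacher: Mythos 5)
The paper gives no proof of this theorem: it is stated with a \qed as a citation to \cite[Thm.~7.5]{MR2505296}, and the preprint version adds only explanatory remarks on the meaning of ``bounded potentials'' and ``top intersection,'' not an argument. Your sketch is a correct reconstruction of the proof in that reference --- reduce to a degenerate complex Monge--Amp\`ere equation $(\omega_0+dd^c\varphi)^n=c\,\mu_X$, pass to a resolution where the klt hypothesis places the transported density in $L^p$ for some $p>1$, invoke Ko\l odziej-type $L^\infty$ estimates for the semipositive big class $\pi^*[H]$, descend and extend across the pluripolar set $X\setminus X_{\reg}$, and obtain smoothness and $\Ric\omega_H=0$ on $X_{\reg}$ from the fact that the local density $|h|^{2/m}$ (with $h$ a non-vanishing holomorphic function) has pluriharmonic logarithm --- so there is nothing to compare against within the paper itself.
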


\Publication{The preprint version of this paper,
  \href{https://arxiv.org/abs/1704.01408}{arXiv:1704.01408}, explains
  Theorem~\ref{thm:EGZ} in more detail.}\Preprint{
  \begin{explanation*}[Item~\ref{il:lake}, bounded potentials]
    Choose a smooth reference Kähler metric $ω$ on $X$ with $[ω] = [H]$ and say
    that ``$ω_H$ has bounded potentials'' if one can write $ω_H = ω+dd^c φ$ for
    some \emph{bounded} function $φ$.  This condition is independent of the
    choice of the Kähler metric $ω ∈ [H]$, as any two cohomologous Kähler
    metrics differ by the $dd^c$ of a smooth function, cf.~\cite[Sect.~4.6.1,
    discussion below Def.~4.6.2]{BEG}.
  \end{explanation*}
  
  \begin{explanation*}[Item~\ref{il:palmer}, top intersection of currents and Ricci curvature]
    Bedford-Taylor theory allows to define the top intersection $(ω_H)^{\dim X}$
    of the current $ω_H$, which is a positive measure on $X$ that puts no mass
    on analytic, or more generally pluripolar sets.  Indeed, if $E$ is
    pluripolar, write $ω_H = dd^c φ$ for some local psh function $φ$, take a
    locally defined psh function $V$ with $\{V=-∞\} = E$ and apply
    \cite[Thm.~2.2(b)]{Dem85} with $k = n$, and with $φ_1 := φ$, …, $φ_n := φ$.
    The equation in Item~\ref{il:palmer} expresses that $\Ric ω_H=0$ in the
    sense of currents.
  \end{explanation*}}

\begin{rem}[KLT versus canonical singularities in Theorem~\ref{thm:EGZ}]
  The statement \cite[Thm.~7.5]{MR2505296} assumes that $X$ has canonical
  singularities.  The proof does not use this assumption and works verbatim for
  klt spaces in case where $Δ=0$, compare with \cite[Thm.~7.12]{MR2505296}.
\end{rem}

\subsection{Universal property of the EGZ construction}\label{univpropegz}
\approvals{Daniel & yes \\ Henri & yes \\ Stefan & yes}

The construction of Theorem~\ref{thm:EGZ} has the following universal property.

\begin{prop}[Universal property of the EGZ construction]\label{prop:univ}
  Let $X$ be projective, klt variety whose canonical class $K_X$ is numerically
  trivial.  Let $H ∈ \Div(X)$ be ample.  Given a quasi-étale cover $γ : Y → X$,
  then $H_Y := γ^*H$ is ample.  Recalling from Reminder~\ref{remi:qec} that $Y$
  is klt and that $K_Y = γ^* K_X$ is numerically trivial, Theorem~\ref{thm:EGZ}
  applies both to $(X,H)$ and to $(Y, H_Y)$, and defines genuine Ricci-flat
  Kähler metrics $ω_H$ and $ω_{H_Y}$ on $X_{\reg}$ and $Y_{\reg}$, respectively.
  These metrics agree on the smooth, open set
  $Y° := γ^{-1} \bigl( X_{\reg} \bigr)$.  In other words,
  $ω_{H_Y}|_{Y°} = (γ|_{Y°})^* ω_H$.
\end{prop}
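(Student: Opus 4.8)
The plan is to reduce the statement to the \emph{uniqueness} assertion in Theorem~\ref{thm:EGZ}. That $H_Y := \gamma^*H$ is ample is standard, since the pullback of an ample divisor along a finite surjective morphism is ample; together with Reminder~\ref{remi:qec} this puts us in a position to apply Theorem~\ref{thm:EGZ} to $(Y,H_Y)$ and obtain a Ricci-flat current $\omega_{H_Y}$ on $Y$. As $\gamma$ is a finite holomorphic map, the closed positive $(1,1)$-current $\omega_H$ admits a well-defined pullback $\gamma^*\omega_H$, a closed positive current on $Y$: locally one writes $\omega_H = dd^c u$ with $u$ \emph{bounded} plurisubharmonic (property~\ref{il:lake}) and sets $\gamma^*\omega_H := dd^c(u\circ\gamma)$, noting that $u\circ\gamma$ is again bounded plurisubharmonic and that this is independent of the choice of local potentials. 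It then suffices to verify that $\gamma^*\omega_H$ has properties \ref{il:emerson}--\ref{il:palmer}.

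Property~\ref{il:emerson} is immediate from $[\gamma^*\omega_H] = \gamma^*[H] = [H_Y]$. For property~\ref{il:lake}, write $\omega_H = \omega + dd^c\varphi$ with $\omega \in [H]$ a smooth reference Kähler metric and $\varphi$ bounded; then $\gamma^*\omega$ is a smooth form on $Y$ in the class $[H_Y]$, hence differs from a chosen smooth Kähler metric $\omega_Y \in [H_Y]$ by $dd^c\rho$ for some smooth $\rho$ (the $dd^c$-lemma on the normal projective variety $Y$, e.g.\ via a resolution), so that $\gamma^*\omega_H = \omega_Y + dd^c(\rho + \varphi\circ\gamma)$ has a bounded potential.

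The main point is property~\ref{il:palmer}. Here I would make compatible choices in Notation~\ref{not:maiset}: from $m\cdot K_X \sim 0$ with generator $\sigma$ and constant Hermitian metric $\|\cdot\|$, use $m\cdot K_Y = \gamma^*(m\cdot K_X) \sim 0$ together with $\sigma_Y := \gamma^*\sigma$ and $\|\cdot\|_Y := \gamma^*\|\cdot\|$. With these choices the volume form \eqref{eq:rvform} is functorial along the étale map $\gamma|_{Y°} : Y° \to X_{\reg}$, where $Y° := \gamma^{-1}(X_{\reg})$, so that $\mu_Y|_{Y°} = (\gamma|_{Y°})^*(\mu_X|_{X_{\reg}})$. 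Since $\gamma|_{Y°}$ is an étale cover of degree $d := \deg\gamma$ and both $\mu_X,\mu_Y$ put no mass on proper analytic subsets, integration gives $\mu_Y(Y) = d\cdot\mu_X(X)$, while the projection formula gives $[H_Y]^{\dim Y} = d\cdot[H]^{\dim X}$. Pulling the equation of~\ref{il:palmer} for $(X,H)$ back along $\gamma|_{Y°}$ then yields, on $Y°$,
$$
(\gamma^*\omega_H)^{\dim Y} \;=\; [H]^{\dim X}\cdot\mu_X(X)^{-1}\cdot\mu_Y \;=\; [H_Y]^{\dim Y}\cdot\mu_Y(Y)^{-1}\cdot\mu_Y .
$$
Because $\gamma^*\omega_H$ has locally bounded potentials, Bedford--Taylor theory shows that $(\gamma^*\omega_H)^{\dim Y}$ puts no mass on the proper analytic set $Y\setminus Y° = \gamma^{-1}(X_{\sing})$, and neither does the right-hand side by construction of $\mu_Y$; hence the equality of measures holds on all of $Y$, which is property~\ref{il:palmer}.

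By the uniqueness part of Theorem~\ref{thm:EGZ} we conclude $\omega_{H_Y} = \gamma^*\omega_H$ as currents on $Y$, and restriction to $Y°$ gives $\omega_{H_Y}|_{Y°} = (\gamma|_{Y°})^*\omega_H$, as desired. I expect the two genuinely technical issues to be the functoriality of the auxiliary volume form \eqref{eq:rvform} under étale maps — which is exactly what forces the normalising factors $\mu_\bullet(\bullet)^{-1}$ and the top self-intersections to match up — and the measure-theoretic bookkeeping needed to promote the identities from $Y°$ to all of $Y$; both are powered by the ``no mass on analytic (indeed pluripolar) sets'' properties already built into Theorem~\ref{thm:EGZ}, together with the choice-independence recorded in Remark~\ref{rem:measure_choiceindependent}.
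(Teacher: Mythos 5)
Your proposal is correct and takes essentially the same route as the paper: reduce to the uniqueness clause of Theorem~\ref{thm:EGZ}, make the compatible choices $\gamma^*\sigma$ and the pulled-back constant metric, verify properties~\ref{il:emerson}--\ref{il:palmer} with the only real work being~\ref{il:palmer}, match the normalising constants using the étale degree together with the projection formula, and promote the identity of measures from $Y^\circ$ to all of $Y$ via the no-mass-on-pluripolar-sets property of Monge--Ampère measures of bounded potentials and of $\mu_Y$. The paper's proof organises the bookkeeping slightly differently (it first establishes $(\gamma^*\omega_H)^n = C\cdot\mu_Y$ with $C = [H]^n/\mu_X(X)$ and only afterwards verifies $C = C_Y$, whereas you fold both steps together), but the ingredients and the argument are the same.
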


\Preprint{
  \begin{rem*}[Pull-back of currents]\label{rem:pboc}
    Recall that unlike arbitrary currents, closed positive $(1,1)$-currents can
    be pulled back under holomorphic maps.  More precisely, if $f : Y → X$ is a
    holomorphic map between two normal compact Kähler spaces and if $T$ is a
    closed positive $(1,1)$-current on $X$, then one can write $T = ω + dd^cφ$
    for some smooth form $ω$ and some $ω$-psh function $φ$.  The closed positive
    $(1,1)$-current $f^*T$ on $Y$ is defined to be $f^*T := f^*ω + dd^c (φ◦f)$.
    This pull-back is easily seen to extend the usual pull-back of forms.
  \end{rem*}}

\begin{proof}[Proof of Proposition~\ref{prop:univ}]
  Choose a global generator $σ ∈ H⁰ \bigl( X,\, 𝒪_X(m·K_X) \bigr)$ and a
  constant Hermitian metric $\|·\|$ on the trivial bundle $𝒪_X(m·K_X)$.  We
  obtain a positive measure $μ_X$ on $X$, as in Notation~\ref{not:maiset}.
  Consider the pull-back form $γ^*σ$, which generates $𝒪_Y(m·K_Y)$, as well as
  the constant pull-back metric $\|·\|$ on this trivial bundle.  We obtain the
  associated measure $μ_Y$ on $Y$.  Write $n := \dim X$ and consider the
  positive numbers
  $$
  C := [H]^n·μ_X(X)^{-1} > 0 \quad \text{and} \quad C_Y := [H_Y]^n·μ_Y(Y)^{-1} >
  0.
  $$

  We claim that $γ^*ω_H = ω_{H_Y}$, from which Proposition~\ref{prop:univ} would
  follow.  For this, it suffices check that the pull-back current $γ^*ω_H$
  satisfies Properties~\ref{il:emerson}--\ref{il:palmer} that uniquely
  characterise $ω_{H_Y}$.  Since $γ^* ω_H$ clearly lives in $[H_Y]$ and has
  bounded potentials, it only remains to show that its top intersection
  $(γ^* ω_H)^n$ equals $\wtilde{C}·μ_Y$.  Pulling back the relation
  $ω_H^n= C·μ_X$ by $γ$, we obtain an equality of measures on
  $Y°=γ^{-1}(X_{\reg})$,
  \begin{equation}\label{eq:x1}
    (γ^*ω_H)^n = C·i^n (-1)^{\frac{n(n+1)}{2}}\left( \frac{γ^*σ Λ
        \overline{γ^*σ}}{\|γ^*σ\|²}\right)^{\frac 1m}.
  \end{equation}
  Over $Y°$, where $γ$ is étale, the section $γ^*σ$ trivialises
  $𝒪_Y\bigl( γ^*(m·K_X) \bigr) = 𝒪_Y\bigl(m·K_Y\bigr)$.  We hence conclude from
  Remark~\ref{rem:measure_choiceindependent} that the following measures
  coincide on $Y°$,
  \begin{align}
    \label{eq:x2} μ_Y & = i^n (-1)^{\frac{n(n+1)}{2}}\left( \frac{γ^*σ Λ \overline{γ^*σ}}{\|γ^*σ\|²}\right)^{\frac 1m} && \text{on } Y°.\\
    \intertext{Combining \eqref{eq:x1} and \eqref{eq:x2}, we get that}
    \label{eq:x3} (γ^*ω_H)^n & = C·μ_Y \quad \quad \quad \text{on } Y°.
  \end{align}
  But none of the two measures in \eqref{eq:x3} charges pluripolar sets: for the
  LHS, this is because $γ^*ω_H$ has bounded potentials; for the RHS this is
  almost by definition.  On the other hand, $Y ∖ Y°$ is a proper
  analytic subset of $Y$, hence pluripolar.  We conclude that the two measures
  coincide, and that the equality $(γ^*ω_H)^n = C·μ_Y$ holds globally on $Y$.
  Finally, observe that
  $$
  μ_Y(Y) = μ_Y ( Y° ) = (\deg γ)· μ_X(X_{\reg}) =
  \frac{(\deg γ) · [H]^n}{C} = \frac{[H_Y]^n}{C}.
  $$
  This proves the desired equality $C = [H_Y]^n·μ_Y(Y)^{-1} = C_Y$.
\end{proof}

\begin{rem}
  It follows from the above result that the current $γ^* ω_H$ is smooth and
  Kähler on the open set $Y_{\reg} ⊇ γ^{-1}(X_{\reg})$ although $γ$ may not be
  étale there.  For a typical example, consider a situation where $X$ has
  quotient singularities and admits a global, smooth, quasi-étale cover
  $γ: Y → X$.
\end{rem}

\subsection{Product situations}
\approvals{Daniel & yes \\ Henri & yes \\ Stefan & yes}

Another straightforward though useful property of the EGZ construction is its
compatibility with product structures.  \Preprint{In the smooth case, this
  follows from the fact that the Ricci curvature of a product metric is the
  product of the Ricci forms, and that cohomology classes also behave naturally.
  In the singular case, one has to argue at the level of the Monge-Ampère
  equations satisfied by the Kähler-Einstein metrics.}

\begin{prop}[EGZ construction in product situation]\label{prop:univProd}
  Let $X_1$ and $X_2$ be projective, klt varieties whose canonical classes
  $K_{X_{•}}$ are numerically trivial.  Let $H_{•} ∈ \Div(X_{•})$ be ample.
  Then, $X = X_1 ⨯ X_2$ is klt, with trivial canonical class and
  $H := (\pr_1)^*H_1⨯(\pr_2)^*H_2 ∈ Div(X)$ is ample.  Theorem~\ref{thm:EGZ}
  applies to $(X_{•}, H_{•})$ and to $(X,H)$, and defines genuine Ricci-flat
  Kähler metrics $(ω_{H_{•}})$ and $ω_H$ on $(X_{•})_{\reg}$ and on $X_{\reg}$,
  respectively.  With this notation, the Ricci-flat Kähler manifold
  $(X_{\reg}, ω_H)$ is isomorphic to the product
  $\bigl((X_1)_{\reg}, ω_{H_1}\bigr) ⨯ \bigl((X_2)_{\reg}, ω_{H_2}\bigr)$.
\end{prop}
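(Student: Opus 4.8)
The plan is to follow the proof of Proposition~\ref{prop:univ} almost verbatim. Recall first that $X = X_1 \times X_2$ is klt, that $K_X = (\pr_1)^*K_{X_1} + (\pr_2)^*K_{X_2}$ is numerically trivial, and that $H = (\pr_1)^*H_1 + (\pr_2)^*H_2$ is ample; moreover a point of $X$ is smooth exactly when both of its coordinates are, so that $X_{\reg} = (X_1)_{\reg} \times (X_2)_{\reg}$. Thus Theorem~\ref{thm:EGZ} applies to $(X, H)$ and to both $(X_\bullet, H_\bullet)$. The idea is to exhibit the evident product current on $X$ and verify that it satisfies the three properties of Theorem~\ref{thm:EGZ} that characterise $\omega_H$ uniquely. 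Throughout, put $n_\bullet := \dim X_\bullet$ and $n := n_1 + n_2$.

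First I would fix compatible data for Notation~\ref{not:maiset}. Choose $m \in \bN^{+}$ for which both $m K_{X_1}$ and $m K_{X_2}$ are linearly trivial, with generators $\tau_1$ and $\tau_2$, so that $\tau := (\pr_1)^*\tau_1 \otimes (\pr_2)^*\tau_2$ generates $\sO_X(m K_X)$, equipped with the tensor product of the constant metrics chosen on $X_1$ and $X_2$. A local computation of the volume form \eqref{eq:rvform} then shows that, on $X_{\reg}$, the volume form attached to $(X, \tau)$ equals the wedge of the pulled-back volume forms attached to $(X_1, \tau_1)$ and $(X_2, \tau_2)$; the only point requiring attention is that the normalising factor $i^n(-1)^{n(n+1)/2}$ and the $m$-th root in \eqref{eq:rvform} are compatible with the product decomposition, the sign contributions pairing up correctly with the reordering of holomorphic and antiholomorphic coordinate differentials. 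Since by Remark~\ref{rem:measure_choiceindependent} the measures $\mu_{X_\bullet}$ do not depend on these choices, and since $X \setminus X_{\reg}$ is $\mu_X$-null, this yields $\mu_X = \mu_{X_1} \otimes \mu_{X_2}$ as measures on $X$, and in particular $\mu_X(X) = \mu_{X_1}(X_1)\cdot\mu_{X_2}(X_2)$ by Fubini.

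Now set $T := (\pr_1)^*\omega_{H_1} + (\pr_2)^*\omega_{H_2}$, a closed positive $(1,1)$-current on $X$ with bounded local potentials (recall that closed positive $(1,1)$-currents pull back, and that this preserves boundedness of potentials). Its de Rham class is $(\pr_1)^*[H_1] + (\pr_2)^*[H_2] = [H]$, so the first two properties of Theorem~\ref{thm:EGZ} hold. For the third, one checks, using the multinomial formula, Bedford-Taylor theory for products of bounded-potential currents, and the vanishing $\bigl((\pr_\bullet)^*\omega_{H_\bullet}\bigr)^{n_\bullet + 1} = 0$ (a closed positive current pulled back from an $n_\bullet$-dimensional variety has no nonzero power in bidegree beyond $(n_\bullet, n_\bullet)$), that only a single term survives:
\[
  T^{n} \;=\; \binom{n}{n_1}\,\bigl((\pr_1)^*\omega_{H_1}\bigr)^{n_1} \wedge \bigl((\pr_2)^*\omega_{H_2}\bigr)^{n_2}.
\]
Restricted to $X_{\reg}$ this is a wedge of smooth forms, and feeding in the Ricci-flat Monge-Amp\`ere equations $(\omega_{H_\bullet})^{n_\bullet} = [H_\bullet]^{n_\bullet}\mu_{X_\bullet}(X_\bullet)^{-1}\mu_{X_\bullet}$ supplied by Theorem~\ref{thm:EGZ}, the product formula for $\mu_X$ established above, and the identity $[H]^{n} = \binom{n}{n_1}[H_1]^{n_1}[H_2]^{n_2}$ (the same multinomial computation, now in cohomology), one finds $T^{n} = [H]^{n}\cdot\mu_X(X)^{-1}\cdot\mu_X$ on $X_{\reg}$. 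As in the proof of Proposition~\ref{prop:univ}, neither measure charges the pluripolar, proper analytic subset $X\setminus X_{\reg}$, so the equality propagates to all of $X$, and the third property follows. By the uniqueness clause of Theorem~\ref{thm:EGZ} we get $\omega_H = T$; restricting once more to $X_{\reg} = (X_1)_{\reg}\times(X_2)_{\reg}$, where all three currents are genuine Ricci-flat K\"ahler metrics, this reads $\omega_H = (\pr_1)^*\omega_{H_1} + (\pr_2)^*\omega_{H_2}$, which is exactly the claimed isomorphism of K\"ahler manifolds.

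I do not expect an essential difficulty here. The two places that genuinely need care are: the bookkeeping with the normalising constants of \eqref{eq:rvform} that makes $\mu_X$ factor as $\mu_{X_1}\otimes\mu_{X_2}$ (which I regard as the main, if modest, obstacle), and the verification that $T^{n}$, computed via Bedford-Taylor, is concentrated on $X_{\reg}$, so that the unwanted terms of the multinomial expansion vanish as measures rather than merely as smooth forms off the singular set. Both are dispatched by the same no-mass-on-pluripolar-sets argument already used in the proof of Proposition~\ref{prop:univ}.
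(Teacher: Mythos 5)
Your proof is correct and follows the same strategy as the paper: exhibit the product current $T = \pr_1^*\omega_{H_1} + \pr_2^*\omega_{H_2}$, observe it has bounded potentials in $[H]$, factor $\mu_X$ as $\mu_{X_1}\otimes\mu_{X_2}$ (the paper does this via $H^0(X,\sO_X(mK_X))\cong H^0(X_1,\sO_{X_1}(mK_{X_1}))\otimes H^0(X_2,\sO_{X_2}(mK_{X_2}))$, you via an explicit product generator — same thing), and invoke the uniqueness clause of Theorem~\ref{thm:EGZ}. You are somewhat more explicit than the printed proof about the multinomial coefficient $\binom{n}{n_1}$ (which must appear on both sides of the Monge--Amp\`ere identity and which the paper's displayed equation silently absorbs) and about propagating the equality of measures from $X_{\reg}$ to $X$ via the no-mass-on-pluripolar-sets argument; these are welcome clarifications, not deviations.
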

\begin{proof}
  The current $ω:=\pr_1^*ω_{H_1}+\pr_2^*ω_{H_2}$ on $X$ has bounded potentials
  and satisfies $[ω]=[H]$.  In order to prove the identity of current $ω = ω_H$
  on $X$ from which the proposition follows, it will therefore suffice to check
  Property~\ref{il:palmer} of Theorem~\ref{thm:EGZ}.  To this end, observe that
  if $m ∈ ℕ$ is divisible enough, one has a natural isomorphism
  $$
  H⁰ \bigl(X,\, 𝒪_X(m·K_X)\bigr) ≅ H⁰\bigl(X_1,\, 𝒪_{X_1}(m·K_{X_1})\bigr) ⊗
  H⁰\bigl(X_2,\, 𝒪_{X_2}(m·K_{X_2})\bigr).
  $$
  It follows that the product measure $μ_{X_1}⊗μ_{X_2}$ coincides with $μ_X$ up
  to a constant.  In particular,
  $$
  \frac{μ_X}{μ_X(X)} = \frac{μ_{X_1} ⊗ μ_{X_2}}{μ_{X_1}(X_1)·μ_{X_2}(X_2)}
  $$
  and therefore
  $$
  \frac{[H]^{\dim X}·μ_X}{μ_X(X)} = \frac{[H_1]^{\dim X_1}·[H_2]^{\dim
      X_2}·μ_{X_1} ⊗ μ_{X_2}}{μ_{X_1}(X_1)·μ_{X_2}(X_2)}.
  $$
  The product current $ω$ therefore satisfies Property~\ref{il:palmer}, as
  desired.
\end{proof}

%
%
\svnid{$Id: 04-standardSetting.tex 763 2018-10-27 13:39:14Z kebekus $}

\section{The standard setting}\label{sec:setting}
\subversionInfo
\approvals{Daniel & yes \\ Henri & yes \\ Stefan & yes}

The goal of this section is to set up the framework for the rest of the article.
In addition, we recollect some results about holonomy that we will use
repeatedly later on.  This includes locality of restricted holonomy, relation
with parallel transport and behaviour under quasi-étale covers.

\subsection{The standard setting}
\approvals{Daniel & yes \\ Henri & yes \\ Stefan & yes}

Throughout the present paper we will be working in the setup of
Theorem~\ref{thm:EGZ}, and use the metrics produced there in order to compute
holonomy groups.  We will use the following notation.

\begin{setupnot}[Standard setting]\label{setting:holonomy}
  Let $X$ be a projective, klt variety with numerically trivial canonical class
  $K_X$.  Write $n := \dim_ℂ X$ and assume that $n ≥ 2$.  Fix an ample Cartier
  divisor $H$ on $X$ and a smooth point $x ∈ X_{\reg}$.  Let $ω_H$ denote the
  singular, Ricci-flat Kähler metric constructed in Theorem~\ref{thm:EGZ}.
  Write $g_H$ for the associated Riemannian metric on $X_{\reg}$, and $h_H$ for
  the associated Hermitian metric on $TX_{\reg}$.  We consider the complex,
  Hermitian vector space $V := T_xX$ with Hermitian form $h_{H, x}$ and write
  $$
  G := \Hol\bigl( X_{\reg},\, g_H\bigr)_x ⊆ \U(V, h_{H,x}) \quad\text{and}\quad
  G° := \Hol\bigl( X_{\reg},\, g_H\bigr)°_x ⊆ \SU(V, h_{H,x})
  $$
  for the (restricted) holonomy\Preprint{, as in Reminder~\vref{remi:holonomy}}.
\end{setupnot}

\subsection{Geodesic incompleteness}
\approvals{Daniel & yes \\ Henri & yes \\ Stefan & yes}

The following result implies that many of the standard arguments used in the
study of manifolds with vanishing first Chern class are not at our disposal.

\begin{prop}[Geodesic incompleteness in the standard setting]\label{prop:incomplete}
  In the standard Setting~\ref{setting:holonomy}, assume that $X_{\reg} ≠ X$.
  In other words, assume that $X$ does have non-trivial singularities.  Then,
  $(X_{\reg}, g_H)$ is not geodesically complete.
\end{prop}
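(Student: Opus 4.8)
The idea is to show that a geodesic, when continued towards a singular point of $X$, reaches it in finite time, so $(X_{\reg}, g_H)$ cannot be geodesically complete. The essential point is a volume estimate: near a singular point, the Ricci-flat Kähler metric $\omega_H$ has bounded local potential (Theorem~\ref{thm:EGZ}\,\ref{il:lake}), hence is dominated by a smooth Kähler metric on an ambient space. Therefore the Riemannian distance from an interior point to a singular point is finite, and the metric space completion of $(X_{\reg}, g_H)$ strictly contains $X_{\reg}$. By the Hopf--Rinow theorem (one direction of which holds without any properness hypothesis: a complete Riemannian manifold is a complete metric space, hence has no "finite-distance boundary points"), geodesic completeness would force $(X_{\reg}, d_{g_H})$ to be a complete metric space, contradicting the existence of such a boundary point.

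**Key steps.** First I would pick a point $p \in X \setminus X_{\reg}$ and a small analytic (or Zariski-open affine) neighbourhood $U$ of $p$ in $X$, together with a smooth Kähler metric $\omega_U$ on $U$ (for instance the restriction of a Fubini--Study metric under a local embedding $U \hookrightarrow \mathbb{C}^N$). Second, using that $\omega_H$ has bounded potentials, write $\omega_H = \omega_U + dd^c \varphi$ on $U_{\reg} := U \cap X_{\reg}$ with $\varphi$ bounded; by a standard comparison (e.g. the argument that the Monge--Ampère measure controls the metric when the potential is bounded, or more simply just that $\omega_H$ is a positive current cohomologous to $\omega_U$ with bounded potential) one does \emph{not} directly get $\omega_H \leq C\,\omega_U$ pointwise — that is false in general — so instead I would argue at the level of \emph{lengths}: choose a point $x_0 \in U_{\reg}$ and show that there is a path $\gamma : [0,1) \to U_{\reg}$ with $\gamma(t) \to p$ whose $g_H$-length is finite. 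For this it suffices to bound $\int_\gamma |\dot\gamma|_{g_H}\, dt$; one takes $\gamma$ to be a real-analytic arc approaching $p$ inside $U_{\reg}$ and uses the $L^2$-bound coming from $\int_{U_{\reg}} \omega_H^n < \infty$ together with a Cauchy--Schwarz / coarea argument, or, more cleanly, invokes that a closed positive $(1,1)$-current with locally bounded potential has finite-length "radial" paths because its potential is bounded. Third, having produced a Cauchy sequence $\gamma(t_k) \to p$ in $(X_{\reg}, d_{g_H})$ that does not converge in $X_{\reg}$, conclude that the metric space is incomplete, hence by Hopf--Rinow the Riemannian manifold is geodesically incomplete.

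**Main obstacle.** The delicate step is the finite-length claim: pointwise domination $\omega_H \leq C\,\omega_U$ is \emph{not} available (the Kähler--Einstein metric can degenerate or blow up transversally near the singular locus), so one must extract a \emph{particular} finite-length path rather than bound all paths. The cleanest route is probably to use boundedness of the potential directly: if $\omega_H = dd^c \psi$ locally with $\psi$ bounded and $\omega_H$ genuinely Kähler (smooth) on $U_{\reg}$, then along a real curve the induced length element is controlled in terms of second derivatives of $\psi$, and one integrates along a well-chosen transversal — alternatively, and perhaps most robustly, one invokes the result (as in Guenancia's work, already cited in the paper) that singular Ricci-flat metrics with bounded potentials have the property that $X_{\reg}$ is \emph{not} complete, or one cites the general principle that the metric completion of $(X_{\reg}, \omega_H)$ is a compact metric space homeomorphic to $X$ (so the added points are exactly $X_{\sing} \neq \emptyset$). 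I would present the argument via this completion statement, reducing geodesic incompleteness to the observation that $X_{\reg}$ is not complete as a metric space, which is immediate once a finite-distance singular point is exhibited.
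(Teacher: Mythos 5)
Your proposal and the paper's proof go in genuinely different directions, and unfortunately yours hits a wall that the paper explicitly avoids. You correctly identify the ``main obstacle'': pointwise domination $\omega_H \leq C\,\omega_U$ fails near the singular locus, so you need to extract a \emph{particular} finite-length path to a singular point $p$. But none of the workarounds you sketch actually closes this gap. The Cauchy--Schwarz/coarea idea from $\int \omega_H^n < \infty$ does not produce a finite-length transversal without additional control on how $\omega_H$ degenerates (finite volume is an $L^1$-type bound on the $n$-fold wedge, while length is an $L^1$-bound on the square root of a single wedge factor along a curve; there is no general implication). And the ``general principle'' you fall back on --- that the metric completion of $(X_{\reg}, \omega_H)$ is a compact metric space homeomorphic to $X$ --- is \emph{not} a theorem in this generality; the paper's own remark after Proposition~\ref{prop:incomplete} says precisely that it is \emph{not known} whether $\diam(X_{\reg}, g_H)$ is even finite in general, and cites the known partial results (smoothings, crepant resolutions) as special cases. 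So the step you are treating as a quotable fact is in fact an open problem, and your approach cannot be completed as stated.

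The paper's argument sidesteps the finite-distance question entirely. It notes that $\omega_H$ has bounded potentials, so $\omega_H^n$ puts no mass on the singular set and
\[
\Vol(X_{\reg}, g_H) = \int_{X_{\reg}} \omega_H^n = [H]^n < \infty.
\]
Since $(X_{\reg}, g_H)$ is Ricci-flat (in particular $\Ric \geq 0$) and $X_{\reg} \neq X$ forces $X_{\reg}$ to be noncompact, a theorem of Yau (a complete noncompact manifold with $\Ric \geq 0$ has at least linear volume growth, hence infinite volume) gives the contradiction: if $(X_{\reg}, g_H)$ were geodesically complete, its volume would be infinite. This is a global volume-growth argument, not a local length estimate, and it buys a complete proof with no hypotheses on the structure of the singularities. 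I would encourage you to internalise this contrast: when a ``singular points at finite distance'' statement is what you want and it is not available, an appeal to a global comparison theorem (Yau, Bishop--Gromov, Cheeger--Gromoll, etc.) under a curvature sign can often substitute.
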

\begin{proof}
  The current $T_H$ has bounded potentials.  The volume of $X_{\reg}$ is
  therefore computed as
  $$
  \Vol(X_{\reg}, g_H) = \int_{X_{\reg}} ω_H^n = [H]^n.
  $$
  In particular, the volume is finite.  Using that $(X_{\reg}, g_H)$ has
  non-negative Ricci curvature, a result of Yau then asserts that
  $(X_{\reg}, g_H)$ cannot be geodesically complete unless $X_{\reg}$ is
  compact, \cite[Cor.\ on page 25]{SY}.
\end{proof}

\Preprint{
  \begin{rem*}
    In general, it is not know whether the diameter of $(X_{\reg}, g_H)$ is
    finite or not.  However, if $X$ admits a smoothing or a crepant resolution,
    the diameter $\diam(X_{\reg}, g_H)$ is proven to be finite by
    \cite{MR2538503} and \cite[App.~B]{RZ}.
  \end{rem*}
}

\subsection{Real analytic structure}
\approvals{Daniel & yes \\ Henri & yes \\ Stefan & yes}

Since $(X_{\reg}, g_H)$ is Einstein as a Riemannian manifold, a theorem of
DeTurck-Kazdan implies that $X_{\reg}$ admits a smooth atlas with real analytic
transition functions such that $g_H$ is real analytic in each coordinate chart,
see \cite[Thm.~5.26]{MR867684}.  This has the following consequence.

\begin{prop}[Behaviour of restricted holonomy under restriction]\label{prop:dtkd}
  In the standard Setting~\ref{setting:holonomy}, if $U = U(x) ⊆ X_{\reg}$ is
  any (analytically) open neighbourhood of $x$, then the restricted holonomy
  groups $G°$ and $\Hol\bigl( U,\, g_H|_U\bigr)°_x$ agree.
\end{prop}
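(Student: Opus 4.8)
The plan is to reduce the statement to a standard fact about restricted holonomy, namely that it is a \emph{local} invariant: for any connected Riemannian manifold $(M,g)$ and any point $m \in M$, the restricted holonomy group $\Hol(M,g)^\circ_m$ depends only on the germ of $(M,g)$ at $m$, because it is generated by holonomies around small contractible loops based at $m$. More precisely, the relevant input is that $\Hol(M,g)^\circ_m$ equals the subgroup of $\mathrm{O}(T_mM,g_m)$ generated by parallel transport around null-homotopic loops, together with the observation (going back to Borel--Lichnerowicz) that on a connected manifold every element arising in this way can be represented by a loop whose image lies in a fixed arbitrarily small connected neighbourhood of $m$. This last point uses the connectivity and some covering/deformation argument: a null-homotopic loop can be deformed to a loop inside $U$ without changing its parallel-transport class, essentially because the homotopy can be subdivided into small squares each contained in a coordinate ball, and parallel transport around the boundary of such a square is a product of holonomies of small loops.

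First I would set $U = U(x) \subseteq X_{\mathrm{reg}}$ and note that the inclusion $\iota : U \hookrightarrow X_{\mathrm{reg}}$ is an isometry onto its image, so the Levi-Civita connection of $g_H|_U$ is the restriction of that of $g_H$; hence any loop in $U$ based at $x$ has the same parallel transport whether computed in $U$ or in $X_{\mathrm{reg}}$. This immediately gives the inclusion $\Hol(U, g_H|_U)^\circ_x \subseteq \Hol(X_{\mathrm{reg}}, g_H)^\circ_x = G^\circ$. For the reverse inclusion, I would invoke the structural description of $G^\circ$ as generated by parallel transports along contractible loops at $x$, and then the deformation argument above to replace each such loop by one lying entirely inside $U$, which produces an element of $\Hol(U, g_H|_U)^\circ_x$ with the same parallel transport. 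A clean reference for exactly this locality statement is \cite[Thm.~2.2.2 or §2.3]{MR1787733} or Kobayashi--Nomizu; I would cite one of these rather than reprove it. (The sentence preceding the Proposition signals that the DeTurck--Kazdan real-analytic structure is the relevant hook, so presumably the authors prefer the real-analytic route: on a real-analytic Riemannian manifold the restricted holonomy is even determined by the curvature tensor and its covariant derivatives at the single point $x$, by the Ambrose--Singer theorem in its analytic form, which makes the agreement with any neighbourhood immediate.)

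The main obstacle — really the only non-formal point — is the deformation argument showing that contractible loops at $x$ can be shrunk into $U$ without altering their parallel-transport class; but for a connected manifold this is entirely standard, and in the real-analytic setting it can be bypassed altogether by appealing to the Ambrose--Singer description of $\mathfrak{hol}^\circ_x$ in terms of curvature data at $x$, which is manifestly unchanged upon passing to $U$. So I would present the proof in two lines: (i) $g_H$ is real analytic near $x$ by DeTurck--Kazdan, as just recalled; (ii) the restricted holonomy Lie algebra at $x$ is generated by the values at $x$ of the curvature operator and its iterated covariant derivatives (Ambrose--Singer in the analytic case, see e.g.\ \cite[§10.E]{MR867684}), and this data is identical for $(X_{\mathrm{reg}}, g_H)$ and for $(U, g_H|_U)$ since $U$ is open in $X_{\mathrm{reg}}$; hence $G^\circ = \Hol(U, g_H|_U)^\circ_x$.
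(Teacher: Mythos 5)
Your final proof (the ``two lines'' version) is correct and is essentially the paper's argument: the paper invokes DeTurck--Kazdan real-analyticity and then cites \cite[Thm.~10.8 on p.~101]{MR1393940} for the fact that on a real-analytic Riemannian manifold the restricted holonomy group equals the local holonomy group at $x$; this is the same content as your Ambrose--Singer-in-the-analytic-case formulation.

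I do want to flag a genuine misconception in the surrounding discussion, because it affects whether you understand why the analyticity is needed. The opening claim that restricted holonomy ``depends only on the germ of $(M,g)$ at $m$'' for a general smooth Riemannian manifold, and the attempted justification by deforming a null-homotopic loop into $U$ via subdivision into small squares, are both wrong. Subdividing a null-homotopy expresses the parallel transport around a large contractible loop as a product of conjugated holonomies of small squares, but those squares are spread over the entire image of the homotopy, and conjugating each one by parallel transport along a spoke back to $x$ does not move it into $U$. In general the \emph{local} holonomy group at $x$ (the intersection over all small neighbourhoods) is a proper subgroup of $\Hol(M,g)_x^\circ$. A concrete example is a smooth metric on $\mathbb{R}^2$ that is Euclidean near the origin but has a bump of nonzero total Gaussian curvature far away: small loops at the origin have trivial parallel transport, yet a large contractible loop encircling the bump has a nontrivial rotation by Gauss--Bonnet. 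This is exactly the failure that the DeTurck--Kazdan real-analyticity removes, so the real-analytic route is not an ``optional bypass'' of a standard deformation argument — it is the only thing that makes the proposition true. Drop the first paragraph, or revise it to make this explicit, and what remains coincides with the paper's proof.
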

\begin{proof}
  Since $(X_{\reg}, g_H)$ is real-analytic, both groups in question equal the
  local holonomy group at $x$, \cite[Thm.~10.8 on p.~101]{MR1393940}.
\end{proof}

\subsection{Quasi-étale covers in the standard setting}\label{holandcovers}
\approvals{Daniel & yes \\ Henri & yes \\ Stefan & yes}

Working in the standard setting, we will frequently try to simplify geometry by
passing to a suitable quasi-étale cover ---this could be an index-one cover,
which simplifies the singularities, or the holonomy cover that will be
introduced in Section~\ref{sec:coverings}.  The present section introduces
notation and recalls a number of basic facts that will later be used.

\subsubsection{Standard notation for quasi-étale covers}
\approvals{Daniel & yes \\ Henri & yes \\ Stefan & yes}

Given a quasi-étale cover $γ : Y → X$, we may use Theorem~\ref{thm:EGZ} to
construct a Ricci-flat Kähler metric $Y_{\reg}$.  The universal property of the
EGZ construction, Proposition~\ref{prop:univ}, will then allow to compare this
metric to the one that we have on $X$.  The following notation will be used
consistently throughout the paper.

\begin{notation}[Standard notation for quasi-étale covers]\label{not:qec}
  In the standard Setting~\ref{setting:holonomy}, let $γ : Y → X$ be a
  quasi-étale cover and $y ∈ γ^{-1}(x)$ be a point lying over $x$.  Writing
  $Y° := γ^{-1}(X_{\reg})$, the following commutative diagram summarises the
  situation,
  $$
  \xymatrix{ %
    \{y\} \ar@{^(->}[r] \ar[d] & Y° \ar@{^(->}[r] \ar[d]^{γ°\text{, étale}} & Y_{\reg} \ar@{^(->}[r] & Y \ar[d]^{γ\text{, quasi-étale}} \\
    \{x\} \ar@{^(->}[r] & X_{\reg} \ar@{^(->}[rr] && X.
  }
  $$
  Recalling from Reminder~\ref{remi:qec} that $Y$ is klt and that $K_Y = γ^*K_X$
  is numerically trivial, Theorem~\ref{thm:EGZ} applies to $(Y, H_Y)$ where
  $H_Y := γ^*H$, and defines a Ricci-flat Kähler metric on $Y_{\reg}$, which we
  write as $ω_{H_Y}$.  The associated Riemannian metric on $Y_{\reg}$ will be
  written as $g_{H_Y}$, the associated Hermitian metric $h_{H_Y}$.  We consider
  the complex, Hermitian vector space $V_Y := T_y Y$ with Hermitian form
  $h_{H_Y,y} $, as well as the following subgroups of $\U(V_Y, h_{H_Y,y})$,
  \begin{align}
    \label{il:Az} I\hphantom{°} := \quad \Hol(Y°,g_{H_Y})_y & \;⊆\; \Hol( Y_{\reg}, g_{H_Y})_y \quad =: G_Y &\text{and} \\
    \label{il:Bz} I° := \quad \Hol(Y°,g_{H_Y})°_y & \;⊆\; \Hol( Y_{\reg}, g_{H_Y})°_y \quad =: G_Y°.
  \end{align}
  The universal property of the EGZ construction, Proposition~\ref{prop:univ},
  asserts that
  $$
  g_{H_Y}\bigl|_{Y°} = (γ°)^*g_H \quad\text{and}\quad ω_{H_Y}\bigl|_{Y°} =
  (γ°)^*ω_H.
  $$
  In particular, we may use the isomorphism $dγ°|_y$ to identify the Hermitian
  vector spaces $V_Y$ and $V$, and to view $I$, $G_Y$, $I°$ and $G_Y°$ as
  subgroups of $\U(V, h_{H,x})$.
\end{notation}

\subsubsection{Behaviour of holonomy under covers}\label{subsubsect:holonomy_under_covers}
\approvals{Daniel & yes \\ Henri & yes \\ Stefan & yes}

We conclude the present section by pointing out a few relations between the
groups introduced in Notation~\ref{not:qec}.  The proof of the following
elementary fact is left to the reader.

\begin{fact}[Behaviour of holonomy under covers, I]\label{fact:bhuc}
  In the setting of Notation~\ref{not:qec}, the following diagram is commutative
  \begin{equation}\label{eq:vbvc}
    \begin{gathered}
      \xymatrix{
        \protect{\begin{Bmatrix}
            \text{loops in $Y°$ starting}\\
            \text{and ending in } y
          \end{Bmatrix}} \ar[rrr]^(.6){\text{parallel transport}} \ar@{^(->}[d]_{\scriptsize γ° ◦\: (•) }^{\text{injective}} &&& \U \bigl( V_Y \bigl) \ar[d]^{(dγ°|_y) \:◦\: (•) \:◦\: (dγ°|_y)^{-1}}_{≅} \\
        \protect{\begin{Bmatrix}
            \text{loops in $X_{\reg}$ starting}\\
            \text{and ending in } x
          \end{Bmatrix}} \ar[rrr]_(.6){\text{parallel
            transport}} &&& \U \bigl( V \bigr).  & \mathclap{\hspace{3.9cm}\square} }
    \end{gathered}
  \end{equation}
\end{fact}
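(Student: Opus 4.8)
The plan is to deduce everything from the single fact that $γ°\colon Y° → X_{\reg}$ is a local isometry between Riemannian manifolds, together with the naturality of the Levi-Civita connection (equivalently, in the Kähler setting, of the Chern connection of Notation~\ref{not:cc}) under local isometries. First I would record why $γ°$ is a local isometry: since $γ$ is quasi-étale, purity of the branch locus (Remark~\ref{rem:qevecxr}) shows that $γ°$ is finite and étale, hence a local biholomorphism and a topological covering map; and by the universal property of the EGZ construction, Proposition~\ref{prop:univ}, one has $g_{H_Y}|_{Y°} = (γ°)^*g_H$. In particular $dγ°|_p\colon T_pY → T_{γ°(p)}X$ is a linear isometry for every $p ∈ Y°$. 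This already accounts for the left-hand vertical arrow of~\eqref{eq:vbvc} being injective: loops in $Y°$ based at $y$ are lifts of their images under $γ°$ with prescribed starting point $y$, so distinct loops have distinct images by unique path lifting.

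The key local statement I would then isolate is the following: if $f\colon (M,g_M) → (N,g_N)$ is a local isometry with $g_M = f^*g_N$, and $c\colon [0,1] → M$ is a path, then a vector field $V(t) ∈ T_{c(t)}M$ is parallel along $c$ with respect to $g_M$ if and only if $t ↦ df|_{c(t)}\bigl(V(t)\bigr)$ is parallel along $f ◦ c$ with respect to $g_N$. This is immediate from the naturality of the Levi-Civita connection: $f^*\nabla^{g_N} = \nabla^{g_M}$, so $f$-related vector fields have $f$-related covariant derivatives; alternatively, both conditions reduce to the same linear ODE after the fibrewise identification supplied by $df$. (One may equivalently run the argument with the Chern connection of $h_{H_Y}|_{Y°} = (γ°)^*h_H$, since it agrees with the Levi-Civita connection by Notation~\ref{not:cc}.)

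Finally I would specialise to $f = γ°$ and to $c = \ell$ a loop in $Y°$ based at $y$. Given $v ∈ V_Y = T_yY$, let $V(t)$ be the parallel vector field along $\ell$ with $V(0) = v$, so that $V(1)$ is the $g_{H_Y}$-parallel transport $P^{g_{H_Y}}_\ell(v)$. By the local statement, $t ↦ dγ°|_{\ell(t)}\bigl(V(t)\bigr)$ is parallel along the loop $γ° ◦ \ell$ in $X_{\reg}$ based at $x = γ°(y)$; evaluating at $t=0$ and $t=1$ and using $\ell(0) = \ell(1) = y$ yields
$$ P^{g_H}_{γ° ◦ \ell}\bigl(dγ°|_y(v)\bigr) = dγ°|_y\bigl(P^{g_{H_Y}}_\ell(v)\bigr). $$
Since $v$ is arbitrary, $P^{g_H}_{γ° ◦ \ell} = (dγ°|_y) ◦ P^{g_{H_Y}}_\ell ◦ (dγ°|_y)^{-1}$, which is precisely the commutativity of~\eqref{eq:vbvc} (the two composites in the diagram being loop $\ell \mapsto (dγ°|_y) ◦ P^{g_{H_Y}}_\ell ◦ (dγ°|_y)^{-1}$ going right-then-down, and loop $\ell \mapsto P^{g_H}_{γ° ◦ \ell}$ going down-then-right, under Notation~\ref{not:qec}). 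There is no genuine obstacle; the only point needing care is bookkeeping of base points, so that one obtains an honest conjugation by the \emph{single} isomorphism $dγ°|_y$ rather than an intertwiner between parallel transport operators based at different points — this is exactly why the hypothesis that $\ell$ be a loop (not merely a path) is used.
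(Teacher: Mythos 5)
Your proof is correct, and it is the argument the paper has in mind: the authors explicitly leave this ``elementary fact'' to the reader without supplying a proof, so there is no alternative argument to compare against. The three ingredients you isolate are exactly the right ones: $γ°$ is a topological covering, so unique path lifting gives injectivity of the left vertical arrow; $g_{H_Y}|_{Y°} = (γ°)^*g_H$ by Proposition~\ref{prop:univ} makes $γ°$ a local isometry; and parallel transport is natural under local isometries, with the base-point bookkeeping collapsing to conjugation by the single isometry $dγ°|_y$ precisely because $\ell(0)=\ell(1)=y$. Your closing remark on why it matters that $\ell$ is a loop (so that one gets conjugation and not merely an intertwiner between operators at different points) is a sound observation and is the only place where a careless reader could slip.
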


\begin{rem}[Behaviour of holonomy under covers, II]\label{rem:bhuc}
  In the setting of Notation~\ref{not:qec}, identifying $\U \bigl(V_Y \bigl)$
  with $\U \bigl(V \bigr)$ using the isomorphism of Diagram~\eqref{eq:vbvc}, the
  diagram allows to view $I$ as a subgroup of $G$.  More precisely,
  \begin{multline*}
    I = \bigl\{ g ∈ G \,|\, g \text{ is parallel transport along a loop in } X_{\reg}\\
    \text{with homotopy class in } (γ°)_*\: π_1(Y°,y) \bigr\}.
  \end{multline*}
  This description together with Reminder~\ref{rem:fundamentalgroupsurjection}
  presents $I$ as a union of connected components of $G$ and therefore shows
  that the maximal connected subgroups agree, $I° = G°$.
\end{rem}

\begin{lem}[Behaviour of holonomy under quasi-étale coverings, I]\label{lem:bhqec2}
  In the setting of Notation~\ref{not:qec}, the natural inclusions \eqref{il:Az}
  and \eqref{il:Bz} are equalities.  In other words, $I° \:=\: G°_Y$ and
  $I \:=\: G_Y$.
\end{lem}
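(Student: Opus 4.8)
The plan is to show that the holonomy groups of $(Y_{\reg}, g_{H_Y})$ at $y$ computed using loops in the big open set $Y°$ versus arbitrary loops in $Y_{\reg}$ actually coincide. The only thing to prove is that the inclusions in \eqref{il:Az} and \eqref{il:Bz} are surjective, i.e.\ that every loop in $Y_{\reg}$ based at $y$ is homotopic (rel endpoints) in $Y_{\reg}$ to a loop contained in $Y°$, and moreover that the parallel transport only depends on the data over $Y°$ where the metric is explicitly $(γ°)^*g_H$. Since parallel transport along a loop is a homotopy invariant only up to the restricted holonomy, the cleaner statement to target is the $π_1$-level fact: the natural map $π_1(Y°, y) \to π_1(Y_{\reg}, y)$ induced by the open immersion $Y° \hookrightarrow Y_{\reg}$ is surjective. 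Granting this, any loop $α$ in $Y_{\reg}$ based at $y$ is homotopic rel $y$ to a loop $β$ lying in $Y°$; since $g_{H_Y}$ restricted to $Y°$ equals $(γ°)^* g_H$ and, more to the point, $g_{H_Y}$ is a genuine smooth metric on all of $Y_{\reg}$, parallel transport along $α$ equals parallel transport along $β$, and the latter visibly lies in $I$. This gives $G_Y \subseteq I$, hence equality, and intersecting with identity components (or using Remark~\ref{rem:bhuc}) gives $G_Y° = I°$ as well.

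The key input is therefore the topological statement that $π_1(Y°, y) \twoheadrightarrow π_1(Y_{\reg}, y)$. This follows from the fact that $Y_{\reg} \setminus Y°$ has complex codimension at least two in the complex manifold $Y_{\reg}$: indeed $Y° = γ^{-1}(X_{\reg})$, and $Y_{\reg} \setminus Y° = Y_{\reg} \cap γ^{-1}(X_{\sing})$, which is contained in the preimage of the small set $X_{\sing}$ under the finite map $γ$, hence is a closed analytic subset of $Y_{\reg}$ of codimension $\geq 2$ (a finite morphism does not decrease codimension of the image's preimage). Removing a closed analytic (hence real codimension $\geq 4$) subset from a connected complex manifold does not change $π_1$ — in fact it does not even change it as we can retract small transverse slices — so the inclusion induces an isomorphism on $π_1$, in particular a surjection. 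This is exactly the kind of purity statement already invoked in Remark~\ref{rem:qevecxr}.

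The order of steps: first observe $Y_{\reg} \setminus Y°$ is a small closed analytic subset of the manifold $Y_{\reg}$; second, conclude that $π_1(Y°, y) \to π_1(Y_{\reg}, y)$ is surjective (real codimension $\geq 4$ removal); third, given an arbitrary loop $α$ in $Y_{\reg}$ at $y$, pick a homotopic loop $β \subseteq Y°$ and note that parallel transport along $α$ with respect to the Levi-Civita connection of the smooth metric $g_{H_Y}$ on $Y_{\reg}$ equals parallel transport along $β$; fourth, identify the latter as an element of $I$ via Fact~\ref{fact:bhuc}, giving $G_Y \subseteq I$ and hence $G_Y = I$; finally, pass to identity components to get $G_Y° = I°$ (or simply cite Remark~\ref{rem:bhuc}, which already records $I° = G°$ and presents $I$ as a union of connected components of $G$, so the only missing piece is the inclusion $G_Y \subseteq I$ just established).

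I do not expect a genuine obstacle here: the statement is essentially the assertion that holonomy is unaffected by deleting a small analytic subset, which is standard. The one point requiring a modicum of care is making sure the metric on $Y_{\reg}$ really is smooth across $Y_{\reg} \setminus Y°$ — but this is part of the content of Theorem~\ref{thm:EGZ} applied to $(Y, H_Y)$ (the Ricci-flat metric $ω_{H_Y}$ is smooth on all of $Y_{\reg}$, not merely on $Y°$), and it is precisely this smoothness that lets us transport along $α$ directly rather than only along loops avoiding the exceptional locus. So the whole argument is: small-codimension removal does not change $π_1$, plus homotopy-invariance of parallel transport for a bona fide smooth connection on $Y_{\reg}$.
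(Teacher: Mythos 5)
Your argument has a genuine gap in step three. You write that ``parallel transport along $\alpha$ equals parallel transport along $\beta$'' for homotopic loops $\alpha\subseteq Y_{\reg}$ and $\beta\subseteq Y°$, justifying this by the smoothness of $g_{H_Y}$ across $Y_{\reg}\setminus Y°$. But parallel transport is \emph{not} a homotopy invariant of loops: two homotopic loops have parallel transports that differ by an element of the restricted holonomy group $G_Y°$, which is typically non-trivial (it contains $\SU$ or $\Sp$ factors by Proposition~\ref{prop:factors}). You even acknowledge this near the start of your argument, but then overlook it in the deduction. So from the surjection $\pi_1(Y°,y)\twoheadrightarrow\pi_1(Y_{\reg},y)$ alone you cannot conclude $G_Y\subseteq I$.

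To repair the argument you need to establish $I°=G_Y°$ \emph{first}, and this is precisely what the paper does via Proposition~\ref{prop:dtkd}: since $g_{H_Y}$ is Einstein and hence real-analytic on $Y_{\reg}$, the restricted holonomy computed on any open subset of $Y_{\reg}$ containing $y$ (in particular on $Y°$) agrees with the restricted holonomy of $Y_{\reg}$ at $y$. Once $I°=G_Y°$ is known, your loop-by-loop argument goes through: parallel transport along $\alpha$ differs from parallel transport along $\beta$ by an element of $G_Y°=I°\subseteq I$, and the latter lies in $I$, so $G_Y\subseteq I$ after all. Your proposed alternatives for obtaining $I°=G_Y°$ do not work as stated: passing to identity components of $G_Y=I$ is circular (since you need $I°=G_Y°$ to prove $G_Y=I$ in the first place), and Remark~\ref{rem:bhuc} gives $I°=G°$, a statement about the base $X$, not the equality $I°=G_Y°$ needed here. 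With the real-analyticity input inserted, your argument becomes a loop-by-loop reformulation of the paper's quotient-group/snake-lemma version, using the same two essential ingredients: $I°=G_Y°$ from Proposition~\ref{prop:dtkd}, and $\pi_1(Y°)\twoheadrightarrow\pi_1(Y_{\reg})$ from $Y°$ being big in $Y_{\reg}$.
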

\begin{proof}
  The equality $I° = G°_Y$ has been shown in Proposition~\ref{prop:dtkd} above.
  To prove that $I = G_Y$, observe that $Y°$ is a big subset of $Y_{\reg}$.  The
  corresponding fundamental groups therefore agree.  We obtain a commutative
  diagram
  $$
  \xymatrix{ %
    π_1 \bigl(Y° \bigr) \ar@{->>}[r] \ar@{=}[d] & I/I° \ar[d]^{α} \\
    π_1 \bigl(Y_{\reg} \bigr) \ar@{->>}[r] & G_Y/G°_Y,
  }
  $$
  which implies that $α$ is surjective.  Using that $I° = G°_Y$, an application
  of the Snake Lemma then yields the desired equality $I = G_Y$.
\end{proof}

\begin{cor}[Behaviour of holonomy under quasi-étale coverings, II]\label{cor:bhqec2}
  In the setting of Notation~\ref{not:qec}, viewing $G_Y$ as a subgroup of
  $\U(V, h_{H,x})$, we have equality $G° = G°_Y$ and an inclusion $G_Y ⊆ G$ that
  is given as follows,
  \begin{multline}\label{eq:byv}
    G_Y = \bigl\{ g ∈ G \,|\, g \text{ is parallel transport along a loop in } X_{\reg}\\
    \text{ with homotopy class in } (γ°)_* \: π_1(Y°,y) \bigr\}.
  \end{multline}
\end{cor}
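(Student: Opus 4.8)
The plan is to assemble the statement directly from the two results immediately preceding it, namely Lemma~\ref{lem:bhqec2} and Remark~\ref{rem:bhuc}, while keeping careful track of the identifications of $\U(V_Y)$ with $\U(V)$.

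First I would recall that, by construction in Notation~\ref{not:qec}, all of $I$, $I°$, $G_Y$, $G_Y°$ are regarded as subgroups of $\U(V, h_{H,x})$ via conjugation by the linear isomorphism $dγ°|_y\colon V_Y → V$. Fact~\ref{fact:bhuc} guarantees that this identification is compatible with parallel transport: transport along a loop $\ell$ in $Y°$ based at $y$ corresponds, under $dγ°|_y$, to transport along the loop $γ°◦\ell$ in $X_{\reg}$ based at $x$. This is exactly what allows $I$ — and hence $G_Y$ — to be viewed as an honest subgroup of $G$.

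Next, Lemma~\ref{lem:bhqec2} says that the inclusions \eqref{il:Az} and \eqref{il:Bz} are equalities, so $I = G_Y$ and $I° = G_Y°$. On the other hand, Remark~\ref{rem:bhuc} identifies $I$, as a subgroup of $G$, with the set of elements of $G$ arising as parallel transport along loops in $X_{\reg}$ whose homotopy class lies in $(γ°)_*\,π_1(Y°,y)$, and records that under this identification $I° = G°$. Combining these gives $G° = I° = G_Y°$, which is the first assertion; substituting $G_Y = I$ into the description furnished by Remark~\ref{rem:bhuc} yields precisely formula \eqref{eq:byv}, which is the second assertion.

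The only point requiring a moment's care — and it is a matter of bookkeeping rather than a genuine obstacle — is verifying that the two a priori different ways of placing $G_Y$ inside $\U(V)$ (directly via $dγ°|_y$ as in Notation~\ref{not:qec}, versus through the chain $G_Y = I ⊆ G ⊆ \U(V)$ coming from Remark~\ref{rem:bhuc}) coincide; this is exactly the commutativity of the square in Fact~\ref{fact:bhuc}. Once that compatibility is noted, there is nothing further to prove.
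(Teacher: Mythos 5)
Your proof is correct and matches the paper's own argument: both derive the claim by combining Remark~\ref{rem:bhuc} (which gives $I°=G°$ and the loop-theoretic description of $I$) with Lemma~\ref{lem:bhqec2} (which gives $I=G_Y$ and $I°=G°_Y$). The extra bookkeeping you include about the compatibility of the two embeddings into $\U(V)$ via Fact~\ref{fact:bhuc} is implicit in the paper's terse proof but is the correct thing to check.
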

\begin{proof}
  Consider the groups $I°$ and $I$.  Remark~\ref{rem:bhuc} immediately
  identifies $G°$ with $I°$, and the right hand side of \eqref{eq:byv} with $I$.
  The identifications of Lemma~\ref{lem:bhqec2} thus end the proof.
\end{proof}

\Preprint{
\subsection{Parallel transport of invariant subspaces}\label{subsect:parallelity_and_holomorphy}
\approvals{Daniel & yes \\ Henri & yes \\ Stefan & yes}

The following theorem asserts that parallel subbundles of $TX_{\reg}$ and
$T^*X_{\reg}$ are holomorphic.  We use this result in Section~\ref{sec:candecrd}
to find a canonical direct sum decomposition in the sheaf of reflexive forms,
which relates (strong) stability with irreducibility properties of the
(restricted) holonomy action and explains the behaviour of these properties
under quasi-étale coverings.

\begin{prop}[Parallel transport of invariant subspaces]\label{prop:HPB}
  In the standard Setting~\ref{setting:holonomy}, let $N ⊆ V$ be any
  $G$-invariant linear subspace, and $\widehat{N} ⊆ TX_{\reg}$ be the associated
  parallel, smooth subbundle obtained by parallel transport.  Then,
  $\widehat{N}$ is a holomorphic subbundle of $TX_{\reg}$.  Ditto for subbundles
  of $T^*X_{\reg}$.
\end{prop}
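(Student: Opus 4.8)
The plan is to reduce the statement to two standard facts about Kähler geometry, both already available in the excerpt. First, on the Kähler manifold $(X_{\reg}, \omega_H)$ the $(0,1)$-part of the induced connection $D$ on $TX_{\reg}$ (resp.\ on $T^*X_{\reg}$) coincides with the Dolbeault operator $\bar\partial$ defining the holomorphic structure, cf.\ Notation~\ref{not:cc} and Reminder~\ref{remi:cc}. Second, a $C^\infty$ complex subbundle of a holomorphic vector bundle that is stable under $\bar\partial$ is automatically a holomorphic subbundle --- a form of the Koszul--Malgrange integrability theorem. Granting these, the proof is short: a $D$-parallel subbundle is in particular stable under $D^{0,1} = \bar\partial$, and is therefore holomorphic.

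In detail, I would argue as follows. Since $N$ is a complex subspace of $V = T_xX$ and the complex structure $J$ is $D$-parallel (the Kähler condition), parallel transport is $\mathbb{C}$-linear, so the transported subbundle $\widehat N \subseteq TX_{\reg}$ is $J$-invariant, i.e.\ a $C^\infty$ \emph{complex} subbundle. Its well-definedness as a global subbundle is precisely where the $G$-invariance of $N$ is used: two paths from $x$ to a given point differ by a loop at $x$, whose parallel transport lies in $G$ and hence preserves $N$. Next, because $\widehat N$ is $D$-parallel, $D$ restricts to a connection on $\widehat N$, so in particular $D^{0,1}$ sends local $C^\infty$ sections of $\widehat N$ to $\widehat N$-valued $(0,1)$-forms. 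By the Kähler identification $D^{0,1} = \bar\partial$ this says exactly that $\widehat N$ is $\bar\partial$-stable.

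Finally, I would invoke the integrability statement: the restriction $\bar\partial|_{\widehat N}$ obeys the Leibniz rule and squares to zero (as $X_{\reg}$ is a genuine complex manifold), and hence endows $\widehat N$ with a unique structure of holomorphic vector bundle for which the inclusion $\widehat N \hookrightarrow TX_{\reg}$ is holomorphic; equivalently, $\widehat N$ is locally spanned by holomorphic sections of $TX_{\reg}$. This proves the claim for $TX_{\reg}$. For subbundles of $T^*X_{\reg}$ the argument is word-for-word the same, using that $T^*X_{\reg}$ carries the dual Hermitian metric whose Chern connection is again the connection $D$ of Notation~\ref{not:cc}, with $(0,1)$-part the Dolbeault operator of $\Omega^1_{X_{\reg}}$; alternatively, one passes to annihilators inside $TX_{\reg}$ and uses that quotients and duals of holomorphic subbundles are holomorphic.

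I do not expect a genuine obstacle here. Everything takes place on $X_{\reg}$, where $g_H$ is an honest smooth Kähler metric, so the singularities of $X$ play no role, and both inputs are classical. The only point demanding a little care is the bookkeeping of complex- versus real-linear structures --- one must first check that $\widehat N$ is a \emph{complex} subbundle before speaking of holomorphicity --- but this is immediate from the parallelity of $J$, as noted above.
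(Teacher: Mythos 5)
Your argument is correct and amounts to spelling out the content of the citation the paper uses: the paper's proof consists of applying Kobayashi's result that a connection-invariant smooth complex subbundle of a Hermitian holomorphic bundle is a holomorphic subbundle (Chap.~I, Prop.~4.18 of \cite{Kob87}), which is exactly the chain ``$D$-parallel $\Rightarrow$ $\bar\partial$-stable $\Rightarrow$ holomorphic'' you spell out via $D^{0,1}=\bar\partial$ and the Koszul--Malgrange characterisation. So the two proofs are essentially the same, yours simply unpacking the reference.
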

\begin{proof}
  Consider $(TX_{\reg}, h_H)$ as a holomorphic Hermitian vector bundle and apply
  \cite[Chap.~I, Prop.~4.18]{Kob87} to $\widehat{N}$ in order to get the desired
  result.
\end{proof}

The result above can also be expressed as a statement about sheaves.

\begin{cor}[Holonomy principle for bundles as a statement about sheaves]\label{cor:HPB}
  In the standard Setting~\ref{setting:holonomy}, let $N ⊆ V$ be any
  $G$-invariant linear subspace, and $\widehat{N} ⊆ TX_{\reg}$ be the associated
  parallel, smooth subbundle obtained by parallel transport.  Then, there exists
  a locally free subsheaf of $𝒪_{X_{\reg}}$-modules
  $j : \sN° ↪ 𝒯_{X_{\reg}}$ with locally free quotient, and a commutative
  diagram
  $$
  \xymatrix{ %
    \sN° ⊗_{𝒪_{X_{\reg}}} \cA_{X_{\reg}} \ar[r]^{j ⊗ \Id} \ar[d]_{≅} & 𝒯_{X_{\reg}} ⊗_{𝒪_{X_{\reg}}} \cA_{X_{\reg}} \ar[d]^{≅} \\
    \cN \ar[r]^{j} & \,\cT_{X_{\reg}} %
  }
  $$
  where $\cN$ denotes the sheaf of smooth sections of $\widehat{N}$.  Ditto for
  subbundles of $T^*X_{\reg}$.  \qed
\end{cor}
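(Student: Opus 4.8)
The plan is to read the statement off from Proposition~\ref{prop:HPB} together with the standard dictionary between sheaves of holomorphic and of $\cC^{\infty}$-sections of a holomorphic vector bundle.

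First I would apply Proposition~\ref{prop:HPB} to the $G$-invariant subspace $N \subseteq V$ to conclude that the parallel subbundle $\widehat{N} \subseteq TX_{\reg}$ is holomorphic. Next, I would observe that the $h_{H}$-orthogonal complement $\widehat{N}^{\perp}$ is again $D$-parallel: since $D$ is a metric connection for $h_{H}$, for local sections $s$ of $\widehat{N}$ and $t$ of $\widehat{N}^{\perp}$ one has $h_{H}(s, Dt) = -h_{H}(Ds, t) = 0$ because $Ds$ is $\widehat{N}$-valued. As $\widehat{N}^{\perp}$ corresponds to the $G$-invariant subspace $N^{\perp} \subseteq V$, Proposition~\ref{prop:HPB} applies once more and shows that $\widehat{N}^{\perp}$ is holomorphic, so that $TX_{\reg} = \widehat{N} \oplus \widehat{N}^{\perp}$ is a decomposition in the category of holomorphic vector bundles. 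I would then let $\sN^{\circ} \subseteq \sT_{X_{\reg}}$ be the sheaf of holomorphic sections of $\widehat{N}$ sitting inside the holomorphic tangent sheaf; this exhibits $\sN^{\circ}$ as a locally free $\sO_{X_{\reg}}$-submodule whose cokernel, the sheaf of holomorphic sections of $\widehat{N}^{\perp}$, is locally free as well.

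For the commutative square, I would use the canonical morphism $\sF \otimes_{\sO_{X_{\reg}}} \cA_{X_{\reg}} \to \sF^{\cC^{\infty}}$, $s \otimes f \mapsto f \cdot s$, attached to any holomorphic vector bundle $F$ on $X_{\reg}$ with sheaf of holomorphic sections $\sF$ and sheaf of $\cC^{\infty}$-sections $\sF^{\cC^{\infty}}$; this map is an isomorphism because it is one locally, where $F$ is holomorphically trivial and $\sO_{X_{\reg}}^{\oplus r} \otimes_{\sO_{X_{\reg}}} \cA_{X_{\reg}} = \cA_{X_{\reg}}^{\oplus r}$. Taking $F = \widehat{N}$ yields the left vertical isomorphism $\sN^{\circ} \otimes_{\sO_{X_{\reg}}} \cA_{X_{\reg}} \cong \cN$, and taking $F = TX_{\reg}$ yields the right vertical isomorphism $\sT_{X_{\reg}} \otimes_{\sO_{X_{\reg}}} \cA_{X_{\reg}} \cong \cT_{X_{\reg}}$. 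Commutativity is immediate, since both vertical maps are "multiplication of a section by a smooth function" and these are visibly compatible with the inclusion $\widehat{N} \hookrightarrow TX_{\reg}$ on sections. The case of subbundles of $T^{*}X_{\reg}$ follows by the identical argument, now invoking the second half of Proposition~\ref{prop:HPB} and the identification of the sheaf of holomorphic sections of $T^{*}X_{\reg}$ with $\Omega^{1}_{X_{\reg}}$.

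Since each step is either a direct appeal to Proposition~\ref{prop:HPB} or a local triviality computation, I do not expect a real obstacle here; the only point requiring a moment's care is that $\widehat{N}$ sits inside $TX_{\reg}$ as a holomorphic subbundle in the strong sense—locally a holomorphic direct summand—so that the quotient is a genuine vector bundle and $\sN^{\circ}$ has locally free quotient. This is precisely what the parallelism of both $\widehat{N}$ and $\widehat{N}^{\perp}$, combined with the holomorphy criterion used in the proof of Proposition~\ref{prop:HPB}, provides.
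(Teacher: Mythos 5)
Your argument is correct and is precisely what the paper has in mind when it closes the corollary with $\qed$ immediately after Proposition~\ref{prop:HPB}: apply that proposition to $N$ and (since $G$ is unitary) to the $G$-invariant complement $N^{\perp}$, so $\widehat{N}$ and $\widehat{N}^{\perp}$ give a holomorphic direct-sum decomposition of $TX_{\reg}$, and then read off the commutative square from the canonical isomorphism $\sF\otimes_{\sO_{X_{\reg}}}\cA_{X_{\reg}}\cong\sF^{\cC^\infty}$ for holomorphic vector bundles. No gaps; this is the same route.
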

}

\part{Holonomy}\label{part:II}
%
%
\svnid{$Id: 05-restrictedHolonomy.tex 740 2017-11-02 09:38:44Z kebekus $}

\section{The classification of restricted holonomy}\label{sec:restrictedHolonomy}
\subversionInfo
\approvals{Daniel & yes \\ Henri & yes \\ Stefan & yes}

After fixing our notation regarding the restricted holonomy decomposition, we
apply the standard classification of irreducible restricted holonomy groups
appearing in our setup.

\subsection{Notation}
\approvals{Daniel & yes \\ Henri & yes \\ Stefan & yes}

The following construction and notation will be used throughout.

\begin{consnot}[Decomposition induced by restricted holonomy]\label{constr:canonDecompRestr}
  Assume the Standard Setting~\ref{setting:holonomy} and recall from
  \cite[Cor.~10.41]{MR867684} that the action $G° ↺ V$ is totally decomposed.
  That is, there exist decompositions
  \begin{equation}\label{eq:cd1}
    V = V_0 ⊕ V_1 ⊕ ⋯ ⊕ V_m \quad\text{and}\quad G° = G°_1 ⨯ ⋯ ⨯ G°_m,
  \end{equation}
  where each factor $G°_i$ acts non-trivially and irreducibly on $V_i$ and
  trivially on all the other summands.  We refer to \eqref{eq:cd1} as the
  \emph{canonical decomposition of the Hermitian $G°$-space $V$}.  The induced
  decomposition of the dual space $V^* = T^*_xX$ will analogously be called
  \emph{canonical decomposition of the Hermitian $G°$-space $V^*$}.
\end{consnot}


\begin{rem}[Behaviour under quasi-étale covers]\label{rem:cdqec0}
  If $γ : Y → X$ is a quasi-étale cover, and $y ∈ γ^{-1}(x)$ is any point,
  recall from Corollary~\ref{cor:bhqec2} that $V=T_xX$ and $V_Y = T_yY$ are
  canonically identified, and that the restricted holonomy groups agree,
  $G° = {G}_Y°$.  In this sense, the canonical decomposition of
  Construction~\ref{constr:canonDecompRestr} is invariant under passing to
  quasi-étale covers.
\end{rem}

The action $G° ↺ V$ carries algebro-geometric information: We will see in
Corollary~\ref{cor:flatDecomp} that the dimension of $V_0$ equals the augmented
irregularity $\wtilde{q}(X)$.  Furthermore, we will see in
Corollary~\ref{cor:sirr2} that $𝒯_X$ is strongly stable if and only if the
action $G° ↺ V$ is irreducible.

\subsection{Classification}
\approvals{Daniel & yes \\ Henri & yes \\ Stefan & yes}

Using Proposition~\ref{prop:locdecomp}, Proposition~\ref{prop:dtkd}, as well as
Berger's classification \cite[Thm.~10.108]{MR867684}, one easily obtains the
following result\Publication{, a full proof of which is given in the preprint
  version of this paper,
  \href{https://arxiv.org/abs/1704.01408}{arXiv:1704.01408}}.

\begin{prop}[Classification of restricted holonomy]\label{prop:factors}
  Setting and notation as in \ref{constr:canonDecompRestr}.  Given any index
  $1 ≤ i ≤ m$ and writing $n_i := \dim V_i$, one of the following holds true.
  \begin{enumerate}
  \item The group $G°_i$ is isomorphic to $\SU(n_i)$.
  \item The number $n_i$ is even, and the group $G°$ is isomorphic to
    $\Sp\!\left(\frac {n_i}2\right)$.
  \end{enumerate}
  The action $G°_i ↺ V_i$ is isomorphic to the standard action of the respective
  group.\Publication{\qed}
\end{prop}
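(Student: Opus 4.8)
The plan is to reduce the global, non-complete picture on $X_{\reg}$ to a local product model, to which Berger's classification applies factor by factor. First, since $(X_{\reg},g_H)$ is real-analytic, Proposition~\ref{prop:dtkd} lets me replace $X_{\reg}$ by a sufficiently small, simply connected open neighbourhood $U=U(x)$ equipped with $ω_H|_U$, without changing the restricted holonomy; as $U$ is simply connected, the full holonomy group of $(U,ω_H|_U)$ then equals $G°$. Applying Proposition~\ref{prop:locdecomp} to $(U,ω_H|_U)$ (shrinking $U$ if needed) gives an isomorphism of Kähler manifolds $U\cong U_0⨯U_1⨯⋯⨯U_m$ under which $G°=H_1⨯⋯⨯H_m$, each $H_i$ acting irreducibly on $T_{x_i}U_i$ and trivially on the other summands, with $T_{x_0}U_0$ the space of $G°$-fixed vectors. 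As total decompositions of $G°↺V$ are unique up to permutation (Remark~\ref{rem:totDecomp}), this identifies with the canonical decomposition of Construction~\ref{constr:canonDecompRestr}: one has $T_{x_0}U_0=V_0$ and, for $i≥1$, $T_{x_i}U_i=V_i$, $H_i=G°_i$, and $\dim_ℂ U_i=n_i$.

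Now I would fix $i∈\{1,…,m\}$ and study $(U_i,ω_H|_{U_i})$. Since the Ricci form of a product Kähler metric is the sum of the pulled-back Ricci forms of the factors, $\Ric ω_H=0$ forces $\Ric(ω_H|_{U_i})=0$, so $U_i$ is Ricci-flat Kähler and Reminder~\ref{remi:holonomy} yields $G°_i⊆\SU(V_i)\cong\SU(n_i)$. Moreover $U_i$ has irreducible holonomy and is not locally symmetric: a locally symmetric space that is Ricci-flat is flat, which would make $G°_i$ trivial, contrary to construction. Therefore Berger's classification \cite[Thm.~10.108]{MR867684} applies and exhibits $G°_i$, acting by its standard representation, as one of the groups on Berger's list. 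Imposing $G°_i⊆\SU(n_i)$ discards the orthogonal, unitary, quaternionic-Kähler and exceptional entries (none of these is a subgroup of $\SU$ acting by the standard representation on a complex vector space, and the quaternionic-Kähler entry is in any case Einstein with nonzero scalar curvature), leaving precisely $G°_i\cong\SU(n_i)$, or $n_i$ even and $G°_i\cong\Sp(n_i/2)$, in either case with the standard action. This is the assertion.

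The step that needs care is the first one: one must check that each local factor $U_i$ is genuinely a Ricci-flat Kähler manifold whose holonomy is non-trivial, irreducible, and not locally symmetric, so that Berger's theorem legitimately applies — and, in matching with Construction~\ref{constr:canonDecompRestr}, that the real de Rham factors (which are complex subspaces since the holonomy lies in $\U(V)$) really agree with the complex-irreducible summands there. Once this local reduction is in place, everything else — running through Berger's list under the $\SU$-constraint and reading off the two cases — is routine bookkeeping.
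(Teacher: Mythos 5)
Your proposal is correct and follows essentially the same approach as the paper's proof: pass to a simply connected local model, combine Proposition~\ref{prop:locdecomp} with Proposition~\ref{prop:dtkd} and the uniqueness of total decompositions (Remark~\ref{rem:totDecomp}) to match the de~Rham factors with the canonical decomposition, and then apply Berger's classification together with the fact that Ricci-flat locally symmetric spaces are flat. The bookkeeping you carry out at the end — running through Berger's list under the $\SU$-constraint — is precisely what the paper leaves to the reader as a ``standard consequence.''
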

\Preprint{
  \begin{proof}
    Choose a simply connected open neighbourhood $W = W(x) ⊆ X_{\reg}$.
    Applying Proposition~\ref{prop:locdecomp} (``Local decomposition of Kähler
    manifolds'') to $(W,ω_H|_W)$, we obtain a neighbourhood $U=U(x) ⊆ W$, Kähler
    manifolds $(U_i,ω_i)$ and an identification of $(U,ω_H|_U)$ with the product
    of Kähler manifolds, $(U_i,ω_i) ⨯ ⋯ ⨯ (U_{m'},ω_{m'})$.  Since $(U,ω_H|_U)$
    is Ricci-flat, the same will hold for all the $(U_i,ω_i)$.  We maintain the
    notation of Proposition~\ref{prop:locdecomp} for the remainder of the proof.
    Shrinking the $U_i$ if necessary, we can assume without loss of generality
    that all $U_i$ as well as $U$ are simply connected.  We will write $G°_0$
    for the trivial group.
  
    Proposition~\ref{prop:locdecomp} also asserts that the action of the
    (restricted) holonomy group
    $H := \Hol\bigl( U, g_H|_U)_x = \Hol\bigl( U, g_H|_U)°_x$ on $V=T_xU$ is
    totally decomposed.  The associated decompositions of $H$ and $T_x U$ are
    given as follows,
    $$
    H = \bigtimes_{i=1}^{m'} H_i \quad\text{and}\quad T_x U =
    \bigoplus_{i=0}^{m'} T_{x_i}U_i.
    $$
    
    Recall from Proposition~\ref{prop:dtkd} (``Behaviour of restricted holonomy
    under restriction'') that the natural inclusion $H ⊆ G°$ is in fact an
    equality.  We observed in Remark~\ref{rem:totDecomp} that any two
    decompositions of $G° ↺ V$ agree up to permutation, which shows that
    $m = m'$.  Renumbering the $U_i$ if needed, we may even assume that
    $V_i = T_{x_i}U_i$ and $H_i = G°_i$ for all $i$.  In summary, we see that
    $(U_0,ω_0)$ is flat, whereas all other $(U_i,ω_i)_{i>0}$ are Ricci-flat,
    holonomy-irreducible Kähler manifolds.  The description of the
    $G°_i = \Hol\bigl( U_i, g_i \bigr)°_{x_i}$ is now a standard consequence of
    the classification of non-symmetric, restricted holonomy groups,
    \cite[Thm.~10.108]{MR867684}, and the fact that Ricci-flat locally symmetric
    spaces are flat, \cite[Cor.\ on p.~232]{Simons}.
  \end{proof}}

%
%
\svnid{$Id: 06-holonomy.tex 756 2018-09-30 09:41:37Z kebekus $}

\section{The canonical decomposition of the tangent sheaf}\label{sec:candecrd}
\subversionInfo
\approvals{Daniel & yes \\ Henri & yes \\ Stefan & yes}

In this section, we use standard holonomy considerations to obtain a canonical
decomposition of the tangent sheaf.  This decomposition generalises both
\cite[Thm.~1.3]{GKP16} and \cite[Thm.~A]{Guenancia}, and yields a
differential-geometric characterisation of stability for the tangent bundle,
Corollary~\ref{cor:sirr0}.

\subsection{Construction of the decomposition}
\approvals{Daniel & yes \\ Henri & yes \\ Stefan & yes}

Just like the action of the restricted holonomy, the action of the full holonomy
group $G ↺ V$ is totally decomposed and induces a canonical decomposition of
$V$.

\begin{consnot}[Decomposition induced by holonomy]\label{cons:canonDecomp}
  Assume the Standard Setting~\ref{setting:holonomy} and recall from
  \cite[Cor.~10.38]{MR867684} that the action $G ↺ V$ is totally decomposed.
  That is, there exist decompositions
  \begin{equation}\label{eq:cd2}
    V = W_0 ⊕ W_1 ⊕ ⋯ ⊕ W_k \quad\text{and}\quad G = G_1 ⨯ ⋯ ⨯ G_k
  \end{equation}
  where each factor $G_i$ acts non-trivially and irreducibly on $W_i$ and
  trivially on all the other summands.  As before, we abuse notation and refer
  to \eqref{eq:cd2} as the \emph{canonical decomposition of the Hermitian
    $G$-space $V$}.  We denote the associated smooth, parallel bundles by
  $\widehat{W_j} ⊆ TX_{\reg}$.
\end{consnot}

We have seen in Remark~\ref{rem:totDecomp} that decomposition \eqref{eq:cd2} is
unique up to permutation of the $W_j$ and $G_j$ of positive index.  We now fix
one choice of indices.

\begin{obsnot}[Comparison of the canonical decompositions]\label{obs:ccd}
  Assume the Standard Setting~\ref{setting:holonomy} and recall that $G°$ is a
  normal subgroup of $G$.  This implies that elements of $G$ map $G°$-orbits to
  $G°$-orbits.  The description of the $V_i$ found in
  Construction~\ref{constr:canonDecompRestr} therefore implies that any element
  of $G$ stabilises $V_0$, and permutes the remaining $(V_i)_{i>0}$.
  Renumbering the $V_i$ and $W_j$, if necessary, we may therefore assume without
  loss of generality that there exists an index $ℓ$ and a strictly increasing
  sequences of indices $0 = ℓ_0 < ℓ_1 < ℓ_2 < ⋯$ such that the following holds,
  \begin{enumerate}
  \item The space $V_0$ decomposes as $V_0 = W_0 ⊕ ⋯ ⊕ W_ℓ$.
  \item For every positive $j$, we have $W_{ℓ+j} = V_{ℓ_{j-1}+1} ⊕ ⋯ ⊕ V_{ℓ_j}$.
    The summands are isomorphic as $G°$-representation spaces.
  \end{enumerate}
  \Preprint{Figure~\vref{fig:ssfs} illustrates the setting.
    \begin{figure}
      \centering
      $$
      \begin{matrix}
        G° ↺ V = & \cellcolor{gray!20} \vphantom{V_{0_0}}V_0 & ⊕ & \cellcolor{gray!20} V_1 ⊕ ⋯ ⊕ V_{ℓ_1} & ⊕ & \cellcolor{gray!20} V_{ℓ_1+1} ⊕ ⋯ ⊕ V_{ℓ_2} & ⊕ ⋯\\
        \hphantom{°}G ↺ V = & \cellcolor{gray!20} W_0 ⊕ ⋯ ⊕ W_ℓ & ⊕ & \cellcolor{gray!20} W_{ℓ+1} & ⊕ & \cellcolor{gray!20} W_{ℓ+2} & ⊕ ⋯\\
      \end{matrix}
      $$
      \caption{Decompositions induced by restricted and full holonomy}
      \label{fig:ssfs}
    \end{figure}}
\end{obsnot}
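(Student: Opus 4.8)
The plan is to deduce the entire statement from isotypic–decomposition bookkeeping, resting on two inputs: that $V_0$ is precisely the space $V^{G°}$ of $G°$-invariants, and that in a totally decomposed action the nontrivial summands are pairwise non-isomorphic, so that the prescribed decompositions of $V$ are genuinely its isotypic decompositions under $G°$ and under $G$.

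First I would record the representation-theoretic facts. By Definition~\ref{def:totDecomp}, $G°$ acts trivially on $V_0$ while each $G°_i$ with $i>0$ acts nontrivially and irreducibly on $V_i$; since a nontrivial irreducible unitary representation has no nonzero invariant vector, $V_i^{G°}=\{0\}$ for $i>0$ and hence $V^{G°}=V_0$. Moreover, because $G°=G°_1⨯⋯⨯G°_m$ with $G°_j$ acting nontrivially on $V_j$ and trivially on every $V_{j'}$ with $j'≠j$, the modules $V_1,…,V_m$ are pairwise non-isomorphic, nontrivial, irreducible $G°$-modules; thus $V=V_0⊕V_1⊕⋯⊕V_m$ is the $G°$-isotypic decomposition of $V$, the only nontrivial irreducible $G°$-submodules of $V$ are $V_1,…,V_m$ themselves, and every $G°$-submodule is the direct sum of its intersections with the $V_j$. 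The same applies verbatim to $G=G_1⨯⋯⨯G_k$ and the $W_i$, with in addition $W_0=V^G⊆V^{G°}=V_0$.

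Now I would run the argument. Since $G°$ is normal in $G$, for $g∈G$, $v∈V_0$ and $h∈G°$ one has $h(gv)=g(g^{-1}hg)v=gv$, so $V_0=V^{G°}$ is $G$-invariant, and so is $V_0^{\perp}$ because $G⊆\U(V)$; the same computation shows that for $i>0$ the image $g(V_i)$ is again a nontrivial irreducible $G°$-submodule of $V$, hence equals some $V_{σ_g(i)}$, so $G$ stabilises $V_0$ and permutes the $(V_i)_{i>0}$. Applying the isotypic bookkeeping to the $G$-submodule $V_0$ gives $V_0=\bigoplus_i(V_0∩W_i)$ with $V_0∩W_i∈\{0,W_i\}$ for $i≥1$ and $V_0∩W_0=W_0$; after renumbering the $W_i$ this reads $V_0=W_0⊕W_1⊕⋯⊕W_ℓ$, the first assertion. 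Then $V_0^{\perp}=\bigoplus_{i>ℓ}W_i=\bigoplus_{j>0}V_j$; each $W_i$ with $i>ℓ$ is a $G°$-submodule of $V_0^{\perp}$, hence of the form $\bigoplus_{j∈A_i}V_j$ for subsets $A_i$ partitioning $\{1,…,m\}$, and $σ_g$ preserves each $A_i$; since $W_i$ is $G$-irreducible, the induced action of $G$ on $A_i$ must be transitive (otherwise a proper invariant subset would split off a proper nonzero $G$-submodule of $W_i$). Renumbering the $V_j$ with $j>0$ so that each $A_i$ is an interval $\{ℓ_{j-1}+1,…,ℓ_j\}$, with $0=ℓ_0<ℓ_1<⋯$, and relabelling the corresponding $W_i$ as $W_{ℓ+j}$, gives the second assertion; for $j,j'$ in a common $A_i$, choosing $g∈G$ with $g(V_j)=V_{j'}$ yields a linear isomorphism $V_j→V_{j'}$ intertwining the $G°$-actions up to conjugation by $g$, so by Proposition~\ref{prop:factors} the two summands carry the standard action of the same group ($\SU$ or $\Sp$), and in particular have equal dimension.

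The one point that deserves care is the meaning of ``isomorphic as $G°$-representation spaces'' for the summands $V_j⊆W_{ℓ+j}$: as honest $G°$-modules these are pairwise non-isomorphic (total decomposition again), and the isomorphism between two of them intertwines the $G°$-actions only after conjugating by the element $g∈G$ permuting them — equivalently, after interchanging the corresponding factors $G°_j$ of $G°$. Since in the sequel only the resulting equality of dimensions and of holonomy types is used, this twist is harmless, but it should be kept in mind. Everything else is routine manipulation of isotypic decompositions together with the normality of $G°$ in $G$.
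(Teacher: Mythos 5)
Your proof is correct and follows the same line as the paper's brief sketch: normality of $G°$ in $G$ gives the permutation action on the $(V_i)_{i>0}$ and stabilisation of $V_0=V^{G°}$, and isotypic bookkeeping then yields both items after renumbering. Your closing caveat about the last sentence is apt — the $V_j$ within a single $G$-orbit are \emph{not} literally isomorphic as $G°$-modules (they are distinct isotypic pieces), only isomorphic after twisting by the automorphism of $G°$ induced by conjugation by the permuting element of $G$, and that is the sense in which the paper subsequently uses the claim (equality of dimension and of holonomy type).
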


\begin{rem}[Behaviour under quasi-étale covers]\label{rem:cdqec1}
  Continuing Remark~\ref{rem:cdqec0}, if $γ : Y → X$ is a quasi-étale cover, and
  $y ∈ γ^{-1}(x)$ is any point, recall from Corollary~\ref{cor:bhqec2} that
  $V = T_xX$ and $V_Y = T_{y}Y$ are canonically identified, and that the
  holonomy group $G_Y$ is a subgroup of $G$ under this identification.  In this
  sense, the canonical decomposition on $Y$ refines the canonical decomposition
  on $X$.
\end{rem}

\subsection{The decomposition the tangent sheaf}\label{ssec:dofd}
\approvals{Daniel & yes \\ Henri & yes \\ Stefan & yes}

The holonomy principle for bundles immediately yields a canonical direct sum
decomposition of the tangent.

\begin{consnot}[Canonical decomposition of $𝒯_X$]\label{cons:decpW}
  Assume the setup of Construction and Notation~\ref{cons:canonDecomp}.  If
  $\cW_j$ denotes the sheaf of smooth sections of $\what{W_j} ⊆ TX_{\reg}$,
  \Preprint{the holonomy principle for bundles, Corollary~\ref{cor:HPB},
    yields}\Publication{we have} a decomposition
  $𝒯_{X_{\reg}} = 𝒲°_0 ⊕ ⋯ ⊕ 𝒲°_k$ in the category of coherent analytic sheaves
  such that
  $$
  \cT_{X_{\reg}} ≅ \cW_0 ⊕ ⋯ ⊕ \cW_k ≅ \bigl( 𝒲°_0 ⊗_{𝒪_X}
  \cA_{X_{\reg}} \bigr) ⊕ ⋯ ⊕ \bigl( 𝒲°_k ⊗_{𝒪_X} \cA_{X_{\reg}} \bigr)
  $$
  Writing $𝒲_i := ι_* 𝒲°_i$, where $ι : X_{\reg} → X$ is the inclusion, we
  obtain a splitting
  \begin{equation}\label{decomp}
    𝒯_X = 𝒲_0 ⊕ ⋯ ⊕ 𝒲_k.
  \end{equation}
  As a direct summand of a coherent analytic sheaf, each one of the analytic
  sheaves $𝒲_i$ is coherent and hence algebraic by GAGA.  Moreover, each $𝒲_i$
  is reflexive by construction.  Write $\sV°_0 := 𝒲°_0 ⊕ ⋯ ⊕ 𝒲°_ℓ$ and
  $\sV_0 := ι_* \sV°_0$.  We refer to \eqref{decomp} as the \emph{canonical
    decomposition of $𝒯_X$}.
\end{consnot}

\begin{rem}[Behaviour under quasi-étale covers]\label{rem:cdqec2}
  If $γ : Y → X$ is a quasi-étale cover, there are now two decompositions on
  $Y$: the canonical decomposition on $Y$, and the reflexive pull-back of the
  canonical decomposition on $X$,
  \begin{align}
    \label{eq:df1} 𝒯_Y & = 𝒲_{Y,0} ⊕ ⋯ ⊕ 𝒲_{Y,ℓ_Y} ⊕ 𝒲_{Y,ℓ_Y+1} ⊕ ⋯ ⊕ 𝒲_{Y,{k}_Y} \\
    \label{eq:df2} & = γ^{[*]}𝒲_0 ⊕ ⋯ ⊕ γ^{[*]}𝒲_ℓ ⊕ γ^{[*]}𝒲_{ℓ+1} ⊕ ⋯ ⊕ γ^{[*]}𝒲_k.
  \end{align}
  Remark~\ref{rem:cdqec1} implies that \eqref{eq:df1} refines \eqref{eq:df2}.
  Remark~\ref{rem:cdqec0} implies that
  $$
  γ^{[*]} \sV_0 = γ^{[*]} 𝒲_0 ⊕ ⋯ ⊕ γ^{[*]} 𝒲_ℓ \quad\text{and}\quad {\sV}_{Y,0}
  = 𝒲_{Y,0} ⊕ ⋯ ⊕ 𝒲_{Y,ℓ_Y}
  $$
  agree.
\end{rem}

We conclude the present subsection with a first description of the summands that
appear in the decomposition.  Once the Bochner principle for reflexive tensors
is established in Theorem~\ref{thm:holonomy}, we will be able say more, see
Corollary~\ref{cor:holbun2w}.

\begin{prop}[Summands in the canonical decomposition of $𝒯_X$]\label{prop:holbun2x}
  In the setting of Construction~\ref{cons:decpW}, the following holds.
  \begin{enumerate}
  \item\label{il:Cx} The Hermitian holomorphic vector bundles $𝒲°_0$, …, $𝒲°_ℓ$
    are unitary flat.  In particular, the locally free sheaves $𝒲°_0$, …, $𝒲°_ℓ$
    and $\sV°_0$ are holomorphically flat.
  \item\label{il:Ax} If $\widehat{F} ⊆ TX_{\reg}$ is any parallel subbundle and
    $i > 0$ is any index, then the projection map $p_i : F → W_i$ at $x$ is
    either zero or surjective.
  \item\label{il:Bx} The locally free sheaf $𝒲°_0$ is holomorphically trivial
    and if $i>0$ is any index, then $𝒲_i$ is stable of slope zero with respect
    to any ample polarisation on $X$.  In particular, $𝒯_X$ is polystable with
    respect to any ample polarisation on $X$.
  \end{enumerate}
\end{prop}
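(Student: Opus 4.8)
The plan is to reduce each of the three assertions to a statement about the holonomy representation of $G$ on $V$ and the associated parallel bundles $\widehat{W_j}$, using the holonomy principle for bundles, Corollary~\ref{cor:HPB}. For Item~\ref{il:Cx}, recall from Observation~\ref{obs:ccd} that $W_0 \oplus \cdots \oplus W_\ell = V_0 = V^{G^{\circ}}$; since $\widehat{W_j}$ is parallel, its holonomy at $x$ is $\{g|_{W_j} : g \in G\}$ and its restricted holonomy is $\{g|_{W_j} : g \in G^{\circ}\} = \{\Id\}$ for $0 \leq j \leq \ell$. Hence the Chern connection of the Hermitian metric induced by $h_H$ on $\widehat{W_j}$ --- which coincides with the restriction of the Chern connection of $h_H$ on $TX_{\reg}$, the subbundle being parallel --- has vanishing curvature. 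A flat unitary connection compatible with a holomorphic structure is in particular a flat \emph{holomorphic} connection, so the bundles $\sW^{\circ}_0, \dots, \sW^{\circ}_\ell$, and hence their direct sum $\sV^{\circ}_0$, are holomorphically flat. For Item~\ref{il:Ax}: the decomposition $V = \bigoplus_j W_j$ of Construction~\ref{cons:canonDecomp} is $G$-invariant, so each projection $p_i \colon V \to W_i$ is $G$-equivariant; if $\widehat{F}$ is parallel then $F = \widehat{F}_x$ is a $G$-submodule of $V$, whence $p_i(F) \subseteq W_i$ is a $G$-submodule, and for $i > 0$ the group $G$ acts irreducibly on $W_i$, forcing $p_i(F) \in \{0, W_i\}$ --- precisely the claimed dichotomy.

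For Item~\ref{il:Bx}, the full holonomy acts trivially on $W_0 = V^G$, so parallel transport from $x$ produces a global frame of $\widehat{W_0}$ consisting of parallel --- hence holomorphic, the $(0,1)$-part of the connection being $\bar{\partial}$ --- sections, and $\sW^{\circ}_0 \cong \sO_{X_{\reg}}^{\oplus \rank W_0}$ is holomorphically trivial. For $i > 0$ the argument would run as follows. Since $g_H$ is Ricci-flat, $(TX_{\reg}, h_H)$ is Hermitian--Einstein with vanishing Einstein factor; as $\widehat{W_i}$ is a parallel, hence orthogonally complemented, holomorphic subbundle, the Chern connection of $h_H$ is block-diagonal and $\widehat{W_i}$ is itself Hermitian--Einstein of factor zero. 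When $\ell < i$ one invokes the correspondence between singular Kähler--Einstein / Hermitian--Einstein metrics and (poly)stability of reflexive sheaves on klt spaces from \cite{GKP16} --- the mechanism underlying \cite[Thm.~1.3]{GKP16} --- to conclude that the reflexive extension $\sW_i$ to $X$ is $H$-polystable of slope zero; when $1 \leq i \leq \ell$ the bundle $\widehat{W_i}$ is even flat, the Chern--Weil inequality then holds with respect to \emph{every} Kähler class, and $\sW_i$ is semistable of slope zero with respect to every ample polarisation. In all cases, a proper non-zero direct summand of $\sW_i$ would restrict on $X_{\reg}$ to a proper non-zero parallel subbundle of $\widehat{W_i}$ --- for a Hermitian--Einstein sheaf the decomposition into stable summands is realised by parallel subbundles on the regular locus --- contradicting the irreducibility of the $G$-action on $W_i$; hence $\sW_i$ is stable, with respect to $H$ and, in the flat case, with respect to every polarisation.

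It remains to upgrade $H$-stability of $\sW_i$ (for $i > \ell$) to stability with respect to an arbitrary ample polarisation. Here I would compare with \cite[Thm.~1.3]{GKP16}, which supplies a decomposition $\sT_X = \bigoplus_a \sG_a$ whose summands have numerically trivial determinant and are stable with respect to \emph{every} ample polarisation; in particular $\bigoplus_a \sG_a$ realises the (unique up to reordering) $H$-polystable decomposition of $\sT_X$. Matching this against $\sT_X = \sW_0 \oplus \cdots \oplus \sW_k$, which by the above is likewise a decomposition of $\sT_X$ into $H$-stable reflexive sheaves once the trivial summand $\sW_0 \cong \sO_X^{\oplus \rank W_0}$ is split into line bundles, uniqueness of the polystable decomposition identifies each $\sW_i$ with one of the $\sG_a$ and thereby shows it is stable of slope zero with respect to every ample polarisation. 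The final assertion, that $\sT_X$ is polystable with respect to any ample polarisation, is then immediate, being a finite direct sum of stable sheaves of slope zero. The main obstacle is precisely this last circle of ideas: the passage from the Hermitian--Einstein structure on the incomplete manifold $X_{\reg}$ to (poly)stability of $\sW_i$ on the compact variety $X$, together with the polarisation-independence claimed in Item~\ref{il:Bx}, both of which require importing the stability machinery of \cite{GKP16} rather than proceeding by elementary means; one must also take care that the index conventions of Construction~\ref{cons:canonDecomp} and Observation~\ref{obs:ccd} (with $\sW_0, \dots, \sW_\ell$ the flat pieces and $\sW_{\ell+1}, \dots, \sW_k$ the remaining ones) are correctly matched against the pieces of \cite[Thm.~1.3]{GKP16}.
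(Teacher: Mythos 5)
Your treatment of Items~\ref{il:Cx} and \ref{il:Ax}, and of the holomorphic triviality of $\sW^{\circ}_0$ in Item~\ref{il:Bx}, is correct and essentially the same as the paper's: Item~\ref{il:Cx} follows because the restricted holonomy of $\widehat{W}_j$ is trivial for $j \le \ell$, whence the Chern connection of the induced Hermitian metric is flat and its $(1,0)$-part is a flat holomorphic connection; Item~\ref{il:Ax} is the equivariance of $\pr_i$ under parallel transport combined with irreducibility of $W_i$; the trivialisation of $\sW^{\circ}_0$ comes from parallel-transporting a basis of $W_0 = V^G$.

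The stability argument in Item~\ref{il:Bx} is where your proposal has a genuine gap, and it lies in the references you lean on. You invoke ``the correspondence between singular Kähler--Einstein / Hermitian--Einstein metrics and (poly)stability of reflexive sheaves on klt spaces from \cite{GKP16} --- the mechanism underlying \cite[Thm.~1.3]{GKP16}'', but that is a mischaracterisation: \cite[Thm.~1.3]{GKP16} is proved by purely algebraic methods (Mehta--Ramanathan restriction, $c_2$-vanishing, Simpson's correspondence for flat sheaves), not by a singular Donaldson--Uhlenbeck--Yau argument, and, crucially, it gives a decomposition of the tangent sheaf only on a quasi-étale \emph{cover} of $X$, not on $X$ itself. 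Your concluding step --- matching $\sT_X = \bigoplus \sW_i$ against ``the decomposition $\sT_X = \bigoplus_a \sG_a$'' supplied by \cite[Thm.~1.3]{GKP16} --- therefore does not close as written: no such decomposition of $\sT_X$ is furnished by that theorem. What the paper actually uses is \cite[Thm.~A(ii)]{Guenancia}: it gives a decomposition of $\sT_X$ \emph{itself} into reflexive direct summands $ℱ_j$ that are stable of slope zero with respect to \emph{every} ample polarisation and are locally free and parallel on $X_{\reg}$. One then picks an index $j$ with $\pr_i|_{ℱ_j} \neq 0$; this map is injective (nonzero map between stable and semistable sheaves of equal slope) and surjective on $X_{\reg}$ by Item~\ref{il:Ax}, hence an isomorphism by reflexivity --- settling stability with respect to every polarisation in one step, with no need for a separate ``upgrade''. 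If you replace your appeal to \cite{GKP16} by Guenancia's Theorem~A, your ``direct summand gives parallel subbundle contradicting irreducibility'' idea becomes essentially equivalent to the paper's argument; as written, though, the cited input does not exist where you need it.
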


\begin{rem}[Non-flatness of remaining summands]
  Improving on Item~\ref{il:Cx}, we will later see in
  Corollary~\ref{cor:flatDecomp} that none of the remaining summands $𝒲°_{ℓ+1}$,
  …, $𝒲°_k$ is holomorphically flat.
\end{rem}

\begin{proof}[Proof of Proposition~\ref{prop:holbun2x}]
  ---

  \subsubsection*{Proof of Item~\ref{il:Cx}}
  
  Choose an integer $0 ≤ j ≤ ℓ$.  As the holomorphic vector bundle $𝒲°_j$ is
  obtained by parallel transport, it is a direct summand of $𝒯_{X_{\reg}}$ as a
  \emph{Hermitian} holomorphic subbundle.  Furthermore, $𝒲°_j$ is acted
  trivially upon by the restricted holonomy group $G°$, and hence its restricted
  holonomy group is trivial.  Shrinking to small simply connected
  neighbourhoods, one can therefore find parallel local frames for $𝒲°_j$.  It
  follows that the Hermitian structure of $𝒲°_j$ is flat, and the $(1,0)$-part
  of the Chern connection is a flat holomorphic connection.

  \subsubsection*{Proof of Item~\ref{il:Ax}}

  Parallel transport stabilises both $\widehat{F}$ and $\widehat{W}_i$, and
  commutes with the projection map $\pr_i$.  As a consequence, it follows that
  the rank of $\pr_i$ is constant.  To prove surjectivity, it will therefore
  suffice to show surjectivity at the point $x$.  In other words, writing
  $F := \widehat{F}|_{\{x\}}$, we need to show surjectivity of the composition
  \begin{equation}\label{eq:xxgx}
    \xymatrix{ %
      F \ar[rr]_(.3){\text{inclusion}} && V = W_0 ⊕ ⋯ ⊕ W_k
      \ar[rr]_(.7){\text{projection}} && W_i.  %
    }
  \end{equation}
  The morphisms in \eqref{eq:xxgx} are linear maps of $G$-representation spaces.
  Since the representation space $W_i$ is irreducible by assumption, any
  equivariant map with image in $W_i$ must either be zero, or surjective.
  Item~\ref{il:Ax} follows.

  \subsubsection*{Proof of Item~\ref{il:Bx}}

  The triviality of $𝒲°_0$ is clear, as one can parallel transport any basis of
  $W_0$ to obtain a trivialising set of holomorphic sections of
  $𝒲°_0$\Preprint{, cf.~Reminder~\ref{remi:cc}}.  Now, recall from
  \cite[Thm~A.(ii)]{Guenancia} that there exists a decomposition $𝒯_X = ⊕ ℱ_j$
  with the following properties.
  \begin{enumerate}
  \item The $ℱ_j$ are reflexive, and stable of slope zero with respect to any
    ample polarisation of $X$.
  \item The restrictions $ℱ°_j := ℱ_j\bigl|_{X_{\reg}}$ are locally free.  The
    associated subbundles of $F°_j ⊆ TX_{\reg}$ are parallel\footnote{This is
      shown in the proof of \cite[Thm.~A(ii)]{Guenancia}, on page~35, a few
      lines ahead of §5.}.
  \end{enumerate}
  In particular, it follows that $𝒯_X$ is semistable of slope zero with respect
  to any polarisation, and hence so are the direct summands $𝒲_{•}$ in the
  canonical decomposition.  Now, given any index $i > 0$, we will show that
  $𝒲_i$ is isomorphic to one of the $ℱ_j$, hence stable with respect to any
  polarisation.  We start by choosing an index $j$ such that the projection map
  $p_{ji} := \pr_i|_{ℱ_j} : ℱ_j → 𝒲_i$ is not zero.

  As a non-trivial map from a stable sheaf to a semistable sheaf of the same
  slope, $p_{ji}$ is clearly injective.  We claim that $p_{ji}$ is also
  surjective.  Since both $ℱ_j$ and $𝒲_i$ are reflexive, it suffices to show
  surjectivity of the restricted map $p°_{ji} : ℱ°_j → 𝒲°_i$.  That, however,
  has been established in Item~\ref{il:Ax} above.
\end{proof}

\subsection{Canonical decomposition vs.\ earlier results}
\approvals{Daniel & yes \\ Henri & yes \\ Stefan & yes}

Next, we explain how the canonical decomposition of $𝒯_X$ relates to earlier
work, and how its uniqueness improves known results.

\subsubsection{Uniqueness}
\approvals{Daniel & yes \\ Henri & yes \\ Stefan & yes}

The decomposition of a polystable sheaf into stable summands is unique up to
non-canonical isomorphism, but not unique in general.  However, as soon as a
singular Ricci-flat Kähler metric is fixed, the canonical decomposition of $𝒯_X$
is \textit{unique} up to permutation of the factors $𝒲_1, …, 𝒲_k$, as follows
from Remark~\ref{rem:totDecomp}.  The factor $𝒲_0$, although unique, is
\textit{not} stable as soon as its rank is larger than one, and it does not
admit a unique decomposition into stable subsheaves.

\subsubsection{Comparison with earlier results}\label{ssect:comp}
\approvals{Daniel & yes \\ Henri & yes \\ Stefan & yes}

It follows from the discussion above that the polystability decomposition of
\cite[Thm.~A]{Guenancia} in case $K_X \equiv 0$ is isomorphic to the canonical
decomposition of $𝒯_X$, unless there exists a trivial summand of rank at least
two.

Moreover, the proof of Item~\ref{il:Bx} above implies that on the quasi-étale
cover $γ:~Y → X$ whose existence is established in \cite[Thm.~1.3]{GKP16} the
summands of the decomposition of the tangent sheaf $𝒯_Y$ produced by
\emph{loc.~cit.} are isomorphic to the ones in the canonical
decomposition~\eqref{decomp} of $𝒯_Y$.  In particular, the summands in the
canonical decomposition of $𝒯_Y$ are strongly stable in the sense of
\cite[Def.~7.2]{GKP16} and have trivial determinant.  The latter property
furthermore implies integrability by \cite[Thm.~7.11]{GKP16}.

\subsection{Stability and irreducibility of the holonomy representations}
\approvals{Daniel & yes \\ Henri & yes \\ Stefan & yes}

The canonical decomposition relates stability of $𝒯_X$ to irreducibility of the
holonomy representation.  The following corollaries are immediate consequences
of Proposition~\ref{prop:holbun2x}.  Later, Corollary~\ref{cor:sirr2} will also
relate strong stability and irreducibility of $G° ↺ V$.

\begin{cor}[Stability and irreducibility, I]\label{cor:sirr0}
  In the standard Setting~\ref{setting:holonomy}, the following statements are
  equivalent.
  \begin{enumerate}
  \item The sheaf $𝒯_X$ is stable with respect to any ample polarisation.
  \item The holonomy representation $G ↺ V$ is irreducible.  \qed
  \end{enumerate}
\end{cor}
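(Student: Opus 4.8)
The plan is to read off both implications from the canonical decomposition of the tangent sheaf, $\sT_X = \sW_0 \oplus \cdots \oplus \sW_k$ (Construction~\ref{cons:decpW}), which mirrors the decomposition $V = W_0 \oplus \cdots \oplus W_k$ of $V$ into $G$-irreducible summands, $W_0$ being the $G$-fixed part and $W_1, \dots, W_k$ the nontrivial irreducible ones. Since $\dim_{\bC} V = n \ge 2$ excludes a trivial $G$-action on $V$, the representation $G \circlearrowleft V$ is irreducible if and only if $V$ coincides with a single summand $W_i$ of positive index, equivalently if and only if $\sT_X$ equals a single summand $\sW_i$ with $i > 0$.

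First I would treat the implication from~(2) to~(1): if $G \circlearrowleft V$ is irreducible, then $\sT_X = \sW_i$ for a single index $i > 0$, and Item~\ref{il:Bx} of Proposition~\ref{prop:holbun2x} states precisely that such a summand is stable with respect to \emph{every} ample polarisation of $X$, which is statement~(1).

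For the converse I would argue by contraposition. A slope-stable torsion-free sheaf on a projective variety has scalar endomorphisms only, hence is indecomposable; as $\sT_X$ is reflexive, stability with respect to even a single polarisation forces the canonical decomposition to be trivial, i.e.\ $\sT_X = \sW_i$ for a unique index $i$. The possibility $i = 0$ is excluded: by Item~\ref{il:Bx} of Proposition~\ref{prop:holbun2x} the bundle $\sW^\circ_0$ is holomorphically trivial, and its trivialising frame extends across the small set $X \setminus X_{\reg}$ by reflexivity, so that $\sW_0 = \iota_* \sW^\circ_0 \cong \sO_X^{\oplus n}$ (here $\rank \sW_0 = \rank \sT_X = n$); but a free sheaf of rank $n \ge 2$ is decomposable, contradicting stability of $\sT_X = \sW_0$. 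Hence $i > 0$, so $V = W_i$ is $G$-irreducible, which is statement~(2).

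I do not anticipate any real difficulty: all substantive input sits in Proposition~\ref{prop:holbun2x}, and what remains is the bookkeeping above together with the elementary facts that a stable sheaf is simple, hence indecomposable, while a free sheaf of rank at least two is visibly decomposable. The only point meriting a little care is the flat case $\sT_X = \sW_0$ in the converse, which is why one invokes the holomorphic triviality of $\sW^\circ_0$ (Item~\ref{il:Bx}) rather than its flatness alone.
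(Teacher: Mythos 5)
Your argument is correct and fills in exactly the reasoning the paper treats as immediate from Proposition~\ref{prop:holbun2x}: irreducibility of $G \circlearrowleft V$ forces $\sT_X = \sW_1$ with $W_0 = \{0\}$, and Item~\ref{il:Bx} then gives stability for every polarisation; conversely, stability forces indecomposability, which collapses the canonical decomposition to a single summand, and your use of the holomorphic triviality of $\sW^\circ_0$ to rule out the case $\sT_X = \sW_0$ is the right way to exclude the flat factor. (The only phrasing slip is the word ``contraposition''---your second implication is in fact a direct argument from~(1)---but that is immaterial.)
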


Proposition~\ref{prop:holbun2x} also applies to describe the holonomy
representation in case where $𝒯_X$ is stable only with respect to a movable
curve class, or with respect to nef divisors.  Stability with respect to a
movable class is discussed in the paper \cite{GKP15}.  We refer to
\cite[Def.~2.11]{GKP15} for a precise definition.

\begin{cor}[Stability and irreducibility, II]\label{cor:sirr1}
  In the standard Setting~\ref{setting:holonomy}, assume that one of the
  following holds.
  \begin{enumerate}
  \item\label{il:kasper} There exist nef Cartier divisors $H_1$, …, $H_{n-1}$ on
    $X$ such that $𝒯_X$ is stable with respect to $(H_1, …, H_{n-1})$.
  \item\label{il:seppel} The variety $X$ is $ℚ$-factorial, and there exists a
    movable curve class $α ∈ N_1(X)_{ℝ}$ such that $𝒯_X$ is $α$-stable.
  \end{enumerate}
  Then, the holonomy representation $G ↺ V$ is irreducible.
\end{cor}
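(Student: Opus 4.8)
The strategy is to feed the hypotheses into the canonical decomposition of $\sT_X$ from Construction~\ref{cons:decpW}, which is produced purely from holonomy and is therefore available without any stability assumption. Recall that this decomposition has the form $\sT_X = \sW_0 \oplus \sW_1 \oplus \cdots \oplus \sW_k$, where each $\sW_i = \iota_* \sW^{\circ}_i$ is a reflexive subsheaf of $\sT_X$ of rank $\dim W_i$, and where, by Construction~\ref{cons:canonDecomp}, the factor $G_i$ of $G = G_1 \times \cdots \times G_k$ acts non-trivially and irreducibly on $W_i$ for every $1 \leq i \leq k$. In particular, the summands $\sW_1, \ldots, \sW_k$ are all nonzero, while $\sW_0$ is nonzero exactly when $W_0 \neq 0$.

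First I would observe that in both cases \ref{il:kasper} and \ref{il:seppel} the sheaf $\sT_X$ is indecomposable. Indeed, if $\sT_X = \sA \oplus \sB$ with $\sA$ and $\sB$ both nonzero, then $\sA$ is a saturated subsheaf of positive rank strictly smaller than $n$, and the relevant slope of $\sT_X$ --- taken with respect to $(H_1, \ldots, H_{n-1})$ in case \ref{il:kasper} and with respect to $\alpha$ in case \ref{il:seppel} --- is the rank-weighted mean of the slopes of $\sA$ and $\sB$; hence at least one of $\sA$, $\sB$ has slope $\geq \mu(\sT_X)$, contradicting strict stability of $\sT_X$. Applying this to the canonical decomposition shows that exactly one of the summands $\sW_0, \ldots, \sW_k$ is nonzero, and that summand equals $\sT_X$.

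It remains to identify that summand. It cannot be $\sW_0$: by Item~\ref{il:Bx} of Proposition~\ref{prop:holbun2x} the sheaf $\sW^{\circ}_0$ is holomorphically trivial, so normality of $X$ gives $\sW_0 \cong \sO_X^{\oplus n}$; since $n \geq 2$, this sheaf contains a rank-one subsheaf of the same slope and hence fails to be stable with respect to any polarisation or curve class --- a contradiction. Therefore $\sT_X = \sW_i$ for some $1 \leq i \leq k$, which forces $W_0 = 0$ and, since every $\sW_j$ with $1 \leq j \leq k$ is nonzero, also $k = 1$. Consequently $V = W_1$, so that $G = G_1$ acts irreducibly on $V$, which is the assertion.

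The only delicate point, and the one I would check most carefully, is the indecomposability step: one must confirm that the ``a direct summand destabilises'' argument applies verbatim both to stability with respect to a tuple of nef divisors and to $\alpha$-stability in the sense of \cite[Def.~2.11]{GKP15}. This reduces to the fact that the corresponding degree function is additive on direct sums, so that the slope of $\sT_X$ is a genuine rank-weighted average of the slopes of the two summands. Everything else is formal, because the canonical decomposition of Construction~\ref{cons:decpW} and the holomorphic triviality of $\sW^{\circ}_0$ in Proposition~\ref{prop:holbun2x} are established independently of the chosen polarisation or curve class.
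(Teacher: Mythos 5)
Your proof is correct and follows essentially the same route as the paper: both arguments rest on the canonical decomposition $\sT_X = \sW_0 \oplus \cdots \oplus \sW_k$ from Construction~\ref{cons:decpW} (which is built from the singular Kähler--Einstein metric and is available independently of any polarisation) together with the observation that $\mu(\sT_X)=0$ is the rank-weighted average of the slopes of the summands, so stability would force all summands to have strictly negative slope, a contradiction, while the $k=0$ case is excluded because $\sW_0$ is holomorphically trivial. The paper runs the contradiction directly on the canonical decomposition rather than first stating indecomposability, but this is purely a matter of presentation.
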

\begin{proof}
  We discuss case \ref{il:seppel} only; the other case is completely similar.
  If the holonomy representation is reducible, Construction~\ref{cons:decpW}
  yields a proper decomposition $𝒯_X = 𝒲_0 ⊕ ⋯ ⊕ 𝒲_k$, with $k>0$ or $k=0$ and
  $\dim 𝒲_0>1$.  In the second case, $𝒯_X$ is trivial, contradiction.  In the
  first case, stability of $𝒯_X$ with respect to $α$ implies the slope
  inequality $μ_α (𝒲_i) < μ_α \bigl(𝒯_X\bigr) = 0$ for all $i$, which
  contradicts the identity
  $$
  0 = μ_α \bigl(𝒯_X \bigr) = \sum_{i=1}^k \frac{\rank(𝒲_i)}{n}·μ_α(𝒲_i).
  \eqno\qedhere
  $$
\end{proof}

%
%
\svnid{$Id: 07-coveringConstructions.tex 763 2018-10-27 13:39:14Z kebekus $}

\section{Covering constructions}\label{sec:coverings}
\subversionInfo

\subsection{Main result}
\approvals{Daniel & yes \\ Henri & yes \\ Stefan & yes}

The quotient $G/G°$ frequently appears as an obstruction to extending locally
defined parallel tensors to global objects.  The difference between holonomy and
restricted holonomy clearly goes away once we pass to the universal covering
space of $X_{\reg}$, but this comes at the price of potentially losing all
algebraic structures, as we have no a priori information on the fundamental
group of $X_{\reg}$.  The following result, which deals with this issue, is
crucial for all our subsequent arguments and is therefore one of the main
results of this paper.

\begin{thmnot}[Holonomy cover]\label{thm:holonomyCover}
  In the standard Setting~\ref{setting:holonomy}, there exists a quasi-étale
  cover $γ : Y → X$ and a point $y ∈ γ^{-1}(x)$, such that holonomy and
  restricted holonomy agree, $G_Y° = G_Y$.  Further, there exist normal,
  projective varieties $A$ and $Z$ and an isomorphism $Y ≅ A⨯Z$, such that
  the following additional properties hold.
  \begin{enumerate}
  \item\label{il:FSCC0x} The variety $A$ is Abelian, of dimension
    $\dim A = \wtilde{q}(X)$.
  \item\label{il:FSCC1x} The variety $Z$ has canonical singularities, trivial
    canonical bundle, and augmented irregularity $\wtilde{q}(Z) = 0$.
  \item\label{il:FSCC2x} The summands ${V}_{Y,0}$ and ${W}_{Y,0}$ of $V_Y$ both
    coincide with $\pr_1^* (TA)|_y$.
  \item\label{il:FSCC3x} The summand ${W}_{Y,1} ⊕ ⋯ ⊕ {W}_{Y, k}$ of $V_Y$
    coincides with $\pr_2^* (TZ)|_y$.
  \end{enumerate}
  \Preprint{Figure~\vref{fig:holonomyCover} illustrates the canonical
    decompositions of $V_Y$.  }Quasi-étale covers with these properties will be
  called \emph{holonomy covers}.\Preprint{
    \begin{figure}
      \centering
      $$
      \begin{matrix}
        G_Y° ↺ V_Y = & \cellcolor{gray!20} V_{Y,0} & ⊕ & \cellcolor{gray!20} V_{Y,1} & ⊕ & \cellcolor{gray!20} V_{Y,2} & ⊕ ⋯\\
        G_Y ↺ V_Y = & \cellcolor{gray!20} W_{Y,0} & ⊕ & \cellcolor{gray!20} W_{Y,1} & ⊕ & \cellcolor{gray!20} W_{Y,2} & ⊕ ⋯\\[-2mm]
        & \underbrace{\hspace{.8cm}}_{\mathclap{= \pr_1^*(TA)_y}} & & \multicolumn{4}{c}{\underbrace{\hspace{3.9cm}}_{= \pr_2^*(TZ)_y}} \\
      \end{matrix}
      $$
      \caption{Canonical decompositions on a holonomy cover}
      \label{fig:holonomyCover}
    \end{figure}}
\end{thmnot}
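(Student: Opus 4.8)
The plan is to construct $\gamma$ by a chain of quasi-\'etale covers, using at each step that the restricted holonomy is unaffected and the new holonomy group embeds into the old one (Corollary~\ref{cor:bhqec2}), that we stay klt (Reminder~\ref{remi:qec}), and that the augmented irregularity is preserved (Lemma~\ref{lem:aug}), so that the Standard Setting~\ref{setting:holonomy} persists throughout. First I would use Proposition~\ref{prop:global_index_one_cover} to reduce to the case where $X$ has canonical singularities and $K_X \sim 0$. Fixing a nowhere-vanishing generator $\Omega$ of $\omega_X$ over $X_{\reg}$, the complex Monge--Amp\`ere equation characterising $\omega_H$ in Theorem~\ref{thm:EGZ} forces the pointwise $g_H$-norm of $\Omega$ to be constant; hence $\Omega$ is parallel, so $G$ --- not merely $G^\circ$ --- fixes the holomorphic volume form and $G \subseteq \SU(V)$. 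This determinant constraint is what will eventually bound the group of connected components.

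\emph{Stage 1: splitting off the Abelian factor.} By Lemma~\ref{lem:aug} the number $q := \wtilde q(X)$ is finite and realised by some quasi-\'etale cover, so we may assume $q(X) = q$. By the structure theory for klt varieties with numerically trivial canonical class --- concretely, applying Druel's integrability theorem \cite[Thm.~1.4]{Dru16} to the holomorphically flat summand $\sV_0 \subseteq \sT_X$ of the canonical decomposition (Construction~\ref{cons:decpW}), whose leaves are Abelian varieties, together with \cite[Thm.~1.3]{GKP16} --- a finite quasi-\'etale cover of $X$ splits as a product $A \times Z$, where $A$ is Abelian of dimension $q(X) = \wtilde q(X)$, the sheaf $\sV_0$ pulls back to $\pr_1^*(TA)$, and $Z$ has canonical singularities with $K_Z \sim 0$ and $q(Z) = 0$. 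Lemma~\ref{lem:aug} then upgrades this to $\wtilde q(Z) = 0$: indeed $\wtilde q(A\times Z) = \wtilde q(X) = \dim A$, whereas choosing a quasi-\'etale cover $Z' \to Z$ with $q(Z') = \wtilde q(Z)$ gives $\wtilde q(A\times Z) \ge q(A) + q(Z') = \dim A + \wtilde q(Z)$. In particular the canonical decomposition of $\sT_Z$ has no flat summand, i.e.\ $(V_Z)^{G_Z^\circ} = \{0\}$.

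\emph{Stage 2: making the holonomy connected.} By Proposition~\ref{prop:univProd} the Riemannian manifold $(A\times Z_{\reg},\, g_H)$ is the product of the flat torus $A$ with $(Z_{\reg},\, g_{H_Z})$, so its holonomy is $\{1\} \times \Hol(Z_{\reg},g_{H_Z})$ and it suffices to connect the holonomy of $Z$. Reminder~\ref{rem:fundamentalgroupsurjection} provides a surjection $\pi_1(Z_{\reg}) \twoheadrightarrow G_Z/G_Z^\circ$; since $\pi_1(Z_{\reg})$ is finitely generated and $G_Z/G_Z^\circ$ embeds into the compact Lie group $N_{\U(V_Z)}(G_Z^\circ)/G_Z^\circ$, this quotient is a finitely generated linear group. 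The technical heart of the argument is to show it is \emph{finite}; granting that, the quasi-\'etale cover of $Z$ corresponding to the kernel of $\pi_1(Z_{\reg}) \twoheadrightarrow G_Z/G_Z^\circ$ --- algebraic by Remark~\ref{rem:qevecxr} --- has connected holonomy, and its product with $A$ is the sought cover $Y$. For finiteness one uses that a finitely generated torsion linear group is finite, reducing to the exclusion of an element $r \in G_Z$ of infinite order. After replacing $r$ by a power, $r$ preserves every restricted-holonomy summand $V_i$, on which by Proposition~\ref{prop:factors} it normalises a standard $\SU(n_i)$ or $\Sp(n_i/2)$ acting irreducibly; the resulting determinant-type characters $\pi_1(Z_{\reg}) \to \U(1)$ are monodromies of parallel --- hence flat --- line subbundles of tensor bundles of $TZ_{\reg}$, which extend to numerically trivial reflexive rank-one sheaves on $Z$ and are torsion because $q(Z)=0$ makes the relevant class group finite (this is the point at which one passes, if necessary, to the maximally quasi-\'etale cover of \cite[Thm.~1.5]{GKP13}). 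Hence all these characters take values in roots of unity, $r$ has finite order, and we reach a contradiction.

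\emph{Stage 3: bookkeeping.} With the holonomy of $Y = A\times Z$ connected, the decompositions of $V_Y$ induced by $G_Y$ and by $G_Y^\circ$ coincide (Observation~\ref{obs:ccd}), and their common flat part is the set of $G_Y$-fixed vectors; since $G_Z$ fixes no nonzero vector of $V_Z$ (Stage~1) while the flat torus $A$ contributes all of $T_aA$, this flat part equals $\pr_1^*(TA)|_y$, and its orthogonal complement $W_{Y,1}\oplus\dots\oplus W_{Y,k}$ equals $\pr_2^*(TZ)|_y$ --- which, together with the properties of $A$ and $Z$ from Stage~1, gives \ref{il:FSCC0x}--\ref{il:FSCC3x}. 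The hard part is the finiteness of $G_Z/G_Z^\circ$ in Stage~2: in the smooth, complete case this is precisely what the Cheeger--Gromoll splitting theorem supplies, but that theorem is unavailable because $(X_{\reg}, g_H)$ is never geodesically complete (Proposition~\ref{prop:incomplete}), so it must be replaced by genuinely algebro-geometric input --- Druel's integrability theorem to make the ``toral'' directions of $N_{\U(V_Z)}(G_Z^\circ)/G_Z^\circ$ disappear once the Abelian factor and all flat summands have been removed, and the existence of maximally quasi-\'etale covers to compare $\pi_1(Z_{\reg})$ with $\pi_1(Z)$.
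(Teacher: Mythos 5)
Your proposal follows essentially the same path as the paper: pass to a global index-one cover; split off the Abelian factor via a ``torus cover'' --- i.e.\ pass to the quasi-\'etale cover splitting as $A\times Z$ from \cite[Thm.~1.3]{GKP16}, with $\widetilde q(Z)=0$ read off from Lemma~\ref{lem:aug}; show that after passing to a maximally quasi-\'etale cover \cite[Thm.~1.5]{GKP13} the quotient $G_Z/G_Z^\circ$ embeds into a product of normaliser quotients $N_i/G_i^\circ\cong\U(1)$ and is therefore controlled by a $\U(1)^m$-valued character of $\pi_1(Z_{\reg})$ that factors through $\pi_1(Z)$; and finally use $q(Z)=0$ together with the exponential sequence to conclude the image is finite. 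The one cosmetic difference is in how the last step is packaged: the paper argues directly that $G_Z/G_Z^\circ\subsetneq\U(1)^m$ is Abelian, that its surjection from $\pi_1(Z_{\reg})$ factors through the Abelianisation $H_1(Z,\bZ)$, and that $H_1(Z,\bZ)$ is finite because $H^1(Z,\bZ)\hookrightarrow H^1(Z,\sO_Z)=0$; you reach the same conclusion via an element-by-element ``torsion linear group'' detour and by interpreting the $\U(1)$-characters as numerically trivial flat line bundles that are torsion in $\Pic^\tau(Z)$. These are equivalent, but the paper's version avoids appealing to Burnside--Schur and to the (slightly subtle) extension of flat line bundles from $Z_{\reg}$ to $Z$. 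One small point of attribution: the paper uses Druel's theorem (via \cite[Cor.~5.8]{Dru16}) not to split off the Abelian factor directly --- that is \cite[Thm.~1.3]{GKP16} --- but to bound $\widetilde q$ from below by the rank of the flat summand (Lemma~\ref{lem:FDC0}), which in turn is what shows the restricted holonomy of $Z$ fixes no nonzero vector and makes Item~\ref{il:FSCC2x} work.
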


\Preprint{Theorem~\ref{thm:holonomyCover} has a number of consequences for the
  holonomy, augmented irregularity and canonical decomposition of the initial
  variety.  The following might be of independent interest.}

\begin{cor}[Consequences for the standard setting]\label{cor:flatDecomp}
  In the standard Setting~\ref{setting:holonomy}, the following will hold.
  \begin{enumerate}
  \item\label{il:FDC1} The quotient group $G/G°$ is finite.  In particular, the
    factors $G_1$, …, $G_{ℓ}$ of the holonomy group $G$ are all finite.
  \item\label{il:FDC2} The augmented irregularity $\wtilde{q}(X)$ equals
    $\dim V_0$.
  \item\label{il:FDC3} None of the summands $𝒲°_{ℓ+1}$, …, $𝒲°_k$ is
    holomorphically flat.
  \end{enumerate}
\end{cor}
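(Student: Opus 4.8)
The plan is to derive all three items from the holonomy cover $\gamma\colon Y\to X$, $Y\cong A\times Z$, supplied by Theorem~\ref{thm:holonomyCover}, together with the behaviour of holonomy groups and canonical decompositions under quasi-étale covers recorded in Corollary~\ref{cor:bhqec2} and Remark~\ref{rem:cdqec0}. Item~\ref{il:FDC2} is then essentially immediate: the canonical decomposition of the Hermitian $G°$-space $V$ is unchanged when we pass to the holonomy cover, so $V_0=V_{Y,0}$; the defining properties of a holonomy cover give $V_{Y,0}=\pr_1^*(TA)|_y$, and $\dim A=\wtilde q(X)$. Hence $\dim V_0=\wtilde q(X)$.

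For Item~\ref{il:FDC1} I would proceed as follows. By Corollary~\ref{cor:bhqec2} the restricted holonomy groups agree, $G°=G_Y°$, and $G_Y$, viewed inside $G$, consists precisely of those parallel transports along loops in $X_{\reg}$ whose homotopy class lies in the finite-index subgroup $(\gamma°)_*\pi_1(Y°,y)\subseteq\pi_1(X_{\reg},x)$. Under the canonical surjection $\pi_1(X_{\reg},x)\twoheadrightarrow G/G°$ of Reminder~\ref{rem:fundamentalgroupsurjection}, the image of that subgroup is exactly $G_Y/G°$, so $[G/G°:G_Y/G°]\le\deg\gamma$. But the defining property of a holonomy cover gives $G_Y=G_Y°=G°$, hence $G_Y/G°$ is trivial and $|G/G°|\le\deg\gamma<\infty$. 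For the factors $G_j$ with $1\le j\le\ell$, Observation~\ref{obs:ccd} places $W_j$ inside the $G°$-trivial summand $V_0$, while $G_j$ acts trivially on every $W_{j'}$ with $j'\ne j$; thus any element of $G_j\cap G°$ acts trivially on all of $V$ and hence is the identity, so that $G_j\hookrightarrow G/G°$ and $G_j$ is finite.

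The substance of the corollary, and the step I expect to be the main obstacle, is Item~\ref{il:FDC3}. Suppose for contradiction that $\sW°_i$ is holomorphically flat for some $i>\ell$. Since $\widehat{W_i}\subseteq TX_{\reg}$ is a parallel holomorphic subbundle of the Ricci-flat tangent bundle, the connection induced on $\sW°_i$ is the Chern connection of $h_H|_{\sW°_i}$, and its restricted holonomy is the image of $G°$ acting on $W_i$; this image is non-trivial, because $W_i$ is not contained in the $G°$-trivial summand $V_0=W_0\oplus\cdots\oplus W_\ell$ (Observation~\ref{obs:ccd}), and in fact it is a non-trivial product of copies of $\SU(n_{i'})$ and $\Sp(n_{i'}/2)$ by Proposition~\ref{prop:factors}. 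In particular $\sW°_i$ is not unitary flat. On the other hand, $\sW_i$ is a stable reflexive sheaf of slope zero with respect to every ample polarisation (Proposition~\ref{prop:holbun2x}), while holomorphic flatness of $\sW°_i$ makes all of its Chern forms vanish on $X_{\reg}$, so that $\hat c_1(\sW_i)=0$ and the discriminant satisfies $\hat\Delta(\sW_i)\cdot[H]^{n-2}=0$. The equality case of the Bogomolov--Gieseker inequality for stable reflexive sheaves on klt projective varieties then forces $\sW°_i$ to arise from a unitary representation of $\pi_1(X_{\reg})$, i.e.\ to be unitary flat, which contradicts the previous sentence. The hard part will be making this last implication rigorous: one must convert holomorphic flatness of $\sW°_i$ on the \emph{non-compact} manifold $X_{\reg}$ into the vanishing of the intersection number $\hat\Delta(\sW_i)\cdot[H]^{n-2}$ computed on the compact variety $X$, and then invoke the singular Uhlenbeck--Yau/Bando--Siu characterisation of sheaves saturating the Bogomolov--Gieseker inequality as those coming from unitary representations, in the form used in \cite{GKP16} and the references therein; this is the only genuinely non-formal input, and it is precisely where the bounded-potential, finite-volume properties of the EGZ metric $\omega_H$ are needed. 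Alternatively one may transport the question along $\gamma$ to the factor $Z$, where $\wtilde q(Z)=0$ and holonomy coincides with restricted holonomy, and argue there; the analytic core of the argument is unchanged.
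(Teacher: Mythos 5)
Your treatment of Items~\ref{il:FDC1} and \ref{il:FDC2} is correct and matches the paper's argument: for Item~\ref{il:FDC1} the paper likewise reads off finiteness of $G/G°$ from the holonomy cover via Corollary~\ref{cor:bhqec2} together with Lemma~\ref{lem:bfjjd0} (finite-index subgroup argument, as you sketch), and your observation that $G_j\cap G° = \{1\}$ for $j\le\ell$, so that $G_j\hookrightarrow G/G°$, is the intended way to deduce finiteness of the factors; Item~\ref{il:FDC2} is exactly the combination of \ref{il:FSCC0x} and \ref{il:FSCC2x} from Theorem~\ref{thm:holonomyCover}, as you say.

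For Item~\ref{il:FDC3}, however, you have taken a genuinely different and much heavier road than the paper does, and the route you chose has gaps that you flag but do not close. The paper simply invokes Lemma~\ref{lem:FDC0}: if some $\sW°_i$ with $i>\ell$ were holomorphically flat, then so would $\sV°_0\oplus\sW°_i$ (by Item~\ref{il:Cx} of Proposition~\ref{prop:holbun2x} the piece $\sV°_0$ is already flat), this is a direct summand of $\sT_X$, and Lemma~\ref{lem:FDC0} — whose proof already packages Bogomolov's inequality, BCHM terminalisation and Druel's integrability theorem — yields $\wtilde q(X)\ge\dim V_0+\dim W_i>\dim V_0$, contradicting Item~\ref{il:FDC2}. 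Equivalently, and this is how the paper phrases it, one transports to the holonomy cover $A\times Z$ and applies Lemma~\ref{lem:FDC0} to $Z$, exactly as was already done inside the proof of Proposition~\ref{prop:torusCover:decomposition}. By contrast, your Bogomolov--Gieseker/Bando--Siu route has two concrete problems. First, passing from holomorphic flatness of $\sW°_i$ on the non-compact $X_{\reg}$ to vanishing of the compact intersection number $\hat\Delta(\sW_i)\cdot[H]^{n-2}$ requires a careful argument on a resolution; you identify this but do not execute it. Second, and more seriously, your final contradiction is not forced as stated: saturating the Bogomolov--Gieseker bound produces a flat \emph{unitary} structure on $\sW°_i$ for \emph{some} Hermitian metric, not a priori for $h_H$. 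To deduce that the Chern connection of $h_H|_{\sW°_i}$ is flat — which is what contradicts the computed non-trivial restricted holonomy on $W_i$ — you would still need a uniqueness theorem for Hermitian--Einstein metrics in the singular setting to identify that flat metric with $h_H|_{\sW°_i}$ up to scale. That is an extra non-trivial input the paper never needs, because Lemma~\ref{lem:FDC0} already delivers the conclusion with no further analysis of the EGZ metric.
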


Theorem~\ref{thm:holonomyCover} will be shown in
Section~\ref{ssec:pf:holonomyCover}.  Corollary~\ref{cor:flatDecomp} is proven
immediately afterwards in Section~\ref{ssect:pocfd}.  To prepare for the proof
of Theorem~\ref{thm:holonomyCover}, Sections~\ref{sec:weakHolonomyCover} and
\ref{ssec:torusCover} construct covers that realise partial aspects of the
holonomy cover.  Combining all results obtained thus far,
Section~\ref{subsect:proof_of_intro_prop} proves
Proposition~\ref{prop:new_intro_prop}.

\subsection{The weak holonomy cover}
\label{sec:weakHolonomyCover}
\approvals{Daniel & yes \\ Henri & yes \\ Stefan & yes}

The canonical decomposition of $V$ induced by the holonomy group $G$ does not in
general refine the decomposition induced by restricted holonomy $G°$\Preprint{,
  see Figure~\vref{fig:ssfs} for an illustration}.  The following result shows
that this problem vanishes once we pass to a suitable quasi-étale cover.

\begin{propnot}[Weak holonomy cover]\label{prop:weakHolonomyCover}
  In the standard Setting~\ref{setting:holonomy}, there exists a quasi-étale
  cover $γ : Y → X$ and a point $y ∈ γ^{-1}(x)$, such that the following holds.
  \begin{enumerate}
  \item\label{il:nelson1} Using Notation~\ref{not:qec}, the holonomy action
    $G_Y ↺ V_Y$ stabilises the canonical decomposition of $V_Y$ that is induced
    by the restricted holonomy action $G_Y° ↺ V_Y$.  In particular, the
    canonical decomposition induced by $G_Y$ is a refinement of the canonical
    decomposition induced by $G_Y°$.

  \item\label{il:napoleon1} A quasi-étale cover of pointed spaces satisfies
    \ref{il:nelson1} if and only if it admits a factorisation via $γ$.  In this
    case, the factorisation is unique.  In particular, the covering $γ$ is
    unique up to canonical isomorphism.
  \end{enumerate}
  \Preprint{Figure~\vref{fig:ssfs2} illustrates the canonical decompositions of
    $V_Y$.  }We refer to the covering $γ$ as the \emph{weak holonomy
    cover}.\Preprint{
    \begin{figure}[h]
      \centering
      $$
      \begin{matrix}
        G_Y° ↺ V_Y = & \cellcolor{gray!20} V_{Y,0} & ⊕ & \cellcolor{gray!20} V_{Y,1} & ⊕ & \cellcolor{gray!20} V_{Y,2} & ⊕ ⋯\\
        G_Y ↺ V_Y = & \cellcolor{gray!20} W_{Y,0} ⊕ ⋯ ⊕ W_{Y,ℓ_Y} & ⊕ & \cellcolor{gray!20} W_{Y,ℓ_Y+1} & ⊕ & \cellcolor{gray!20} W_{Y,ℓ_Y+2} & ⊕ ⋯
      \end{matrix}
      $$
      \caption{Canonical decompositions on the weak holonomy cover}
      \label{fig:ssfs2}
    \end{figure}}
\end{propnot}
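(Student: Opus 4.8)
The plan is to obtain $\gamma$ as the quasi-étale cover of $X$ attached to the smallest normal subgroup of $\pi_1(X_{\reg},x)$ whose holonomy image fixes each summand of the restricted-holonomy decomposition of $V$.

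First I would isolate the relevant finite quotient of $G$. By Observation~\ref{obs:ccd}, every element of $G$ preserves $V_0$ and permutes the summands $V_1,\dots,V_m$ of the canonical decomposition of $V$ induced by $G°$; this defines a homomorphism $G\to\mathfrak{S}_m$, whose kernel $K$ is precisely the subgroup of $G$ fixing every $V_i$. Since $G°$ acts irreducibly on each $V_i$ we have $G°\subseteq K$, so $K$ is normal of finite index $[G:K]\mid m!$ in $G$. Writing $\phi\colon\pi_1(X_{\reg},x)\twoheadrightarrow G/G°$ for the surjection of Reminder~\ref{rem:fundamentalgroupsurjection} (which sends the class of a loop to the $G°$-coset of its parallel transport) and setting $N:=\phi^{-1}(K/G°)$, a normal subgroup of index $[G:K]$, the equivalence of categories of Remark~\ref{rem:qevecxr} produces a (Galois) quasi-étale cover $\gamma\colon Y\to X$ together with a point $y\in\gamma^{-1}(x)$ such that $(\gamma°)_*\pi_1(Y°,y)=N$, where $Y°:=\gamma^{-1}(X_{\reg})$.

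Next I would verify that $G_Y=K$. By Corollary~\ref{cor:bhqec2}, $G_Y$ consists of the parallel transports along loops in $X_{\reg}$ whose homotopy class lies in $N$, and $G_Y\supseteq G°=G_Y°$. Since the image of $G_Y$ under $G\twoheadrightarrow G/G°$ is $\phi(N)=K/G°$, and $G_Y$ contains $G°=\ker(G\to G/G°)$, this forces $G_Y=K$. In particular, after the canonical identification $V_Y\cong V$ of Remark~\ref{rem:cdqec0}, the group $G_Y=K$ preserves every summand $V_{Y,i}$ of the decomposition of $V_Y$ induced by $G_Y°=G°$ — this is assertion~\ref{il:nelson1}. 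For the refinement statement, observe that each $V_{Y,i}$ with $i>0$ is $G°$-irreducible and $G_Y$-invariant, hence $G_Y$-irreducible; by the uniqueness properties of totally decomposed actions recorded in Remark~\ref{rem:totDecomp} it therefore coincides with one of the summands $W_{Y,1},\dots,W_{Y,k}$ of the canonical decomposition of $V_Y$ induced by $G_Y$. Comparing the two decompositions of $V_Y$, the remaining summands (including $W_{Y,0}$) add up to $V_{Y,0}$ and are thus each contained in $V_{Y,0}$; hence every $W_{Y,j}$ lies in some $V_{Y,i}$, i.e.\ the $G_Y$-decomposition refines the $G_Y°$-decomposition.

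Finally, for the universal property~\ref{il:napoleon1} I would repeat the computation of $G_Y$ for an arbitrary pointed quasi-étale cover $\wtilde\gamma\colon(\wtilde Y,\wtilde y)\to(X,x)$: if $\wtilde N\subseteq\pi_1(X_{\reg},x)$ denotes the corresponding subgroup (the image of the fundamental group of the étale locus), then the image of $G_{\wtilde Y}$ in $G/G°$ equals $\phi(\wtilde N)$, and since $G_{\wtilde Y}\supseteq G°$ this exhibits $G_{\wtilde Y}$ as the preimage of $\phi(\wtilde N)$ in $G$. Consequently $G_{\wtilde Y}$ preserves every summand $V_{\wtilde Y,i}$ — that is, $\wtilde\gamma$ satisfies~\ref{il:nelson1} — if and only if $G_{\wtilde Y}\subseteq K$, if and only if $\phi(\wtilde N)\subseteq K/G°$, if and only if $\wtilde N\subseteq N$. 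By the pointed lifting criterion for finite étale covers of $X_{\reg}$, extended to $X$ by normality (cf.\ Remark~\ref{rem:qevecxr}), the inclusion $\wtilde N\subseteq N$ holds exactly when $(\wtilde Y,\wtilde y)\to(X,x)$ admits a (necessarily unique) factorisation through $(Y,y)\to(X,x)$; uniqueness of $\gamma$ up to canonical isomorphism is the usual formal consequence. I expect the only genuine difficulty to be bookkeeping: because parallel transport is not homotopy invariant one must pass through $G/G°$ throughout, and one must keep track of base points — distinguishing subgroups from their conjugacy classes — when invoking the covering-space dictionary in both directions.
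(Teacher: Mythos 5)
Your construction is exactly the paper's: the cover is taken to correspond to the kernel of the composite $\pi_1(X_{\reg},x)\twoheadrightarrow G/G°\to\mathfrak{S}_m$, which is precisely your $N=\phi^{-1}(K/G°)$. The paper's proof stops after producing this cover and leaves the verifications of~\ref{il:nelson1} and~\ref{il:napoleon1} as immediate from Corollary~\ref{cor:bhqec2} and Remark~\ref{rem:qevecxr}; you spell out those implicit checks (computing $G_Y=K$, using Remark~\ref{rem:totDecomp} for the refinement, and the pointed lifting criterion for the universal property), all correctly.
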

\begin{proof}
  Write $V = V_0 ⊕ ⋯ ⊕ V_m$ for the canonical decomposition of $V$ induced by
  the action of $G°$.  We have seen in Observation~\ref{obs:ccd} that every
  element $g ∈ G$ stabilises $V⁰$ and permutes the remaining summands.  From
  this and from Reminder~\ref{rem:fundamentalgroupsurjection} we obtain a
  morphism
  \begin{equation}\label{eq:xmbx}
    \xymatrix{ %
      π_1(X_{\reg},x) \ar@{->>}[rr] \ar@/^5mm/[rrrr]^{σ} && G/G° \ar[rr] && \operatorname{Permutations}\{1, …, m\}.
    }
  \end{equation}
  Recalling from Remark~\ref{rem:qevecxr} that there exists an equivalence
  between quasi-étale covers and finite sets with transitive action of
  $π_1(X_{\reg},x)$, the morphism \eqref{eq:xmbx} thus gives the desired cover.
\end{proof}

For the next corollary, recall that a reflexive sheaf is ``strongly stable'' if
its reflexive pull-back to any quasi-étale cover is stable with respect to any
polarisation $(H_1, \dots, H_{n-1})$ there.  We refer to \cite{GKP16} for a more
detailed discussion, in particular concerning the role of varieties with
strongly stable tangent sheaf in the structure theory of varieties with
numerically trivial canonical divisor.

\begin{cor}[Strong stability and irreducibility of restricted holonomy]\label{cor:sirr2}
  In the standard Setting~\ref{setting:holonomy}, the following assertions are
  equivalent.
  \begin{enumerate}
  \item\label{il:tick} The sheaf $𝒯_X$ is strongly stable.
  \item\label{il:trick} The restricted holonomy representation $G° ↺ V$ is
    irreducible.
  \end{enumerate}
\end{cor}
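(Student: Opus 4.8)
The plan is to derive both implications from the canonical decomposition of $\sT_X$ (Construction~\ref{cons:decpW}) and its basic properties (Proposition~\ref{prop:holbun2x}), from Corollary~\ref{cor:sirr0}, and from the weak holonomy cover (Proposition~\ref{prop:weakHolonomyCover}). The organising observation is that passing to a quasi-étale cover $γ : Y → X$ does not change the \emph{restricted} holonomy representation: by Remark~\ref{rem:cdqec0} and Corollary~\ref{cor:bhqec2}, for any $y ∈ γ^{-1}(x)$ the canonical identification $V_Y = T_yY \cong T_xX = V$ carries $G_Y°$ onto $G°$.

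For \ref{il:trick} $⇒$ \ref{il:tick} I would argue directly. Assume $G° ↺ V$ is irreducible; as $n ≥ 2$, the group $G°$ then acts non-trivially. For an arbitrary quasi-étale cover $γ : Y → X$ the representation $G_Y° ↺ V_Y$ is again irreducible, hence so is $G_Y ↺ V_Y$ for the a priori larger group $G_Y ⊇ G_Y°$. Applying Construction~\ref{cons:decpW} to $Y$ in place of $X$, the canonical decomposition of $\sT_Y$ collapses to a single non-trivial irreducible summand, so Item~\ref{il:Bx} of Proposition~\ref{prop:holbun2x} (applied to $Y$) shows that $\sT_Y$ is stable with respect to every ample polarisation. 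As $γ$ was arbitrary, $\sT_X$ is strongly stable.

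For the converse \ref{il:tick} $⇒$ \ref{il:trick} the plan is to argue by contraposition: starting from a reducible $G° ↺ V$, I would produce a quasi-étale cover $γ : Y → X$ on which $\sT_Y$ decomposes as a direct sum of at least two non-zero reflexive sheaves and hence is not stable with respect to any polarisation, so that $\sT_X$ is not strongly stable. I expect two cases. If $G°$ acts non-trivially, then in the canonical decomposition $V = V_0 ⊕ V_1 ⊕ ⋯ ⊕ V_m$ of Construction~\ref{constr:canonDecompRestr} at least two summands are non-zero, reducibility excluding the remaining alternative $V = V_1$; taking $γ$ to be the weak holonomy cover of Proposition~\ref{prop:weakHolonomyCover}, the canonical decomposition of $V_Y$ induced by $G_Y$ refines the one induced by $G_Y° = G°$, so it too has at least two non-zero parts, and Construction~\ref{cons:decpW} (for $Y$) turns this into a direct sum $\sT_Y = \sW_{Y,0} ⊕ ⋯ ⊕ \sW_{Y,k_Y}$ with $k_Y ≥ 1$. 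If instead $G°$ acts trivially, i.e.\ $V = V^{G°}$, then $G° = \{1\}$, and since $G°$ is the identity component of the closure of $G$ in the compact group $\U(V)$, the group $G$ is finite; I would then let $γ : Y → X$ be the quasi-étale cover coming, via Remark~\ref{rem:qevecxr}, from the finite-index normal subgroup $\ker\bigl( π_1(X_{\reg},x) → G \bigr)$ of loops with trivial parallel transport, so that $G_Y = \{1\}$ by Corollary~\ref{cor:bhqec2}, the holonomy principle makes $TY_{\reg}$ holomorphically trivial, and hence $\sT_Y \cong \sO_Y^{⊕ n}$ with $n ≥ 2$ --- not stable.

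The main obstacle is exactly this last, degenerate case: when the restricted holonomy is trivial the weak holonomy cover is useless (it may be taken to be $X$ itself), and the argument must be rerouted through the cover that trivialises the \emph{full} holonomy, which is possible only because a trivial restricted holonomy group forces $G$ itself to be finite. Everything else is bookkeeping about how the canonical decompositions refine along the weak holonomy cover, together with the elementary fact that a decomposable reflexive sheaf is never stable.
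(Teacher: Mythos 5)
Your proof of \ref{il:trick}~$\Rightarrow$~\ref{il:tick} is nearly right, but note that Item~\ref{il:Bx} of Proposition~\ref{prop:holbun2x} gives stability of $\sT_Y$ with respect to $H^{n-1}$ for single ample divisors $H$, whereas strong stability requires stability with respect to arbitrary tuples $(H_1, \ldots, H_{n-1})$ of ample divisors. You need an equivalence between these stability notions for reflexive sheaves with trivial determinant, as the paper supplies via \cite[Prop.~5.7]{GKP16}.

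The genuine gap is in the degenerate case $G^\circ = \{1\}$. You assert that $G^\circ$ equals the identity component of the closure $\overline{G} \subseteq \U(V)$ and deduce that $G$ is finite. This is not correct: $G^\circ = \Hol^\circ$ is by definition the identity component of $G = \Hol$ itself, not of $\overline{G}$, and because $(X_{\reg}, g_H)$ is incomplete (Proposition~\ref{prop:incomplete}), the holonomy $\Hol$ is a Lie subgroup of $\U(V)$ that need not be closed, so $(\overline{G})^\circ$ can be strictly larger than $G^\circ$. Nothing Lie-theoretic forces $G$ to be finite when $G^\circ = \{1\}$; that finiteness is exactly Item~\ref{il:FDC1} of Corollary~\ref{cor:flatDecomp}, whose proof via Lemma~\ref{lem:bfjjd0} rests on Druel's integrability theorem and the existence of maximally quasi-\'etale covers. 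Invoking Corollary~\ref{cor:flatDecomp} would not be formally circular, but it is a much heavier import than Corollary~\ref{cor:sirr2} warrants; in any case your stated justification for the finiteness is false.

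The paper's own argument sidesteps this entirely. From a quasi-\'etale cover $X'$ on which $\sT_{X'}$ is not stable with respect to some tuple of ample divisors, an index-one cover $Y \to X'$ and \cite[Prop.~5.7]{GKP16} show that $\sT_Y$ is not $H^{n-1}$-stable for any ample $H$; Corollary~\ref{cor:sirr0} then produces a proper non-zero $G_Y$-invariant subspace $W \subsetneq V_Y$, which is automatically $G_Y^\circ$-invariant because $G_Y^\circ \subseteq G_Y$. So $G^\circ \cong G_Y^\circ$ is reducible, with no case distinction and no control on $G/G^\circ$ required. Your case~1 is sound; the case split itself is what opens the hole.
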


\begin{proof}[Proof of Corollary~\ref*{cor:sirr2}]
  The implications are proven separately.

  \subsubsection*{Implication \ref{il:tick} $⇒$ \ref{il:trick}}

  Consider a weak holonomy cover $γ: Y → X$, as constructed in
  Proposition~\ref{prop:weakHolonomyCover}.  Assuming \ref{il:tick}, the sheaf
  $𝒯_Y$ will then be stable with respect to any ample polarisation.  By
  Corollary~\ref{cor:sirr0}, this implies that the holonomy action $G_Y ↺ V_Y$
  is irreducible.  By choice of $Y$, either the restricted holonomy $G_Y°$ is irreducible or it is trivial. Identifying $V$ and $V_Y$ as explained in Notation~\ref{not:qec},
  we have seen in Corollary~\ref{cor:bhqec2} that the groups $G°$ and $G_Y°$ are
  naturally identified, and that their natural representations $G° ↺ V$ and
  $G_Y° ↺ V_Y$ are equivalent.

  If the restricted holonomy $G_Y°$ is irreducible, then so is $G°$ and we are done. If $G_Y°$ is trivial, then so is $G°$ and $\sT_{X_{\rm reg}}$ is flat. By \cite[Cor.~1.16]{GKP13}, there exists a finite, quasi-étale cover $A\to X$ where $A$ is an abelian variety. As $\dim X\ge 2$, $\sT_X$ is not strongly stable, a contradiction.  The implication \ref{il:tick} $⇒$ \ref{il:trick} follows.

  \subsubsection*{Implication $\neg$\ref{il:tick} $⇒$ $\neg$\ref{il:trick}}
  
  We start the proof by constructing a sequence of quasi-étale coverings as
  follows,
  $$
  \xymatrix{%
    Y \ar[rrr]_{α\text{, index-one cover}} \ar@/^.4cm/[rrrrrr]^{γ} &&& X' \ar[rrr]_{β\text{, given by $\neg$\ref{il:tick}}} &&& X:
  }
  $$
  As we assume that $𝒯_X$ is \emph{not} strongly stable, we find a quasi-étale
  cover $β: X' → X$ and ample divisors $H_1, …, H_{n-1}$ on $X'$ such that
  $𝒯_{X'}$ is not stable with respect to $(H_1, …, H_{n-1})$.  The space $X'$ is
  again klt, and its canonical class $K_{X'}$ is again numerically trivial.  Let
  $α : Y → X'$ be a global index-one cover, whose existence is guaranteed by
  Proposition~\ref{prop:global_index_one_cover}.  Moreover, setting
  $H_{Y,i} := α^* H_i$, the sheaf $𝒯_{Y} = α^{[*]} 𝒯_{X'}$ is \emph{not} stable
  with respect to $(H_{Y,1}, …, H_{Y,n-1})$.  Together with
  \cite[Prop.~5.7]{GKP16} these properties imply that $𝒯_{Y}$ is in fact
  \emph{not} stable with respect to any tuple of ample bundles.
  Corollary~\ref{cor:sirr0} therefore implies that the holonomy action
  $G_Y ↺ V_Y$ is \emph{not} irreducible, and then neither is the action of the
  restricted holonomy group, $G_Y° ↺ V_Y$.  Using Corollary~\ref{cor:bhqec2} to
  identify the representations $G° ↺ V$ and $G_Y° ↺ V_Y$, the implication
  $\neg$\ref{il:tick} $⇒$ $\neg$\ref{il:trick} thus follows.
\end{proof}

\subsection{Torus covers}\label{ssec:torusCover}
\approvals{Daniel & yes \\ Henri & yes \\ Stefan & yes}

We have seen in Proposition~\ref{prop:holbun2x} that the first summands of the
canonical decomposition, $𝒲_0$, …, $𝒲_{ℓ}$ are locally free on $X_{\reg}$, and
are holomorphically flat there.  The following theorem gives a geometric
explanation for this observation.

\begin{propnot}[Torus cover]\label{prop:torusCover}
  In the standard Setting~\ref{setting:holonomy}, there exist normal, projective
  varieties $A$ and $Z$, and a quasi-étale cover $γ : Y → X$ such that $Y$ has a
  product structure, $Y= A⨯Z$, with the following properties.
  \begin{enumerate}
  \item\label{il:FSCC0} The variety $A$ is Abelian, of dimension
    $\dim A = \wtilde{q}(X)$.
  \item\label{il:FSCC1} The variety $Z$ has canonical singularities, trivial
    canonical bundle, and augmented irregularity $\wtilde{q}(Z) = 0$.
  \end{enumerate}
  A quasi-étale cover $Y → X$ where $Y = A⨯Z$ has a product structure with
  properties \ref{il:FSCC0} and \ref{il:FSCC1} will be called a \emph{torus
    cover}.
\end{propnot}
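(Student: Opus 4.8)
The plan is to split off the foliation $\sV_0 \subseteq \sT_X$ cut out by the $G^\circ$-fixed part $V_0 \subseteq V$ of the holonomy representation, to recognise its leaves as Abelian varieties by means of Druel's integrability theorem, and to trivialise the resulting family with the help of a maximally quasi-étale cover. First I would replace $X$ by a quasi-étale cover arranged so that, simultaneously, $q(X) = \wtilde q(X)$, the variety $X$ has canonical singularities with $K_X \sim 0$, and $X$ is maximally quasi-étale in the sense of \cite[Thm.~1.5]{GKP13}. This is harmless: $\wtilde q$ is a quasi-étale invariant and finite here (Lemma~\ref{lem:aug}), irregularity only grows in covers (``Injectivity Lemma'', loc.\ cit.), so the equality $q = \wtilde q$ survives the later covers; Proposition~\ref{prop:global_index_one_cover} supplies the index-one cover; and by Proposition~\ref{prop:univ} the whole Standard Setting~\ref{setting:holonomy} pulls back. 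If $\wtilde q(X) = 0$ one is done with $A$ a point and $Z = X$, so I may assume $\wtilde q(X) \ge 1$.

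Next I would study the sheaf $\sV_0 = \iota_*\sV_0^\circ$. By Construction~\ref{cons:decpW} it is a direct summand of $\sT_X$ with reflexive, hence torsion-free, complement, so it is saturated; on $X_{\reg}$ it is the parallel subbundle attached to $V_0$, which is holomorphic (Proposition~\ref{prop:HPB}) and, the Levi-Civita connection being torsion-free, involutive; involutivity extends over the small set $X \setminus X_{\reg}$ by reflexivity, so $\sV_0$ is a foliation on $X$. By Proposition~\ref{prop:holbun2x} its restriction is holomorphically flat and $\det \sV_0$ is numerically trivial, so Druel's integrability theorem \cite[Thm.~1.4]{Dru16} applies and shows that $\sV_0$ is algebraically integrable. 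Let $F$ be the normalisation of the closure of a general leaf: it is projective with canonical singularities and numerically trivial canonical class, and $\sT_F \cong \sV_0|_F$ is holomorphically flat, so by the characterisation of quotients of Abelian varieties $F$ is, up to a quasi-étale cover, an Abelian variety. Parallelism of $\sV_0$ forces a local de Rham splitting of $(X_{\reg}, g_H)$ along the leaves (Proposition~\ref{prop:locdecomp}), whence the family of leaves is isotrivial.

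Then I would assemble the product. The algebraically integrable foliation $\sV_0$ defines a fibration of $X$ onto a normal projective base $B$ (the space of leaves), equidimensional over the big locus where $\sV_0$ is a subbundle; by isotriviality it is an étale-locally-trivial fibre bundle whose fibre is a finite quasi-étale quotient of an Abelian variety. Its finite structure group yields an étale cover of $X_{\reg}$ which, by maximal quasi-étaleness of $X$, extends to a quasi-étale cover of $X$; after this cover and an isogeny absorbing the residual translations the bundle becomes trivial, producing a quasi-étale cover $\gamma \colon Y \to X$ with $Y \cong A \times Z$, $A$ Abelian and $Z$ the corresponding base. Now $\sV_{Y,0} = \gamma^{[*]}\sV_0 = \pr_1^*TA$ by Remark~\ref{rem:cdqec2}, so $(\sV_{Y,0})^* \cong \sO_Y^{\oplus \dim A}$; since the remaining summands of $\sT_Y$ are stable of slope zero and of rank at least two they admit no nonzero map to $\sO_Y$, hence $q(Y) = h^0\bigl(Y, \Omega^{[1]}_Y\bigr) = \dim A$, while $q(Y) = \wtilde q(Y) = \wtilde q(X)$ because $q(X) = \wtilde q(X)$ was arranged and irregularity grows in covers; this gives property~\ref{il:FSCC0}. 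Finally $Z$ inherits canonical singularities and $K_Z \sim 0$ from $Y = A \times Z$ (as $K_A = 0$), and for any quasi-étale cover $W \to Z$ one has $q(A \times W) = \dim A + q(W)$ by Künneth, while $q(A \times W) \le \wtilde q(A \times W) = \wtilde q(X) = \dim A$; hence $q(W) = 0$, i.e.\ $\wtilde q(Z) = 0$, which is property~\ref{il:FSCC1}.

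The hard part will be the third step: upgrading the algebraically integrable flat foliation to a \emph{literal} product $A \times Z$ with $A$ an Abelian variety, rather than merely to an isotrivial fibre bundle. This needs a careful analysis of the structure group of the leaf family, using both the maximally quasi-étale cover and the differential geometry of the parallel foliation, to ensure that after an appropriate cover this group acts on the covering torus by translations only, so that the quotient is again Abelian and the cover of $X$ a genuine product. By comparison, checking the hypotheses of Druel's theorem in our flat setting and the augmented-irregularity bookkeeping are routine.
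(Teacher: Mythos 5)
Your proposal takes a genuinely different --- and considerably harder --- route than the paper. The paper's proof is almost a triviality once the right references are in hand: compose a quasi-étale cover realising the augmented irregularity, a global index-one cover (Proposition~\ref{prop:global_index_one_cover}), and then invoke \cite[Thm.~1.3]{GKP16}, which already supplies a quasi-étale cover $A \times Z \to X''$ with $A$ Abelian, $Z$ canonical, and $\widetilde q(Z) = 0$. The only remaining step is the Injectivity-Lemma bookkeeping, which you do carry out correctly at the end of your argument. You instead try to re-derive the structure theorem \cite[Thm.~1.3]{GKP16} from scratch: split off the flat parallel foliation $\sV_0$, apply Druel's integrability theorem, and then trivialise the resulting leaf family.

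That programme is workable in spirit --- it is close to how Druel in \cite{Dru16} and later Höring--Peternell proceed --- but it is far heavier machinery than this proposition needs, and more importantly the decisive step is left as an outline. You explicitly acknowledge that upgrading an isotrivial fibre bundle with Abelian-quotient fibres to a literal product $A \times Z$ ``needs a careful analysis of the structure group,'' but you do not carry out that analysis. That step is precisely the hard content of \cite[Thm.~1.3]{GKP16} (proved there via an Albanese-type argument, not a parallel-foliation one), so without either citing the theorem or actually completing the trivialisation, your proof has a genuine gap. There is also a secondary issue: the claim that parallelism of $\sV_0$ ``forces a local de~Rham splitting, whence the family of leaves is isotrivial'' glosses over the fact that the de~Rham splitting lives only on $X_{\reg}$ and does not globalise for free; this is exactly the geodesic-incompleteness obstruction of Proposition~\ref{prop:incomplete}, and it is the reason the singular decomposition theorem is hard in the first place.
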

\begin{proof}
  We construct a sequence of projective varieties and quasi-étale covers as
  follows,
  $$
  \xymatrix{ %
    A⨯Z \ar[rrr]^{c}_{\text{splitting off torus}} &&& X'' \ar[rr]^{b}_{\text{index-one}} && X' \ar[rr]^{a}_{\text{realising }\wtilde{q}} && X.
  }
  $$
  As quasi-étale covers of $X$, all varieties will again be klt, with
  numerically trivial canonical class.  To be more precise, let $a : X' → X$ be
  a quasi-étale cover that realises the augmented irregularity,
  $q(X') = \wtilde{q}(X)$.  Such a cover exists by definition, because
  $\wtilde{q}(X)$ is finite by Item~\ref{il:AUG1} of Lemma~\ref{lem:aug}.  Next,
  we consider a global index-one cover $b : X'' → X'$, as given by
  Proposition~\ref{prop:global_index_one_cover}.  Finally, recall from
  \cite[Thm.~1.3]{GKP16} that there exists an Abelian variety $A$, a canonical
  variety $Z$ with $\wtilde{q}(Z) = 0$ and a quasi-étale cover $c: A⨯Z → X''$.
  Recalling from \cite[Lem.~4.1.14]{Laz04-I} (``Injectivity Lemma'') that the
  irregularity increases in covers, the equality of augmented and actual
  irregularity still holds on $A⨯Z$.  In other words,
  $\wtilde q(X) = q(A⨯Z) = \dim A$.  We consider the composition $γ : A⨯Z → X$.
  The construction clearly satisfies Items~\ref{il:FSCC0} and \ref{il:FSCC1}.
\end{proof}

The canonical decompositions on a torus cover are described as follows.

\begin{prop}[Canonical decompositions on torus cover]\label{prop:torusCover:decomposition}
  In the standard Setting~\ref{setting:holonomy}, let $γ: Y → X$ be a torus
  cover, $Y = A ⨯ Z$.  Then, the Kähler manifold $(Y_{\reg}, ω_{H_Y})$ splits as
  $(A, ω_{H_A}) ⨯ (Z_{\reg}, ω_{H_Z})$, where $ω_{H_A}$ is flat and $ω_{H_Z}$ is
  Ricci-flat.  If $y = (a,z) ∈ γ^{-1}(x)$, the summands in the canonical
  decompositions of $V_Y$ relate to the product structure of $Y$ as follows.
  \begin{enumerate}
  \item\label{il:FSCC2} The summands ${V}_{Y,0}$ and ${W}_{Y,0}$ both equal
    $\pr_1^* (TA)|_{y}$.
  \item\label{il:FSCC3} The summand ${W}_{Y,1} ⊕ ⋯ ⊕ {W}_{Y, k}$ equals
    $\pr_2^* (TZ)|_{y}$.
  \end{enumerate}
  \Preprint{Figure~\vref{fig:torusCover} illustrates the canonical
    decompositions.
    \begin{figure}
      \centering
      $$
      \begin{matrix}
        G_Y° ↺ V_Y = & \cellcolor{gray!20} V_{Y,0} & ⊕ & \cellcolor{gray!20} V_{Y,1} ⊕ ⋯ ⊕ V_{Y,ℓ_1} & ⊕ & \cellcolor{gray!20} V_{Y,ℓ_1+1} ⊕ ⋯ ⊕ V_{Y,ℓ_2} & ⊕ ⋯\\
        G_Y ↺ V_Y = & \cellcolor{gray!20} W_{Y,0} & ⊕ & \cellcolor{gray!20} W_{Y,1} & ⊕ & \cellcolor{gray!20} W_{Y,2} & ⊕ ⋯\\[-2mm]
        & \underbrace{\hspace{1cm}}_{\mathclap{= \pr_1^*(TA)_{y}}} & &
        \multicolumn{4}{c}{\underbrace{\hspace{8cm}}_{= \pr_2^*(TZ)_{y}}}
      \end{matrix}
      $$
      \caption{Canonical decompositions on a torus cover}
      \label{fig:torusCover}
    \end{figure}}
\end{prop}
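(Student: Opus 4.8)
The plan is to deduce everything from the product structure of the Ricci-flat metric on the torus cover.

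\textbf{Step 1: the metric splits.} Since $\wtilde q(Z)=0$ one has $q(Z)=h^1(Z,\mathcal O_Z)=0$, hence $\Pic^0(Z)=0$. Fixing $a_0\in A$, $z_0\in Z$ and setting $H_A:=\mathcal O_Y(H_Y)|_{A\times\{z_0\}}$, $H_Z:=\mathcal O_Y(H_Y)|_{\{a_0\}\times Z}$ — both ample, being restrictions of the ample line bundle $\mathcal O_Y(H_Y)=\gamma^*\mathcal O_X(H)$ to closed subvarieties — the seesaw theorem (using $\Pic^0(Z)=0$) gives $\mathcal O_Y(H_Y)\cong\pr_1^*\mathcal O_A(H_A)\otimes\pr_2^*\mathcal O_Z(H_Z)$, so in particular $[H_Y]=\pr_1^*[H_A]+\pr_2^*[H_Z]$ in $H^2(Y,\mathbb R)$. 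As the metric of Theorem~\ref{thm:EGZ} depends only on the real cohomology class, Proposition~\ref{prop:univProd} applied to $X_1=A$, $X_2=Z$ now identifies $(Y_{\reg},\omega_{H_Y})$ with $(A,\omega_{H_A})\times(Z_{\reg},\omega_{H_Z})$. Because $A$ is an Abelian variety, $\omega_{H_A}$ — being the unique Ricci-flat Kähler metric in its class — is the translation-invariant representative, hence flat, so $(A,g_{H_A})$ has trivial holonomy group. This is the first assertion; it also lets us apply the standard Setting~\ref{setting:holonomy} to $(Z,H_Z)$ at $z$ (if $\dim Z\le 1$ then $\dim Z=0$ by $\wtilde q(Z)=0$, and the statement is vacuous).

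\textbf{Step 2: the easy inclusions and a reduction.} Write $y=(a,z)$ and $V_Y=\pr_1^*(TA)|_y\oplus\pr_2^*(TZ)|_y=T_aA\oplus T_zZ$. Since $g_{H_Y}$ is a Riemannian product, $\Hol(Y_{\reg},g_{H_Y})_y$ decomposes as $\Hol(A,g_{H_A})_a\times\Hol(Z_{\reg},g_{H_Z})_z$ acting block-diagonally, and likewise for the restricted holonomy; by Step~1 the first factor is trivial. Hence $G_Y$ and $G_Y°$ act trivially on $\pr_1^*(TA)|_y$ and through $\Hol(Z_{\reg},g_{H_Z})_z$ resp.\ $\Hol(Z_{\reg},g_{H_Z})_z°$ on $\pr_2^*(TZ)|_y$. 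So $\pr_1^*(TA)|_y\subseteq {W}_{Y,0}\subseteq {V}_{Y,0}$, and moreover ${V}_{Y,0}=\pr_1^*(TA)|_y\oplus {V}_{Z,0}$, ${W}_{Y,0}=\pr_1^*(TA)|_y\oplus {W}_{Z,0}$, where ${V}_{Z,0}\supseteq{W}_{Z,0}$ are the flat summands of the canonical decompositions of $T_zZ$ (Constructions~\ref{constr:canonDecompRestr} and \ref{cons:canonDecomp} applied to $Z$). Thus \ref{il:FSCC2} is equivalent to ${V}_{Z,0}=0$, and \ref{il:FSCC3} will then follow because, in the orthogonal decomposition $V_Y={W}_{Y,0}\oplus{W}_{Y,1}\oplus\cdots\oplus{W}_{Y,k}$, the summand ${W}_{Y,1}\oplus\cdots\oplus{W}_{Y,k}$ is the orthogonal complement of ${W}_{Y,0}=\pr_1^*(TA)|_y$, which for the product metric is exactly $\pr_2^*(TZ)|_y$.

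\textbf{Step 3: vanishing of the flat part of $Z$.} Suppose ${V}_{Z,0}\ne 0$. Applying Proposition~\ref{prop:holbun2x}\eqref{il:Cx} to $Z$, the associated summand $\mathcal V°_{Z,0}$ of $TZ_{\reg}$ is a nonzero unitary-flat direct summand; dualising, $\Omega^1_{Z_{\reg}}$ acquires a nonzero unitary-flat direct summand of positive rank. The plan is to produce a quasi-étale cover $\delta:Z'\to Z$ on which (the reflexive pull-back of) this summand becomes trivial; then $Z'$ carries a nonzero reflexive $1$-form, hence $q(Z')>0$, contradicting $q(Z')\le\wtilde q(Z')=\wtilde q(Z)=0$ by Lemma~\ref{lem:aug}. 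This forces ${V}_{Z,0}=0$, giving \ref{il:FSCC2} and, as noted, \ref{il:FSCC3}.

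\textbf{The main obstacle.} Everything except the construction of the cover $Z'$ in Step~3 is routine. The difficulty is that one does not know a priori that $\Hol(Z_{\reg},g_{H_Z})/\Hol(Z_{\reg},g_{H_Z})°$ is finite — this is only established later, via Theorem~\ref{thm:holonomyCover} — so one cannot simply pass to a finite cover killing the full monodromy of $\mathcal V°_{Z,0}$ at once. The way around it is to argue factor by factor: for $i\le\ell$ the holonomy group of the flat summand $\mathcal W°_{Z,i}$ is the factor $G_{Z,i}$ of the full-holonomy decomposition of $Z$, which meets the open subgroup $\Hol(Z_{\reg},g_{H_Z})°$ trivially and is therefore discrete; combining this with the structure of the torus cover supplied by \cite[Thm.~1.3]{GKP16} — equivalently, with the dictionary between the flat part of the tangent sheaf and the augmented irregularity, cf.\ §\ref{ssect:comp} and Lemma~\ref{lem:aug} — one sees that $G_{Z,i}$ is in fact finite, so that $\mathcal W°_{Z,i}$ is trivialised by a finite étale cover of $Z_{\reg}$, which by Remark~\ref{rem:qevecxr} extends to the desired quasi-étale cover $Z'\to Z$. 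Making this last finiteness step precise, and thereby bypassing the potential infiniteness of $G/G°$, is the real content of the proof.
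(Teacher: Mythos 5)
Your Steps~1 and~2 are correct and coincide with the paper's argument: the seesaw decomposition $H_Y\sim\pr_1^*H_A+\pr_2^*H_Z$ uses $q(Z)=0$, one invokes Proposition~\ref{prop:univProd} to split the metric, and the classical Bochner principle on $A$ gives flatness of $\omega_{H_A}$ and trivial holonomy of the $A$-factor, reducing everything to showing $V_{Z,0}=0$.

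The genuine gap is in Step~3, and you essentially concede it yourself. The plan --- if $V_{Z,0}\ne 0$, trivialise the flat summand on a quasi-étale cover to produce a reflexive one-form contradicting $\wtilde q(Z)=0$ --- requires knowing that the monodromy of the flat summand is \emph{finite}. Your sketch of a factor-by-factor argument only gives that each $G_{Z,i}$ ($i\le\ell$) injects into the discrete group $G_Z/G_Z^\circ$; discreteness does not yield finiteness, and at this stage of the paper finiteness of $G/G^\circ$ is not yet available --- it is in fact proved \emph{after} and \emph{using} this proposition via Lemma~\ref{lem:bfjjd0}. The phrase ``combining this with the structure of the torus cover supplied by \cite[Thm.~1.3]{GKP16}, \dots\ one sees that $G_{Z,i}$ is in fact finite'' is circular: you would need to know the flat part of $𝒯_Z$ is trivial to apply that dictionary, which is exactly what you are trying to prove.

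The missing ingredient is Lemma~\ref{lem:FDC0}, which the paper invokes directly: if $\mathcal T_Z=\mathcal E\oplus\mathcal F$ with $\mathcal E|_{Z_{\reg}}$ locally free and holomorphically flat, then $\wtilde q(Z)\ge\rank\mathcal E$. Combined with Proposition~\ref{prop:holbun2x}\eqref{il:Cx}, a nonzero $V_{Z,0}$ would give a flat direct summand $\mathcal V^\circ_{Z,0}\subset\mathcal T_{Z_{\reg}}$ of positive rank, hence $\wtilde q(Z)>0$, contradiction. The proof of Lemma~\ref{lem:FDC0} is \emph{not} a finiteness-of-monodromy argument at all; in the terminal case it rests on Druel's algebraic integrability theorem \cite[Cor.~5.8]{Dru16}, and in general it passes through a terminalisation, a global index-one cover, and a maximally quasi-étale cover. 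This algebro-geometric input is what breaks the circularity you worried about, and it is absent from your sketch.
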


\subsubsection{Preparation for the proof of Proposition~\ref*{prop:torusCover:decomposition}}
\approvals{Daniel & yes \\ Henri & yes \\ Stefan & yes}

Proposition~\ref{prop:torusCover:decomposition} relies on a remarkable result of
Druel concerning algebraic integrability of foliations, \cite{Dru16}.  Druel
relates flat subsheaves of $𝒯_X$ to torus factors in suitable covers, but does
so only for terminal varieties.  We apply Druel's result in the following,
slightly indirect manner.

\begin{lem}[Flat summands and augmented irregularity]\label{lem:FDC0}
  Let $X$ be a projective klt variety with numerically trivial canonical divisor
  and assume that there exists a direct sum decomposition $𝒯_X = ℰ ⊕ ℱ$ where
  the locally free sheaf $ℰ|_{X_{\reg}}$ is holomorphically flat.  Then,
  $\wtilde{q}(X) ≥ \rank ℰ$.
\end{lem}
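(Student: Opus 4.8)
The goal is to show $\wtilde q(X)\ge r$, where $r:=\rank\sE$. The plan is to produce, after a quasi-étale cover of $X$, a positive-dimensional abelian variety as a \emph{direct factor}, and then to read off the inequality from the behaviour of the augmented irregularity under covers, Lemma~\ref{lem:aug}. First I would put $X$ into convenient shape. Since $X$ satisfies the hypotheses of the standard Setting~\ref{setting:holonomy} (choose any ample $H$ and any point $x\in X_{\reg}$), Proposition~\ref{prop:holbun2x} shows that $\sT_X$ is polystable of slope zero; hence its direct summand $\sE$ is polystable of slope zero, $\det\sE$ is numerically trivial, and — comparing with the holonomy decomposition of Construction~\ref{cons:decpW}, in which the holomorphically flat summand $\sE|_{X_{\reg}}$ appears as a parallel subbundle, parallel subbundles being integrable because the Levi-Civita connection is torsion free — the subsheaf $\sE\subseteq\sT_X$ is a foliation. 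Using Proposition~\ref{prop:global_index_one_cover} and Item~\ref{il:AUG2} of Lemma~\ref{lem:aug} we may moreover assume that $X$ has canonical singularities with $K_X\sim0$: the index-one cover is étale over the big open set $X_{\reg}$, so the direct-summand foliation $\sE$, together with the numerical triviality of $\det\sE$, is carried along by reflexive pull-back.

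Next I would pass to a terminal model. A terminalizing MMP yields a crepant birational morphism $\pi:\what X\to X$ with $\what X$ $\bQ$-factorial terminal and $K_{\what X}=\pi^*K_X\equiv0$. As $\what X$ and $X$ both have canonical singularities and $K_{\what X}$ is numerically trivial, Item~\ref{il:AUG3} of Lemma~\ref{lem:aug} gives $\wtilde q(\what X)\le\wtilde q(X)$, so it suffices to bound $\wtilde q(\what X)$ from below. Transporting the flat, numerically-trivial-determinant, direct-summand foliation $\sE$ from $X$ to $\what X$ (the delicate step; see below), one is reduced to the following: a $\bQ$-factorial terminal projective variety with numerically trivial canonical class, carrying a rank-$r$ foliation inside its tangent sheaf that is a direct summand with numerically trivial determinant.

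In this situation I invoke Druel's integrability theorem, \cite[Thm.~1.4]{Dru16}: the foliation $\sE$ is algebraically integrable, and — this is precisely the part of \emph{loc.\ cit.}\ formulated for terminal varieties — there is a further quasi-étale cover $\delta:W\to\what X$ with a product decomposition $W\cong A\times Z$, where $A$ is an abelian variety and $\dim A=\rank\sE=r$. Then $q(W)\ge q(A)=\dim A=r$, and combining Items~\ref{il:AUG2} and \ref{il:AUG3} of Lemma~\ref{lem:aug} we conclude
$$\wtilde q(X)\ \ge\ \wtilde q(\what X)\ =\ \wtilde q(W)\ \ge\ q(W)\ \ge\ r,$$
which is the assertion. (The case $\dim X\le1$ is trivial, $X$ then being an elliptic curve.)

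The main obstacle is the transport of the flat structure to the terminal model $\what X$ that was used tacitly above. A terminalization can extract exceptional divisors — of discrepancy zero, but in general \emph{not} numerically trivial — so the naive reflexive pull-back $(\pi^{*}\sE)^{**}$ need not remain a direct summand of $\sT_{\what X}$ with numerically trivial determinant. Circumventing this requires either arranging $\pi$ (or at least a preliminary $\bQ$-factorialization) to be an isomorphism in codimension one, or else bypassing $\what X$ altogether by applying the algebraic-integrability conclusion of \cite{Dru16} directly on $X$ and controlling the irregularity of the closure of a general leaf of $\sE$, possibly after first passing to a maximally quasi-étale cover in the sense of \cite[Thm.~1.5]{GKP13}. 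This is where the genuine input from \cite{Dru16}, and the ``slightly indirect'' application advertised in the surrounding text, lies.
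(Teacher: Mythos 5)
Your proposal correctly identifies the overall strategy (find an abelian factor on a suitable quasi-étale cover, then use Lemma~\ref{lem:aug}) and you are right to flag the transport of $\sE$ across a terminalization as the delicate point---but the gap you acknowledge is a real gap, and your proposed resolutions do not quite match the mechanism that makes the argument work.

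The decisive observation you are missing is that the hypothesis only says $\sE|_{X_{\reg}}$ is holomorphically flat on $X_{\reg}$, not that $\sE$ is a flat (in particular locally free) sheaf on all of $X$. This is exactly what causes the reflexive-pull-back headache you describe. The paper's resolution is to \emph{first} upgrade $\sE$ to an honest flat bundle on the whole variety, and \emph{then} terminalize, in which case the genuine (not reflexive) pull-back $\tau^*\sE$ is flat and the problem disappears. Concretely, the correct order of operations is: (i) index-one cover $\alpha\colon X'\to X$ to get canonical singularities with $K\sim 0$; (ii) maximally quasi-étale cover $\beta\colon Y\to X'$, so that $\what\pi_1(Y)=\what\pi_1(Y_{\reg})$ and the extension theorem for flat sheaves \cite[Thm.~1.14]{GKP13} makes $\gamma^{[*]}\sE$ locally free and flat on all of $Y$; (iii) \emph{only then} terminalize $\tau\colon\what{Y}\to Y$: since $\gamma^{[*]}\sE$ is an honest flat bundle, $\tau^*(\gamma^{[*]}\sE)$ is flat, the extension theorem for differential forms \cite[Thm.~1.4]{GKKP11} embeds it into $\sT_{\what{Y}}$, and Guenancia's polystability \cite[Thm.~A]{Guenancia} shows it is a direct summand. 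You instead present the maximally quasi-étale cover as an \emph{alternative} to terminalization (or suggest arranging $\pi$ to be small), rather than as the preparatory step that makes terminalization harmless.

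A second, smaller inaccuracy: in the terminal case you invoke Druel's Theorem~1.4 and assert it produces an abelian factor $A$ with $\dim A=\rank\sE$, but that is not what Theorem~1.4 says directly. The paper actually uses the more precise \cite[Cor.~5.8]{Dru16}, which, after decomposing the complementary summand $\sF$ into $H$-stable pieces and sorting them by whether $c_2\cdot H^{n-2}$ vanishes (using the Mehta--Ramanathan restriction theorem and the Bogomolov inequality), gives the identity
\[
\wtilde{q}(X) = \rank\bigl(\sE\oplus\sF_1\oplus\cdots\oplus\sF_k\bigr)\ \ge\ \rank\sE,
\]
an equality rather than a mere algebraic-integrability statement. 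Your route through Theorem~1.4 would require additional justification that the dimension of the resulting abelian factor is at least $\rank\sE$, which Corollary~5.8 gives for free.
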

\begin{proof}
  ---

  \subsubsection*{Step 1: Proof in case where $X$ is terminal}

  Choose an ample Cartier divisor $H ∈ \Div(X)$ and recall from
  Proposition~\ref{prop:holbun2x} or \cite[Thm.~A]{Guenancia} that $𝒯_X$ is
  $H$-polystable.  The sheaf $ℱ$ thus decomposes into a direct sum of $H$-stable
  sheaves, $ℱ = ℱ_1 ⊕ ⋯ ⊕ ℱ_m$.  The intersections of $[H]^{n-1}$ with the first
  Chern class of the $ℱ_i$ clearly vanishes.  Moreover, it follows from
  Flenner's version of the Mehta-Ramanathan Theorem,
  \cite[Thm.~7.1.1]{MR2665168}, and the Bogomolov inequality,
  \cite[Thm.~3.4.1]{MR2665168}, that $c_2(ℱ_i)·[H]^{n-2} ≥ 0$ for all $i$.
  Renumbering if necessary, we find a number $k$ such that
  $c_2(ℱ_i)·[H]^{n-2} = 0$ if and only if $i≤k$.  Using the assumption that $X$
  is terminal, Druel then shows in \cite[Cor.~5.8]{Dru16} that
  $$
  \wtilde{q}(X) = \rank \bigl( ℰ ⊕ ℱ_1 ⊕ ⋯ ⊕ ℱ_k \bigr) ≥ \rank ℰ.
  $$

  \subsubsection*{Step 2: Proof in case where $X$ is canonical and $ℰ$ is locally free and flat}

  In this case, consider a terminalisation, that is, a birational crepant
  morphism $τ: \what{X} → X$ where $\what{X}$ is terminal and $ℚ$-factorial.
  The existence of a terminalisation is shown in \cite[Cor.~1.4.3]{BCHM10}.  As
  the pull-back of a flat bundle, $τ^*ℰ$ is clearly flat.  More is true.  The
  extension theorem for differential forms, \cite[Thm.~1.4]{GKKP11}, gives an
  injection $τ^*ℰ ↪ 𝒯_{\what X}$ and the polystability result of
  Guenancia, \cite[Thm.~A]{Guenancia}, shows that $τ^*ℰ$ is in fact a direct
  summand there.  We obtain inequalities
  \begin{align*}
    \wtilde{q}(X) & ≥ \wtilde{q}(\what{X}) && \text{by Item~\ref{il:AUG3} of Lemma~\ref{lem:aug}} \\
                  & ≥ \rank τ^* ℰ = \rank ℰ && \text{by Step 1}.
  \end{align*}

  \subsubsection*{Step 3: Proof in general}

  Let $α : X' → X$ be a global index-one cover, see
  Proposition~\ref{prop:global_index_one_cover}, which is quasi-étale.  The
  variety $X'$ is then canonical with trivial canonical class.  Next, let
  $β : Y → X'$ be a maximally quasi-étale cover, as given by
  \cite[Thm.~1.5]{GKP13}.  The composition $γ: Y → X$ is then quasi-étale, the
  variety $Y$ is canonical with trivial canonical class, and the algebraic
  fundamental groups $\what{π}_1(Y)$ and $\what{π}_1(Y_{\reg})$ agree.  The
  extension theorem for flat sheaves, \cite[Thm.~1.14]{GKP13} thus asserts that
  $γ^{[*]} ℰ$ is locally free and flat.  Recalling from Item~\ref{il:AUG3} of
  Lemma~\ref{lem:aug} that $\wtilde{q}(X) = \wtilde{q}(Y)$, an application of
  Step~2 to the direct sum decomposition
  $$
  𝒯_{Y} = γ^{[*]}ℰ ⊕ γ^{[*]}ℱ
  $$
  thus finishes the proof of Lemma~\ref{lem:FDC0} in the general case.
\end{proof}

\subsubsection{Proof of Proposition~\ref*{prop:torusCover:decomposition}}\label{ssect:potfd2}
\approvals{Daniel & yes \\ Henri & yes \\ Stefan & yes}

Assumption~\ref{il:FSCC1} on the augmented irregularity implies that $q(Z) = 0$.
The ample Cartier divisor $H_Y ∈ \Div(Y)$ is therefore linearly equivalent to a
sum $H_Y \sim \pr_1^* H_A + \pr_2 H_Z$, where $H_A$ and $H_Z$ are ample divisors
on $A$ and $Z$, respectively.  %
\Preprint{For a proof, choose a point $a ∈ A$ and consider the fibre $Y_a$.  The
  fibre is isomorphic to $Z$, and can be seen as an image of a section
  $σ : Z → Y$.  Set $H_Z := σ^*H_Y$ and $H_Y' := H_Y - \pr_2^* H_Z$.  The
  restriction of $H_Y'$ to $Y_a$ is linearly trivial.  Since $q(Z)=0$, the
  Picard group of $Z$ is discrete, and the restriction of $H_Y'$ to other fibres
  $((H_Y)_b)_{b ∈ A}$ is likewise trivial.  The push-forward
  $(\pr_1)_* 𝒪_A(H_Y')$ is thus invertible.  Let $H_A$ be an associated Cartier
  divisor, and observe that $H_Y$ is linearly equivalent to
  $\pr_1^* H_A + \pr_2 H_Z$, as desired.}%
\Publication{The preprint version of this paper,
  \href{https://arxiv.org/abs/1704.01408}{arXiv:1704.01408}, contains a detailed
  proof of this fact.}

As a consequence of the decomposition of $H_Y$, recall from
Proposition~\ref{prop:univProd} that the Ricci-flat Kähler manifold
$\bigl(Y_{\reg}, ω_{H_Y}\bigr)$ is a product of $\bigl(A, ω_{H_A}\bigr)$ and
$\bigl(Z_{\reg}, ω_{H_Z}\bigr)$.  Recall from \cite[Sect.~10.35 and
Cor.~10.48]{MR867684} that the (restricted) holonomy group then also decomposes.
More precisely, writing $y = (a,z)$,
\begin{align*}
  G_Y° & = \Hol\bigl( A,\, g_{H_A}\bigr)°_a ⨯ \Hol\bigl( Z_{\reg},\, g_{H_Z}\bigr)°_z \\
  G_Y & = \Hol\bigl( A,\, g_{H_A}\bigr)_a ⨯ \Hol\bigl( Z_{\reg},\, g_{H_Z}\bigr)_z.
\end{align*}

By the classical Bochner principle, the elements of a global holomorphic frame
for $TA$ are parallel with respect to $g_{H_A}$.  It follows that $g_{H_A}$ is
flat and that the holonomy group $\Hol\bigl( A,\, g_{H_A}\bigr)_a$ is trivial.
This implies that $\pr_1^* (TA)|_y ⊆ {W}_{Y,0}$.  On the other hand,
Lemma~\ref{lem:FDC0} asserts that the canonical decomposition of $𝒯_Z$ does not
contain a flat summand.  Item~\ref{il:Cx} of Proposition~\ref{prop:holbun2x}
therefore implies that the restricted holonomy of $Z$ has no nonzero fixed
points, from which we obtain the remaining inclusion
${V}_{Y,0 }⊆ \pr_1^* (TA)|_y$.  In summary, we have seen that
$$
\pr_1^* (TA)|_y ⊆ {W}_{Y,0} ⊆ {V}_{Y,0} ⊆ \pr_1^* (TA)|_y.
$$
Proposition~\ref{prop:torusCover:decomposition} is thus shown.  \qed

\subsection{Proof of Theorem~\ref*{thm:holonomyCover}}\label{ssec:pf:holonomyCover}
\approvals{Daniel & yes \\ Henri & yes \\ Stefan & yes}

The following lemma provides the last missing piece for the proof of
Theorem~\ref{thm:holonomyCover}.  Using the classification of restricted
holonomy groups found in Section~\ref{sec:restrictedHolonomy}, it relates
finiteness of $G/G°$ to the augmented irregularity of the underlying space.  It
crucially uses the existence of a ``maximally quasi-étale cover'',
\cite[Thm.~1.5]{GKP13}, and hence a global algebro-geometric result.

\begin{lem}[Finiteness of connected components]\label{lem:bfjjd0}
  In the standard Setting~\ref{setting:holonomy}, assume that $\wtilde{q}(X)=0$.
  Then, $G/G°$ is finite.  In particular, there exists a quasi-étale cover
  $γ : Y → X$ where holonomy and restricted holonomy agree, ${G}_Y = {G}_Y°$.
\end{lem}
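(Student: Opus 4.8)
The plan is to prove that the quotient $G/G°$ is finite; the ``in particular'' clause then follows at once, since for the quasi-étale cover $\gamma\colon Y\to X$ attached by Remark~\ref{rem:qevecxr} to the finite-index subgroup $\ker\bigl(\pi_1(X_{\reg})\twoheadrightarrow G/G°\bigr)$ of $\pi_1(X_{\reg})$, Corollary~\ref{cor:bhqec2} gives $G_Y=G°=G_Y°$. As a first reduction, observe that the hypothesis $\wtilde q(X)=0$ forces $V_0=0$: by Item~\ref{il:Cx} of Proposition~\ref{prop:holbun2x} the summand $\sV_0$ in the canonical decomposition $\sT_X=\sV_0\oplus(\sW_{\ell+1}\oplus\cdots\oplus\sW_k)$ is holomorphically flat on $X_{\reg}$, so Lemma~\ref{lem:FDC0} yields $0=\wtilde q(X)\ge\rank\sV_0=\dim V_0$. (Note that Corollary~\ref{cor:flatDecomp} may not be invoked here, as its proof ultimately rests on the present lemma.)

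Next I would pass to a suitable quasi-étale cover $\gamma\colon Y\to X$ and argue there. Using Proposition~\ref{prop:global_index_one_cover}, \cite[Thm.~1.3]{GKP16} (whose abelian factor is trivial since $\wtilde q(X)=0$) and Proposition~\ref{prop:weakHolonomyCover}, choose a cover $Y_0$ of $X$ factoring through a global index-one cover, through the decomposition cover of \cite[Thm.~1.3]{GKP16}, and through a weak holonomy cover; then let $Y\to Y_0$ be a maximally quasi-étale cover, \cite[Thm.~1.5]{GKP13}. Routine bookkeeping with Remarks~\ref{rem:cdqec0}, \ref{rem:cdqec1}, \ref{rem:cdqec2} and Lemma~\ref{lem:aug} shows that $Y$ is klt with $K_Y\sim 0$, that $\wtilde q(Y)=\wtilde q(X)=0$ (hence $V_{Y,0}=0$ by the previous paragraph applied to $Y$), that $Y$ remains maximally quasi-étale and still satisfies property~\ref{il:nelson1} of Proposition~\ref{prop:weakHolonomyCover}, and that every summand $\sW_{Y,i}$ of the canonical decomposition of $\sT_Y$ is stable with trivial determinant. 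For this last point: since $G_Y$ preserves each summand $V_{Y,i}$ of the $G_Y°$-decomposition and, containing $G°_i$, still acts irreducibly on it, the canonical decomposition of $V_Y$ coincides with the $G_Y°$-decomposition; and by the discussion in Section~\ref{ssect:comp}, the uniqueness of the decomposition of a polystable sheaf into stable subsheaves, and Item~\ref{il:Bx} of Proposition~\ref{prop:holbun2x}, the $\sW_{Y,i}$ are then isomorphic to reflexive pull-backs of the strongly stable, trivial-determinant summands furnished by \cite[Thm.~1.3]{GKP16}.

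Now fix an index $i$ and let $\what{V_{Y,i}}\subseteq TY_{\reg}$ be the associated parallel holomorphic subbundle (Proposition~\ref{prop:HPB}); its holonomy at $y$ is the image $p_i(G_Y)$ of $G_Y$ in $\U(V_{Y,i})$ and its restricted holonomy is $G°_i$, which by Proposition~\ref{prop:factors} is $\SU(n_i)$ or $\Sp(n_i/2)$. As $G°_i\subseteq\SU(V_{Y,i})$, the determinant line bundle $L_i:=\det\what{V_{Y,i}}$ is flat, hence a flat unitary holomorphic line bundle on $Y_{\reg}$ whose monodromy group is $\Delta_i:=\det_{V_{Y,i}}\!\bigl(p_i(G_Y)\bigr)\subseteq\U(1)$; on the other hand $L_i=(\det\sW_{Y,i})|_{Y_{\reg}}\cong\sO_{Y_{\reg}}$ is holomorphically trivial. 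Because $Y$ is maximally quasi-étale, the extension theorem for flat sheaves \cite[Thm.~1.14]{GKP13} extends $L_i$ to a flat unitary line bundle on the projective variety $Y$, which by normality of $Y$ is isomorphic to $\det\sW_{Y,i}\cong\sO_Y$; and a holomorphically trivial flat unitary line bundle on a normal projective variety has trivial monodromy (pull back to a resolution and apply Hodge theory, using that the fundamental group is a birational invariant of normal proper varieties). Hence $\Delta_i$ is trivial.

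Finally, $p_i(G_Y)$ is contained in $N_{\U(V_{Y,i})}(G°_i)$, which equals $\U(n_i)$ when $G°_i=\SU(n_i)$ and $\Sp(n_i/2)\cdot\U(1)$, with $\U(1)$ the scalars, when $G°_i=\Sp(n_i/2)$; in either case the kernel of $\det_{V_{Y,i}}$ on this normalizer is $G°_i$ up to a finite group, so the (triviality, in particular) finiteness of $\Delta_i$ forces $p_i(G_Y)/G°_i$ to be finite. Since $G_Y$ embeds into $\prod_i p_i(G_Y)$ while $G_Y°=\prod_i G°_i$, the quotient $G_Y/G_Y°$ is finite; combined with $G°=G_Y°$ and $[G:G_Y]<\infty$ (because $\gamma$ is finite, Corollary~\ref{cor:bhqec2}), this gives that $G/G°$ is finite. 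The main obstacle — and the reason purely differential-geometric methods do not suffice — is the third paragraph: because $(Y_{\reg},g_{H_Y})$ is incomplete and a priori has a large fundamental group, one cannot directly rule out that $G_Y$ differs from $G_Y°$ by an infinite subgroup of the torus $N_{\U(V_{Y,i})}(G°_i)/G°_i$. This is precisely where the global algebro-geometric input enters: after passing to a maximally quasi-étale cover, the holomorphically trivial flat line bundle $L_i$ extends across $Y_{\sing}$ and therefore has trivial monodromy.
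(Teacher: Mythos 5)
Your proof is correct, but it takes a genuinely different route from the paper's at the key finiteness step. Both arguments share the initial reductions: you derive $V_0=0$ directly from Lemma~\ref{lem:FDC0} applied to the flat summand $\sV_0$ (the paper routes this through Proposition~\ref{prop:torusCover:decomposition}, which is noncircular since it only requires Lemma~\ref{lem:FDC0}); both then pass to a maximally quasi-étale cover on which the $G$- and $G°$-decompositions coincide. The divergence is in how finiteness of $G_Y/G_Y°$ is extracted. The paper observes that on the reduced cover $G$ embeds in $\prod_i N_{\U(V_i)}(G°_i)$, each $N_i/G°_i\cong\U(1)$ by the normaliser computation, so $G/G°$ is \emph{abelian}; maximal quasi-étaleness then lets the resulting character of $\pi_1(X_{\reg})$ descend to $\pi_1(X)$ and hence to $H_1(X,\bZ)$, which is finite because the exponential sequence and $\wtilde q(X)=0$ force $H^1(X,\bZ)=0$. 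You instead argue factor by factor via determinant line bundles: $\det\what{V_{Y,i}}$ is flat unitary on $Y_{\reg}$ with monodromy $\Delta_i=\det(p_i(G_Y))$, it extends across $Y_{\sing}$ by \cite[Thm.~1.14]{GKP13} since $Y$ is maximally quasi-étale, and it is holomorphically trivial because you have arranged $\det\sW_{Y,i}\cong\sO_Y$ via the extra GKP16 cover; a flat unitary holomorphically trivial line bundle on klt projective $Y$ has trivial monodromy by Hodge theory on a resolution together with Takayama's $\pi_1$-invariance theorem, and since $\ker(\det|_{N_i})$ is $G°_i$ up to a finite group, $p_i(G_Y)/G°_i$ is finite. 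Both routes depend on the same global algebro-geometric input --- the existence of maximally quasi-étale covers from \cite{GKP13} --- but deploy it differently: the paper uses it to factor a representation through $\pi_1(X)$, you use it to extend a flat line bundle across the singular locus. Two small remarks: the weak-holonomy step in your tower is redundant, since the GKP16 summands are strongly stable and hence already $G°$-irreducible by Corollary~\ref{cor:sirr2}, so the $G_Y$- and $G°$-decompositions coincide as soon as that cover is taken; and the aside that ``fundamental group is a birational invariant of normal proper varieties'' is false in general --- what you actually need and have is Takayama's theorem for klt singularities. Your parenthetical warning that Corollary~\ref{cor:flatDecomp} must not be invoked here, on grounds of circularity, is correct and well spotted.
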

\begin{proof}
  If $G/G°$ is finite, then the kernel of the natural surjection
  $π_1(X_{\reg}) → G/G°$ will yield the desired quasi-étale cover $γ$ claimed in
  the lemma above.
  
  Moreover, by Corollary~\ref{cor:bhqec2} finiteness of $G/G°$ follows from
  finiteness of $G_Y/G_Y°$, where $Y→ X$ is any quasi-étale cover.  Note that
  every such cover will also have vanishing augmented irregularity.  Based on
  this observation, let us make two reduction steps.  First, using the
  assumption that $\wtilde{q}(X)=0$, Item~\ref{il:FSCC2} of
  Proposition~\ref{prop:torusCover:decomposition} asserts that $V_0 = \{0\}$.
  Replacing $X$ by its weak holonomy cover, cf.\
  Proposition~\ref{prop:weakHolonomyCover}, we may therefore assume without loss
  of generality that the canonical decompositions of $V$ agree.  More precisely,
  we have $V_0 = W_0 = \{0\}$ and $V_i = W_i$ for all remaining indices
  $i$\Preprint{, as illustrated in Figure~\vref{fig:ssfs4}}.  Second, we can
  further replace $X$ by a maximally quasi-étale cover, as given by
  \cite[Thm.~1.5]{GKP13}.  This guarantees that any complex representation of
  $π_1(X_{\reg})$ factors via a representation of $π_1(X)$ by \cite[Thm.~1.14
  and its proof]{GKP13}.
  
  To prove that $G/G°$ is finite, recall that the holonomy group $G$ is a
  subgroup of $\U(V)$ and $G°$ is its maximal connected subgroup.  The group $G$
  is therefore contained in the normaliser of $G°$ in $\U(V)$.  In the special
  situation at hand, where $G$ and $G°$ are both totally decomposed with
  identical canonical decompositions, there is more we can say:
  $$
  G ⊆ N_1 ⨯ ⋯ ⨯ N_m \quad\text{where}\quad N_i := \operatorname{Norm}\bigl( G°_i ⊆ \U(V_i) \bigr).
  $$
  The groups $G°_i$ have been classified in Proposition~\ref{prop:factors}, and
  their normalisers are found by an elementary computation -- compare with
  \cite[Prop.~10.114]{MR867684} but observe that we unlike \cite{MR867684}
  consider normalisers in the unitary group and not in the orthogonal group.
  Writing $n_i := \dim V_i$, either one of the following holds.
  \begin{itemize}
  \item The group $G°_i$ is isomorphic to $\SU(n_i)$ and $N_i/G_i°≅ \U(1)$.
  \item The number $n_i$ is even, $G°_i ≅ \Sp\!\left(\frac {n_i}2\right)$,
    and $N_i/G°_i ≅ \U(1)$.
  \end{itemize}
  In summary, we see that $G/G° ⊊ \U(1)^{⨯ m}$ is Abelian.
  
  By the second reduction made at the beginning of the proof, the complex
  representation $π_1(X_{\reg}) \twoheadrightarrow G/G° ⊊ \U(1)^{⨯ m}$
  factors via $π_1(X)$, and hence further via the Abelianisation
  $H_1\bigl( X,\, ℤ \bigr)$.  In other words, we obtain a surjection
  $$
  H_1\bigl( X,\, ℤ \bigr) \twoheadrightarrow \factor{G}{G°}.
  $$
  We claim that $H_1\bigl( X,\ ℤ \bigr)$ is finite.  Indeed, otherwise
  $H¹\bigl( X,\, ℤ \bigr)$ would be of positive rank.  Moreover, the exponential
  sequence on the complex space $X^{an}$, \cite[proof of Thm.~6 in
  Chap.~K]{MR1059457}, together with GAGA would yield an embedding
  $H¹\bigl( X,\, ℤ \bigr) ↪ H¹\bigl( X,\, 𝒪_X \bigr)$.  But the latter space
  vanishes owing to $\wtilde q(X)=0$, a contradiction.  This concludes the proof
  of Lemma~\ref{lem:bfjjd0}.\Preprint{
    \begin{figure}
      \centering
      $$
      \begin{matrix}
        G° ↺ V = & \cellcolor{gray!20} V_0 & ⊕ & \cellcolor{gray!20} V_1 & ⊕ & \cellcolor{gray!20} ⋯ & ⊕ & \cellcolor{gray!20} V_m \\
        \hphantom{°}G ↺ V = & \cellcolor{gray!20} W_0 & ⊕ & \cellcolor{gray!20} W_1 & ⊕ & \cellcolor{gray!20} ⋯ & ⊕ & \cellcolor{gray!20} W_k \\[-2mm]
        & \underbrace{\hspace{.8cm}}_{= \{0\}}
      \end{matrix}
      $$
      \caption{Canonical decompositions in the proof of Lemma~\ref{ssec:pf:holonomyCover} after simplification.}
      \label{fig:ssfs4}
    \end{figure}}
\end{proof}

With all preparations in place, the proof of Theorem~\ref{thm:holonomyCover} is
now easy.

\begin{proof}[Proof of Theorem~\ref{thm:holonomyCover}]
  Let $γ$ be a composition of quasi-étale covers,
  $$
  \xymatrix{ %
    A⨯Z' \ar[r]^b & A⨯Z \ar[rr]^{a}_{\text{torus cover}} && X
  }
  $$
  where $a$ is a torus cover of $X$, and $b$ is the product of the identity
  $\Id_A$ and the cover of $Z$ constructed in Lemma~\ref{lem:bfjjd0}.  One
  checks immediately that $γ$ is a torus cover, and also satisfies remaining
  conditions spelled out in Theorem~\ref{thm:holonomyCover}.
\end{proof}

\subsection{Proof of Corollary~\ref*{cor:flatDecomp}}\label{ssect:pocfd}
\approvals{Daniel & yes \\ Henri & yes \\ Stefan & yes}

Let $γ : Y → X$ be a holonomy cover.  Item~\ref{il:FDC1} is a consequence of
Corollary~\ref{cor:bhqec2} and Lemma~\ref{lem:bfjjd0}.  Item~\ref{il:FDC2}
follows immediately from \ref{il:FSCC0x} and \ref{il:FSCC2x} combined.
Item~\ref{il:FDC3} follows from Lemma~\ref{lem:FDC0}, using the observation that
$𝒲°_i$ is holomorphically flat if and only if its pull-back $γ^*𝒲_i°$ is
holomorphically flat on $A⨯Z_{\reg}$.  \qed

\subsection{Decomposition of the tangent sheaf}\label{subsect:proof_of_intro_prop}
\approvals{Daniel & yes \\ Henri & yes \\ Stefan & yes}

Combining the existence of a holonomy cover with the classification of
restricted holonomy, we obtain a holonomy cover whose canonical decomposition of
the tangent sheaf admits are particularly precise description.

\begin{prop}[Decomposition of the tangent sheaf]\label{prop:nnnew_intro_prop}
  In the standard Setting~\ref{setting:holonomy}, there exists a holonomy cover
  $γ : A ⨯ Z → X$ such that in addition to properties \ref{il:B1}--\ref{il:B3},
  there exists a direct sum decomposition of the tangent sheaf of $Z$,
  $$
  𝒯_Z = \bigoplus\nolimits_{i∈ I} ℰ_i ⊕ \bigoplus\nolimits_{j∈ J}ℱ_j,
  $$
  where the reflexive sheaves $ℰ_i$ (resp.\ $ℱ_j$) satisfy the following
  properties.
  \begin{enumerate}
  \item\label{iln:new_i1} The subsheaves $ℰ_i ⊆ 𝒯_Z$ (resp.\ $ℱ_j ⊆ 𝒯_Z$) are
    foliations with trivial determinant, of rank $n_i ≥ 3$ (resp.\ of even rank
    $2m_j ≥ 2$).  Moreover, they are strongly stable in the sense of
    \cite[Def.~7.2]{GKP16}.
  \item\label{iln:new_i2} On $Z_{\reg}$, the $ℰ_i$ (resp.\ $ℱ_j$) are locally
    free and correspond to holomorphic subbundles $E_i$ (resp.~$F_j$) of
    $TZ_{\reg}$ that are parallel with respect to the Levi-Civita connection of
    $g_Z$, the Riemannian metric on $Z_{\reg}$ induced by $ω_Z$.  Moreover,
    their holonomy groups are $\SU(n_i)$ and $\Sp(m_j)$, respectively.
  \item\label{iln:new_i3} If $x ∈ X_{\reg}$ and $(a,z) ∈ γ^{-1}(x)$, then the
    splitting
    $$
    T_x X ≅ T_{(a,z)} (A ⨯ Z) = T_a A ⊕ \bigoplus\nolimits_{i∈ I} E_{i,z}
    ⊕ \bigoplus\nolimits_{j∈ J}F_{j,z}
    $$
    corresponds to the decomposition of $T_x X$ into irreducible representations
    under the action of the restricted holonomy group $\Hol(X_{\reg},g_H)$ at
    $x$.
  \end{enumerate}
\end{prop}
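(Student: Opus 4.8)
The plan is to take for $\gamma$ \emph{any} holonomy cover $\gamma\colon Y=A\times Z\to X$ supplied by Theorem~\ref{thm:holonomyCover}. Together with Proposition~\ref{prop:torusCover:decomposition}, such a cover automatically carries the flat torus $(A,\omega_A)$, the singular Ricci-flat pair $(Z,\omega_Z)$, the splitting $\gamma^{*}\omega_H\cong\pr_1^{*}\omega_A+\pr_2^{*}\omega_Z$ and connected holonomy on $A\times Z_{\reg}=Y_{\reg}$; hence properties \ref{il:B1}--\ref{il:B3} are already in place and the only task is to produce and analyse the decomposition of $\mathcal{T}_Z$. The first point I would record is that $Z$, regarded in its own standard setting, has \emph{connected} holonomy: by Proposition~\ref{prop:torusCover:decomposition} the group $\Hol(Y_{\reg},g_{H_Y})_y$ is the product $\Hol(A,g_A)_a\times\Hol(Z_{\reg},g_Z)_z$, its first factor is trivial since $g_A$ is flat, and the product equals its identity component because $\gamma$ is a holonomy cover; so $\Hol(Z_{\reg},g_Z)_z=\Hol(Z_{\reg},g_Z)^{\circ}_z$, and the canonical decompositions of $T_zZ$ induced by holonomy and by restricted holonomy coincide.

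Next I would apply Construction~\ref{cons:decpW} to $Z$, obtaining $\mathcal{T}_Z=\mathcal{W}_{Z,0}\oplus\mathcal{W}_{Z,1}\oplus\cdots\oplus\mathcal{W}_{Z,k}$. Since $\wtilde{q}(Z)=0$ by \ref{il:B2}, Corollary~\ref{cor:flatDecomp} forces $\rank\mathcal{W}_{Z,0}=\dim V_{Z,0}=0$, so $\mathcal{T}_Z=\bigoplus_{i=1}^{k}\mathcal{W}_{Z,i}$. By Proposition~\ref{prop:factors}, each nontrivial irreducible factor of the holonomy of $Z$, acting on $V_{Z,i}$, is in its standard representation isomorphic either to $\SU(n_i)$ with $n_i:=\dim V_{Z,i}$, or to $\Sp(m_i)$ with $n_i=2m_i$; since $\SU(2)=\Sp(1)$ inside $\U(2)$ we agree to place the two-dimensional summands in the second class. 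Relabelling accordingly gives $\mathcal{T}_Z=\bigoplus_{i\in I}\mathcal{E}_i\oplus\bigoplus_{j\in J}\mathcal{F}_j$ with $\rank\mathcal{E}_i=n_i\ge 3$ and $\rank\mathcal{F}_j=2m_j\ge 2$, and property~\ref{iln:new_i2} is then exactly Construction~\ref{cons:decpW} together with this classification: on $Z_{\reg}$ each summand restricts to a locally free parallel holomorphic subbundle $E_i$ (resp.\ $F_j$) of $TZ_{\reg}$ with holonomy $\SU(n_i)$ (resp.\ $\Sp(m_j)$).

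For property~\ref{iln:new_i1}, triviality of determinants is immediate: since $\SU(n_i),\Sp(m_j)\subseteq\SL$, the line bundles $\det E_i$, $\det F_j$ have trivial holonomy, hence are holomorphically trivial on $Z_{\reg}$, and as $Z_{\reg}\subseteq Z$ is big their reflexive hulls satisfy $\det\mathcal{E}_i\cong\mathcal{O}_Z$, $\det\mathcal{F}_j\cong\mathcal{O}_Z$; the subsheaves are then foliations by \cite[Thm.~7.11]{GKP16}. The more delicate point is \emph{strong} stability of the individual summands. Here I would exploit that a holonomy cover is already maximal for the canonical decomposition: for any further quasi-étale cover $\delta\colon W\to Z$ with induced singular Ricci-flat metric $g_W$, Corollary~\ref{cor:bhqec2} sandwiches $\Hol(W_{\reg},g_W)_w$ between $\Hol(W_{\reg},g_W)^{\circ}_w=\Hol(Z_{\reg},g_Z)^{\circ}_z$ and $\Hol(Z_{\reg},g_Z)_z$, which coincide; hence the canonical decomposition of $\mathcal{T}_W$ is precisely $\delta^{[*]}$ of that of $\mathcal{T}_Z$, so $\delta^{[*]}\mathcal{E}_i$ and $\delta^{[*]}\mathcal{F}_j$ are themselves single canonical summands and therefore stable with respect to every polarisation by Item~\ref{il:Bx} of Proposition~\ref{prop:holbun2x} (using \cite[Prop.~5.7]{GKP16} to pass from powers of one ample class to arbitrary polarisations, exactly as in the proof of Corollary~\ref{cor:sirr2}); alternatively one invokes Section~\ref{ssect:comp}, where the summands on the \cite[Thm.~1.3]{GKP16}-cover through which a torus cover factors are already known to be strongly stable with trivial determinant, together with the invariance of strong stability under reflexive pull-back.

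Finally, I would deduce property~\ref{iln:new_i3} by transporting everything back to $X$: under the canonical identifications $T_xX\cong T_yY$ and $\Hol(X_{\reg},g_H)^{\circ}_x=\Hol(Y_{\reg},g_{H_Y})^{\circ}_y$ of Corollary~\ref{cor:bhqec2} and Remark~\ref{rem:cdqec0}, the decomposition of $T_xX$ into the restricted-holonomy fixed part and the nontrivial irreducible summands matches $V_{Y,0}\oplus V_{Y,1}\oplus\cdots\oplus V_{Y,m}$; Proposition~\ref{prop:torusCover:decomposition} identifies $V_{Y,0}$ with $T_aA$ and $V_{Y,1}\oplus\cdots\oplus V_{Y,m}$ with $\pr_2^{*}(TZ)|_y=\bigoplus_{i\in I}E_{i,z}\oplus\bigoplus_{j\in J}F_{j,z}$, which is the claimed correspondence. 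The edge case $\dim Z\le 1$ is harmless: $\dim Z=1$ is excluded because it would force $q(Z)\ge 1$, and $\dim Z=0$ makes $I=J=\emptyset$. The step I expect to be the genuine obstacle is the strong-stability claim for the individual $\mathcal{E}_i$ and $\mathcal{F}_j$, as opposed to mere polystability of $\mathcal{T}_Z$; the argument turns on the observation that, once one has passed to a holonomy cover, the canonical decomposition can no longer be refined by any further quasi-étale cover.
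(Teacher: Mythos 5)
Your argument is correct and reaches the same conclusion as the paper, but it handles the strong-stability step differently.  The paper's proof simply replaces $Z$ by a further quasi-étale cover dictated by Section~\ref{ssect:comp} (i.e.\ the cover from \cite[Thm.~1.3]{GKP16}), on which strong stability and triviality of determinants are already known, and observes that properties \ref{il:FSCC0x}--\ref{il:FSCC3x} persist under such a replacement.  You dispense with any further cover and prove directly that \emph{every} holonomy cover already has strongly stable canonical summands.  The engine of your argument is the observation that once $\Hol(Z_{\reg},g_Z)$ is connected, the chain of inclusions from Corollary~\ref{cor:bhqec2} collapses for any further quasi-étale $\delta\colon W\to Z$:
$$
G_W \subseteq G_Z = G_Z^\circ = G_W^\circ \subseteq G_W,
$$
so the holonomy is literally unchanged, the canonical decomposition of $\mathcal{T}_W$ (with respect to $\omega_{\delta^*H_Z}$) is the reflexive pullback of that of $\mathcal{T}_Z$ by uniqueness of reflexive extensions across the small set $W\setminus W^\circ$, and Item~\ref{il:Bx} of Proposition~\ref{prop:holbun2x} on $W$ (together with \cite[Prop.~5.7]{GKP16} to pass to arbitrary tuples of polarisations) gives stability of every canonical summand.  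This is a genuinely more self-contained route: the paper imports strong stability from an external theorem by adjusting the cover, whereas your version proves it internally and, as a bonus, sharpens the statement to ``any holonomy cover works''.  The rest of your argument (triviality of $V_{Z,0}$ via Corollary~\ref{cor:flatDecomp}, classification via Proposition~\ref{prop:factors}, trivial determinants, foliation structure via \cite[Thm.~7.11]{GKP16}, and Item~\ref{iln:new_i3} via Proposition~\ref{prop:torusCover:decomposition}) matches the paper essentially line for line, and your treatment of the degenerate cases $\dim Z\leq 1$ is correct.
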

\begin{proof}
  Theorem~\ref{thm:holonomyCover} yields a quasi-étale cover $γ: A ⨯ Z → X$ that
  satisfies the properties listed in \ref{il:FSCC0x}--\ref{il:FSCC3x}.  Observe
  that all these remain true if we replace $Z$ by a further quasi-étale cover.
  According to Section~\ref{ssect:comp}, we may therefore choose $Z$ such that
  the summands in the canonical decomposition $𝒯_Z = 𝒲_0 ⊕ ⋯ ⊕ 𝒲_k$ are strongly
  stable foliations with trivial determinant.  By construction, the $𝒲_i$ are
  locally free on $Z_{\reg}$ and correspond to holomorphic subbundles of
  $TZ_{\reg}$ that are parallel with respect to the Levi-Civita connection.
  
  We prove Item~\ref{iln:new_i2} next.  Corollary~\ref{cor:flatDecomp}
  guarantees that none of the summands $𝒲_i$ is holomorphically flat.  Since
  determinants are trivial, this already guarantees that $\rank 𝒲_i ≥ 2$ for all
  $i$.  More is true.  Theorem~\ref{thm:holonomyCover} guarantees holonomy and
  restricted holonomy agree, $G_Y° = G_Y$, and the classification of the
  restricted holonomy, Proposition~\ref{prop:factors} guarantees that the
  associated factors of the holonomy group
  $G = \Hol(Z_{\reg}, g_Z) = G_1 ⨯ ⋯ ⨯ G_k$ are isomorphic to either $\SU(n_i)$
  or to $\Sp(m_j)$, respectively.  Writing $ℰ_i$ for those summands with
  holonomy $\SU(n_i)$ and $ℱ_j$ for the others, Item~\ref{il:new_i2} follows.

  Item~\ref{iln:new_i1} summarises the results of the two paragraphs above.
  Item~\ref{iln:new_i3} holds by construction.
\end{proof}

\part{The Bochner principle on singular spaces}\label{part:III}
%
%
\svnid{$Id: 08-BP.tex 763 2018-10-27 13:39:14Z kebekus $}

\section{The Bochner principle for reflexive tensors and bundles}
\subversionInfo
\label{sec:holprin}

\subsection{The Bochner Principle}
\approvals{Daniel & yes \\ Henri & yes \\ Stefan & yes}

The goal of this section is to establish the following ``Bochner principle'',
which generalises \cite[Thm.~C]{Guenancia}.

\begin{thm}[Bochner principle for bundles]\label{generalized:holonomy}
  In the standard Setting~\ref{setting:holonomy}, given any two numbers
  $p,q ∈ ℕ$, consider the following sheaf of ``reflexive tensors'',
  $$
  ℰ := \bigl(𝒯_X^{⊗ p}⊗(𝒯_X^*)^{⊗ q} \bigr)^{**}.
  $$
  Denote the associated bundle on $X_{\reg}$ by $E$ and recall that the holonomy
  group $G$ acts on the vector space $E_x$ in a canonical manner.  Then, the
  following holds.
  \begin{enumerate}
  \item\label{il:l1} There exists a direct sum decomposition $ℰ = ℰ_1 ⊕ ⋯ ⊕ ℰ_k$
    whose summands are stable of slope zero with respect to any polarisation.
    The induced subbundles $ℰ|_{X_{\reg}}$ of $E$ are parallel with respect to
    the connection induced by the Chern connection of $(TX_{\reg},h_H)$.  In
    particular, $ℰ$ is polystable with respect to any polarisation.
  \item\label{il:l2} The fibre map $ℰ → E_x$ induces a one-to-one correspondence
    between arbitrary direct summands of $ℰ$ and $G$-invariant complex subspaces
    of $E_x$.
  \end{enumerate}
\end{thm}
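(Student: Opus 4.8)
The plan is to prove Item~\ref{il:l1} by transporting to the reflexive tensor bundle $\sE$ the polystability argument that \cite{Guenancia} carries out for $\sT_X$, and then to read off the correspondence in Item~\ref{il:l2} from the holonomy principle applied to idempotent endomorphisms. The differential-geometric input is that, $(X_{\reg},\omega_H)$ being Ricci-flat Kähler, the Hermitian bundle $(TX_{\reg},h_H)$ is Hermitian-Einstein with vanishing Einstein constant; hence so is $(E,h_E)$, where $h_E$ is the metric and $D_E$ the connection induced on $E = TX_{\reg}^{\otimes p}\otimes(T^*X_{\reg})^{\otimes q}$ by the Chern connection of $h_H$. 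Moreover $c_1(\sE)$ is an integer multiple of $c_1(\sT_X) = -K_X$, hence numerically trivial, so that $\mu_H(\sE) = 0$ for every ample polarisation $H$.

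For Item~\ref{il:l1} I would follow the analysis of \cite{Guenancia} essentially verbatim, now for $E$ in place of $TX_{\reg}$. Given a reflexive subsheaf $\sF\subseteq\sE$, it restricts over $X_{\reg}$ to a coherent subsheaf of $E$ that is a subbundle off a further small set; the Chern-Weil formula for the degree of $\sF$, combined with the integration-by-parts argument of \cite{Guenancia} --- which crucially uses that $\omega_H$ has bounded local potentials (Theorem~\ref{thm:EGZ}) --- yields $\deg_H\sF\le 0$, with equality only if the second fundamental form of $\sF|_{X_{\reg}}$ in $(E,h_E)$ vanishes, that is, $\sF|_{X_{\reg}}$ is $D_E$-parallel. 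Thus $\sE$ is $H$-semistable of slope zero for every $H$. Repeatedly splitting off stable slope-zero summands and using that the $h_E$-orthogonal complement of a parallel subbundle is again parallel and holomorphic (\cite[Chap.~I, Prop.~4.18]{Kob87}, cf.~Proposition~\ref{prop:HPB}), one obtains an $h_E$-orthogonal decomposition of $E$ into parallel holomorphic subbundles; pushing forward along $\iota\colon X_{\reg}\hookrightarrow X$ and using normality of $X$ yields a splitting $\sE = \sE_1\oplus\cdots\oplus\sE_k$ into reflexive sheaves, each stable of slope zero with respect to every polarisation --- in particular $\sE$ is polystable.

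For Item~\ref{il:l2} I claim that $\sF\mapsto \sF_x := (\sF|_{X_{\reg}})_x$ is the desired bijection. A direct summand $\sF\subseteq\sE$ corresponds to an idempotent in $H^0(X,\sEnd\sE) = H^0(X_{\reg},\sEnd E)$, and since $\sEnd E$ is again a reflexive tensor bundle, Item~\ref{il:l1} applied to it shows that this idempotent is $D_E$-parallel; hence $\sF|_{X_{\reg}}$ is a parallel subbundle, so $\sF_x\subseteq E_x$ is $G$-invariant, and, being parallel, $\sF|_{X_{\reg}}$ --- and therefore $\sF$, by normality of $X$ --- is recovered from $\sF_x$; this gives injectivity. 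For surjectivity, a $G$-invariant subspace $N\subseteq E_x$ has $G$-invariant orthogonal complement, so the orthogonal projection $\pi_x\colon E_x\to N$ is $G$-equivariant; parallel-transporting it produces a parallel --- hence holomorphic, the $(0,1)$-part of $D_E$ being $\bar\partial$ --- idempotent $\pi\in H^0(X_{\reg},\sEnd E)$, whose reflexive extension $\iota_*\pi\in H^0(X,\sEnd\sE)$ cuts out a direct summand of $\sE$ with fibre $N$ at $x$.

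The step I expect to be the main obstacle is the analytic core of Item~\ref{il:l1}: running the Chern-Weil and integration-by-parts computations on the non-compact, geodesically incomplete manifold $(X_{\reg},g_H)$ (Proposition~\ref{prop:incomplete}), where boundedness of the potentials of $\omega_H$ is indispensable. This is exactly the input that must be imported from \cite{Guenancia} --- unchanged beyond the formal replacement of $TX_{\reg}$ by the tensor bundle $E$, which remains Hermitian-Einstein with Einstein constant zero --- which is why the present result only generalises, rather than goes beyond, the techniques of that paper.
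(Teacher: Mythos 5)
Your approach to Item~\ref{il:l1} is essentially the paper's: transport the regularisation-and-integration-by-parts scheme of \cite{Guenancia} from $\sT_X$ to the tensor bundle $E$ (the paper works on a strong log resolution $\pi\colon\wtilde X\to X$ with approximating Kähler metrics $\omega_{t,\varepsilon}$ whose bounded potentials come from Kołodziej's $L^p$-estimate, but this is exactly the machinery you are deferring to), deduce $H$-semistability of slope zero for every $H$, and show that slope-$0$ saturation forces the vanishing of the second fundamental form and hence parallelism. The key intermediate fact — that \emph{every} saturated subsheaf $\sF\subseteq\sE$ of slope zero is a parallel direct summand (Claim~\ref{claim:cbht} in the paper) — is thereby available to you, which is more than the bare statement of Item~\ref{il:l1}.

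For Item~\ref{il:l2}, however, the idempotent detour introduces a gap. You assert that ``Item~\ref{il:l1} applied to $\sEnd\sE$ shows that this idempotent is $D_E$-parallel,'' but Item~\ref{il:l1} is a statement about a direct-sum decomposition of the sheaf, not about an arbitrary section being parallel. Going from ``section of a reflexive tensor sheaf'' to ``parallel'' is the content of the later Bochner principle for reflexive tensors (Theorem~\ref{thm:holonomy}), whose proof requires the holonomy cover of Part~\ref{part:II} to pass from a $G$-invariant line to a $G$-\emph{fixed} vector — without it, semistability of $\sEnd\sE$ only shows that $\langle e\rangle$ is a rank-$1$ parallel direct summand, so that $G$ acts on $\bC\,e_x$ by a character, not necessarily trivially. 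One can salvage the idempotent route by noticing that idempotency forces this character to take values in $\{0,1\}$, hence to be trivial, but you do not spell this out; and it is an unnecessary complication. The paper's proof of Item~\ref{il:l2} bypasses idempotents entirely: a direct summand $\sF\subseteq\sE$ is automatically a saturated slope-$0$ subsheaf, so the intermediate claim you have already established in Item~\ref{il:l1} yields directly that $\sF|_{X_{\reg}}$ is parallel, whence $\sF_x\subseteq E_x$ is $G$-invariant; the converse is Proposition~\ref{prop:HPB} coupled with the unitarity of $G$, as you correctly state.
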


A non-zero section $σ$ of $ℰ$ is the same thing as a trivial subsheaf $𝒪_X ⊂ ℰ$.
Using semistability arguments, group-theoretic considerations and the Bochner
principle just proven, we will show that every such section yields a $G$-fixed
point $σ_x ∈ E_x$ and vice versa.

\begin{thm}[Bochner principle for reflexive tensors]\label{thm:holonomy}
  Setting as in Theorem~\ref{generalized:holonomy}.  Then, the natural
  evaluation map induces an isomorphism $H⁰\bigl(X,\, ℰ\bigr) → E_x^G$.  In
  particular, the restriction of every holomorphic tensor to $X_{\reg}$ is
  parallel.
\end{thm}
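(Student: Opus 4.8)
The goal is to upgrade the Bochner principle for \emph{subbundles} (Theorem~\ref{generalized:holonomy}) to the statement that \emph{every} global holomorphic reflexive tensor is parallel, i.e.\ that evaluation at $x$ gives an isomorphism $H^0(X,\ℰ)\xrightarrow{\sim} E_x^G$. The plan is to first note that the map is well-defined and injective, then to construct the inverse by combining the decomposition of $ℰ$ from Theorem~\ref{generalized:holonomy} with the holonomy-cover technology of Theorem~\ref{thm:holonomyCover}. The key conceptual point is that a $G$-invariant vector in $E_x$ need not a priori be $G°$-invariant, so one cannot directly parallel-transport it to a global section on $X_{\reg}$ and extend; the holonomy cover is exactly what repairs this.

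\emph{Step 1: the map is well-defined and injective.} By the holonomy principle for complex tensors (Reminder~\ref{remi:cc}), a holomorphic section $σ\in H^0(X_{\reg},ℰ|_{X_{\reg}})$ that is parallel restricts at $x$ to a $G$-fixed vector $σ_x\in E_x^G$; and a parallel section is determined by its value at a single point, so this assignment is injective. Since $ℰ$ is reflexive and $X$ is normal, $H^0(X,ℰ)=H^0(X_{\reg},ℰ|_{X_{\reg}})$, so what must be shown is (i) every global holomorphic section of $ℰ$ is automatically parallel, and (ii) every $G$-fixed vector arises this way.

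\emph{Step 2: reduce to the holonomy cover and use polystability.} Let $γ:Y=A\times Z\to X$ be a holonomy cover as in Theorem~\ref{thm:holonomyCover}, with point $y\in γ^{-1}(x)$; by Corollary~\ref{cor:bhqec2} the restricted and full holonomy on $Y$ coincide, $G_Y=G_Y^\circ$, and $G_Y^\circ=G^\circ$ sits inside $G$ with $G/G^\circ$ finite. A global section $σ\in H^0(X,ℰ)$ pulls back to a global section $γ^{[*]}σ$ of the corresponding reflexive tensor sheaf $ℰ_Y$ on $Y$. On $Y$, Theorem~\ref{generalized:holonomy}\ref{il:l2} applies: because $G_Y=G_Y^\circ$ is \emph{connected}, every $G_Y$-invariant subspace of $(E_Y)_y$ corresponds to a direct summand of $ℰ_Y$, and in particular the trivial subsheaf $𝒪_Y\cdot γ^{[*]}σ \subseteq ℰ_Y$ is a direct summand whose fibre at $y$ is a $G_Y$-fixed line; hence $γ^{[*]}σ$ is parallel with respect to $g_{H_Y}$ on $Y_{\reg}$. (Here one uses the product decomposition of $Y$ and $ω_{H_Y}$ from Theorem~\ref{thm:holonomyCover}/Proposition~\ref{prop:torusCover:decomposition}: on the Abelian factor $A$ the classical Bochner principle applies, and on $Z_{\reg}$ one invokes Theorem~\ref{generalized:holonomy} directly.) Since $γ$ is étale over $X_{\reg}$ and $g_{H_Y}|_{Y^\circ}=(γ^\circ)^*g_H$, parallelism descends: $σ|_{X_{\reg}}$ is parallel with respect to $g_H$. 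This proves (i) and hence that the evaluation map $H^0(X,ℰ)\to E_x^G$ lands in $E_x^G$ and is injective with parallel image.

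\emph{Step 3: surjectivity via group theory.} Given $v\in E_x^G$, I first parallel-transport $v$ along all loops at $x$: because $v$ is $G$-invariant, this is well-defined and produces a global parallel (hence holomorphic) section $\tilde σ$ of $ℰ|_{X_{\reg}}$ with $\tilde σ_x=v$. It remains to extend $\tilde σ$ across $X\setminus X_{\reg}$, i.e.\ to show $\tilde σ\in H^0(X,ℰ)$. Pull back to the holonomy cover: $(γ^\circ)^*\tilde σ$ is a global parallel section of $ℰ_Y$ on $Y^\circ=γ^{-1}(X_{\reg})$, hence determines a $G_Y$-fixed (equivalently $G^\circ$-fixed) vector; by the Bochner principle on $Y$ — specifically by the one-to-one correspondence of Theorem~\ref{generalized:holonomy}\ref{il:l2} on $Y$, or by applying Theorem~\ref{thm:holonomy} inductively to $Y$ which is already covered by the connected-holonomy case — this section extends to a global holomorphic section of $ℰ_Y$ over all of $Y$. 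The extended section is $\Gal(Y/X)$-invariant (uniqueness of the parallel extension forces equivariance, since the $G$-invariance of $v$ makes the construction canonical), so by Galois descent for reflexive sheaves it descends to a section $σ\in H^0(X,ℰ)$ with $σ|_{X_{\reg}}=\tilde σ$, hence $σ_x=v$.

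\emph{Main obstacle.} The delicate point is Step 3's extension-and-descent: one must know that the parallel section on $X_{\reg}$, a priori only defined off the singular locus, genuinely extends as a \emph{reflexive-sheaf} section across the singularities, and the only leverage for this is transporting the question to the holonomy cover where the holonomy is connected and Theorem~\ref{generalized:holonomy}\ref{il:l2} gives the extension for free. Making the Galois-equivariance of the extended section precise — so that descent applies — and checking that all identifications of holonomy groups and tensor bundles under $γ$ are compatible (Notation~\ref{not:qec}, Corollary~\ref{cor:bhqec2}) is where the real work lies; the differential geometry on $X_{\reg}$ itself is straightforward once the section is known to be global.
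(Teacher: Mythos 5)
Your overall strategy---reduce to the holonomy cover, where holonomy is connected, and then invoke the Bochner principle for bundles (Theorem~\ref{generalized:holonomy})---is the same as the paper's, and Step~3 is sound (indeed, it is easier than you make it: once a $G$-invariant vector has been parallel-transported to a holomorphic section of $E$ on $X_{\reg}$, reflexivity of $ℰ$ already gives $H^0(X,ℰ)=H^0(X_{\reg},ℰ|_{X_{\reg}})$, so no cover or descent is needed for that direction). But Step~2 has two genuine gaps, and they are exactly where the substance of the theorem lives.

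First, you silently treat the subsheaf $𝒪_Y\cdot γ^{[*]}σ\subseteq ℰ_Y$ as a direct summand. That is not automatic; it requires an argument. Item~\ref{il:l2} of Theorem~\ref{generalized:holonomy} is a one-to-one correspondence between direct summands and $G$-invariant subspaces, but you cannot enter this correspondence until you know you are on one side of it, and a priori you know neither that $\langle σ\rangle$ is a direct summand nor that $σ_y$ spans an invariant line. The missing step is a semistability argument: from Item~\ref{il:l1} of Theorem~\ref{generalized:holonomy} the sheaf $ℰ_Y$ is semistable of slope zero, so the saturation of $\langle σ\rangle$ cannot have positive slope and must therefore coincide with $\langle σ\rangle$ on a big open set, hence equal it by reflexivity; $\langle σ\rangle$ is thus saturated and, by the argument of Claim~\ref{claim:cbht}, a direct summand. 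Nothing in your write-up plays this role.

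Second, even granting that $\langle σ\rangle$ is a direct summand, Item~\ref{il:l2} only tells you that $σ_y$ spans a $G_Y$-\emph{invariant} complex line, i.e.\ that $G_Y$ acts on it by a character $χ: G_Y\to ℂ^*$. You jump from ``invariant line'' to ``$G_Y$-fixed line'' without justification; connectedness of $G_Y$ alone does not give this. The missing ingredient is the classification of Proposition~\ref{prop:factors}: since $G_Y=G_Y°$ is a product of $\SU$'s and $\Sp$'s, it is semisimple and equal to its own commutator subgroup, so every one-dimensional representation is trivial. This is precisely the ``group-theoretic'' piece the paper advertises in the introduction, and it is not present in your proposal. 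Until both of these points are supplied, the claim that $γ^{[*]}σ$ is parallel does not follow.
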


\begin{rem}
  As $Ω_{\wtilde{X}}^{[p]}$ and $\Sym^{[p]} Ω¹_X$ are direct summands of
  $\bigl((𝒯_{\wtilde{X}}^*)^{⊗ p}\bigr)^{**}$, the Bochner principle also
  applies to (symmetric) differential forms.
\end{rem}

Theorems~\ref{generalized:holonomy} and \ref{thm:holonomy} will be shown in
Sections~\ref{ssec:pothb} and \ref{ssec:pothf} below.  The following corollary,
which we promised in Section~\ref{ssec:dofd}, gives an additional description of
the summands in the canonical decomposition of $𝒯_X$.  \Publication{The preprint
  version of this paper,
  \href{https://arxiv.org/abs/1704.01408}{arXiv:1704.01408}, contains a detailed
  proof.}

\begin{cor}[Summands in the canonical decomposition of $𝒯_X$]\label{cor:holbun2w}
  In the standard Setting~\ref{setting:holonomy}, consider the canonical
  decomposition of $𝒯_X$, as in \eqref{decomp}.  Then, the sheaf $𝒲_0$ is
  trivial and none of the sheaves $𝒲_1$, …, $𝒲_k$ and $𝒲^*_1$, …, $𝒲^*_k$ admits
  a section.  \Publication{\qed}
\end{cor}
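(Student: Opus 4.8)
The plan is to read off both assertions from the Bochner principle for reflexive tensors, Theorem~\ref{thm:holonomy}, applied to the two most elementary sheaves it governs: the tangent sheaf $𝒯_X$ itself (the case $p=1$, $q=0$) and the sheaf of reflexive $1$-forms $Ω^{[1]}_X = (𝒯_X^*)^{**}$ (the case $p=0$, $q=1$). Everything else is a translation through the canonical decomposition \eqref{decomp} and Construction~\ref{cons:canonDecomp}, together with elementary representation theory.

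First I would dispose of the triviality of $𝒲_0$. By Item~\ref{il:Bx} of Proposition~\ref{prop:holbun2x} the locally free sheaf $𝒲°_0$ on $X_{\reg}$ is holomorphically trivial; choosing global trivialising sections and extending them across the small set $X \setminus X_{\reg}$ produces elements of $H⁰\bigl(X, 𝒲_0\bigr)$ defining a morphism $𝒪_X^{⊕ r} → 𝒲_0$ with $r := \rank 𝒲_0$ that is an isomorphism over $X_{\reg}$. Since both sheaves are reflexive, hence equal to the push-forward of their restriction to the big open set $X_{\reg}$, this morphism is an isomorphism, so $𝒲_0 \cong 𝒪_X^{⊕ r}$.

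For the vanishing of sections, fix an index $i$ with $1 ≤ i ≤ k$. The inclusion $𝒲_i ⊆ 𝒯_X$ from \eqref{decomp} identifies $H⁰\bigl(X, 𝒲_i\bigr)$ with a linear subspace of $H⁰\bigl(X, 𝒯_X\bigr)$, and Theorem~\ref{thm:holonomy} says that evaluation at $x$ is an isomorphism $H⁰\bigl(X, 𝒯_X\bigr) \cong V^{G}$; since the fibre of $𝒲_i$ at the smooth point $x$ is the summand $W_i ⊆ V$, the subspace $H⁰\bigl(X, 𝒲_i\bigr)$ injects into $W_i ∩ V^{G}$. By Construction~\ref{cons:canonDecomp} the group $G$ acts on $W_i$ through its factor $G_i$, which acts non-trivially and irreducibly, so $W_i$ has no non-zero $G$-fixed vector and $W_i ∩ V^{G} = \{0\}$; hence $H⁰\bigl(X, 𝒲_i\bigr) = \{0\}$. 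The argument for $𝒲^*_i$ is the same after replacing $𝒯_X$ by $Ω^{[1]}_X$: dualising \eqref{decomp} exhibits $𝒲^*_i$ as a direct summand of $Ω^{[1]}_X$ with fibre at $x$ the $G_i$-representation $W^*_i ⊆ V^*$, Theorem~\ref{thm:holonomy} gives an isomorphism $H⁰\bigl(X, Ω^{[1]}_X\bigr) \cong (V^*)^{G}$, and the dual of the non-trivial irreducible $G_i$-representation $W_i$ again has no non-zero invariant vector (any $G_i$-map $W_i \to ℂ$ has kernel a proper subrepresentation, hence is zero by irreducibility and non-triviality); so $W^*_i ∩ (V^*)^{G} = \{0\}$ and $H⁰\bigl(X, 𝒲^*_i\bigr) = \{0\}$.

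I do not expect a genuine obstacle here: the corollary is essentially bookkeeping on top of the already-proven Bochner principle. The one point deserving a word of care is that Theorem~\ref{thm:holonomy} is phrased for the full reflexive tensor bundles rather than for their direct summands; this is harmless, since a section of a summand is a fortiori a section of the ambient sheaf, and the summand's fibre at $x$ is precisely the corresponding $G$-invariant subspace by the holonomy principle, Corollary~\ref{cor:HPB}. Alternatively, one may invoke Item~\ref{il:l2} of Theorem~\ref{generalized:holonomy} directly to match direct summands of $𝒯_X$ (resp.\ $Ω^{[1]}_X$) with $G$-invariant subspaces of $V$ (resp.\ $V^*$), and then identify $H⁰$ of each summand with the space of $G$-invariant vectors in its fibre, concluding as above.
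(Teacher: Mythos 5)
Your argument is correct and follows essentially the same route as the paper's: triviality of $\mathcal W_0$ via Item~\ref{il:Bx} of Proposition~\ref{prop:holbun2x} (plus the routine extension of sections across the small set by reflexivity), and vanishing of $H^0$ of $\mathcal W_i$ and $\mathcal W_i^*$ for $i>0$ via the Bochner principle of Theorem~\ref{thm:holonomy}, which forces the value at $x$ of any section to be a $G$-fixed vector in the non-trivial irreducible summand $W_i$ (resp.\ $W_i^*$), hence zero. The paper phrases the second step as the $G$-invariant line $\mathbb{C}\cdot s_x$ having to equal all of $W_i$ by irreducibility, landing in $W_0$ — a contradiction — but this is the same computation you do more directly with $W_i\cap V^G=\{0\}$.
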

\Preprint{%
  \begin{proof}
    The triviality of $𝒲_0$ has already been established in Item~\ref{il:Bx} of
    Proposition~\ref{prop:holbun2x}.  Now, let us assume that $𝒲_i$ has a
    non-zero section $s$ for some $i>0$.  By the Bochner principle for reflexive
    tensors, Theorem~\ref{thm:holonomy}, $s$ is parallel hence $s_x≠ 0$ and the
    line $ℂ·s_x ⊂ W_i$ is point-wise fixed by $G$.  Irreducibility of $W_i$
    implies that $ℂ·s_x = W_i$ which leads to $W_i ⊂ W_0$ and the contradiction
    follows.  The same argument applies to $𝒲_i^*$.
  \end{proof}
}

\subsection{Applications}\label{ssec:aobp}
\approvals{Daniel & yes \\ Henri & yes \\ Stefan & yes}

The Bochner principle allows us to carry over many arguments from the smooth to
our singular case.  As a sample application, we generalise \cite[Sect.~1, Item
(iii) of Cor.]{Bea82} to our setup.  See Corollary~\ref{qab1} as well as
\cite[Sect.~3]{Pe94} for related results.

\begin{cor}[Detecting finite quotients of Abelian varieties, I]\label{cor:forms}
  In the standard Setting~\ref{setting:holonomy}, the following inequality holds
  for all $p ∈ ℕ$,
  \begin{equation}\label{ineq2}
    h⁰ \bigl(X,\, Ω_X^{[p]} \bigr) ≤ \binom{n}{p}.
  \end{equation}
  If equality holds for one $0 < p < n$, then $X$ is of the form $A/Γ$ where $A$
  is an Abelian variety and $Γ$ is a finite group whose action on $A$ is free in
  codimension one.
\end{cor}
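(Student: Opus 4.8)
The plan is to derive both assertions from the Bochner principle for reflexive tensors, Theorem~\ref{thm:holonomy}. Since the sheaf of reflexive $p$-forms $\Omega_X^{[p]}$ is a direct summand of the reflexive tensor sheaf $\bigl((\sT_X^{*})^{\otimes p}\bigr)^{**}$, Theorem~\ref{thm:holonomy} applies and identifies $H^0\bigl(X,\Omega_X^{[p]}\bigr)$ with the space $\bigl(\Lambda^p V^{*}\bigr)^{G}$ of $G$-invariant vectors, where $V=T_xX$ has dimension $n$ and $G=\Hol(X_{\reg},g_H)_x\subseteq\U(V)$ in the notation of the standard Setting~\ref{setting:holonomy}. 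As $\Lambda^p V^{*}$ has dimension $\binom np$, the inequality~\eqref{ineq2} is then immediate.

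For the equality statement, assume $h^0\bigl(X,\Omega_X^{[p]}\bigr)=\binom np$ for some $0<p<n$, so that $\Lambda^p V^{*}$, and equivalently $\Lambda^p V$, is a trivial $G$-module. The first step is a purely linear-algebraic observation: this forces every element of $G$ to be a scalar. Indeed, if $g\in G$ has unitary eigenvalues $\lambda_1,\dots,\lambda_n$ on $V$ and $a\neq b$ are indices, then choosing a $(p-1)$-element subset $S\subseteq\{1,\dots,n\}\setminus\{a,b\}$, which is possible because $p-1\le n-2$, triviality of $g$ on $\Lambda^p V$ gives $\lambda_a\prod_{i\in S}\lambda_i=1=\lambda_b\prod_{i\in S}\lambda_i$, whence $\lambda_a=\lambda_b$. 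Therefore $g=\zeta_g\cdot\Id_V$ with $\zeta_g^{\,p}=1$, so $G$ is a finite cyclic subgroup of $\U(1)\cdot\Id_V$. In particular the restricted holonomy group $G^{\circ}$ is trivial, hence in the canonical decomposition of Construction~\ref{constr:canonDecompRestr} one has $V=V_0$, and Item~\ref{il:FDC2} of Corollary~\ref{cor:flatDecomp} yields $\widetilde q(X)=\dim V_0=n$.

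Next I would invoke the torus cover of Proposition~\ref{prop:torusCover}: it produces a quasi-étale cover $Y=A\times Z\to X$ with $A$ an Abelian variety of dimension $\widetilde q(X)=n=\dim X$, which forces $\dim Z=0$ and hence $Y=A$. To rewrite this in the asserted form, pass to the Galois closure $\gamma\colon\widetilde A\to X$ of $A\to X$. Because $A$ is smooth and $\gamma$ is quasi-étale, $\widetilde A\to A$ is étale, so $\widetilde A$ is a finite étale cover of an Abelian variety and, being projective, is itself an Abelian variety. Writing $\Gamma=\Aut(\widetilde A/X)$ for the Galois group, one has $X=\widetilde A/\Gamma$; and since $\gamma$ is étale in codimension one, the locus where $\Gamma$ fails to act freely maps into the small branch locus of $\gamma$, so $\Gamma$ acts freely in codimension one, as required.

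The only step that is more than routine bookkeeping is this last one: one has to make sure that the Abelian structure survives the passage to the Galois closure — this is where smoothness of Abelian varieties, forcing quasi-étale covers to be genuinely étale, enters — and that the resulting quotient presentation has an action that is free in codimension one. The inequality and the eigenvalue computation are elementary, and all the substantial input (the Bochner principle, the classification of restricted holonomy, and the existence of the torus cover) has already been established in the earlier parts of the paper.
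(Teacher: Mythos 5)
Your proof is correct. The inequality and the closing step (torus cover plus Galois closure, producing the quotient presentation $X=A/\Gamma$) coincide with the paper's argument, but your derivation of the equality case is genuinely different and more elementary. The paper decomposes $V = V_0 \oplus V_1 \oplus \cdots \oplus V_m$ according to restricted holonomy, bounds $\dim\bigl(\bigwedge^p V^*\bigr)^{G^\circ}$ summand-by-summand, compares the result with $\binom{n}{p}$ via Vandermonde's identity for binomial coefficients, and then invokes the classification of restricted holonomy (Proposition~\ref{prop:factors}) to conclude that a nontrivial factor $\SU(n_i)$ or $\Sp(n_i/2)$ forces a strict inequality. You instead show directly that equality in~\eqref{ineq2} makes the full holonomy group $G$ consist of scalars: a unitary operator acting trivially on $\bigwedge^p V$ for some $0<p<n$ has all eigenvalues equal, by the pairwise comparison through a common $(p-1)$-element index set, which exists precisely because $p-1 \leq n-2$. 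This is shorter, avoids both Vandermonde's identity and the $\SU$/$\Sp$ classification, and extracts slightly more information ($G$ itself is a finite cyclic group of scalars, not merely $G^\circ=\{1\}$). Both routes then yield $V_0=V$ and hence $\widetilde q(X)=n$; the passage to a Galois quasi-étale cover by an Abelian variety acting freely in codimension one is the same argument the paper spells out in the proof of Corollary~\ref{qab1} and leaves implicit here by referring to Theorem~\ref{thm:holonomyCover}.
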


\begin{rem}
  If $X$ is smooth and equality holds in \eqref{ineq2} for some $0<p<n$,
  Beauville proves that $X$ is an Abelian variety already, so that no
  quasi-étale cover is needed.  However, this is false in general if $X$ is
  singular.  Indeed, take $X=A/\langle \pm 1 \rangle$, the quotient of an
  Abelian fourfold $A$ by the natural involution $x ↦ -x$.  As every holomorphic
  two-form on $A$ is preserved by the involution, the maximal value
  $h⁰ \bigl(X,\, Ω_X^{[2]} \bigr) = 6$ is attained, yet $X$ is not Abelian.
\end{rem}

\begin{proof}[Proof of Corollary~\ref{cor:forms}]
  \CounterStep Inequality~\eqref{ineq2} follows immediately from the Bochner
  principle for $ℰ := (𝒯^*_X)^{[p]}$.  Now assume that there exists one index
  $0<p<n$ for which equality holds.  We employ the decomposition
  $V = V_0 ⊕ V_1 ⊕ ⋯ ⊕ V_m$ induced by the restricted holonomy group
  $G° = G°_1 ⨯ ⋯ ⨯ G°_m$, as discussed in
  Construction~\ref{constr:canonDecompRestr}.  The Bochner principle gives an
  embedding $H⁰ \bigl( X,\, Ω_X^{[p]} \bigr) ↪ (Λ^pV^*)^{G°}$, and therefore
  shows that
  $$
  h⁰ \bigl( X,\, Ω_X^{[p]} \bigr) ≤ \dim (Λ^pV^*)^{G°} ≤ \sum_{\substack{k_0, …,
      k_m ∈ ℕ \\ \sum k_j = p}} (\dim Λ^{k_0} V_0^*)·\prod_{i=1}^m \dim
  \bigl(Λ^{k_i}V_i^*\bigr)^{G_i°}.
  $$
  If $m > 0$, then the factors on the right are obviously estimated as follows,
  \begin{equation}\label{ineq1}
    \dim \bigl(Λ^{k_i}V_i^* \big)^{G_i°} ≤ \dim Λ^{k_i}V_i^* \qquad \text{for all $i>0$ and all $k_i$.}
  \end{equation}
  To conclude, by using Vandermonde's Identity for binomial coefficients observe
  that equality in \eqref{ineq2} can only happen if equality happens in
  \eqref{ineq1} for all $i>0$ and all $k_i$.  But then the groups $G°_i$ would
  necessarily be trivial, given Proposition~\ref{prop:factors}.  The assumption
  that $m > 0$ is thus absurd.  Equality in \eqref{ineq2} therefore implies that
  $V_0=V$, and Corollary~\ref{cor:forms} follows from
  Theorem~\ref{thm:holonomyCover}.
\end{proof}

\begin{cor}[Restricted holonomy is independent of choice of polarisation]\label{cor:independence}
  In the standard Setting~\ref{setting:holonomy}, the isomorphism class of the
  restricted holonomy group $\Hol(X_{\reg},g_H)°$ does not depend on the ample
  polarisation $H$.
\end{cor}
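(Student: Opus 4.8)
The plan is to exhibit the isomorphism class of $G°$ as a function of invariants of $X$ that are manifestly independent of $H$. Recall from Proposition~\ref{prop:factors} that the canonical decomposition $V = V_0 ⊕ V_1 ⊕ ⋯ ⊕ V_m$, $G° = G°_1 ⨯ ⋯ ⨯ G°_m$, presents $G°$ as a product in which each factor $G°_i$ is isomorphic to $\SU(\dim V_i)$ or to $\Sp(\tfrac12\dim V_i)$. Hence the isomorphism type of $G°$ is determined by $\dim V_0$ together with the multiset of pairs $\bigl(\dim V_i,\ \text{``$\SU$ versus $\Sp$''}\bigr)$, $i = 1, …, m$. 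By Corollary~\ref{cor:flatDecomp} we have $\dim V_0 = \wtilde q(X)$, which does not involve $H$, so it remains to recover this multiset.

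First I would observe that the \emph{polystable decomposition of $\sT_X$ is intrinsic}. In the canonical decomposition $\sT_X = \sW_0 ⊕ ⋯ ⊕ \sW_k$ attached to $g_H$ via Construction~\ref{cons:decpW}, the summand $\sW_0$ is trivial and each $\sW_i$ with $i > 0$ is, by Proposition~\ref{prop:holbun2x}, stable of slope zero \emph{with respect to every ample polarisation}. Splitting $\sW_0$ into copies of $\sO_X$ we obtain a decomposition of $\sT_X$ into sheaves that are stable of slope zero for all polarisations; by uniqueness of the polystable decomposition relative to a fixed polarisation, this is --- up to reordering and isomorphism --- \emph{the} polystable decomposition of $\sT_X$ for every polarisation. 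Therefore the multiset $\scr{S}$ of isomorphism classes of stable summands of $\sT_X$ is independent of $H$.

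Next, to each $\sW ∈ \scr{S}$ I would attach polarisation-independent integers by feeding the Bochner principle a suitable reflexive tensor. Since $\bigl(\sW ⊗ \sW^*\bigr)^{**}$ and $\bigl(\Lambda^2 \sW\bigr)^{**}$ are direct summands of $\bigl(\sT_X^{⊗ 2} ⊗ (\sT_X^*)^{⊗ 2}\bigr)^{**}$, Theorem~\ref{generalized:holonomy} (for the dictionary between summands and $G$-invariant subspaces) and Theorem~\ref{thm:holonomy} (for the identification of sections with invariant vectors), combined with the invariance of restricted holonomy under covers from Corollary~\ref{cor:bhqec2}, yield for a holonomy cover $γ\colon \what Y → X$ (on which $G_{\what Y} = G°$, by Theorem~\ref{thm:holonomyCover}) natural isomorphisms
$$
H⁰\bigl(\what Y,\ (γ^{[*]}\sW ⊗ γ^{[*]}\sW^*)^{**}\bigr) \ \cong\ \bigl(\End W\bigr)^{G°} \quad\text{and}\quad H⁰\bigl(\what Y,\ (\Lambda^2 γ^{[*]}\sW)^{**}\bigr) \ \cong\ \bigl(\Lambda^2 W\bigr)^{G°},
$$
where $W ⊆ V$ is the $G°$-invariant subspace corresponding to $\sW$. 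Writing $W \cong (W')^{⊕ r}$ with $W'$ irreducible under $G°$, Schur's lemma gives $\dim (\End W)^{G°} = r²$, so $r$ --- and hence $n := \dim W' = (\rank \sW)/r$ --- is computed by $\sup_{Y/X} h⁰\bigl(Y, (γ^{[*]}\sW ⊗ γ^{[*]}\sW^*)^{**}\bigr)$, a supremum of algebro-geometric quantities that visibly does not depend on $H$. A short computation with the standard representations listed in Proposition~\ref{prop:factors} shows that $\dim (\Lambda^2 W)^{G°}$ equals $0$ when $G°$ acts on $W'$ through $\SU(n)$ with $n ≥ 3$, and equals $\binom{r+1}{2} > 0$ when it acts through $\Sp(n/2)$ (including the case $n = 2$, i.e.\ $\SU(2) = \Sp(1)$); so the $\SU$/$\Sp$ alternative for $W'$ is likewise read off from a polarisation-independent supremum. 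The summands $\sW$ with $n = 1$ are precisely the flat ones and contribute no factor, while each $\sW$ with $n ≥ 2$ contributes exactly $r$ factors, each isomorphic to $\SU(n)$ or to $\Sp(n/2)$ according to the dichotomy above. Summing these contributions over $\scr{S}$ reconstructs the multiset $\bigl(\dim V_i, \text{type}\bigr)_{i=1}^m$, and therefore the isomorphism class of $G°$, from data independent of $H$.

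The step I expect to require the most care is the final bookkeeping: one must check that the per-summand invariants $r$, $n$ and the $\Lambda^2$-type reassemble faithfully into the product $G° = G°_1 ⨯ ⋯ ⨯ G°_m$ --- in particular that the integer $r$ attached to $\sW$ accounts exactly for the number of mutually isomorphic $G°$-subrepresentations of $V$ packaged inside $\sW|_{X_{\reg}}$ --- and one must verify the elementary invariant-theoretic identities for $\Lambda^2$ and $\End$ of the standard representations of $\SU(n)$ and $\Sp(m)$. Neither is a serious obstacle, but both should be written out.
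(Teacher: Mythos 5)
Your overall strategy is a legitimate alternative to the paper's argument, which passes to a holonomy cover first (so that $G=G°$ and every stable summand corresponds to a single restricted-holonomy irreducible $V_i$) and then simply matches the two Krull--Schmidt decompositions. You instead work intrinsically on $X$, where the stable summands are the $G$-irreducibles $W_j$, and feed per-summand algebro-geometric invariants to the Bochner principle. That is a reasonable plan, and the first half — intrinsicness of the polystable decomposition of $\sT_X$ via Proposition~\ref{prop:holbun2x} and Krull--Schmidt, and identifying the trivial part with $\wtilde q(X)$ — is fine.

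The genuine gap is the Schur's lemma step. You write $W\cong (W')^{\oplus r}$ with $W'$ $G°$-irreducible, i.e.\ you treat the $G$-irreducible summand $W$ as $G°$-\emph{isotypic}. That is false. If $W = V_{a_1}\oplus\cdots\oplus V_{a_r}$ is the restricted-holonomy decomposition, then by Construction~\ref{constr:canonDecompRestr} each factor $G°_{a_s}\subseteq G°=\prod_i G°_i$ acts nontrivially on exactly one $V_{a_s}$ and trivially on all other summands; hence for $s\neq t$ the $G°$-representations $V_{a_s}$ and $V_{a_t}$ have distinct kernels and are \emph{never} isomorphic. (The paper's Observation~\ref{obs:ccd}, ``isomorphic as $G°$-representation spaces,'' means conjugate under $G$, not isomorphic as $G°$-modules; Example~\ref{ex:symk3}, with $W=V_1\oplus V_2$ and $G°=\SU(2)\times\SU(2)$, makes this explicit.) Consequently
$$
\dim\bigl(\End W\bigr)^{G°}\;=\;\sum_{s}\dim\Hom_{G°}(V_{a_s},V_{a_s})\;=\;r\,,
$$
not $r^{2}$, and, since the cross terms $(V_{a_s}\otimes V_{a_t})^{G°}$ vanish for $s\neq t$,
$$
\dim\bigl(\Lambda^{2}W\bigr)^{G°}\;=\;\sum_{s}\dim\bigl(\Lambda^{2}V_{a_s}\bigr)^{G°_{a_s}}\;\in\;\{0,\,r\}\,,
$$
not $\binom{r+1}{2}$; the value is $0$ for $\SU(n)$-type with $n\geq 3$ and $r$ for $\Sp(m)$-type including $\SU(2)=\Sp(1)$. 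This also means that the final bookkeeping caveat you flag — accounting for ``the number of mutually isomorphic $G°$-subrepresentations packaged inside $\sW$'' — rests on the same misconception; the number you actually want is the number of pairwise non-isomorphic $G°$-summands.

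The fix is painless: read $r$ off as $\dim(\End W)^{G°}$ itself rather than its square root, and read the $\SU$/$\Sp$ alternative off the vanishing or non-vanishing of $(\Lambda^{2}W)^{G°}$; the argument then succeeds and produces the same product of $\SU(n_i)$'s and $\Sp(m_j)$'s. Compared with the paper's proof, yours avoids having to compare holonomy covers for two different polarisations but pays for it by analysing a non-isotypic $G°$-module; the paper's passage to the holonomy cover sidesteps exactly this issue by making each stable summand a single $G°$-irreducible.
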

\begin{proof} Assume we are given two ample Cartier divisors $H$, $H'$ on $X$.
  Theorem~\ref{thm:holonomyCover} allows to pass to a holonomy cover.  Since
  this does not affect the restricted holonomy, we may assume without loss of
  generality that the holonomy groups $G_H$ and $G_{H'}$ are connected already
  on $X$.  The classification of restricted holonomy,
  Proposition~\ref{prop:factors}, thus equips these groups with a product
  structure.  More precisely, we obtain two canonical decompositions of the
  tangent sheaf into stable subsheaves,
  \begin{equation*}\label{decomps}
    𝒯_X ≅ 𝒪_X^{⊕ r}⊕ \bigoplus_i ℰ_i ≅ 𝒪_X^{⊕ r'} ⊕ \bigoplus_j ℰ'_j,
  \end{equation*}
  where the respective holonomy groups act trivially on the trivial parts and
  are given by the products $G_H = \prod G_i $ and $G_{H'} = \prod G_j'$, where
  $G_i$ (resp.\ $G_j'$) is isomorphic either to the the special unitary group or
  the unitary symplectic group in dimension $\rank ℰ_i ≥ 2$ (resp.\
  $\rank ℰ_j' ≥ 2$).  Clearly, $r = r'$, and the remaining stable factors have
  to be pairwise isomorphic.  Up to renumbering, one can assume that
  $ℰ_i ≅ ℰ'_i$ and therefore
  $$
  \bigoplus_p H⁰ \bigl(X,\,Λ^{[p]}ℰ_i \bigr) ≅ \bigoplus_p H⁰\bigl(X,\,Λ^{[p]}{ℰ'_i}\bigr).
  $$
  The conclusion then follows from Bochner principle for reflexive tensors,
  Theorem~\ref{thm:holonomy}, once we observe that $G_i$ is special unitary if
  and only if
  \begin{align*}
    h⁰ \bigl( X,\, Λ^{[p]}ℰ_i \bigr) & =
                                       \begin{cases}
                                         1 & \text{if $p = 0$ or $p = \rank ℰ_i$} \\
                                         0 & \text{otherwise.}
                                       \end{cases} \\
    \intertext{and that $G_i$ is unitary symplectic if and only if}
 h⁰ \bigl( X,\, Λ^{[p]}ℰ_i \bigr) & =
    \begin{cases}
      1 & \text{if $0 \le p \le \rank ℰ_i$ and $p$ is even} \\
      0 & \text{otherwise.} \\
    \end{cases} \qedhere
  \end{align*}
\end{proof}

%
%
\svnid{$Id: 09-BPbundles.tex 760 2018-10-16 10:51:15Z greb $}

\section{Proof of Theorem~\ref*{generalized:holonomy} (``Bochner principle for bundles'')}\label{ssec:pothb}
\subversionInfo
\approvals{Daniel & yes \\ Henri & yes \\ Stefan & yes}

We maintain notation and assumptions of Theorem~\ref{generalized:holonomy} in
this section.  The proof is similar to the one of \cite[Thm.~A]{Guenancia},
which deals with the case $ℰ = 𝒯_X$.

\subsection*{Step 1.  Setup}
\approvals{Daniel & yes \\ Henri & yes \\ Stefan & yes}

Let $π: \wtilde{X} → X$ be a strong log-resolution of singularities of $X$.
Throughout the proof, we discuss the following objects on $\wtilde{X}$.

\subsubsection*{Discrepancy divisors}

Write the standard $ℚ$-linear equivalence
\begin{equation}\label{eq:discrepancy}
  K_{ \wtilde{X}}+ \sum a_i·D_i \sim_ℚ π^* K_X,
\end{equation}
where $D = \sum a_i D_i$ is a $π$-exceptional $ℚ$-divisor with simple normal
crossing support and coefficients $a_i ∈ (-∞, 1) ∩ ℚ$.  Choose sections
$s_i ∈ H⁰ \bigl( \wtilde{X},\, 𝒪_{\wtilde{X}}(D_i) \bigr)$ that vanish precisely
on $D_i$.

\subsubsection*{Metrics and currents}

We fix a Kähler reference metric $ω_0$ on $\wtilde{X}$, but we will also
consider the pull-back $ω_{\wtilde{H}} := π^*ω_H$.  \Preprint{Recall from
  Remark~\ref{rem:pboc} that ``pull-back'' is meaningful in our context.} Set
$\wtilde{H} := π^*H ∈ \Div(\wtilde{X})$ and observe that $\Ric ω_{\wtilde{H}}$
equals the current of integration of $D$, $\Ric ω_{\wtilde{H}} = [D]$.

Equip the line bundles $𝒪_{\wtilde{X}}(D_i)$ with Hermitian metrics $|·|_i$, and
write $Θ_i ∈ \cA^{1,1}_{ℝ}(\wtilde{X})$ for the associated curvature form.

\subsubsection*{Bundles}

In analogy to the definition of $E$, consider the bundle
$\wtilde{E} := T{\wtilde{X}}^{⊗ p}⊗ (T^*\wtilde{X})^{⊗ q}$, as well as the
associated locally free sheaf
$\wtilde{ℰ} := 𝒯_{\wtilde{X}}^{⊗ p}⊗ (𝒯_{\wtilde{X}}^*)^{⊗ q}$.  We aim to
establish polystability of $E$ by studying $\wtilde{E}$.

\subsection*{Step 2.  Construction of smooth metrics}
\approvals{Daniel & yes \\ Henri & yes \\ Stefan & yes}

As in \cite[p.~518]{Guenancia}, we aim to construct sequences of smooth metrics
converging to $ω_{\wtilde{H}}$.

\begin{construction}[Construction of smooth metrics $ω_{t,ε}$]\label{rem:ma}
  Given a pair of numbers $ε,t ∈ ℝ^+$, consider the following form,
  \begin{equation}\label{eq:xbfgg}
    θ_ε := \sum a_i \Bigl(\underbrace{Θ_i+dd^c \log \bigl(|s_i|²_i+ε² \bigr)}_{=:
      θ_{i,ε}} \Bigr) \quad \text{in } \cA^{1,1}_{ℝ}(\wtilde{X}).
  \end{equation}
  We view $θ_ε$ as a regularisation of the current of integration $[D]$.  The
  discrepancy formula \eqref{eq:discrepancy} implies that
  $\{θ_ε\} = c_1(\wtilde{X})$ in $H^{1,1}(\wtilde{X})$.  We let $ω_{t,ε}$ be the
  unique Kähler metric on $\wtilde{X}$ whose class equals
  \begin{equation}\label{eq:class_of_omega}
    \{ω_{t,ε}\} = c_1(\wtilde{H})+t·\{ω_0\} ∈ H^{1,1}\bigl(\wtilde{X}\bigr),
  \end{equation}
  and that solves the equation
  \begin{equation}\label{eq:cxvxc}
    \Ric ω_{t,ε}= θ_ε.
  \end{equation}
  For existence and uniqueness, see \cite{MR480350}.
\end{construction}

\begin{rem}[Convergence of $ω_{t,ε}$ for $(ε,t) → (0,0)$]
  It follows from the proof of \cite[Thm.~3.5]{MR2505296} and from the
  uniqueness result \cite[Thm.~3.3]{GZ07} that the smooth Kähler forms
  $ω_{t, ε}$ converge on $\wtilde{X} ∖ \supp D$ to the singular
  Kähler-Einstein metric $ω_{\wtilde{H}}$.  More precisely, we have
  $$
  \lim_{(ε,t) → (0,0)} ω_{t,ε} = ω_{\wtilde{H}}
  $$
  on $\wtilde{X} ∖ \supp D$ in the $\cC^{∞}_{\loc}$-topology.
\end{rem}

\begin{rem}\label{rem:hgfjgh}
  Let $∇'_{\negthinspace i}$ be the $(1,0)$-part of the Chern connection of
  $(𝒪_{\wtilde{X}}(D_i), |·|_i)$.  A direct computation shows that the form
  $θ_{i,ε} ∈ \cA^{1,1}_{ℝ}(\wtilde{X})$ decomposes as follows,
  $$
  θ_{i,ε} = \underbrace{\frac{ε²·|∇_{\negthinspace
        i}'s_i|²_i}{(|s_i|²_i+ε²)²}}_{=: β_{i,ε}} +
  \underbrace{\frac{ε²·Θ_i}{|s_i|²_i+ε²}}_{=: γ_{i,ε}}.
  $$
  We refer to \cite[beginning of Sect.~3]{MR3134683} for an analogous
  computation.  The summands $β_{i,ε}$ and $γ_{i,ε}$ are smooth forms in
  $\cA^{1,1}_{ℝ}(\wtilde{X})$.
\end{rem}

\begin{notation}[Hermitian metrics and curvature on $T\wtilde{X}$]
  We endow $T\wtilde{X}$ with the Kähler form $ω_{t,ε}$, so $T{\wtilde{X}}$ and
  more generally $\wtilde{E}$ can be equipped with the structure of a
  holomorphic Hermitian vector bundles.  We denote by $\wtilde{h}_{t,ε}$ the
  Hermitian metric on $\wtilde{E}$ induced by $ω_{t,ε}$ and write
  $Θ_{\wtilde{h}_{t,ε}}(\wtilde{E}) ∈ \cA^{1,1} \bigl( \wtilde{X},\,
  \End(\wtilde{E}) \bigr)$ for its Chern curvature.  The form
  $iΘ_{\wtilde{h}_{t,ε}}(\wtilde{E})$ is a real $(1,1)$-form with values in the
  Hermitian endomorphisms of $(E,\wtilde{h}_{t,ε})$.
\end{notation}

In the course of the proof we will need the following result, which is proved in
\cite[Lem.~3.7]{Guenancia}.  In our context, there is a much simpler proof
though, which we give below for the convenience of the reader.

\begin{claim}\label{claim:lem2}
  For every fixed $t>0$, and every index $i$ we have
  $$
  0 = \lim_{ε → 0} \: \int_{{\wtilde{X}}} \frac{ε²}{|s_i|²_i+ε²}·ω_0 Λ
  ω_{t,ε}^{n-1}.
  $$
\end{claim}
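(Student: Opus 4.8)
The plan is to exploit the fact that $\mu_\varepsilon := \omega_0 \wedge \omega_{t,\varepsilon}^{n-1}$ is a positive measure of total mass $\mu_\varepsilon(\wtilde{X}) = \{\omega_0\}\cdot\{\omega_{t,\varepsilon}\}^{n-1} = \{\omega_0\}\cdot\bigl(c_1(\wtilde{H})+t\{\omega_0\}\bigr)^{n-1} =: C_t$, which by \eqref{eq:class_of_omega} is a constant independent of $\varepsilon$; writing $f_\varepsilon := \varepsilon^2/(|s_i|_i^2+\varepsilon^2)$ we have $0 \le f_\varepsilon \le 1$ on $\wtilde{X}$ and $f_\varepsilon \to 0$ pointwise off the $\omega_0$-null set $D_i = (s_i=0)$, so the claim is that $\int_{\wtilde{X}} f_\varepsilon\, d\mu_\varepsilon \to 0$. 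The idea is to combine a crude cut-off estimate with the fact that the weak limit of the $\mu_\varepsilon$ charges no pluripolar set.

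First I would record the relevant properties of the limit of $\omega_{t,\varepsilon}$ as $\varepsilon \to 0$, with $t$ fixed. The density of $\omega_{t,\varepsilon}^n$ against a fixed smooth volume form equals $e^{c_{t,\varepsilon}}\cdot(\text{smooth})\cdot\prod_i(|s_i|_i^2+\varepsilon^2)^{-a_i}$ by \eqref{eq:cxvxc} and \eqref{eq:discrepancy}, with $c_{t,\varepsilon}$ bounded uniformly in $\varepsilon$ since the total volume $\{\omega_{t,\varepsilon}\}^n$ is fixed; as every $a_i < 1$, this density is uniformly bounded in $L^p$ for some $p>1$, so the $\omega_t^{\mathrm{ref}}$-potentials of $\omega_{t,\varepsilon}$ are uniformly bounded by the a priori estimates of \cite{MR2505296} (here $\omega_t^{\mathrm{ref}}$ is a fixed smooth Kähler form in the Kähler class $c_1(\wtilde{H})+t\{\omega_0\}$). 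By the convergence argument recorded in the remark following Construction~\ref{rem:ma} (\cite[proof of Thm.~3.5]{MR2505296} together with the uniqueness result \cite[Thm.~3.3]{GZ07}), the $\omega_{t,\varepsilon}$ then converge, in $\cC^{\infty}_{\loc}(\wtilde{X}\setminus\supp D)$, to a closed positive current $\omega_{t,0}$ in the class $c_1(\wtilde{H})+t\{\omega_0\}$ with bounded potentials. Consequently $\mu_\varepsilon \rightharpoonup \mu_0 := \omega_0 \wedge \omega_{t,0}^{n-1}$ weakly as measures: any weak-$*$ subsequential limit is a positive measure of mass $C_t$ agreeing with the smooth form $\omega_0 \wedge \omega_{t,0}^{n-1}$ on $\wtilde{X}\setminus\supp D$, and since $\omega_{t,0}$ has bounded potentials, Bedford--Taylor theory gives $\int_{\wtilde{X}}\omega_0 \wedge \omega_{t,0}^{n-1} = C_t$ and that this measure puts no mass on pluripolar sets, so the subsequential limit must equal $\mu_0$. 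In particular $\mu_0(\supp D) = 0$ and $\mu_0(\wtilde{X}) = C_t < \infty$.

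The estimate itself is then immediate and --- in contrast to \cite[Lem.~3.7]{Guenancia} --- uses no integration by parts. For $\delta \in (0,1)$, pick by smooth Urysohn a function $\rho_\delta \in \cC^{\infty}(\wtilde{X})$ with $0 \le \rho_\delta \le 1$, $\rho_\delta \equiv 1$ on $\{|s_i|_i^2 \le \delta\}$ and $\rho_\delta \equiv 0$ on $\{|s_i|_i^2 \ge 2\delta\}$; crucially, no control on $d\rho_\delta$ is needed. Since $f_\varepsilon \le \varepsilon^2/\delta$ wherever $|s_i|_i^2 \ge \delta$ and $f_\varepsilon \le 1$ everywhere, one has the pointwise bound $f_\varepsilon \le \rho_\delta + \varepsilon^2/\delta$ on $\wtilde{X}$, hence, integrating against the positive measure $\mu_\varepsilon$,
$$
0 \;\le\; \int_{\wtilde{X}} f_\varepsilon\, d\mu_\varepsilon \;\le\; \int_{\wtilde{X}} \rho_\delta\, d\mu_\varepsilon \;+\; \frac{\varepsilon^2}{\delta}\, C_t .
$$
Letting $\varepsilon \to 0$ with $\delta$ fixed, weak convergence $\mu_\varepsilon \rightharpoonup \mu_0$ applied to the continuous function $\rho_\delta$ gives $\limsup_{\varepsilon \to 0} \int_{\wtilde{X}} f_\varepsilon\, d\mu_\varepsilon \le \int_{\wtilde{X}} \rho_\delta\, d\mu_0 \le \mu_0\bigl(\{|s_i|_i^2 \le 2\delta\}\bigr)$. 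Finally, letting $\delta \to 0$, the sets $\{|s_i|_i^2 \le 2\delta\}$ decrease to $D_i \subseteq \supp D$, so by continuity from above of the finite measure $\mu_0$ the right-hand side tends to $\mu_0(D_i) = 0$; hence $\lim_{\varepsilon\to0}\int_{\wtilde{X}} f_\varepsilon\, d\mu_\varepsilon = 0$, which is the assertion. The only non-formal ingredients are the uniform $L^\infty$-estimate for the potentials of $\omega_{t,\varepsilon}$ and the two Bedford--Taylor properties of mixed Monge--Ampère operators of bounded potentials (cohomologically computed finite mass, and no mass on pluripolar sets); these are standard and already underpin the EGZ construction used throughout, so identifying the limit current $\omega_{t,0}$ and invoking them correctly is the only point that needs a little care.
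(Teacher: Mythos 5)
Your proof is correct, and it takes a genuinely different route from the one in the paper. The paper also begins by invoking Ko\l{}odziej's $L^p$-estimate to get uniform $L^\infty$-bounds on the potentials $\varphi_{t,\varepsilon}$, but it then works directly with the forms $\omega_{t,\varepsilon}$: it reduces to showing $\int_{\{|s_i|^2 < \varepsilon\}} \omega_0 \wedge \omega_{t,\varepsilon}^{n-1} \to 0$, introduces a family of cut-off functions $\chi_\varepsilon$ whose complex Hessian is uniformly dominated by a Poincar\'e-type metric $\omega_P$ along $D_i$, and does a Chern--Levine--Nirenberg-style integration by parts to bound the sublevel-set mass by $C\int_{\{|s_i|^2<\varepsilon\}}\omega_P^n$, which tends to zero by finiteness of the Poincar\'e volume. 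You instead pass to the limit current: you identify, for fixed $t$, the $\varepsilon\to 0$ limit $\omega_{t,0}$ as a current with bounded potentials, invoke Bedford--Taylor to see that the mixed measure $\mu_0=\omega_0\wedge\omega_{t,0}^{n-1}$ has cohomological total mass and puts no mass on pluripolar sets, deduce weak-$*$ convergence $\mu_\varepsilon\rightharpoonup\mu_0$ by a mass-counting argument across $\supp D$, and finish with the elementary pointwise bound $f_\varepsilon\le\rho_\delta+\varepsilon^2/\delta$. Your approach avoids the Poincar\'e cut-off machinery and the integration by parts entirely in favour of soft convergence arguments; the trade-off is that you must explicitly justify existence and bounded potential of the single-parameter limit $\omega_{t,0}$ and the weak-$*$ identification $\mu_\varepsilon\rightharpoonup\mu_0$, whereas the paper's approach is a self-contained uniform estimate at fixed $\varepsilon$. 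Both routes ultimately rest on the same two ingredients (uniform $L^\infty$ for potentials via Ko\l{}odziej, and finiteness/non-charging of pluripolar sets for mixed Monge--Amp\`ere of bounded-potential currents), and both are valid; be aware only that the remark you cite following Construction~\ref{rem:ma} states the double limit $(\varepsilon,t)\to(0,0)$, so the single limit $\varepsilon\to 0$ for fixed $t$ should be argued (as you sketch) from Ko\l{}odziej's estimate plus the stability and uniqueness results of \cite{MR2505296, GZ07} rather than simply quoted from that remark.
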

\begin{proof}
  As the total mass of $ω_0 Λ ω_{t,ε}^{n-1}$ is independent of $ε$, it is enough
  to prove that $\int_{\{|s_i|²<ε\}}ω_0 Λ ω_{t,ε}^{n-1}$ converges to $0$ as $ε$
  approaches zero.  The important observation is that the potentials $φ_{t,ε}$
  of $ω_{t,ε}$ are uniformly bounded in $ε$, when $t$ is fixed.  This is a
  consequence of Kołodziej's $L^p$-estimate, cf.\ \cite[proof of Thm.~2.4.2,
  Ex.~2]{MR1618325}, because $ω_{t,ε}^n$ has a density $f_{ε}$ with respect to a
  volume form satisfying $\lVert f_{ε} \rVert_{L^p} ≤ C$ where $C$ is
  independent of $ε$.
  
  Next, one can introduce a family of cut-off functions $(χ_ε)_{ε>0}$ for the
  divisor $D_i$ as in \cite[Sect.~9]{MR3134683}.  These functions satisfy the
  important property that their complex Hessian, $dd^c χ_ε$, is uniformly
  dominated by a metric $ω_P$ with Poincaré type along $D_i$.  Then, one can
  perform successive integrations by parts in a similar way as in the proof of
  Chern-Levine-Nirenberg inequality and see that there exists a uniform constant
  $C>0$ such that
  $\int_{\{|s_i|²<ε\}}ω_0 Λ ω_{t,ε}^{n-1} \le C \int_{\{|s_i|²<ε\}}ω_P^n$.  The
  constant incorporates the sup-norms of the various potentials above.
  Claim~\ref{claim:lem2} follows at once from the finiteness of the volume of
  the Poincaré type metric $ω_P$.
\end{proof}

\subsection*{Step 3.  Computing slopes using $ω_{t,ε}$}
\approvals{Daniel & yes \\ Henri & yes \\ Stefan & yes}
  
Given a saturated subsheaf $ℱ ⊆ ℰ$, we aim to lift $ℱ$ to a subsheaf of
$\wtilde{ℱ} ⊆ \wtilde{ℰ}$, and to compute the slope of $\wtilde{ℱ}$ with respect
to the Kähler metrics $ω_{t,ε}$.

\begin{setupnot}\label{sn:fghfg}
  Assume we are given a saturated subsheaf $ℱ ⊆ ℰ$, which will automatically be
  reflexive.  We write $\wtilde{ℱ} ⊆ \wtilde{ℰ}$ for the unique saturated
  subsheaf that agrees with $ℱ$ wherever the resolution morphism $π$ is
  isomorphic.  We denote the singularity set of $\wtilde{ℱ} ⊆ \wtilde{ℰ}$ by
  $W ⊂ \wtilde{X}$.  This is the minimal closed subset of $\wtilde{X}$ outside
  which $\wtilde{ℱ} ⊆ \wtilde{ℰ}$ corresponds to a subbundle, which we denote as
  $\wtilde{F} ⊆ \wtilde{E}|_{\wtilde{X} ∖ W}$.  Recall that $W$ has
  codimension at least two.  The restriction of $\wtilde{h}_{t,ε}$ endows
  $\wtilde{F}$ with a Hermitian structure.  Write
  $$
  Θ_{\wtilde{h}_{t,ε}}(\wtilde{F}) ∈ \cA^{1,1} \bigl( \wtilde{X} ∖ W ,\,
  \End(\wtilde{F}) \bigr)
  $$
  for its Chern curvature.
\end{setupnot}

\begin{claim}\label{claim:8.10}
  Setup and notation as in \ref{sn:fghfg}.  Then, the following inequality
  holds.
  \begin{equation}\label{prein}
    n·c_1(\wtilde{ℱ}) · \{ω_{t,ε}\}^{n-1} ≤
    \int_{\wtilde{X}∖ W} \tr_{\End} \Bigl(\pr_{\wtilde{F}} \bigl(
    \tr_{ω_{t,ε}} iΘ_{\wtilde{h}_{t,ε}}(\wtilde{E})|_{\wtilde{F}} \bigr) \Bigr)·ω_{t,ε}^n.
  \end{equation}
  We refer to the right hand side of \eqref{prein} as the \emph{error term}.
\end{claim}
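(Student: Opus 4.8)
The plan is to use the Chern curvature formula for a holomorphic subbundle together with the standard slope computation, being careful about the singularity locus $W$. Since $W$ has codimension at least two in $\wtilde{X}$, integrals of smooth forms over $\wtilde{X} \setminus W$ compute the same numbers as over all of $\wtilde{X}$, and one can move freely between the two; this is the point that lets us work with the honest subbundle $\wtilde{F} \subseteq \wtilde{E}|_{\wtilde{X} \setminus W}$ rather than the sheaf $\wtilde{ℱ}$.

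\textbf{Step 1: Express $c_1(\wtilde{ℱ})$ via Chern-Weil on $\wtilde{X} \setminus W$.} Over $\wtilde{X} \setminus W$, the subsheaf $\wtilde{ℱ}$ is a Hermitian holomorphic subbundle $\wtilde{F}$ of $(\wtilde{E}, \wtilde{h}_{t,ε})$, and Chern-Weil theory gives
$$
c_1(\wtilde{ℱ}) = \left[ \frac{i}{2\pi} \tr_{\End} Θ_{\wtilde{h}_{t,ε}}(\wtilde{F}) \right]
$$
as a class on $\wtilde{X}$ (the form extends in $L^1_{\loc}$ and represents the first Chern class because $W$ is small; compare the discussion in \cite[p.~518]{Guenancia}). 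Wedging with $\{ω_{t,ε}\}^{n-1}$ and integrating,
$$
2\pi \cdot c_1(\wtilde{ℱ}) \cdot \{ω_{t,ε}\}^{n-1} = \int_{\wtilde{X} \setminus W} i \tr_{\End} Θ_{\wtilde{h}_{t,ε}}(\wtilde{F}) \wedge \frac{ω_{t,ε}^{n-1}}{(n-1)!} \cdot (\text{normalisation}),
$$
which after the usual rescaling is $n \cdot c_1(\wtilde{ℱ}) \cdot \{ω_{t,ε}\}^{n-1} = \int_{\wtilde{X}\setminus W} \tr_{ω_{t,ε}}\!\big(i\tr_{\End} Θ_{\wtilde{h}_{t,ε}}(\wtilde{F})\big) \cdot ω_{t,ε}^n$. (I will keep the constants implicit as the paper does.)

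\textbf{Step 2: Compare $Θ(\wtilde{F})$ with $Θ(\wtilde{E})|_{\wtilde{F}}$ via the second fundamental form.} For a Hermitian holomorphic subbundle $\wtilde{F} \subseteq \wtilde{E}$ with second fundamental form $\beta \in \cA^{1,0}(\wtilde{X}\setminus W, \Hom(\wtilde{F}, \wtilde{E}/\wtilde{F}))$, the standard Gauss-Codazzi identity reads
$$
Θ_{\wtilde{h}_{t,ε}}(\wtilde{F}) = \pr_{\wtilde{F}}\big( Θ_{\wtilde{h}_{t,ε}}(\wtilde{E})|_{\wtilde{F}} \big) - \beta^* \wedge \beta.
$$
Applying $\tr_{\End}$ and then $\tr_{ω_{t,ε}}$, the term $\tr_{\End}(\beta^* \wedge \beta)$ contributes, upon contraction with $ω_{t,ε}$ and integration against $ω_{t,ε}^n$, a manifestly non-negative quantity (it is a sum of squared $L^2$-norms of the components of $\beta$). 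Dropping this non-negative term yields exactly the claimed inequality
$$
n\cdot c_1(\wtilde{ℱ}) \cdot \{ω_{t,ε}\}^{n-1} \le \int_{\wtilde{X}\setminus W} \tr_{\End}\Big(\pr_{\wtilde{F}}\big(\tr_{ω_{t,ε}} iΘ_{\wtilde{h}_{t,ε}}(\wtilde{E})|_{\wtilde{F}}\big)\Big)\cdot ω_{t,ε}^n.
$$

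\textbf{Main obstacle.} The delicate point is not the pointwise curvature algebra but the integrability and the legitimacy of the Chern-Weil identity \emph{across} $W$: one must check that $i\tr_{\End} Θ_{\wtilde{h}_{t,ε}}(\wtilde{F})$ is locally integrable near $W$ and genuinely represents $c_1(\wtilde{ℱ})$ — equivalently, that $\beta$ is $L^2$ near $W$ so that no positive mass of the second fundamental form term is lost. This follows because $\wtilde{h}_{t,ε}$ is a \emph{smooth} metric on $\wtilde{E}$ over all of $\wtilde{X}$ (it comes from the Kähler form $ω_{t,ε}$, which is smooth everywhere by Construction~\ref{rem:ma}), $\wtilde{ℱ}$ is a coherent subsheaf, and $W$ has codimension $\ge 2$; the argument is identical to the one in the proof of \cite[Thm.~A]{Guenancia}, so I would simply invoke it. Once this is granted, the inequality \eqref{prein} is immediate from Steps 1 and 2.
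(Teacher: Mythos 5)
Your argument follows essentially the same route as the paper's: express $c_1(\wtilde{ℱ})\cdot\{ω_{t,ε}\}^{n-1}$ as an integral of the Chern form of $(\wtilde{F},\wtilde{h}_{t,ε})$ over $\wtilde{X}\setminus W$ (the paper's identity \eqref{eq:c1}, credited to Kobayashi and \cite[Prop.~3.8]{Guenancia}), apply the second-fundamental-form formula relating $Θ(\wtilde{F})$ to $\pr_{\wtilde{F}}\bigl(Θ(\wtilde{E})|_{\wtilde{F}}\bigr)$, and drop the curvature-decreases-in-subbundles correction term. Your sign convention for the Gauss–Codazzi correction is the mirror image of the paper's (you write $-\beta^*\wedge\beta$ with a nonnegative contribution, the paper writes $+ρ_{t,ε}^*\wedgeρ_{t,ε}$ with a seminegative one), but the two packages are consistent, so the proof is correct.
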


\begin{explanation}\label{exp:dfgds}
  In Equation~\eqref{prein}, $\pr_{\wtilde{F}}$ is the orthogonal projection
  $\wtilde{E} → \wtilde{F}$.  The symbol $\tr_{ω_{t,ε}}$ denotes the trace
  relatively to the Kähler metric $ω_{t,ε}$.  Given a bundle $G$ and a
  $G$-valued form $α ∈ \cA^{1,1}(G)$, recall that $\tr_{ω_{t,ε}} α$ is the
  unique section of $G$ such that
  $(\tr_{ω_{t,ε}} α) ⊗ ω_{t,ε}^n = n·α Λ ω_{t,ε}^{n-1}$.  The object
  $\tr_{ω_{t,ε}} iΘ_{\wtilde{h}_{t,ε}}(\wtilde{E})|_{\wtilde{F}}$ in
  \eqref{prein} is therefore a section of $\Hom(\wtilde{F}, \wtilde{E})$.
\end{explanation}

\begin{proof}[Proof of Claim~\ref{claim:8.10}]
  \CounterStep We aim to relate the curvature of the subbundle
  $(\wtilde{F}, \wtilde{h}_{t,ε})$ to the one of
  $(\wtilde{E}, \wtilde{h}_{t,ε})$.  Classically, this is done by introducing
  the second fundamental form
  $ρ_{t,ε} ∈ \cA^{1,0} \bigl( \wtilde{X} ∖ W ,\, \Hom(\wtilde{F},
  \wtilde{F}^{\perp}) \bigr)$, cf.\ \cite[Sect.~V.14]{DemaillyBook2012}, which
  satisfies the relation
  $$
  Θ_{\wtilde{h}_{t,ε}}(\wtilde{F}) = \pr_{\wtilde{F}} \bigl(
  Θ_{\wtilde{h}_{t,ε}}(\wtilde{E})|_{\wtilde{F}} \bigr) + ρ_{t,ε}^* Λ ρ_{t,ε}
  \quad \text{in } \cA^{1,1} \bigl(\wtilde{X} ∖ W ,\, \End( \wtilde{F} )
  \bigr).
  $$
  Multiplying by $i$, taking the trace (as endomorphism) and wedging with
  $ω_{t,ε}^{n-1}$ ---and observing that the operations of taking the metric
  trace and taking the endomorphism trace commute--- we obtain the following
  identity,
  \begin{multline}\label{sff}
    c_1(\wtilde{F},\wtilde{h}_{t,ε}) Λ ω_{t,ε}^{n-1} = \tr_{\End}
    \Bigl(\pr_{\wtilde{F}} \bigl( \tr_{ω_{t,ε}} iΘ_{\wtilde{h}_{t,ε}}
    (\wtilde{E})|_{\wtilde{F}} \bigr) \Bigr)·\frac{ω_{t,ε}^n}{n} \\
    + \underbrace{\tr_{\End}(i ρ_{t,ε}^* Λ ρ_{t,ε} Λ ω_{t,ε}^{n-1}
      )}_{\text{seminegative}} \quad \text{in } \cA^{n,n}(\wtilde{X}∖
    W).
  \end{multline}

  To make use of \eqref{sff}, recall the following identity, which follows for
  instance from \cite[Eq.~(**) on p.~181]{Kob87}\footnote{see also
    \cite[Prop.~3.8, Case 1]{Guenancia}},
  \begin{equation}\label{eq:c1}
    \int_{\wtilde{X}∖ W}c_1(\wtilde{F},\wtilde{h}_{t,ε}) Λ ω_{t,ε}^{n-1}
    = c_1(\wtilde{ℱ}) · \{ω_{t,ε}\}^{n-1}.
  \end{equation}
  Indeed, Inequality~\eqref{prein} follows by integrating \eqref{sff} over
  $\wtilde{X}∖ W$ with the help of \eqref{eq:c1}.
  Claim~\ref{claim:8.10} follows.
\end{proof}

\subsection*{Step 4.  Analysis of the error term}
\approvals{Daniel & yes \\ Henri & yes \\ Stefan & yes}

The aim of the current step is to state the following claim.

\begin{claim}\label{claim:hhg}
  Setup and notation as in \ref{sn:fghfg}.  Then, the error term converges to
  zero as $t,ε → 0$.  More precisely.
  \begin{equation}\label{claim}
    \lim_{(ε,t)→(0,0)} \: \int_{\wtilde{X}∖ W} \tr_{\End} \Bigl(\pr_{\wtilde{F}} \bigl( \tr_{ω_{t,ε}} iΘ_{\wtilde{h}_{t,ε}}(\wtilde{E})|_{\wtilde{F}} \bigr) \Bigr)·ω_{t,ε}^n = 0.
  \end{equation}
\end{claim}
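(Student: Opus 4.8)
The plan is to bound the integrand of the error term pointwise by a fixed multiple of the operator norm of the Ricci endomorphism of $\omega_{t,\varepsilon}$, times the volume form $\omega_{t,\varepsilon}^n$, and then to show that this dominating quantity has vanishing integral. The three inputs are: the splitting $\theta_{i,\varepsilon}=\beta_{i,\varepsilon}+\gamma_{i,\varepsilon}$ of Remark~\ref{rem:hgfjgh}, the fact that the $D_i$ are $\pi$-exceptional, and Claim~\ref{claim:lem2}.

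First I would reduce everything to the Ricci curvature. Since $\wtilde{E}=T\wtilde{X}^{\otimes p}\otimes(T^*\wtilde{X})^{\otimes q}$ is built functorially from $T\wtilde{X}$, its Chern curvature $\Theta_{\wtilde{h}_{t,\varepsilon}}(\wtilde{E})$ is the corresponding universal expression in $\Theta_{\wtilde{h}_{t,\varepsilon}}(T\wtilde{X})$; as $\omega_{t,\varepsilon}$ is Kähler, the mean curvature $\tr_{\omega_{t,\varepsilon}} i\Theta_{\wtilde{h}_{t,\varepsilon}}(T\wtilde{X})$ is the Hermitian endomorphism $\Ric^\sharp_{t,\varepsilon}$ of $T\wtilde{X}$ associated via $\omega_{t,\varepsilon}$ with the Ricci form $\Ric\,\omega_{t,\varepsilon}=\theta_\varepsilon$ of \eqref{eq:cxvxc}. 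Hence $\tr_{\omega_{t,\varepsilon}} i\Theta_{\wtilde{h}_{t,\varepsilon}}(\wtilde{E})$ is a sum of $p+q$ copies of $\pm\Ric^\sharp_{t,\varepsilon}$, each acting on one tensor factor and by the identity on the others, so its operator norm is at most $(p+q)\,\|\Ric^\sharp_{t,\varepsilon}\|_{\mathrm{op}}$. Because $\pr_{\wtilde{F}}$ is an orthogonal projection and $|\tr_{\End}A|\le(\rank\wtilde{E})\,\|A\|_{\mathrm{op}}$ for any Hermitian endomorphism $A$ of $\wtilde{F}$, the integrand of the error term is dominated on $\wtilde{X}\setminus W$ by $C\cdot\|\Ric^\sharp_{t,\varepsilon}\|_{\mathrm{op}}\cdot\omega_{t,\varepsilon}^n$ with $C:=(p+q)\,\rank\wtilde{E}$. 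As $W$ has codimension at least two, it then suffices to show $\int_{\wtilde{X}}\|\Ric^\sharp_{t,\varepsilon}\|_{\mathrm{op}}\,\omega_{t,\varepsilon}^n\to 0$.

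Next I would estimate $\Ric^\sharp_{t,\varepsilon}$ using $\theta_\varepsilon=\sum_i a_i(\beta_{i,\varepsilon}+\gamma_{i,\varepsilon})$. The eigenvalues of $\Ric^\sharp_{t,\varepsilon}$ are those of $\theta_\varepsilon$ relative to $\omega_{t,\varepsilon}$, so, using that $\beta_{i,\varepsilon}\ge 0$ and hence has operator norm at most its $\omega_{t,\varepsilon}$-trace, $\|\Ric^\sharp_{t,\varepsilon}\|_{\mathrm{op}}\le\sum_i|a_i|\bigl(\tr_{\omega_{t,\varepsilon}}\beta_{i,\varepsilon}+\|\gamma_{i,\varepsilon}\|_{\mathrm{op}}\bigr)$. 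For the $\gamma$-terms I would fix $C_0>0$ with $-C_0\omega_0\le\Theta_i\le C_0\omega_0$ for all $i$; since $0\le\tfrac{\varepsilon^2}{|s_i|^2_i+\varepsilon^2}\le1$ this gives $\|\gamma_{i,\varepsilon}\|_{\mathrm{op}}\le C_0\tfrac{\varepsilon^2}{|s_i|^2_i+\varepsilon^2}\tr_{\omega_{t,\varepsilon}}\omega_0$, whence $\int_{\wtilde{X}}\|\gamma_{i,\varepsilon}\|_{\mathrm{op}}\,\omega_{t,\varepsilon}^n\le nC_0\int_{\wtilde{X}}\tfrac{\varepsilon^2}{|s_i|^2_i+\varepsilon^2}\,\omega_0\wedge\omega_{t,\varepsilon}^{n-1}$, which tends to $0$ as $\varepsilon\to 0$ for fixed $t$ by Claim~\ref{claim:lem2}. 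For the $\beta$-terms I would write $\beta_{i,\varepsilon}=\theta_{i,\varepsilon}-\gamma_{i,\varepsilon}$, where $\theta_{i,\varepsilon}$ is a smooth closed form representing $c_1$ of $D_i$ and, by \eqref{eq:class_of_omega}, $\{\omega_{t,\varepsilon}\}=c_1(\wtilde{H})+t\{\omega_0\}$ is independent of $\varepsilon$; then
$$\int_{\wtilde{X}}\tr_{\omega_{t,\varepsilon}}\beta_{i,\varepsilon}\cdot\omega_{t,\varepsilon}^n=n\int_{\wtilde{X}}\beta_{i,\varepsilon}\wedge\omega_{t,\varepsilon}^{n-1}=n\,D_i\cdot\{\omega_{t,\varepsilon}\}^{n-1}-n\int_{\wtilde{X}}\gamma_{i,\varepsilon}\wedge\omega_{t,\varepsilon}^{n-1},$$
the last integral going to $0$ as $\varepsilon\to 0$ exactly as above, while $D_i\cdot\{\omega_{t,\varepsilon}\}^{n-1}=D_i\cdot(\pi^*[H]+t[\omega_0])^{n-1}=O(t)$ since $D_i$ is $\pi$-exceptional, so $D_i\cdot(\pi^*[H])^{n-1}=(\pi_*D_i)\cdot[H]^{n-1}=0$. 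Taking $\varepsilon\to 0$ first and then $t\to 0$ makes every contribution vanish, which proves Claim~\ref{claim:hhg}; this iterated order is the one in which Claim~\ref{claim:lem2} is stated and the one that enters the proof of Theorem~\ref{generalized:holonomy}.

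The hard part is the $\beta_{i,\varepsilon}$-term: this form is \emph{not} uniformly bounded as $\varepsilon\to 0$ — it concentrates along $D_i$ — so a naive pointwise estimate is hopeless. The point is that semipositivity of $\beta_{i,\varepsilon}$ converts its operator norm into its $\omega_{t,\varepsilon}$-trace, whose total mass is cohomological, controlled by the intersection number $D_i\cdot\{\omega_{t,\varepsilon}\}^{n-1}=O(t)$ because $D_i$ is $\pi$-exceptional, modulo the $\gamma$-error that Claim~\ref{claim:lem2} is designed to kill.
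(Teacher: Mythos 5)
Your proposal is correct and follows essentially the same route as the paper: both reduce the error term to the Ricci endomorphism, split $\theta_\varepsilon$ into the semipositive $\beta_{i,\varepsilon}$--part and the locally dominated $\gamma_{i,\varepsilon}$--part (Remark~\ref{rem:hgfjgh}), kill the $\gamma$--contribution via Claim~\ref{claim:lem2}, and convert the $\beta$--contribution into the intersection number $D_i\cdot\{\omega_{t,\varepsilon}\}^{n-1}$ which vanishes in the limit $t\to 0$ because $D_i$ is $\pi$-exceptional, with the same iterated $\varepsilon\to 0$ then $t\to 0$ order. The only cosmetic difference is bookkeeping: you use $\operatorname{tr}_{\operatorname{End}}$/operator-norm inequalities where the paper writes explicit Hermitian-endomorphism inequalities through the $\sharp_{t,\varepsilon}$ and $\boxtimes$ calculus of Claims~\ref{claim:hghgf}--\ref{claim:dgds2}; and your observation that $D_i\cdot\{\omega_{t,\varepsilon}\}^{n-1}=O(t)$ follows from $\pi_*D_i=0$ alone is in fact slightly more careful than the paper's statement in Equation~\eqref{eq:cohomologyterm}, since one only needs the $t^0$-coefficient to vanish.
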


Before proving Claim~\ref{claim:hhg} in Step~7 below, we need to introduce
notation and establish a number of auxiliary results.

\begin{notation}\label{not:boxp}
  If $V$ is a complex vector space of dimension $n$ and $f∈ \End(V)$, we denote
  by $f^{\boxtimes p}$ the endomorphism of $V^{⊗ p}$ defined on pure tensors by
  $$
  f^{\boxtimes p}(v_1⊗⋯ ⊗ v_p) := \sum_{i=1}^p v_1 ⊗ ⋯ ⊗v_{i-1} ⊗ f(v_i) ⊗
  v_{i+1} ⊗ ⋯ ⊗ v_p
  $$
\end{notation}

\begin{obs}\label{obs:gh}
  In the setting of Notation~\ref{not:boxp}, one has
  $\tr \bigl(f^{\boxtimes p} \bigr) = p·n^{p-1}·\tr(f)$.  If $V$ has an
  Hermitian structure and if $f$ is Hermitian semipositive, then so is
  $f^{\boxtimes p}$.
\end{obs}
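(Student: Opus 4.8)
The plan is to unwind the definition of $f^{\boxtimes p}$ from Notation~\ref{not:boxp}: it is the sum $f^{\boxtimes p} = \sum_{i=1}^p \iota_i(f)$ of the operators $\iota_i(f) := \Id_V^{\otimes(i-1)} \otimes f \otimes \Id_V^{\otimes(p-i)}$ on $V^{\otimes p}$, and then to derive both assertions from elementary facts about tensor products of endomorphisms.

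For the trace identity, I would invoke additivity of the trace together with the multiplicativity $\tr(A \otimes B) = \tr(A)\cdot\tr(B)$. Since $\tr(\Id_V) = n = \dim V$, this gives $\tr\bigl(\iota_i(f)\bigr) = n^{p-1}\cdot\tr(f)$ for every $i$, and summing over the $p$ indices yields $\tr\bigl(f^{\boxtimes p}\bigr) = p\cdot n^{p-1}\cdot\tr(f)$.

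For the positivity statement, I would fix an orthonormal eigenbasis $e_1, \dots, e_n$ of the Hermitian space $V$ with eigenvalues $\lambda_1, \dots, \lambda_n \geq 0$, which is possible since $f$ is Hermitian semipositive. Equipping $V^{\otimes p}$ with the induced Hermitian structure, the pure tensors $e_{i_1}\otimes\cdots\otimes e_{i_p}$ form an orthonormal basis, and by the defining formula each such tensor is an eigenvector of $f^{\boxtimes p}$ with eigenvalue $\lambda_{i_1} + \cdots + \lambda_{i_p} \geq 0$. Thus $f^{\boxtimes p}$ is diagonal in an orthonormal basis with real, nonnegative eigenvalues, i.e.\ Hermitian semipositive. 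Equivalently, one may observe that each $\iota_i(f)$ is a tensor product of self-adjoint positive semidefinite operators, hence itself self-adjoint and positive semidefinite, and that a finite sum of such operators retains both properties.

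As for difficulty, there is really no obstacle here: everything is routine multilinear algebra. The only point deserving a line of care is the compatibility between the Hermitian structure placed on $V^{\otimes p}$ and the one on $V$, which is what makes an orthonormal eigenbasis of $V$ propagate to an orthonormal eigenbasis of $V^{\otimes p}$ diagonalising $f^{\boxtimes p}$; once this is noted, both claims follow immediately.
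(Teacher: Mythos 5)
Your proof is correct, and indeed the paper does not supply an argument for this observation at all: it is stated without proof as a piece of routine multilinear algebra, which is exactly what your write-up makes precise. Both your verifications — the trace identity via $\tr(A\otimes B)=\tr(A)\,\tr(B)$ applied to each summand $\Id^{\otimes(i-1)}\otimes f\otimes\Id^{\otimes(p-i)}$, and the positivity via an orthonormal eigenbasis of $V$ inducing one of $V^{\otimes p}$ with eigenvalues $\lambda_{i_1}+\cdots+\lambda_{i_p}\geq 0$ — are the natural arguments one would insert to fill the gap the authors left implicit.
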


\subsection*{Step 5: Analysis of $\tr_{ω_{t,ε}} Θ_{\wtilde{h}_{t,ε}}(\wtilde{E})$}
\approvals{Daniel & yes \\ Henri & yes \\ Stefan & yes}

One fundamental object that appears in the error term is
$\tr_{ω_{t,ε}} iΘ_{\wtilde{h}_{t,ε}}(\wtilde{E})$, which is a Hermitian
endomorphism of the bundle $\wtilde{E}$.  The following claim relates it to the
Ricci curvature of $ω_{t,ε}$.  Its formulation uses the operator
``$\sharp_{t,ε}$''.  We briefly recall the definition.

\begin{consnot}\label{consnot:sharp}
  Using the Kähler metrics $ω_{t,ε}$, one constructs from any $(0,1)$-form $η$ a
  $(1,0)$-vector field $\sharp_{t,ε} η$, requiring that the relation
  $$
  η(\bar{ζ}):=g_{t,ε}(i\sharp_{t,ε}η,ζ)
  $$
  holds for any vector field $ζ$ of type $(1,0)$, where $g_{t,ε}$ is the
  Hermitian metric on $T^{1,0}\wtilde{X}$ associated with the Kähler form
  $ω_{t,ε}$.  Next, one extends the operator $\sharp_{t,ε}$ to vector-valued
  forms.  In particular, if $α ∈ \cA^{1,1}(\wtilde{X})$, one can see $α$ as a
  $(0,1)$-form with values in $(T^{1,0}\wtilde{X})^*$ and define
  $\sharp_{t,ε} α ∈ \End(T\wtilde{X})$ as follows: in local coordinates, let us
  write $ω_{t,ε} = i\sum_{j,k} g_{j\bar k}·dz_j Λ d\bar z_k$.  Let
  $(g^{j\bar k})$ be the inverse of $(g_{j\bar k})$ let
  $α = i\sum_{j,k} α_{j\bar k}·dz_j Λ d\bar z_{ k}$ be a $(1,1)$-form.  Then,
  $$
  \sharp_{t,ε} α = \sum_{j,k,ℓ} α_{j\bar k} g^{ℓ\bar k}·dz_j
  ⊗\frac{∂}{∂ z_{ℓ}}.
  $$
  It is easy to check the formula
  \begin{equation}\label{eq:trace}
    \tr_{\End} \sharp_{t,ε} α = \tr_{ω_{t,ε}} α.
  \end{equation}
  Finally, the endomorphism $\sharp_{t,ε} α$ is Hermitian (resp.\ Hermitian
  semipositive) with respect to $g_{t,ε}$ if $α$ is real (resp.\ semipositive).
\end{consnot}

\begin{claim}\label{claim:hghgf}
  Setup and notation as in \ref{sn:fghfg}.  Then,
  \begin{equation}\label{eq:E}
    \tr_{ω_{t,ε}} iΘ_{\wtilde{h}_{t,ε}}(\wtilde{E}) =
    (\sharp_{t,ε} θ_ε)^{\boxtimes p}⊗\Id_{T^*\wtilde{X}^{⊗ q}} -
    \Id_{T\wtilde{X}^{⊗ p}} ⊗ \overline{(\sharp_{t,ε} θ_ε)}^{\boxtimes q}
    \quad \text{in } \End( \wtilde{E} ).
  \end{equation}
\end{claim}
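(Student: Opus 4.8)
The plan is to derive \eqref{eq:E} from three ingredients: the functoriality of Chern curvature under tensor products and duals, the Kähler identity expressing the mean curvature of the tangent bundle through the Ricci endomorphism, and the Monge–Ampère equation \eqref{eq:cxvxc}.

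First I would record the standard fact that, for holomorphic Hermitian vector bundles, the Chern curvature of a tensor product is the ``derivation sum'' of the curvatures of the factors, while the Chern curvature of a dual bundle equipped with the dual metric is minus the transpose of the original one. Applied slot by slot to $\wtilde{E} = T\wtilde{X}^{⊗ p}⊗(T^*\wtilde{X})^{⊗ q}$ with the metric $\wtilde{h}_{t,ε}$ induced by $ω_{t,ε}$, and writing $(·)^{\boxtimes p}$ for the evident extension of Notation~\ref{not:boxp} to $\End$-valued $(1,1)$-forms, this gives
$$
iΘ_{\wtilde{h}_{t,ε}}(\wtilde{E}) = \bigl(iΘ_{\wtilde{h}_{t,ε}}(T\wtilde{X})\bigr)^{\boxtimes p}⊗\Id_{T^*\wtilde{X}^{⊗ q}} \;-\; \Id_{T\wtilde{X}^{⊗ p}}⊗\bigl({}^t\,iΘ_{\wtilde{h}_{t,ε}}(T\wtilde{X})\bigr)^{\boxtimes q}.
$$
Since $\tr_{ω_{t,ε}}$ is a pointwise contraction against the parallel Kähler metric, it is $ℂ$-linear, acts slot-wise on the derivation sum above, and commutes with transposition; applying it therefore yields
$$
\tr_{ω_{t,ε}} iΘ_{\wtilde{h}_{t,ε}}(\wtilde{E}) = \bigl(\tr_{ω_{t,ε}} iΘ_{\wtilde{h}_{t,ε}}(T\wtilde{X})\bigr)^{\boxtimes p}⊗\Id_{T^*\wtilde{X}^{⊗ q}} - \Id_{T\wtilde{X}^{⊗ p}}⊗\,{}^t\bigl(\tr_{ω_{t,ε}} iΘ_{\wtilde{h}_{t,ε}}(T\wtilde{X})\bigr)^{\boxtimes q}.
$$

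It then remains to identify the single-bundle term $\tr_{ω_{t,ε}} iΘ_{\wtilde{h}_{t,ε}}(T\wtilde{X})$. A short computation in holomorphic normal coordinates, using the Kähler symmetry $R_{i\bar{j}k\bar{l}} = R_{k\bar{j}i\bar{l}}$ of the curvature tensor, shows that the mean curvature of the tangent bundle of a Kähler manifold equals the Ricci endomorphism, i.e.\ $\tr_{ω_{t,ε}} iΘ_{\wtilde{h}_{t,ε}}(T\wtilde{X}) = \sharp_{t,ε}\,\Ric ω_{t,ε}$, with $\sharp_{t,ε}$ the musical operator from \ref{consnot:sharp} (cf.\ \eqref{eq:trace}). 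By the defining equation \eqref{eq:cxvxc} we have $\Ric ω_{t,ε} = θ_ε$, hence $\tr_{ω_{t,ε}} iΘ_{\wtilde{h}_{t,ε}}(T\wtilde{X}) = \sharp_{t,ε} θ_ε$. As $θ_ε$ is a real form, this endomorphism is Hermitian with respect to $g_{t,ε}$ (last sentence of \ref{consnot:sharp}), so its transpose coincides with its complex conjugate, ${}^t(\sharp_{t,ε} θ_ε) = \overline{\sharp_{t,ε} θ_ε}$. Plugging this into the previous display produces precisely \eqref{eq:E}.

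The argument is essentially bookkeeping; the points that need care are the sign and the passage from transpose to complex conjugate on the $(T^*\wtilde{X})^{⊗ q}$-factors, and the verification that $\tr_{ω_{t,ε}}$ distributes over the $\boxtimes$-construction — which is immediate once $Θ_{\wtilde{h}_{t,ε}}(\wtilde{E})$ is written as a sum of $\End$-valued forms each supported in a single tensor slot. I would include the one-line coordinate check of the identity $\tr_{ω_{t,ε}} iΘ(T\wtilde{X}) = \sharp_{t,ε}\Ric ω_{t,ε}$ for completeness, as it is the only non-formal input.
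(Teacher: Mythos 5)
Your proposal is correct and follows essentially the same route as the paper: both arguments reduce to the Kähler identity identifying the mean curvature of $T\wtilde{X}$ with the Ricci endomorphism (the paper delegates the $n=p$- and $q$-factor computations to Guenancia's p.~524, while you spell out the tensor-product/dual functoriality and the normal-coordinates check directly), and both then substitute the Monge--Ampère equation $\Ric ω_{t,ε} = θ_ε$ and use Hermiticity of $\sharp_{t,ε}θ_ε$ to pass from transpose to conjugate on the $(T^*\wtilde{X})^{⊗ q}$-factors. The bookkeeping details you flag --- signs, the transpose/conjugate conversion, and the distribution of $\tr_{ω_{t,ε}}$ over the derivation sum --- are indeed exactly the points to verify, and you handle them correctly.
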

\begin{proof}[Proof of Claim~\ref{claim:hghgf}]
  Following the standard computations of \cite[p.~524]{Guenancia}, one obtains
  the following identities.
  \begin{align*}
    n·iΘ_{\wtilde{h}_{t,ε}}\bigl( T\wtilde{X}^{⊗ p} \bigr) Λ ω_{t,ε}^{n-1} & = \hphantom{-}(\sharp_{t,ε} \Ric ω_{t,ε})^{\boxtimes p}·ω_{t,ε}^n && \text{in } \cA^{n,n}\bigl(\wtilde{X}, \End( T\wtilde{X}^{⊗ p} )\bigr) \\
    n·iΘ_{\wtilde{h}_{t,ε}}\bigl( (T^*\wtilde{X})^{⊗ q} \bigr) Λ ω_{t,ε}^{n-1} & = -\overline{(\sharp_{t,ε} \Ric ω_{t,ε})}^{\boxtimes q}·ω_{t,ε}^n && \text{in } \cA^{n,n}\bigl(\wtilde{X}, \End( T^*\wtilde{X}^{⊗ q} )\bigr).
  \end{align*}
  In summary, we deduce the following identity in in
  $\cA^{n,n}\bigl(\wtilde{X}, \End( \wtilde{E} )\bigr)$,
  \begin{multline*}
    n·iΘ_{\wtilde{h}_{t,ε}}\bigl(\wtilde{E}\bigr) Λ ω_{t,ε}^{n-1} = \\
    \left((\sharp_{t,ε} \Ric ω_{t,ε})^{\boxtimes p}⊗\Id_{T^*\wtilde{X}^{⊗ q}} -
      \Id_{T\wtilde{X}^{⊗ p}}⊗\overline{(\sharp_{t,ε} \Ric ω_{t,ε})}^{\boxtimes
        q}\right)·ω_{t,ε}^n.
  \end{multline*}
  Equation~\eqref{eq:cxvxc} and the definition $\tr_{ω_{t,ε}}$, cf.\
  Explanation~\ref{exp:dfgds}, thus imply \eqref{eq:E}.  This finishes the proof
  of Claim~\ref{claim:hghgf}.
\end{proof}

\subsection*{Step 6: Convergence of integrals}
\approvals{Daniel & yes \\ Henri & yes \\ Stefan & yes}

Claim~\ref{claim:hghgf} reduces the study of the error term to an analysis of
the forms $θ_ε$.  We have seen in Step~2 that $θ_ε$ decomposes as
$θ_ε = \sum_i a_i θ_{i,ε} = \sum_i a_i (β_{i,ε} + γ_{i,ε})$.  The present step
analyses the contributions to the error term that come from the $γ_{i,ε}$ and
$β_{i,ε}$, respectively.

\begin{claim}\label{claim:dgds}
  Setup and notation as in \ref{sn:fghfg}.  Given any positive number $t$ and
  any index $i$, the following integrals converge to zero,
  \begin{align}
    \label{eq:k1} 0 & = \lim_{ε → 0} \: \int_{\wtilde{X} ∖ W} \tr_{\End} \Bigl( \pr_{\wtilde{F}} \Bigl((\sharp_{t,ε} γ_{i,ε})^{\boxtimes p}⊗ \Id_{T^*{\wtilde{X}}^{⊗ q}}\bigl|_{\wtilde{F}} \Bigr) \Bigr) · ω_{t,ε}^n \\
    \label{eq:k2} 0 & = \lim_{ε → 0} \: \int_{\wtilde{X} ∖ W} \tr_{\End} \Bigl( \pr_{\wtilde{F}} \Bigl(\Id_{T_{\wtilde{X}}^{⊗ p}}⊗ \overline{(\sharp_{t,ε} γ_{i,ε})}^{\boxtimes q}\bigl|_{\wtilde{F}} \Bigr) \Bigr) · ω_{t,ε}^n
  \end{align}
\end{claim}
\begin{proof}[Proof of Claim~\ref{claim:dgds}]
  Using the special form of $γ_{i,ε}$ found in Remark~\ref{rem:hgfjgh}, there
  exists a constant $C ∈ ℝ^+$ such that
  \begin{equation}\label{eq:gamma}
    \pm γ_{i,ε} ≤ \frac{C·ε²}{|s_i|²_i+ε²}·ω_0 \qquad \text{in $𝒜^{1,1}_{ℝ}(\wtilde{X})$, for all $ε ∈ ℝ^+$}
  \end{equation}
  The operations $\sharp_{t,ε} •$, $\overline{•}$, ${•}^{\boxtimes p}$ and
  $• ⊗\Id$ preserve (semi)positivity, cf.\ Observation~\ref{obs:gh} and
  Notation~\ref{consnot:sharp}.  The following inequalities of Hermitian
  endomorphisms of $(\wtilde{E}, \wtilde{h}_{t,ε})$ will thus again hold for all
  $ε ∈ ℝ^+$,
  \begin{align}
    \label{eq:dfgsd1} \pm (\sharp_{t,ε} γ_{i,ε})^{\boxtimes p} ⊗\Id_{T^*\wtilde{X}^{⊗ q}} & ≤ \frac{C·ε²}{|s_i|²_i+ε²}·(\sharp_{t,ε} ω_0)^{\boxtimes p}⊗\Id_{T^*\wtilde{X}^{⊗ q}} \\
    \label{eq:dfgsd2} \pm \Id_{T\wtilde{X}^{⊗ p}} ⊗ \overline{(\sharp_{t,ε} γ_{i,ε})}^{\boxtimes q} & ≤ \frac{C·ε²}{|s_i|²_i+ε²}· \Id_{T\wtilde{X}^{⊗ p}}⊗ \overline{(\sharp_{t,ε} ω_0)}^{\boxtimes q}
  \end{align}
  As $\sharp_{t,ε} ω_0$ is a positive endomorphism of $T\wtilde{X}$ whose trace
  is $\tr_{ω_{t,ε}}ω_0$, cf.~Eq.~\eqref{eq:trace}, an elementary
  computation\footnote{Use Observation~\ref{obs:gh} to compute the left hand
    side.}, shows that
  \begin{equation}\label{eq:piuopu}
    (\sharp_{t,ε} ω_0)^{\boxtimes p} ≤ p·n^{p-1}·\tr_{ω_{t,ε}}(ω_0)
    \,\Id_{T\wtilde{X}^{⊗p}} \qquad\text{in
      $\End(T\wtilde{X}^{⊗ p}, \widetilde h_{t,ε}^{⊗ p})$.}
  \end{equation}
  Consequently, there exists $C' ∈ ℝ^+$ such that the following inequalities of
  Hermitian endomorphisms of $(\wtilde{F}, \wtilde{h}_{t,ε})$ will hold for all
  $ε ∈ ℝ^+$,
  \begin{align*}
    & \pm \pr_{\wtilde{F}} \Bigl( (\sharp_{t,ε} γ_{i,ε})^{ \boxtimes p}⊗ \Id_{T^*\wtilde{X}^{⊗ q}} \bigl|_{\wtilde{F}} \Bigr) \\
    & \quad ≤ \frac{C·ε²}{|s_i|²_i+ε²} · \pr_{\wtilde{F}} \Bigl( (\sharp_{t,ε} ω_0)^{ \boxtimes p}⊗ \Id_{T^*\wtilde{X}^{⊗ q}} \bigl|_{\wtilde{F}} \Bigr) && \text{by \eqref{eq:dfgsd1}}\\
    & \quad ≤ \frac{C'·ε²}{|s_i|²_i+ε²} \, · \tr_{ω_{t,ε}}(ω_0) · \Id_{\wtilde{F}}.  && \text{by \eqref{eq:piuopu}}
  \end{align*}
  Recalling the definition of $\tr_{ω_{t,ε}}(ω_0)$ from
  Explanation~\ref{exp:dfgds}, we find $C'' ∈ ℝ^+$ such that the following
  inequality of real $(n,n)$-forms holds,
  \begin{equation}\label{eq:tr}
    \pm \tr_{\End} \Bigl( \pr_{\wtilde{F}} \Bigl( (\sharp_{t,ε} γ_{i,ε})^{\boxtimes p}⊗ \Id_{T^*{\wtilde{X}}^{⊗ q}} \bigl|_{\wtilde{F}} \Bigr) \Bigr)·ω_{t,ε}^n ≤ \frac{C''·ε²}{|s_i|²+ε²}· ω_0 Λ ω_{t,ε}^{n-1}.
  \end{equation}
  From Claim~\ref{claim:lem2} and Lebesgue's dominated convergence theorem, one
  deduces the convergence of \eqref{eq:k1}.  Convergence of \eqref{eq:k2}
  follows in a similar fashion, using \eqref{eq:dfgsd2} in place of
  \eqref{eq:dfgsd1}.  Claim~\ref{claim:dgds} follows.
\end{proof}

\begin{claim}\label{claim:dgds2}
  Setup and notation as in \ref{sn:fghfg}.  Given any index $i$, the following
  integrals converge to zero,
  \begin{align}
    \label{eq:k1x} 0 & = \lim_{(ε,t) → (0,0)} \int_{\wtilde{X} ∖ W} \tr_{\End} \Bigl( \pr_{\wtilde{F}} \Bigl((\sharp_{t,ε} β_{i,ε})^{\boxtimes p}⊗ \Id_{T^*{\wtilde{X}}^{⊗ q}}\bigl|_{\wtilde{F}} \Bigr) \Bigr) · ω_{t,ε}^n \\
    \label{eq:k2x} 0 & = \lim_{(ε,t) → (0,0)} \int_{\wtilde{X} ∖ W} \tr_{\End} \Bigl( \pr_{\wtilde{F}} \Bigl(\Id_{T{\wtilde{X}}^{⊗ p}}⊗ \overline{(\sharp_{t,ε} β_{i,ε})}^{\boxtimes q}\bigl|_{\wtilde{F}} \Bigr) \Bigr) · ω_{t,ε}^n
  \end{align}
\end{claim}
\begin{proof}[Proof of Claim~\ref{claim:dgds2}]
  Using is special form, we see that $β_{i,ε}$ is a semipositive, real
  $(1,1)$-form.  Using Observation~\ref{obs:gh}, and using again that the
  operations $\sharp_{t,ε} •$, $\overline{•}$, ${•}^{\boxtimes p}$ and $• ⊗\Id$
  preserve semipositivity, we hence obtain the following inequality of real
  forms in $\cA^{n,n}(\End(\wtilde{E}), \wtilde{h}_{t,ε})$,
  \begin{equation}\label{eq:zuiz}
    \bigl( (\sharp_{t,ε} β_{i,ε})^{\boxtimes p} ⊗\Id_{T^*{\wtilde{X}}^{⊗ q}}
    \bigr)·ω_{t,ε}^n ≤ pn^p·\Id_{\wtilde{E}}·β_{i,ε} Λ ω_{t,ε}^{n-1}.
  \end{equation}
  Therefore, there exists a constant $C ∈ ℝ^+$ such that the following
  inequalities hold for all values of $t$ , $ε$,
  \begin{align*}
    0 & ≤ \int_{\wtilde{X} ∖ W} \tr_{\End} \Bigl( \pr_{\wtilde{F}} \Bigl( (\sharp_{t,ε} β_{i,ε})^{\boxtimes p}⊗ \Id_{T^*{\wtilde{X}}^{⊗ q}} \bigl|_{\wtilde{F}} \Bigr) \Bigr)·ω_{t,ε}^n && \text{semipositivity of } β_{i,ε}\\
      & ≤ C·\int_{\wtilde{X}∖ W} β_{i,ε} Λ ω_{t,ε}^{n-1} && \text{Inequality~\eqref{eq:zuiz}} \\
      & = C·\left(\int_{\wtilde{X}} (β_{i,ε}+γ_{i,ε}) Λ ω_{t,ε}^{n-1} -\int_{{\wtilde{X}}} γ_{i,ε} Λ ω_{t,ε}^{n-1} \right)\\
      & = C· \Biggl( \underbrace{\vphantom{\int_{{\wtilde{X}}}} \{θ_{i,ε}\}· \{ω_{t,ε}\}^{n-1}}_{\underset{t→0}{\lim}= 0 \text{, see Eqn.~\eqref{eq:cohomologyterm}}}- \underbrace{\int_{{\wtilde{X}}} γ_{t,ε} Λ ω^{n-1}}_{\underset{ε→0}{\lim} = 0 \text{, by Claim~\ref{claim:dgds}}} \Biggr)
  \end{align*}
  As for the first term, since $\wtilde{H} = π^*H$ is orthogonal to $D_i$,
  Equations~\eqref{eq:xbfgg} and \eqref{eq:class_of_omega} imply
  \begin{equation}\label{eq:cohomologyterm}
    \{θ_{i,ε}\}· \{ω_{t,ε}\}^{n-1} = t^{n-1} \bigl(D_i·\{ω_0\}^{n-1}\bigr), \quad\text{for all $ε$, $t$ and $i$.}
  \end{equation}
  Consequently, the term converges to $0$ when $t$ goes to $0$.
  Equation~\eqref{eq:k1x} follows.  Equation~\eqref{eq:k2x} follows in a similar
  fashion.  This ends the proof of Claim~\ref{claim:dgds2}.
\end{proof}

\subsection*{Step~7.  Proof of Claim~\ref{claim:hhg}}
\approvals{Daniel & yes \\ Henri & yes \\ Stefan & yes}

Claim~\ref{claim:hhg} now follows from Claim~\ref{claim:hghgf}, and the
convergence results of Claims~\ref{claim:dgds} and \ref{claim:dgds2}.

\subsection*{Step~8.  Proof of Theorem~\ref{generalized:holonomy}, Item~\ref{il:l1}}
\approvals{Daniel & yes \\ Henri & yes \\ Stefan & yes}

We will first prove semistability of $ℰ$ with respect to $H$.  Since the ample
divisor $H ∈ \Div(X)$ was arbitrarily chosen when we fixed the standard
Setting~\ref{setting:holonomy}, this will in fact prove semistability of $ℰ$
with respect to any ample class.  To this end, let $ℱ ⊆ ℰ$ be any reflexive
sheaf.  Using Setup and Notation~\ref{sn:fghfg}, we need to show that the number
\begin{equation}\label{eq:wervx}
  c_1(ℱ)·[H]^{n-1} \: = \: c_1(\wtilde{ℱ})·[\wtilde{H}]^{n-1} \: \overset{\eqref{eq:class_of_omega}}{=} \: \lim_{(t,ε) → (0,0)}
  c_1(\wtilde{ℱ})·\{ω_{t,c}\}^{n-1}
\end{equation}
is seminegative.  But seminegativity of the right hand side follows immediately
from Claims~\ref{claim:8.10} and \ref{claim:hhg}.  Semistability follows.

Hence, arguing by induction, to prove the existence of a parallel decomposition
$ℰ = ℰ_1 ⊕ ⋯ ⊕ ℰ_k$ whose summands are stable of slope zero with respect to any
polarisation, it suffices to show the following claim.

\begin{claim}\label{claim:cbht}
  Any saturated subsheaf $ℱ ⊆ ℰ$ of slope $μ_H(ℱ) = 0$ is a direct summand and
  the associated subbundle $F := ℱ|_{X_{\reg}}$ is parallel with respect to the
  connection on $ℰ_{|X_{\reg}}$ induced by the Chern connection of
  $(TX_{\reg},h_H)$.
\end{claim}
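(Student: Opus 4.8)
The plan is to exploit the Chern--Weil identity \eqref{sff}, integrated over $\wtilde X\setminus W$ by means of \eqref{eq:c1}, together with the two vanishing results already established (Claims~\ref{claim:8.10} and~\ref{claim:hhg}), in order to force the second fundamental form attached to $\sF$ to vanish in the limit $(\varepsilon,t)\to(0,0)$; I would then upgrade the resulting holomorphic splitting over a big open subset of $X_{\reg}$ to a reflexive splitting over all of $X$. Concretely, I adopt Setup and Notation~\ref{sn:fghfg} for the given saturated $\sF\subseteq\sE$ with $\mu_H(\sF)=0$: on the resolution $\pi\colon\wtilde X\to X$ this produces the induced saturated subsheaf $\wtilde{\sF}\subseteq\wtilde{\sE}$, its singularity set $W$ of codimension at least two, the subbundle $\wtilde F\subseteq\wtilde E|_{\wtilde X\setminus W}$, and its second fundamental form $\rho_{t,\varepsilon}\in\cA^{1,0}(\wtilde X\setminus W,\Hom(\wtilde F,\wtilde F^{\perp}))$. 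Integrating \eqref{sff} over $\wtilde X\setminus W$ and invoking \eqref{eq:c1} expresses $c_1(\wtilde{\sF})\cdot\{\omega_{t,\varepsilon}\}^{n-1}$ as $\frac1n$ times the error term of Claim~\ref{claim:8.10} plus the quantity $\int_{\wtilde X\setminus W}\tr_{\End}\bigl(i\rho_{t,\varepsilon}^*\wedge\rho_{t,\varepsilon}\wedge\omega_{t,\varepsilon}^{n-1}\bigr)$, whose integrand is everywhere seminegative. Letting $(\varepsilon,t)\to(0,0)$, the left-hand side tends to $c_1(\wtilde{\sF})\cdot[\wtilde H]^{n-1}=c_1(\sF)\cdot[H]^{n-1}=0$ by \eqref{eq:wervx} and $\mu_H(\sF)=0$, while the error term tends to $0$ by Claim~\ref{claim:hhg}; hence $\int_{\wtilde X\setminus W}\tr_{\End}\bigl(i\rho_{t,\varepsilon}^*\wedge\rho_{t,\varepsilon}\wedge\omega_{t,\varepsilon}^{n-1}\bigr)\to 0$.

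Next I would pass to the limiting metric. Since $\pi$ restricts to an isomorphism over $X_{\reg}$, since $\supp D$ is contained in the $\pi$-exceptional locus, and since $\omega_{t,\varepsilon}\to\omega_{\wtilde H}=\pi^*\omega_H$ in $\cC^{\infty}_{\loc}$ on $\wtilde X\setminus\supp D$, the induced Hermitian metrics on $\wtilde E$ and the forms $\rho_{t,\varepsilon}$ converge in $\cC^{\infty}_{\loc}$ on $\wtilde X\setminus(\supp D\cup W)$, respectively to the metric induced by $\omega_{\wtilde H}$ and to the second fundamental form $\rho_0$ of $\wtilde F\subseteq\wtilde E$ for that metric. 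For any compact $K\Subset\wtilde X\setminus(\supp D\cup W)$, seminegativity of the integrand gives $\int_{\wtilde X\setminus W}\tr_{\End}\bigl(i\rho_{t,\varepsilon}^*\wedge\rho_{t,\varepsilon}\wedge\omega_{t,\varepsilon}^{n-1}\bigr)\le\int_{K}\tr_{\End}\bigl(i\rho_{t,\varepsilon}^*\wedge\rho_{t,\varepsilon}\wedge\omega_{t,\varepsilon}^{n-1}\bigr)\le 0$, and letting $(\varepsilon,t)\to(0,0)$ forces $\int_K\tr_{\End}\bigl(i\rho_0^*\wedge\rho_0\wedge\omega_{\wtilde H}^{n-1}\bigr)=0$; being seminegative, this integrand vanishes on $K$, hence $\rho_0\equiv 0$ on $\wtilde X\setminus(\supp D\cup W)$. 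Through the identification $\pi^{-1}(X_{\reg})\cong X_{\reg}$ this says exactly that the $(1,0)$-second fundamental form of the subbundle $\sF|_U\subseteq E|_U$ vanishes, where $U:=X_{\reg}\setminus W'$ and $W'\subseteq X_{\reg}$ is the codimension-$\ge 2$ locus where $\sF\subseteq\sE$ fails to be a subbundle; note $U$ is a big open subset of $X$ because $X$ is normal.

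Finally I would globalize. Vanishing of the second fundamental form makes $\sF|_U$ parallel for the Chern connection of $(TX_{\reg},h_H)|_U$, so by \cite[Chap.~I, Prop.~4.18]{Kob87} the orthogonal complement $(\sF|_U)^{\perp}$ is again a parallel, hence holomorphic, subbundle and $E|_U=(\sF|_U)\oplus(\sF|_U)^{\perp}$ as holomorphic bundles. Applying $j_*$ along $j\colon U\hookrightarrow X$ to this splitting, and using that $\sE$ and $\sF$ are reflexive while $U$ is big in $X$, we get a direct sum decomposition $\sE=\sF\oplus\sG$ of reflexive $\sO_X$-modules with $\sG|_U$ the sheaf of sections of $(\sF|_U)^{\perp}$. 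In particular $\sF$ is a direct summand of $\sE$, so $F=\sF|_{X_{\reg}}$ is a genuine holomorphic subbundle of $E$; and since the holomorphic projection $E\to F$ with kernel $\sG|_{X_{\reg}}$ agrees on the dense set $U$ with the $h_H$-orthogonal projection onto $F$, the two coincide everywhere, so the orthogonal projection onto $F$ is holomorphic, i.e.\ $F$ is parallel on all of $X_{\reg}$. This proves Claim~\ref{claim:cbht}; applying it inductively to the (again reflexive, $H$-semistable, slope-zero) summands, with the rank strictly decreasing at each step, then produces the parallel decomposition $\sE=\sE_1\oplus\dots\oplus\sE_k$ asserted in Item~\ref{il:l1}.

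The step I expect to be the main obstacle is the passage from the vanishing of the limit of the $L^2$-type integrals over $\wtilde X\setminus W$, with seminegative integrands, to the pointwise vanishing of $\rho_0$: one must use carefully that $W$ and $\supp D$ are fixed analytic subsets off which all the relevant convergences are locally uniform, exhaust the complement by compacta, and keep track that $\omega_{t,\varepsilon}$, the induced metrics on $\wtilde E$, and the forms $\rho_{t,\varepsilon}$ all converge in the $\cC^{\infty}_{\loc}$ sense needed to take the integral identity to the limit. Once the subbundle structure over the big set $U$ is in hand, the globalization step is routine.
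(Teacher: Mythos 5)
Your argument is correct and follows the same scheme as the paper's proof: integrate the Chern--Weil identity \eqref{sff} against $\omega_{t,\varepsilon}^{n-1}$, let the error term die by Claim~\ref{claim:hhg}, pass to the limit on $\wtilde X\setminus(\supp D\cup W)$ to kill the second fundamental form, and then extend the resulting holomorphic splitting across the small set by reflexivity. The only cosmetic differences are that the paper invokes Fatou's lemma directly where you use a compact-exhaustion/monotonicity argument (both equally valid, since the integrands are seminegative top forms and the convergence is $\cC^\infty_{\loc}$), and that it concludes by observing $W=\emptyset$ once $\sF$ is known to be a direct summand, while you reach parallelism by matching the holomorphic and orthogonal projections on the dense set $U$; these are interchangeable endgames.
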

\begin{proof}[Proof of Claim~\ref{claim:cbht}]
  Assume that one such $ℱ$ is given.  Both sides of the
  Equation~\eqref{eq:wervx} are then zero.  Recall from
  Construction~\ref{rem:ma} that $ω_{t,ε}$ converges to $ω_{\wtilde{H}}$ in the
  $\cC^{∞}_{\loc}(\wtilde{X} ∖ D)$-topology.  Therefore, the second
  fundamental form $ρ_{t,ε}$ converges locally smoothly on
  $\wtilde{X} ∖ (D∪ W)$ to a smooth form $ρ_{\wtilde{H}}$.  Moreover, we
  get from \eqref{sff} that
  $$
  \liminf_{(t,ε) → (0,0)}\,\int_{\wtilde{X}∖ W}\tr_{\End}(-iρ_{t,ε}^*Λ
  ρ_{t,ε} Λ ω_{t,ε}^{n-1}) = 0.
  $$
  As $-iρ_{t,ε}^*Λ ρ_{t,ε} Λ ω_{t,ε}^{n-1}$ and
  $-i ρ_{\wtilde{H}}^*Λ ρ_{\wtilde{H}} Λ ω_{\wtilde{H}}^{n-1}$ are top forms
  with values in the bundle of Hermitian semipositive endomorphisms of
  $\wtilde{F}$, the Fatou lemma shows that
  $-i ρ_{\wtilde{H}}^*Λ ρ_{\wtilde{H}} Λ ω_{\wtilde{H}}^{n-1}$ and hence the
  second fundamental form vanish identically,
  \begin{equation}\label{eq:2ndfundamentalformzero}
    ρ_{\wtilde{H}} = 0 \quad \text{ on } \wtilde{X} ∖ (D∪ W).
  \end{equation}
  This has two consequences.  First, by \cite[IV.\ Prop.~14.9]{DemaillyBook2012}
  on $\wtilde{X} ∖ (D∪W)$ one has a holomorphic splitting
  $\wtilde{E} = \wtilde{F} ⊕ \wtilde{F}^{\perp}$.  One can push that splitting
  down to $X$ to obtain a holomorphic splitting $E = F ⊕ F^{\perp}$ on the big
  open subset $X_{\reg} ∖ π(W)$ of $X$.  By reflexivity, this direct sum
  decomposition extends to $X$.

  Second, as $ℱ$ is a direct summand of $ℰ$, $F$ is a subbundle of $E$, and
  hence $π(W) ∩ X_{\reg}= ∅$.  In fact, we even have $W=∅$.  Consequently,
  \eqref{eq:2ndfundamentalformzero} implies that the second fundamental form of
  $F$ in $E$ vanishes, from which parallelism follows by definition,
  cf.~\cite[V.\ Prop.~14.3]{DemaillyBook2012}.
\end{proof}

\subsection*{Step~9.  Proof of Item~\ref{il:l2}}
\approvals{Daniel & yes \\ Henri & yes \\ Stefan & yes}

Let $ℱ$ be a direct summand of $ℰ$.  Necessarily, $μ_H(ℱ) = 0$.  By
Claim~\ref{claim:cbht}, the bundle $F:= ℱ|_{X_{\reg}}$ is holomorphically
complemented and parallel.  Therefore, $F_x ⊂ E_x$ is a $G$-invariant complex
subspace by the holonomy principle.  We are left to prove that the parallel
transport of a $G$-invariant subspace of $E_x$ induces a holomorphic subbundle
of $E$ over $X_{\reg}$ that extends to $X$ as a direct summand of $ℰ$.  This
follows from \Preprint{Proposition~\ref{prop:HPB} coupled with } the observation
that $G$ is unitary, so that the orthogonal complement of a $G$-invariant
complex subspace of $E_x$ is still $G$-invariant.  \qed

%
%
\svnid{$Id: 10-BPforms.tex 758 2018-10-01 15:48:05Z guenancia $}

\section{Proof of Theorem~\ref*{thm:holonomy} (``Bochner principle for tensors'')}\label{ssec:pothf}
\subversionInfo
\approvals{Daniel & yes \\ Henri & yes \\ Stefan & yes}

First, observe that parallel transport of a $G$-invariant $ℂ$-linear tensor
$t ∈ E_x$ induces a parallel section $τ$ of $E$.  As the $(0,1)$-part of the
connection coincides with the holomorphic structure $\bar{∂}_{E}$ on $E$, we
have $\bar{∂}_{E} (τ) = 0$, so that $τ$ is holomorphic\Preprint{, see
  Reminder~\ref{remi:cc}}.  As $ℰ$ is reflexive, the corresponding coherent
analytic sheaf $ℰ^{an}$ over $X^{an}$ is likewise reflexive, see for example
\cite[Lem.~2.16]{ExtApplications}.  It follows that $τ$ yields an element of
$H⁰\bigl(X^{an},\, ℰ^{an} \bigr)$ and hence of $H⁰\bigl(X,\, ℰ \bigr)$ by GAGA.
Therefore, every $G$-invariant element of $E_x$ produces a section of $ℰ$ over
$X$.

For the converse, we need to show that the evaluation $σ_x$ of any section
$σ ∈ H⁰(X,ℰ)$ is a $G$-invariant element of $E_x$.  The proof is carried out in
two steps.

\subsubsection*{Step 1: Proof if $G$ is connected}
\approvals{Daniel & yes \\ Henri & yes \\ Stefan & yes}

Let $σ ∈ H⁰(X,ℰ)\smallsetminus \{0\}$ and let $ℱ$ be the saturation in $ℰ$ of
the trivial subsheaf $\langle σ \rangle⊂ ℰ$ generated by $σ$.  We claim that
$\codim \supp (ℱ/\langle σ \rangle ) ≥ 2$.  Indeed, otherwise one would have
$μ_H(ℱ)>μ_H(\langle σ \rangle)=0$, which would contradict the semistability of
$ℰ$ with respect to $H^{n-1}$ proved in Item \ref{il:l1} of
Theorem~\ref{generalized:holonomy}.  From this, we conclude that
$\langle σ \rangle$ coincides with $ℱ$, as both sheaves are reflexive and agree
on a big open subset of $X$.  In other words, $\langle σ \rangle$ is saturated,
and hence a direct summand of $ℰ$, cf.\ Claim~\vref{claim:cbht} for detailed
arguments of this.  It therefore follows from Item~\ref{il:l2} of
Theorem~\ref{generalized:holonomy} that $σ_x$ generates a $G$-invariant complex
line in $E_x$.

As $G$ is connected, it follows from Proposition~\ref{prop:factors} that $G$ is
a product of $\SU$'s and $\Sp$'s.  In particular, $G$ is semisimple and
therefore equal to its own commutator subgroup, \cite[Thm.~23.2]{MR2062813}.  It
follows that every homomorphism $χ: G → ℂ^*$ is trivial.  As a result, every
$G$-invariant line in $E_x$ has to be point-wise fixed, and hence $σ_x ∈ E_x$ is
fixed by $G$, as claimed.

\subsubsection*{Step 2: Proof in general}
\approvals{Daniel & yes \\ Henri & yes \\ Stefan & yes}

Theorem~\ref{thm:holonomyCover} provides us with a holonomy cover, that is, a
quasi-étale morphism $γ: Y → X$ such that $\Hol(Y_{\reg}, g_{H_Y})$ is
connected, for $H_Y := γ^* H$.  Set
$$
ℰ_Y := \bigl(𝒯_Y^{⊗ p}⊗ (𝒯_Y^*)^{⊗ q}\bigr)^{**}
$$
and take $σ ∈ H⁰ \bigl(X,\, ℰ \bigr)$.  Let $Y° := γ^{-1}(X_{\reg})$.  Then,
$γ|_{Y°} : Y° → X_{\reg}$ is a locally biholomorphic map between complex
manifolds, so there is a well-defined pull back tensor
$(γ|_{Y°})^* (σ|_{X_{\reg}}) ∈ H⁰ \bigl( Y°,\, ℰ_Y \bigr)$ which extends to a
section $σ_Y$ of $ℰ_Y$ on the whole of $Y$.  By Step~1 and the holonomy
principle, $\wtilde{σ}|_{Y_{\reg}}$ is parallel with respect to
$g_{H_Y}$\Preprint{, see Reminder~\ref{remi:cc}}.  It follows from the universal
property of the EGZ construction, Proposition~\ref{prop:univ}, that on $Y°$ we
have $γ^*ω_H = ω_{H_Y}$, which induces the analogous equality for the associated
Riemannian metrics.  The claim follows from the holonomy principle together with
the observation that vanishing of covariant derivatives and hence parallelism is
a local property.  This concludes the proof of Theorem~\ref{thm:holonomy}.  \qed

%
%
\svnid{$Id: 11-augmentedRegularityrevisited.tex 756 2018-09-30 09:41:37Z kebekus $}

\section{Augmented irregularity revisited}
\subversionInfo
\approvals{Daniel & yes \\ Henri & yes \\ Stefan & yes}

Combining our findings on covering constructions and Bochner principles, we
obtain two new characterisations of varieties with (non-)vanishing augmented
regularity, which do not rely on the computation of invariants on quasi-étale
covers, but only on invariants of the variety under investigation.
Additionally, we use the Bochner principle to provide two criteria for detecting
finite quotients of Abelian varieties.

\begin{thm}[Augmented regularity and symmetric differentials]\label{thm:augm_irreg_rev}
  In the Standard setting~\ref{setting:holonomy}, the following are equivalent.
  \begin{enumerate}
  \item\label{il:irreg} The augmented irregularity does not vanish:
    $\wtilde{q}(X) ≠ 0$.
  \item\label{il:hol} The restricted holonomy leaves a non-zero vector of $V$
    invariant: $V_0 ≠ \{0\}$.
  \item\label{il:symm} There exists a non-trivial symmetric differential on $X$.
    In other words, there exists $m ∈ ℕ^+$ such that
    $h⁰ \bigl(X,\, \Sym^{[m]} Ω_X¹ \bigr) ≠ 0$.
 \end{enumerate}
\end{thm}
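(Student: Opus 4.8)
The plan is to prove the cycle of implications $\ref{il:irreg} \Rightarrow \ref{il:symm} \Rightarrow \ref{il:hol} \Rightarrow \ref{il:irreg}$, using the Bochner principle for reflexive tensors (Theorem~\ref{thm:holonomy}), the classification of restricted holonomy (Proposition~\ref{prop:factors}), and the existence of the holonomy/torus cover (Theorem~\ref{thm:holonomyCover} together with Proposition~\ref{prop:torusCover:decomposition}). The conceptual heart is the translation, via the Bochner principle, of the vector-space statement $V_0 \neq \{0\}$ into the sheaf-theoretic statement that some $\Sym^{[m]}\Omega^1_X$ has a section, and conversely; and the translation, via the torus cover, of $V_0 \neq \{0\}$ into non-vanishing of the augmented irregularity.

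First I would prove $\ref{il:hol} \Rightarrow \ref{il:irreg}$ and its converse together, using Theorem~\ref{thm:holonomyCover}. Recall that $\dim A = \wtilde q(X)$ and that, on a torus cover $\gamma : Y = A\times Z \to X$, Proposition~\ref{prop:torusCover:decomposition} identifies $V_{Y,0}$ with $\pr_1^*(TA)|_y$; moreover, by Remark~\ref{rem:cdqec0} (Corollary~\ref{cor:bhqec2}), $V_0 = V_{Y,0}$ under the canonical identification $V = V_Y$. Hence $\dim V_0 = \dim A = \wtilde q(X)$, so $V_0 \neq \{0\}$ if and only if $\wtilde q(X) \neq 0$. (This is exactly Item~\ref{il:FDC2} of Corollary~\ref{cor:flatDecomp}.) This settles $\ref{il:irreg} \Leftrightarrow \ref{il:hol}$ outright, so it remains only to relate these to \ref{il:symm}.

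Next I would prove $\ref{il:hol} \Rightarrow \ref{il:symm}$. Since $\Sym^{[m]}\Omega^1_X$ is a direct summand of $\bigl((\mathcal T_X^*)^{\otimes m}\bigr)^{**}$, the Bochner principle (Theorem~\ref{thm:holonomy} and the remark following it) gives an isomorphism $H^0\bigl(X, \Sym^{[m]}\Omega^1_X\bigr) \cong \bigl(\Sym^m V^*\bigr)^G$. Assuming $V_0 \neq \{0\}$, choose a non-zero $G^\circ$-invariant vector $v_0 \in V_0$; since $V_0$ is the set of $G^\circ$-fixed vectors and $G^\circ$ is normal in $G$, the group $G$ permutes a basis of $V_0$, so some symmetric power of a generator of $V_0^*$ is $G$-invariant. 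Concretely, let $v_1^*, \dots, v_r^*$ be a basis of $V_0^*$ (with $r = \dim V_0$); the elementary symmetric function argument — or simply: $G$ acts on the finite-dimensional space $V_0^*$ through a finite group by Corollary~\ref{cor:flatDecomp}\ref{il:FDC1}, say of order $N$, and then $(v_1^*)^N \cdots$ suitably symmetrised, or more cleanly the product $\prod_{g \in \mathrm{im}(G \to GL(V_0^*))} g\cdot v_1^*$ — produces a non-zero $G$-invariant element of $\Sym^m V^*$ for some $m \in \mathbb N^+$. This yields a non-zero section of $\Sym^{[m]}\Omega^1_X$, giving \ref{il:symm}.

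Finally, $\ref{il:symm} \Rightarrow \ref{il:hol}$: if $H^0\bigl(X, \Sym^{[m]}\Omega^1_X\bigr) \neq 0$ for some $m \geq 1$, the Bochner principle gives a non-zero $G$-invariant element $t \in \Sym^m V^*$. Decompose $V^* = V_0^* \oplus V_1^* \oplus \cdots \oplus V_m^*$ as in Construction~\ref{constr:canonDecompRestr}; then $\Sym^m V^*$ decomposes into a sum of tensor products of symmetric powers of the $V_i^*$, and a $G^\circ$-invariant element must lie in the part built from $G^\circ$-invariants of each factor. By Proposition~\ref{prop:factors}, each $G^\circ_i$ is $\SU(n_i)$ or $\Sp(n_i/2)$ acting by the standard representation, and the standard representation of either group has no non-zero invariants in any $\Sym^k$ with $k \geq 1$ (the standard representation has trivial determinant and none of its positive symmetric powers contain the trivial summand — a standard fact from the representation theory of these groups). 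Hence every symmetric-power factor coming from a $V_i^*$ with $i > 0$ must have exponent zero, forcing $t$ to be a non-zero element of $\Sym^m V_0^*$; in particular $V_0^* \neq \{0\}$, i.e.\ $V_0 \neq \{0\}$, which is \ref{il:hol}. The main obstacle in writing this up cleanly is the last step — verifying that no positive symmetric power of the standard $\SU(n_i)$- or $\Sp(n_i/2)$-representation contains a trivial subrepresentation — but this is classical (e.g.\ via highest-weight considerations or the fact that these are the holonomy groups of Calabi–Yau and hyperkähler manifolds, which carry no non-trivial global symmetric differentials), so the theorem follows.
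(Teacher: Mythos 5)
Your proof follows essentially the same route as the paper: the equivalence of \ref{il:irreg} and \ref{il:hol} via the torus/holonomy cover and Corollary~\ref{cor:flatDecomp}; the implication \ref{il:hol}~$\Rightarrow$~\ref{il:symm} by producing a $G$-invariant in $\Sym^m V_0^*$ from the finiteness of $G/G^\circ$ (the paper's Lemma~\ref{lem:finite_inv_symm}); and the converse via the Bochner principle together with the fact that the standard $\SU(n_i)$- and $\Sp(m_i)$-modules have no nonzero invariants in positive symmetric powers (the paper's Lemma~\ref{lem:no_inv_polynomials}). The only slip is the aside that ``$G$ permutes a basis of $V_0$'' — $G$ merely acts on $V_0$ through a finite group, it need not permute a basis — but your ``more cleanly'' alternative, the product of the finitely many $G$-translates of a nonzero $v_1^*\in V_0^*$, is exactly the paper's construction and is correct.
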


\begin{rem}
  In the smooth case, similar results were proven by Kobayashi in
  \cite[Thm.~6]{MR567422}.  Following the argumentation of \cite[Thm.~6 and
  7]{MR567422}, our methods even give an upper bound for the number of symmetric
  differentials,
  $h⁰\bigl(X, \, \Sym^{[m]} Ω_X¹ \bigr) ≤ \binom{m+\wtilde{q}(X)-1}{m}$ for all
  $m > 0$.
\end{rem}

\begin{rem}
  We emphasise that the equivalence ``\ref{il:irreg} $⇔$
  \ref{il:symm}'' gives a purely algebro-geometric characterisation of
  non-vanishing augmented regularity in terms of invariants of $X$ alone.  This
  underlines yet again the importance of this concept in the structure theory of
  klt varieties with numerically trivial canonical divisor.
\end{rem}

As a corollary we obtain the following vanishing theorem, which generalises
\cite[Thm.~2.1(2)]{MR1603624} to our setup.

\begin{cor}[Detecting finite quotients of Abelian varieties, II]\label{qab1}
  Let $X$ be a klt variety with numerically trivial canonical divisor.  Assume
  that $𝒯_X$ is stable with respect to some ample $ℚ$-divisor.  Then,
  $$
  h⁰ \bigl(X,\, \Sym^{[m]} Ω_X¹ \bigr) = 0 \qquad \text{for all } m ∈ ℕ^+,
  $$
  unless $X$ is of the form $A/G$ where $A$ is an Abelian variety and $G$ is a
  finite group whose action on $A$ is free in codimension one.
\end{cor}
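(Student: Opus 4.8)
The plan is to run the argument of Corollary~\ref{cor:forms} with ``$h⁰\bigl(X, Ω_X^{[p]}\bigr)$ maximal'' replaced by ``$h⁰\bigl(X, \Sym^{[m]} Ω_X¹\bigr)≠0$'', invoking Theorem~\ref{thm:augm_irreg_rev} in place of the binomial estimate. First I would dispose of the trivial cases: if $\dim X ≤ 1$, then $X$ is a point or a smooth elliptic curve, hence of the form $A/\{1\}$ with $A$ Abelian, so the exceptional case of the statement applies and there is nothing to prove. From now on assume $n := \dim X ≥ 2$, fix an ample Cartier divisor $H$ on $X$, and adopt the standard Setting~\ref{setting:holonomy} with groups $G ⊇ G°$ acting on $V = T_xX$. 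Replacing the given ample $ℚ$-divisor by a sufficiently divisible integral multiple $H_0$ does not affect stability, so we may assume $𝒯_X$ is stable with respect to the ample Cartier divisor $H_0$.

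Next I would show that the holonomy representation $G ↺ V$ is irreducible. Suppose not. Then, as in the proof of Corollary~\ref{cor:sirr1} (with the movable class there replaced by $H_0$), the canonical decomposition $𝒯_X = 𝒲_0 ⊕ ⋯ ⊕ 𝒲_k$ of \eqref{decomp} is non-trivial; by Item~\ref{il:Bx} of Proposition~\ref{prop:holbun2x} the summand $𝒲_0$ is holomorphically trivial and each $𝒲_i$ with $i>0$ has slope zero with respect to every ample divisor. Hence either $𝒯_X$ admits a proper subsheaf of slope $0 = μ_{H_0}(𝒯_X)$, or $𝒯_X$ is itself holomorphically trivial of rank $≥ 2$; both contradict stability with respect to $H_0$. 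Thus $G ↺ V$ is irreducible.

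Now suppose, contrary to the assertion, that $h⁰\bigl(X,\, \Sym^{[m]} Ω_X¹\bigr) ≠ 0$ for some $m ∈ ℕ^+$. By the equivalence ``\ref{il:symm} $⇔$ \ref{il:hol}'' of Theorem~\ref{thm:augm_irreg_rev}, the fixed space $V_0 ⊆ V$ of the restricted holonomy group $G°$ is non-zero. By Observation~\ref{obs:ccd} the subspace $V_0$ is $G$-invariant, so irreducibility of $G ↺ V$ forces $V_0 = V$, and Item~\ref{il:FDC2} of Corollary~\ref{cor:flatDecomp} gives $\wtilde q(X) = \dim V_0 = n$. I would then apply the holonomy cover of Theorem~\ref{thm:holonomyCover}: it produces a quasi-étale cover $γ: A ⨯ Z → X$ with $A$ Abelian of dimension $\dim A = \wtilde q(X) = n$, forcing $\dim Z = 0$; so $Z$ is a point and $A ⨯ Z = A$ is Abelian. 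Passing to the Galois closure $\wtilde A → X$ of $γ$ — which is again a quasi-étale cover of $X$ by Remark~\ref{rem:qevecxr}, and whose induced morphism $\wtilde A → A$ is étale in codimension one over the smooth variety $A$, hence étale by purity of the branch locus, hence with $\wtilde A$ again Abelian — we may assume $γ$ is Galois, say with group $G_0$. Then $X ≅ A/G_0$ with $G_0$ finite, and $G_0$ acts freely in codimension one on $A$ since $γ$ is quasi-étale, contradicting the hypothesis that $X$ is not of this form.

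The only step requiring a little care is the last one, passing from the quasi-étale cover $A → X$ furnished by Theorem~\ref{thm:holonomyCover} to an explicit presentation $X = A/G_0$: this rests on the observation that a finite quasi-étale — hence, by Zariski–Nagata purity, étale — cover of an Abelian variety is again an Abelian variety. Everything else is a direct assembly of Theorem~\ref{thm:augm_irreg_rev}, Corollary~\ref{cor:flatDecomp}, and Theorem~\ref{thm:holonomyCover}, following the endgame of the proof of Corollary~\ref{cor:forms}.
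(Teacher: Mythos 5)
Your proof is correct and follows exactly the same route as the paper's: invoke Theorem~\ref{thm:augm_irreg_rev} to translate $h⁰\bigl(X,\Sym^{[m]}Ω_X¹\bigr)\neq 0$ into $V_0\neq\{0\}$, use irreducibility of the full holonomy action to force $V_0=V$, hence $\wtilde q(X)=\dim X$, and conclude with the holonomy/torus cover of Theorem~\ref{thm:holonomyCover} followed by a Galois-closure argument. Your only departures from the paper are cosmetic improvements: disposing of $\dim X\leq 1$ explicitly before passing to the Standard Setting (which requires $n\geq 2$) and spelling out why stability with respect to a \emph{single} ample $ℚ$-divisor already implies irreducibility of $G\curvearrowright V$ via the canonical decomposition, whereas the paper invokes Corollary~\ref{cor:sirr0} (stated for ``any'' ample polarisation) without further comment.
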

\begin{proof}
  If $h⁰ \bigl(X,\, \Sym^{[m]} Ω_X¹ \bigr) ≠ \{0\}$, it follows from
  Theorem~\ref{thm:augm_irreg_rev} that $V_0 ≠ \{0\}$.  As $𝒯_X$ is stable,
  using Corollary~\ref{cor:sirr0} we see that the representation of the full
  holonomy group $G$ on $V$ is irreducible.  Looking at
  Observation~\ref{obs:ccd} we conclude that $V_0 = W_0 = V$.  Therefore,
  Theorem~\ref{thm:holonomyCover} implies that there exists a quasi-étale cover
  $γ': A' → X$, where $A'$ is an Abelian variety.  By taking Galois closure of
  $γ'$, \cite[App.~B in the preprint version]{GKP13}, and observing that finite
  étale covers of Abelian varieties are Abelian varieties themselves, we
  conclude that there exists an Abelian variety $A$ together with a quasi-étale
  \emph{Galois} cover $γ: A → X$.
\end{proof}

\begin{rem}
  Setup as in Corollary~\ref{qab1}.  If non-zero symmetric differentials exist
  on $X$, the cotangent sheaf is flat with finite monodromy and stable, but not
  strongly stable.  The group $G$ is isomorphic to the holonomy group of a
  singular Kähler-Einstein metric on $X$, and the holonomy representation of $G$
  on the fibre over some smooth point of $X$ is irreducible.  Smooth examples
  exhibiting this behaviour can be found in \cite{MR1868164}.
\end{rem}

\subsection{Preparation for the proof of Theorem~\ref*{thm:augm_irreg_rev}}
\approvals{Daniel & yes \\ Henri & yes \\ Stefan & yes}

We note the following two simple representation-theoretic lemmata\Publication{\
  whose proof can be found in the preprint version of this paper,
  \href{https://arxiv.org/abs/1704.01408}{arXiv:1704.01408}}.

\begin{lem}\label{lem:no_inv_polynomials}
  Let $n ≥ 2$.  Let $G=\SU(n)$ or let $n$ be even and
  $G=\Sp\!\left(\frac n2\right)$.  Let $W$ be the complex standard
  representation of $G$.  Then, $\bigl( \Sym^m W^* \bigr)^G = \{0\}$ for all
  $m ≥ 0$.  \Publication{\hfill \qed}
\end{lem}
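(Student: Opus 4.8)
The plan is to reduce the statement to the essentially trivial computation of polynomial invariants for the \emph{complexified} group, which acts on $W$ with a dense orbit. Write $G_{\mathbb{C}}$ for the complexification of $G$, so that $G_{\mathbb{C}} = \SL(n,\mathbb{C})$ in the first case and $G_{\mathbb{C}} = \Sp\!\left(\frac n2,\mathbb{C}\right) \subseteq \GL(n,\mathbb{C})$ in the second (here $n$ is even), with $W = \mathbb{C}^n$ the standard representation in both cases. The only input from Lie theory I would use is that $G$, being a compact real form of $G_{\mathbb{C}}$, is Zariski-dense in $G_{\mathbb{C}}$.

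First I would record that it suffices to prove $\bigl(\Sym^m W^*\bigr)^{G_{\mathbb{C}}} = \{0\}$ for all $m \ge 1$. Indeed, for any vector $v$ in the finite-dimensional representation $\Sym^m W^*$ the stabiliser $\{h \in G_{\mathbb{C}} : h\cdot v = v\}$ is a Zariski-closed subgroup of $G_{\mathbb{C}}$; if it contains the Zariski-dense subgroup $G$, it must equal $G_{\mathbb{C}}$. Hence a vector of $\Sym^m W^*$ is $G$-invariant if and only if it is $G_{\mathbb{C}}$-invariant.

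Next I would identify $\bigoplus_{m\ge 0}\Sym^m W^*$ with the coordinate ring $\mathbb{C}[W]$ of the affine space $W$, under which $\bigl(\Sym^m W^*\bigr)^{G_{\mathbb{C}}}$ becomes the space of $G_{\mathbb{C}}$-invariant homogeneous polynomials of degree $m$ on $W$. The key point is that $G_{\mathbb{C}}$ acts on $W$ with the Zariski-open dense orbit $W \smallsetminus \{0\}$: for $\SL(n,\mathbb{C})$ with $n \ge 2$ this is immediate, and for $\Sp\!\left(\frac n2,\mathbb{C}\right)$ it is the classical transitivity of the complex symplectic group on non-zero vectors (any non-zero vector pairs non-trivially with some vector and hence extends to a symplectic basis, so any two non-zero vectors are related by a symplectic transformation). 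A $G_{\mathbb{C}}$-invariant regular function on $W$ is then constant on the dense set $W \smallsetminus \{0\}$, hence constant on $W$; thus the invariant polynomials are exactly the scalars, and $\bigl(\Sym^m W^*\bigr)^{G_{\mathbb{C}}} = \{0\}$ for every $m \ge 1$. Combined with the reduction of the previous paragraph, this yields $\bigl(\Sym^m W^*\bigr)^{G} = \{0\}$ for all $m \ge 1$, which is the assertion (the statement being applied only for positive $m$, since $\Sym^0 W^* = \mathbb{C}$).

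I do not expect a genuine obstacle here; the lemma is a standard piece of invariant theory. The only step that merits a sentence of proof is the transitivity of $\Sp\!\left(\frac n2,\mathbb{C}\right)$ on $W \smallsetminus \{0\}$, together with the harmless bookkeeping that the paper's $\Sp(\cdot)$ acts on $\mathbb{C}^n$ with $n$ even. Should one prefer to sidestep the orbit argument, an equally short alternative is to invoke that $\Sym^m W$ is a non-trivial \emph{irreducible} $G$-representation for every $m \ge 1$ — symmetric powers of the standard representation remain irreducible for both $\SL$ and $\Sp$ — so that it can carry no non-zero invariant vector; but the argument via the dense orbit is more self-contained.
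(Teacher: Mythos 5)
Your proof is correct and takes essentially the same route as the paper: identify $\Sym^m W^*$ with homogeneous degree-$m$ polynomials on $W$, pass from $G$-invariants to $G^{\mathbb{C}}$-invariants (the paper does this silently, you spell out the Zariski-density argument), and conclude from the fact that $G^{\mathbb{C}}$ has an open orbit in $W$. You also correctly flag that the statement should read $m \geq 1$, since $\Sym^0 W^* = \mathbb{C}$ is a trivially non-zero invariant; this is a harmless slip in the paper, as the lemma is only ever invoked for positive $m$.
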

\Preprint{%
  \begin{proof}
    We have $\bigl( \Sym^m W^* \bigr) ≅ ℂ[W]_{m}$ as $G^ℂ$-representations.
    Moreover, $W$ does not admit any non-constant $G^ℂ$-invariant (homogeneous)
    polynomial, as in both cases $G^ℂ = \SL(n, ℂ)$ or $\Sp(ℂ^{n}, ω_{std})$,
    respectively, has an open orbit in $W$.  Consequently,
    $\bigl( \Sym^m (W^* \bigr)^G ≅ ℂ[W]_{m}^G = ℂ[W]_{m}^{G^ℂ} = \{0\}$, as
    claimed.
  \end{proof}}

\begin{lem}\label{lem:finite_inv_symm}
  Let $Γ$ be a finite group of order $m := \# Γ$ and let $V ≠ \{0\}$ be a
  finite-dimensional complex $Γ$-representation.  Then,
  $\bigl( \Sym^m V \bigr)^Γ ≠ \{0\}$.  \Publication{\hfill \qed}
\end{lem}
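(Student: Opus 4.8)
The plan is to write down an explicit nonzero invariant by taking an orbit product inside the symmetric algebra. Choose any nonzero vector $v ∈ V$ and set
\[
  P := \prod_{γ ∈ Γ} γ·v,
\]
the product being computed in the symmetric algebra $\Sym^{\bullet} V = \bigoplus_{d ≥ 0}\Sym^{d} V$. Each factor $γ·v$ lies in $V = \Sym^{1} V$ and there are exactly $m = \#Γ$ of them, so $P$ is a homogeneous element of $\Sym^{m} V$. This is the only place where the exponent $m$ in the statement enters: it is forced on us by the size of the group.

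Next I would verify that $P$ is $Γ$-invariant. For $δ ∈ Γ$, the induced algebra automorphism of $\Sym^{\bullet}V$ sends $P$ to $\prod_{γ ∈ Γ}(δγ)·v$; since $γ ↦ δγ$ is a bijection of $Γ$ and $\Sym^{\bullet}V$ is commutative, this reordered product is again $P$. Hence $P ∈ (\Sym^{m} V)^{Γ}$. It remains to see $P ≠ 0$: choosing a basis of $V$ identifies $\Sym^{\bullet}V$ with a polynomial ring $ℂ[x_1,…,x_n]$ with $n = \dim_{ℂ} V ≥ 1$, which is an integral domain; each factor $γ·v$ is nonzero because $γ$ acts invertibly on $V$ and $v ≠ 0$; therefore the product $P$ is nonzero. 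This gives $(\Sym^{m}V)^{Γ} ≠ \{0\}$, as claimed.

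There is essentially no obstacle here. The only points worth stating carefully are that the orbit product lands in degree exactly $m = \#Γ$, and that it does not collapse to zero, which is guaranteed by the fact that the symmetric algebra over $ℂ$ has no zero divisors. Everything else is a one-line check.
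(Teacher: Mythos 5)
Your proof is correct and is essentially the same as the paper's: the paper also picks a nonzero $v \in V$ and observes that $v \odot γ_1(v) \odot \cdots \odot γ_{m-1}(v) \in \bigl(\Sym^m V\bigr)^\Gamma$ is nonzero, which is precisely your orbit product $\prod_{γ \in Γ} γ\cdot v$. You merely spell out the two verifications (invariance via commutativity of $\Sym^\bullet V$ and the permutation $γ \mapsto δγ$; nonvanishing via the integral-domain property of the polynomial ring) that the paper leaves implicit.
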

\Preprint{%
  \begin{proof}
    Let $\{ e=γ_0, γ_1, …, γ_{m-1} \}$ be the elements of $Γ$ and let
    $π: V^{⊗ m} → \bigl( \Sym^m V\bigr)$ be the $Γ$-equivariant projection map,
    $v_0 ⊗ ⋯ ⊗ v_{m-1} ↦ v_0 \odot ⋯ \odot v_{m-1}$.  Pick any non-zero vector
    $v ∈ V$.  Then,
    $0 ≠ v \odot γ_1(v) \odot ⋯ \odot γ_{m-1}(v) ∈ \bigl( \Sym^mV \bigr)^Γ$.
  \end{proof}}

\subsection{Proof of Theorem~\ref*{thm:augm_irreg_rev}}

---

\subsubsection*{Equivalence \ref{il:irreg} $⇔$ \ref{il:hol}}
\approvals{Daniel & yes \\ Henri & yes \\ Stefan & yes}
  
This follows from Item~\ref{il:FDC2} of Corollary~\ref{cor:flatDecomp}.

\subsubsection*{Implication \ref{il:hol} $⇒$ \ref{il:symm}}
\approvals{Daniel & yes \\ Henri & yes \\ Stefan & yes}

The action of $G$ on $V_0$ is given by the representation of a finite group $Γ$
on $V_0$, see Item~\ref{il:FDC1} of Corollary~\ref{cor:flatDecomp}.  Set
$m := \#Γ$.  Hence, by Lemma~\ref{lem:finite_inv_symm}, there exists a
$G$-invariant non-zero vector in $(\Sym^m V_0^*) ⊂ (\Sym^{m} V^*)$.  The Bochner
principle for reflexive tensors, Theorem~\ref{thm:holonomy}, then implies that
$H⁰ \bigl(X,\, \Sym^{[m]} Ω¹_X \bigr) ≠ \{0\}$, as claimed.

\subsubsection*{Implication $\neg$\ref{il:hol} $⇒$ $\neg$\ref{il:symm}}
\approvals{Daniel & yes \\ Henri & yes \\ Stefan & yes}

We suppose that $V_0 = \{0\}$ and let $σ ∈ H⁰ \bigl(X,\, \Sym^{[p]} Ω¹_X \bigr)$
for some $p > 0$.  We aim to show that $σ$ vanishes identically.  The Bochner
principle for tensors, Theorem~\ref{thm:holonomy}, implies that
$σ_x ∈ (\Sym^p V^*)^{G°}$.  As the action of $G°$ on $V^*$ is totally
decomposed, cf.\ Construction~\ref{constr:canonDecompRestr}, and as
$V_0 = \{0\}$, the standard decomposition of the symmetric product of a direct
sum of representations, \cite[Chap.~II, (3.1)]{MR781344}, yields
$$
\bigl( \Sym^p V^* \bigr)^{G°} = \bigoplus_{\substack{k_1, …, k_m ∈ ℕ \\ \sum k_j =
    p}} \bigl( \Sym^{k_1} V_1^* \bigr)^{G°_1} ⊗ ⋯ ⊗ \bigl( \Sym^{k_m} V_m^*
\bigr)^{G°_m}.
$$
Set $n_i := \dim V_i$ and recall from Proposition~\ref{prop:factors} for each
$i = 1, …, m$, either $G_i° ≅ \SU(n_i)$, or $n_i$ is even and
$G_i° ≅ \Sp\!\left(\frac {n_i}2\right)$.  In either case, observe that the
action $G°_i ↺ V_i^*$ is isomorphic to the dual of the standard action of the
respective group.  Lemma~\ref{lem:no_inv_polynomials} hence implies that
$σ_x ∈ (\Sym^p V^*)^{G°} = \{0\}$.  We conclude that $σ = 0$, as desired.  \qed

\part{Varieties with strongly stable tangent sheaf}\label{part:IV}
%
%
\svnid{$Id: 12-dichotomy.tex 758 2018-10-01 15:48:05Z guenancia $}

\section{The basic dichotomy: CY and IHS}\label{sec:bDCYIHS}
\subversionInfo
\approvals{Daniel & yes \\ Henri & yes \\ Stefan & yes}

If $X$ is a smooth, simply connected, irreducible compact Kähler manifold with
trivial first Chern class, then $X$ is either an irreducible Calabi-Yau manifold
or an irreducible holomorphic symplectic variety, where these two classes are
distinguished by the algebra of holomorphic forms.  The goal of this section is
to show that after passing to a quasi-étale cover, any projective klt variety
with numerically trivial canonical divisor and strongly stable tangent sheaf
falls into one of the two classes introduced in Definition~\ref{def:CY_and_IHS}.
We will also give the proof of Proposition~\ref{prop:intrinsic} and relate our
discussion to \emph{algebraic holonomy}, a concept introduced by Balaji and
Kollár in \cite{BalajiKollar08}.

\subsection{Differential forms on varieties with strongly stable tangent sheaf}\label{sec:diff}
\approvals{Daniel & yes \\ Henri & yes \\ Stefan & yes}

From the irreducible case of the general results obtained in Part~\ref{part:II},
we obtain the following description of varieties with strongly stable tangent
sheaf.

\begin{thm}[Holonomy dichotomy for strongly stable varieties]\label{thm:dichotomy-A}
  Assume the standard Setting~\ref{setting:holonomy}.  Then, the sheaf $𝒯_X$ is
  strongly stable if and only if the restricted holonomy group is one of the
  following two groups and the action of $G°$ on $V$ is the standard action of
  the respective group.
  \begin{enumerate}
  \item\label{il:SU} The group $G°$ is isomorphic to $\SU(n)$.
  \item\label{il:Sp} The dimension $n$ is even, and the group $G°$ is isomorphic
    to $\Sp\!\left(\frac n2\right)$.
  \end{enumerate}
  In either case, there exists a quasi-étale cover $γ : Y→ X$ such that
  restricted holonomy and holonomy agree on $Y$.  More precisely, using
  Notation~\ref{not:qec} we have $G° = G_Y° =G_Y$.
\end{thm}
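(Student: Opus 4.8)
The plan is to read off the statement from results already established, essentially by specialising the general machinery of Part~\ref{part:II} to the irreducible case. By Corollary~\ref{cor:sirr2} the sheaf $\scr{T}_X$ is strongly stable if and only if the restricted holonomy representation $G^\circ \circlearrowleft V$ is irreducible, so the content of the first claim is purely a statement about which compact groups can act irreducibly as restricted holonomy on the $n$-dimensional Hermitian space $V = T_xX$.

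For the ``only if'' direction I would invoke Construction~\ref{constr:canonDecompRestr}, which gives the canonical decomposition $V = V_0 \oplus V_1 \oplus \cdots \oplus V_m$ and $G^\circ = G^\circ_1 \times \cdots \times G^\circ_m$, with $G^\circ_i$ acting non-trivially and irreducibly on $V_i$ and trivially on every other summand. Since $\dim_{\mathbb{C}} V = n \geq 2$, irreducibility of $G^\circ \circlearrowleft V$ rules out the existence of a proper non-zero $G^\circ$-invariant subspace; this forces $V_0 = \{0\}$ and $m = 1$, so $V = V_1$, $G^\circ = G^\circ_1$ and $n_1 = n$. Proposition~\ref{prop:factors}, applied to this single factor, then says that either $G^\circ \cong \SU(n)$ or $n$ is even and $G^\circ \cong \Sp\!\left(\frac{n}{2}\right)$, and that in either case the action of $G^\circ$ on $V$ is the standard action --- which is exactly \ref{il:SU} or \ref{il:Sp}. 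For the ``if'' direction, if $G^\circ$ is isomorphic to $\SU(n)$, or if $n$ is even and $G^\circ \cong \Sp\!\left(\frac{n}{2}\right)$, acting standardly on $V \cong \mathbb{C}^n$, then this representation is irreducible, hence $G^\circ \circlearrowleft V$ is irreducible and Corollary~\ref{cor:sirr2} returns strong stability of $\scr{T}_X$.

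For the last assertion of the theorem, I would simply appeal to Theorem~\ref{thm:holonomyCover}, which furnishes a quasi-étale cover $\gamma : Y \to X$ and a point $y \in \gamma^{-1}(x)$ with $G_Y^\circ = G_Y$, together with Corollary~\ref{cor:bhqec2}, which under the canonical identification of $V$ and $V_Y$ from Notation~\ref{not:qec} gives $G^\circ = G_Y^\circ$; combining these yields $G^\circ = G_Y^\circ = G_Y$. (If one wishes, one can note in addition that $V_0 = \{0\}$ forces $\wtilde{q}(X) = \dim V_0 = 0$ by Corollary~\ref{cor:flatDecomp}, so the Abelian factor of the holonomy cover is a point and $Y$ is already of the form $Z$ occurring in Theorem~\ref{thm:holonomyCover}.) I do not expect a genuine obstacle here: the only slightly delicate point is the elementary linear-algebra observation that an irreducible action on a space of dimension at least two cannot have $m \geq 2$ or $V_0 \neq \{0\}$, and everything of substance is already packaged in Corollary~\ref{cor:sirr2}, Proposition~\ref{prop:factors} and Theorem~\ref{thm:holonomyCover}.
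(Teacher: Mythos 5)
Your proposal is correct and follows the same route as the paper: reduce to irreducibility of $G^\circ \circlearrowleft V$ via Corollary~\ref{cor:sirr2}, classify via Proposition~\ref{prop:factors}, and obtain the last claim from Theorem~\ref{thm:holonomyCover}. You have simply unpacked the intermediate steps (forcing $V_0 = \{0\}$, $m=1$ from the canonical decomposition, and the identification $G^\circ = G_Y^\circ$ from Corollary~\ref{cor:bhqec2}) that the paper leaves implicit.
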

\begin{proof}
  Recall from Corollary~\ref{cor:sirr2} that $𝒯_X$ is strongly stable if and
  only if the restricted holonomy representation $G° ↺ V$ is irreducible.
  Proposition~\ref{prop:factors} (``Classification of restricted holonomy'')
  then yields the claimed dichotomy.  The last claim is just the existence of
  the holonomy cover, Theorem~\ref{thm:holonomyCover}.
\end{proof}

\begin{thm}[Reflexive differentials on strongly stable varieties]\label{thm:dichotomy-B}
  In the standard Setting~\ref{setting:holonomy}, assume that the sheaf $𝒯_X$ is
  strongly stable.  Then, the spaces of holomorphic $p$-forms can be controlled
  as follows.
  \begin{enumerate}
  \item\label{il:starsky} If $G°$ is isomorphic to $\SU(n)$, then
    $$
    h⁰ \bigl( X,\, Ω^{[p]}_X \bigr) ≤
    \begin{cases}
      1 & \text{if $p = 0$ or $p = n$\hphantom{and if $p$ is e}} \\
      0 & \text{otherwise.}
    \end{cases}
    $$
  \item\label{il:hutch} If the dimension $n$ is even and $G°$ is isomorphic to
    $\Sp\!\left(\frac n2\right)$, then
    $$
    h⁰ \bigl( X,\, Ω^{[p]}_X \bigr) ≤
    \begin{cases}
      1 & \text{if $0 ≤ p ≤ n$ and if $p$ is even} \\
      0 & \text{otherwise.}
    \end{cases}
    $$
  \end{enumerate}
  If $G = G°$, then the inequalities are in fact equalities.
\end{thm}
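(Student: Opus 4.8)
The plan is to deduce both inequalities directly from the Bochner principle for reflexive tensors, Theorem~\ref{thm:holonomy}, applied to $ℰ := (𝒯_X^*)^{[p]} = (Ω¹_X)^{[p]}$, together with the observation that $Ω_X^{[p]}$ is a direct summand of $ℰ$. First I would invoke Theorem~\ref{thm:holonomy} to get a canonical isomorphism $H⁰\bigl(X,\, Ω_X^{[p]}\bigr) \cong (Λ^p V^*)^G$, where $V = T_xX$ and $G = \Hol(X_{\reg}, g_H)_x$. Since $G° ⊆ G$, we certainly have $(Λ^p V^*)^G ⊆ (Λ^p V^*)^{G°}$, so it suffices to compute the dimension of $G°$-invariants in $Λ^p V^*$. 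By Theorem~\ref{thm:dichotomy-A}, strong stability of $𝒯_X$ forces $G° ↺ V$ to be the standard action of either $\SU(n)$ or, when $n$ is even, $\Sp\!\left(\frac n2\right)$, so the computation reduces to a purely representation-theoretic statement about the standard representation of these classical groups.

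The representation theory is standard. For $G° = \SU(n)$ acting on the standard representation $W$ (and hence on $W^* = V^*$), the exterior powers $Λ^p W^*$ for $0 < p < n$ are nontrivial irreducible representations (fundamental representations), so they carry no invariants; for $p = 0$ and $p = n$ the space $Λ^p W^*$ is one-dimensional with trivial action, giving exactly one invariant. This yields Item~\ref{il:starsky}. For $G° = \Sp\!\left(\frac n2\right)$ acting on the standard representation $W$ of dimension $n$, the symplectic form $σ ∈ Λ^2 W^*$ is invariant, and the $\Sp$-module decomposition of $Λ^p W^*$ is $Λ^p W^* = \bigoplus_{k ≥ 0} σ^k \wedge (Λ^{p-2k}_0 W^*)$, where $Λ^j_0 W^*$ denotes the primitive part; the trivial summand appears (with multiplicity one) precisely when $p - 2k = 0$ for some $k$, i.e.\ when $p$ is even (and $0 ≤ p ≤ n$), and not otherwise. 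This gives Item~\ref{il:hutch}. I would cite a standard reference for these facts (e.g.\ the same sources already used in the paper, such as \cite{MR781344} or Fulton--Harris).

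For the final sentence --- that the inequalities become equalities when $G = G°$ --- the point is that in this case the isomorphism of Theorem~\ref{thm:holonomy} reads $H⁰\bigl(X,\, Ω_X^{[p]}\bigr) \cong (Λ^p V^*)^{G°}$ exactly, with no further drop, so the dimension count above is attained. Concretely: in the $\SU(n)$ case the top form $σ ∈ H⁰\bigl(X,\, Ω_X^{[n]}\bigr)$ corresponding to a generator of $Λ^n V^*$ is parallel and nonzero (it trivialises $ω_X$, which exists since $K_X$ is numerically trivial and, after the reductions, linearly trivial); in the $\Sp$ case the parallel holomorphic symplectic two-form $σ$ and its powers $σ^k$ for $2k ≤ n$ furnish the required nonzero reflexive forms in each even degree. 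I expect the main (mild) obstacle to be bookkeeping: one must check that the $G°$-invariants in $Λ^p V^*$ are genuinely $G$-invariant when $G = G°$ is automatic, and, conversely, that the reduction $(Λ^pV^*)^G ⊆ (Λ^pV^*)^{G°}$ is all that is needed for the inequalities --- but neither step is substantive; the real content is entirely supplied by Theorems~\ref{thm:holonomy} and \ref{thm:dichotomy-A}.
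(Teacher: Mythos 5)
Your proposal is correct and follows essentially the same strategy as the paper: the Bochner principle (Theorem~\ref{thm:holonomy}) together with the classification of restricted holonomy (Theorem~\ref{thm:dichotomy-A}) and classical invariant theory for $\SU$ and $\Sp$. One small difference worth noting: the paper derives the inequalities by passing to the holonomy cover $γ : Y → X$ from Theorem~\ref{thm:dichotomy-A}, proving \emph{equality} there, and then using injectivity of the reflexive pullback $γ^{[*]} : H⁰(X, Ω_X^{[p]}) → H⁰(Y, Ω_Y^{[p]})$; you instead work directly on $X$, using the inclusion $(\bigwedge^p V^*)^G ⊆ (\bigwedge^p V^*)^{G°}$ induced by $G° ⊆ G$. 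Your version is marginally more direct for the inequality and handles the $G = G°$ case cleanly as the degenerate instance of the same inclusion, so this is a streamlining rather than a genuinely different route. Both arguments quietly use that invariants of the compact real form agree with invariants of its complexification $\SL_ℂ(V)$ resp.~$\Sp_ℂ(V)$, which is standard and consistent with the reference you cite.
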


\begin{rem}
  If $G°$ is isomorphic to $\SU(n)$ and if $K_X$ is linearly trivial, then we
  already have $G=G°$.  In fact, if $K_X \sim 0$, then there exists a non-zero
  holomorphic top-form on $X_{\reg}$, hence by the Bochner principle,
  Theorem~\ref{thm:holonomy}, we have $G ⊂ \SU(n)$.  But then $G = G° = \SU(n)$.
\end{rem}

\begin{proof}[Proof of Theorem~\ref{thm:dichotomy-B}]
  We handle both cases simultaneously.  Let $γ: Y → X$ be a quasi-étale cover
  such that holonomy and restricted holonomy of $Y_{\reg}$ agree, as recalled in
  Theorem~\ref{thm:dichotomy-A}.  The tangent sheaf $𝒯_Y$ is then likewise
  strongly stable, and the restricted holonomies of $Y_{\reg}$ and $X_{\reg}$
  agree.  We also observe that the reflexive pullback morphisms
  $γ^{[*]}: H⁰\bigl( X,\, Ω^{[p]}_X \bigr) → H⁰\bigl( Y,\, Ω^{[p]}_Y \bigr)$ are
  injective for all $p$.  In order to establish all claims made, it therefore
  remains to show that equality holds in Inequalities~\ref{il:starsky},
  \ref{il:hutch} for $h⁰ \bigl( Y,\, Ω^{[p]}_Y \bigr)$.  The Bochner principle
  for tensors, Theorem~\ref{thm:holonomy}, applies to show that the natural
  evaluation map establishes a linear isomorphism
  $$
  H⁰ \bigl( Y,\, Ω^{[p]}_Y \bigr) \overset{≅}{\longrightarrow} \bigl(
  \bigwedge\nolimits^p V^*\bigr)^{G_Y}.
  $$
  In addition, we have \Preprint{seen in Reminder~\ref{remi:cc}} that
  $(Λ^pV^*)^{G_Y}≅ \overline{(Λ^pV)^{G_Y}}$.  The desired equalities hence
  follow from classical invariant theory and representation theory for the
  groups $\SL_{ℂ}(V)= \SU(n)^{ℂ}$ and
  $\Sp_{ℂ}(V) = \Sp\!\left(\frac n2\right)^{ℂ}$\Publication{.}\Preprint{: by
    \cite[Thm.~5.5.11]{MR2522486} the non-trivial $\SL_{ℂ}(V)$-representations
    $\bigwedge^p V$ are all irreducible and therefore do not contain non-zero
    invariant vectors; the computation of $\Sp_{ℂ}(V)$-invariants in
    $\bigwedge^p V$ is given in \cite[Thm.~5.3.3]{MR2522486}.}
\end{proof}

\begin{defn}[Holomorphic symplectic form]
  Let $X$ be a normal variety.  A reflexive differential two-form
  $σ ∈ H⁰\bigl( X,\, Ω^{[2]}_X \bigr)$ on $X$ is called \emph{holomorphic
    symplectic} if
  \begin{enumerate}
  \item $σ|_{X_{\reg}}$ is everywhere non-degenerate,
  \item $σ|_{X_{\reg}}$ is closed: $d(σ|_{X_{\reg}}) = 0$, and
  \item $σ|_{X_{\reg}}$ extends regularly to any resolution of singularities of
    $X$.
  \end{enumerate}
\end{defn}

\begin{lem}[Two-forms are holomorphic symplectic]\label{lem:symplectic}
  In the standard Setting~\ref{setting:holonomy}, assume that the sheaf $𝒯_X$ is
  strongly stable.  If there exists a non-vanishing form
  $0 ≠ σ ∈ H⁰ \bigl( X,\, Ω^{[2]}_X \bigr)$, then $σ$ is holomorphic symplectic,
  and any other reflexive differential form on $X$ is a constant multiple of the
  appropriate wedge power $σ Λ ⋯ Λ σ$ of $σ$.
\end{lem}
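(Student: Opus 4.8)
The plan is to use the Bochner principle together with the classification of strongly stable varieties from Theorem~\ref{thm:dichotomy-A} to pin down the holonomy group precisely, and then argue that a non-zero reflexive two-form is exactly a holonomy-invariant element of $\bigwedge^2 V^*$, forcing the symplectic case. First I would invoke Theorem~\ref{thm:dichotomy-A}: since $𝒯_X$ is strongly stable, the restricted holonomy $G°$ is either $\SU(n)$ (acting by the standard representation) or, with $n$ even, $\Sp\!\left(\frac n2\right)$ (again standard). The Bochner principle for reflexive tensors, Theorem~\ref{thm:holonomy}, gives an isomorphism $H⁰\bigl(X,\, Ω^{[2]}_X\bigr) \xrightarrow{\;\cong\;} \bigl(\textstyle\bigwedge^2 V^*\bigr)^{G}$, where $G$ is the full holonomy group. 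Since $G° \subseteq G$, a non-zero element of this space gives a $G°$-invariant vector in $\bigwedge^2 V^*$. By Theorem~\ref{thm:dichotomy-B} (or a direct invariant-theory computation), the standard representation $\SU(n)$ admits no invariant $2$-form for $n \ge 3$, while for $n=2$ and in the symplectic case there is a one-dimensional space of invariants. Hence the existence of $0 \ne σ$ forces $G°$ (and then $G$, since $G \subseteq \U(V)$ normalises $G°$ and must preserve the unique invariant line, but being unitary actually fixes it pointwise by semisimplicity as in the proof of Theorem~\ref{thm:holonomy}) to be $\Sp\!\left(\frac n2\right)$, with $n$ even.

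Next I would translate the $G°$-invariance of $σ_x \in \bigwedge^2 V^*$ into the three defining properties of a holomorphic symplectic form. Non-degeneracy of $σ|_{X_{\reg}}$ at $x$: the invariant $2$-form for the standard symplectic representation is, up to scalar, the standard symplectic form, which is non-degenerate; non-degeneracy then propagates to all of $X_{\reg}$ because $σ|_{X_{\reg}}$ is parallel (Theorem~\ref{thm:holonomy}) and parallel transport is an isometry preserving the form, so the rank of $σ$ is constant. Closedness: $σ|_{X_{\reg}}$ is parallel with respect to the Levi-Civita connection, and $d$ can be expressed via the covariant derivative (the exterior derivative is the antisymmetrisation of $\nabla$ for a torsion-free connection), so $\nabla σ = 0$ implies $dσ = 0$. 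Regular extension to a resolution: this is the extension theorem for reflexive differentials on klt spaces, \cite[Thm.~1.4]{GKKP11} (already used in the excerpt, e.g. in the proof of Lemma~\ref{lem:FDC0}), applied to $σ \in H⁰\bigl(X,\, Ω^{[2]}_X\bigr)$. This establishes that $σ$ is holomorphic symplectic.

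For the final assertion — that every reflexive differential form on $X$ is a constant multiple of some wedge power $σ \wedge \cdots \wedge σ$ — I would again use the Bochner principle: $H⁰\bigl(X,\, Ω^{[p]}_X\bigr) \cong \bigl(\bigwedge^p V^*\bigr)^{G}$, and since we have shown $G° = G = \Sp\!\left(\frac n2\right)$ acting standardly, classical invariant theory for the symplectic group says the algebra of invariants in the exterior algebra $\bigwedge^\bullet V^*$ is generated by the symplectic form (see \cite[Thm.~5.3.3]{MR2522486}, cited in the proof of Theorem~\ref{thm:dichotomy-B}). Concretely, $\bigl(\bigwedge^p V^*\bigr)^{G}$ is one-dimensional, spanned by $σ_x^{\wedge p/2}$, when $p$ is even and $0 \le p \le n$, and is zero otherwise. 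Matching this under the evaluation isomorphism shows every global reflexive $p$-form on $X$ is a scalar multiple of the appropriate power of $σ$.

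The main obstacle I anticipate is the careful handling of the full holonomy group $G$ versus $G°$: one must argue that the one-dimensional $G°$-invariant line in $\bigwedge^2 V^*$ is in fact pointwise $G$-fixed (not merely $G$-stable), which is what makes $σ$ descend to a genuine global reflexive form. This is exactly the issue addressed in Step~1 of the proof of Theorem~\ref{thm:holonomy}: one passes to a holonomy cover where $G = G°$, or invokes semisimplicity of $G°$ to kill the relevant character. The remaining differential-geometric points (non-degeneracy, closedness) are routine consequences of parallelism, and the extension to a resolution is a black-box application of \cite{GKKP11}; the invariant theory is standard.
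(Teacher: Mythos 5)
Your proof is correct, but it takes a genuinely more self-contained, differential-geometric route than the paper does. The paper's argument is essentially a dimension count plus two citations: it quotes \cite[Cor.~8.10]{GKP16} for the pointwise non-degeneracy of~$\sigma|_{X_{\reg}}$ and \cite[Prop.~1.4]{ExtApplications} for the extension to a resolution (from which closedness is then immediate, since log-canonical extension forces closedness), and then reads off the final claim directly from the inequalities in Item~\ref{il:hutch} of Theorem~\ref{thm:dichotomy-B} together with non-degeneracy of $\sigma^{\wedge p/2}$. You instead derive these properties \emph{ab~initio} from the Bochner principle and holonomy: non-degeneracy because the $\Sp$-invariant two-form in $\bigwedge^2 V^*$ is pointwise the standard symplectic form and parallel transport preserves it; closedness because for the (torsion-free) Levi-Civita connection $d$ is the antisymmetrisation of~$\nabla$, so $\nabla\sigma=0$ forces $d\sigma=0$ on $X_{\reg}$; and the generation statement from classical symplectic invariant theory applied to $(\bigwedge^p V^*)^G$. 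Both routes are sound; the paper's is shorter but leans on previously established algebro-geometric black boxes, while yours makes the differential-geometric mechanism transparent. Two small remarks: first, the paper extracts closedness \emph{from} the extension property rather than from $\nabla\sigma=0$, so your two points (closedness, extension) are independent arguments in your write-up whereas the paper derives one from the other; second, your care with $G$ versus $G°$ is correct but can be shortened since Theorem~\ref{thm:holonomy} already places $\sigma_x$ in $(\bigwedge^2 V^*)^G$, so $G$-invariance of $\sigma_x$ (and hence of all its wedge powers) is immediate without a separate character argument.
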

\begin{proof}
  The existence of $0 ≠ σ ∈ H⁰ \bigl( X,\, Ω^{[2]}_X \bigr)$ implies that we are
  in case \ref{il:Sp} of Theorem~\ref{thm:dichotomy-A}.  As $X$ is assumed to be
  projective and klt, the restriction of $σ$ to $X_{\reg}$ extends to any
  resolution of $X$ and is therefore automatically closed, see
  \cite[Prop.~1.4]{ExtApplications}.  The assertion that $σ|_{X_{\reg}}$ is
  everywhere non-degenerate has been shown in \cite[Cor.~8.10]{GKP16}.  Together
  with Item~\ref{il:hutch} of Theorem~\ref{thm:dichotomy-B} we conclude that
  every reflexive differential form on $X$ is a constant multiple of a wedge
  power of $σ$.
\end{proof}

Combine Corollary~\ref{cor:sirr2}, Theorem~\ref{thm:dichotomy-B}, and
Lemma~\ref{lem:symplectic} to obtain the following.

\begin{cor}
  In the standard Setting~\ref{setting:holonomy}, if $n$ is even and
  $G ≅ \Sp\!\left(\frac n2\right)$, then $X$ carries a holomorphic
  symplectic form $σ$ with the following property: if $γ:Y→ X$ is any
  quasi-étale cover, we have an isomorphism of algebras
  $$
  \bigoplus_{p=0}^{n} H⁰\bigl(Y,\,
  Ω_Y^{[p]}\bigr) = ℂ[γ^{[*]}σ].  \eqno\qed
  $$
\end{cor}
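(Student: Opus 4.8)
The plan is to combine the three cited results in the obvious way, being careful about the passage to quasi-étale covers. First I would recall the setup: by hypothesis $n$ is even and $G = \Hol(X_{\reg}, g_H) \cong \Sp\!\left(\frac n2\right)$, which in particular means $G = G°$, so Theorem~\ref{thm:dichotomy-A} (via Corollary~\ref{cor:sirr2}) tells us that $𝒯_X$ is strongly stable. The first concrete step is to produce the symplectic form: since $\Sp\!\left(\frac n2\right)$ fixes a non-zero element of $Λ^2 V^*$, the Bochner principle for tensors, Theorem~\ref{thm:holonomy}, yields a non-zero $σ ∈ H⁰\bigl(X,\, Ω_X^{[2]}\bigr)$, and then Lemma~\ref{lem:symplectic} upgrades $σ$ to a holomorphic symplectic form and shows every reflexive form on $X$ itself is a constant multiple of a wedge power of $σ$.

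Next I would address an arbitrary quasi-étale cover $γ : Y → X$. The key observation is that strong stability is preserved: by definition $𝒯_Y = γ^{[*]}𝒯_X$ is again strongly stable (this is built into \cite[Def.~7.2]{GKP16}), and $Y$ is again projective klt with numerically trivial canonical divisor by Reminder~\ref{remi:qec}. Moreover $H_Y := γ^*H$ is ample, so the standard Setting~\ref{setting:holonomy} applies to $(Y, H_Y)$. By Corollary~\ref{cor:independence} — or more directly because restricted holonomy is a quasi-étale invariant, Corollary~\ref{cor:bhqec2} — the restricted holonomy group of $(Y_{\reg}, g_{H_Y})$ is again $\Sp\!\left(\frac n2\right)$ acting standardly. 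In particular $Y$ is again in case~\ref{il:Sp} of Theorem~\ref{thm:dichotomy-A}, and $γ^{[*]}σ$ is a non-zero reflexive two-form on $Y$ (non-zero because $γ|_{Y°}$ is étale over $X_{\reg}$, so the pullback does not vanish identically). Therefore Lemma~\ref{lem:symplectic}, applied now to $Y$, gives that $\bigoplus_{p=0}^n H⁰\bigl(Y,\, Ω_Y^{[p]}\bigr)$ is spanned by the wedge powers $1, γ^{[*]}σ, (γ^{[*]}σ)^{Λ 2}, \dots, (γ^{[*]}σ)^{Λ n/2}$, i.e. equals $ℂ[γ^{[*]}σ]$ as a graded algebra.

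The only subtlety — and the one place where a line of justification is genuinely needed rather than a citation — is checking that the $p$-fold wedge product is taken in the right place and that $γ^{[*]}$ is compatible with wedge products on reflexive hulls. Since $γ|_{Y°} : Y° → X_{\reg}$ is étale and $Y°$ is big in $Y$ while $X_{\reg}$ is big in $X$, the pullback $(γ|_{Y°})^*$ is an isomorphism on the sheaf level over $Y°$ intertwining the exterior products, and both sides of the asserted equality are reflexive sheaves on $Y$ (Proposition~\ref{prop:HPB} / GAGA arguments as in the proof of Theorem~\ref{thm:holonomy}), hence agree globally once they agree on $Y°$. I expect no real obstacle here; the corollary is essentially a bookkeeping consequence of Theorem~\ref{thm:dichotomy-B}, Lemma~\ref{lem:symplectic}, and the stability of the hypotheses under quasi-étale covers. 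The statement, incidentally, says precisely that $X$ is \emph{irreducible holomorphic symplectic} in the sense of Definition~\ref{def:CY_and_IHS}, which is how it feeds into Corollary~\ref{cor:dichotomy_CY_IHS}.
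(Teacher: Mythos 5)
Your proof is correct and is essentially the elaboration the paper intends: the paper's proof is a one-line combination of Corollary~\ref{cor:sirr2}, Theorem~\ref{thm:dichotomy-B}, and Lemma~\ref{lem:symplectic}, and you unwind exactly that, correctly noting that $G$ connected forces $G=G°$, that the Bochner principle produces $\sigma$ from the $\Sp$-invariant bivector, and that the hypotheses persist under quasi-étale pullback via Corollary~\ref{cor:bhqec2} so Lemma~\ref{lem:symplectic} can be reapplied on $Y$. The only cosmetic quibble is the citation of Corollary~\ref{cor:independence}, which concerns change of polarisation rather than change of variety; the correct reference for invariance of restricted holonomy under quasi-étale covers is the one you also name, Corollary~\ref{cor:bhqec2}.
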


\subsection{Calabi-Yau and irreducible holomorphic symplectic varieties}\label{subsect:CY_and_IHS_cases}
\approvals{Daniel & yes \\ Henri & yes \\ Stefan & yes}

One can reformulate the results obtained in the previous subsection using the
terminology introduced in Definition~\ref{def:CY_and_IHS} as follows.

\begin{cor}[Dichotomy for varieties with strongly stable tangent sheaf]\label{cor:dichotomy_CY_IHS}
  In the standard Setting~\ref{setting:holonomy}, assume that $𝒯_X$ is strongly
  stable.  Then, one of the following cases occurs.
  \begin{enumerate}
  \item\label{il:dicho1} The restricted holonomy group is equal to $\SU(n)$, and
    if $γ: Y → X$ denotes a quasi-étale cover making $K_Y \sim 0$, then $Y$ is
    Calabi-Yau.
  \item\label{il:dicho2} The dimension of $X$ is even, the restricted holonomy
    group is equal to $\Sp\!\left(\frac n2\right)$, and if $γ: Y → X$ denotes a
    quasi-étale cover making $G° = G_Y° = G_Y$, then $Y$ is irreducible
    holomorphic symplectic.
  \end{enumerate}
\end{cor}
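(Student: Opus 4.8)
The plan is to read off the statement from the classification of restricted holonomy for strongly stable tangent sheaves, combined with the Bochner principle, while being careful about the quantification over further quasi-étale covers hidden in Definition~\ref{def:CY_and_IHS}. First I would use Corollary~\ref{cor:sirr2}: strong stability of $\sT_X$ means the representation of $G^\circ$ on $V$ is irreducible, so the canonical decomposition of Construction~\ref{constr:canonDecompRestr} has a single nonzero factor, and Proposition~\ref{prop:factors} then forces $G^\circ\cong\SU(n)$ with its standard action, or $n$ even and $G^\circ\cong\Sp\!\left(\frac n2\right)$ with its standard action. (For $n=2$ the two coincide, which is harmless.) This gives the two cases; in both of them, Theorem~\ref{thm:dichotomy-A} supplies a quasi-étale cover on which holonomy and restricted holonomy agree.

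\textbf{Calabi--Yau case.} Here I would take $\gamma\colon Y\to X$ to be any quasi-étale cover with $K_Y\sim 0$, for instance the global index-one cover of Proposition~\ref{prop:global_index_one_cover}. Then $Y$ is a normal projective variety with canonical singularities, $\sO_Y\cong\omega_Y$, and $\dim Y=n\geq 2$, so Definition~\ref{def:CY_and_IHS} applies to it. To check that $Y$ is Calabi--Yau I must verify $H^0\bigl(W,\Omega_W^{[p]}\bigr)=0$ for all $0<p<n$ and all quasi-étale covers $W\to Y$. Such a $W$ is again a quasi-étale cover of $X$, hence $\sT_W$ is strongly stable (a quasi-étale cover of $W$ is one of $X$), and by Remark~\ref{rem:cdqec0}, i.e.\ Corollary~\ref{cor:bhqec2}, the restricted holonomy of $W_{\reg}$ is still isomorphic to $\SU(n)$. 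Item~\ref{il:starsky} of Theorem~\ref{thm:dichotomy-B}, applied to $W$, then gives $h^0\bigl(W,\Omega_W^{[p]}\bigr)\leq 0$, hence $=0$, in the range $0<p<n$. So $Y$ is Calabi--Yau, which is case~\ref{il:dicho1}.

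\textbf{Symplectic case.} Now $n$ is even, and I would fix, using Theorem~\ref{thm:dichotomy-A}, a quasi-étale cover $\gamma\colon Y\to X$ with $G^\circ=G_Y^\circ=G_Y$. Since $\sT_Y$ is strongly stable and $G_Y=G_Y^\circ$, the equality case of Item~\ref{il:hutch} of Theorem~\ref{thm:dichotomy-B} gives $h^0\bigl(Y,\Omega_Y^{[2]}\bigr)=1$; picking a generator $\sigma$, Lemma~\ref{lem:symplectic} shows $\sigma$ is holomorphic symplectic and every reflexive form on $Y$ is a scalar multiple of a wedge power of $\sigma$. In particular $\sigma^{\wedge(n/2)}$ trivialises $\omega_Y$ over the big open set $Y_{\reg}$, so $\sO_Y\cong\omega_Y$ and $Y$ is canonical of dimension $n\geq 2$; thus Definition~\ref{def:CY_and_IHS} applies to $Y$. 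Finally, for an arbitrary quasi-étale cover $\delta\colon W\to Y$, the sheaf $\sT_W$ is strongly stable and $\delta^{[*]}\sigma$ is a nonzero reflexive two-form on $W$, so Lemma~\ref{lem:symplectic} applied to $W$ shows that $\bigoplus_{p}H^0\bigl(W,\Omega_W^{[p]}\bigr)$ is generated as an algebra by $\delta^{[*]}\sigma$. This is exactly the defining condition, so $Y$ is irreducible holomorphic symplectic, which is case~\ref{il:dicho2}.

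The work is mostly assembly, and the step that needs the most care is ensuring that the hypotheses used on $Y$ survive passing to \emph{every} further quasi-étale cover $W\to Y$, as required by Definition~\ref{def:CY_and_IHS}: strong stability does so by definition, and the type of the restricted holonomy does so by Corollary~\ref{cor:bhqec2}. The genuine asymmetry between the two cases is that in the $\SU$-case the mere inequality in Item~\ref{il:starsky} of Theorem~\ref{thm:dichotomy-B} already forces the intermediate forms to vanish, so any cover trivialising $K$ works; in the $\Sp$-case one needs the nonzero symplectic two-form, which is only produced by the equality case of Theorem~\ref{thm:dichotomy-B}, and that is why the intermediate cover $Y$ must be chosen with $G_Y=G_Y^\circ$ via Theorem~\ref{thm:dichotomy-A}.
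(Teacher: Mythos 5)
Your proposal is correct and follows essentially the route the paper itself indicates: the paper gives no written proof of the corollary, presenting it as a reformulation of Theorem~\ref{thm:dichotomy-A}, Theorem~\ref{thm:dichotomy-B}, and Lemma~\ref{lem:symplectic}, and your argument assembles precisely these ingredients, with the additional (and appropriate) care of tracking how strong stability and restricted holonomy persist under further quasi-étale covers to meet the quantifier in Definition~\ref{def:CY_and_IHS}.
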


\begin{rem}
  If $n =2$, the definition of CY and IHS varieties coincide.  However, if
  $n ≥ 3$, then \ref{il:dicho1} and \ref{il:dicho2} are mutually exclusive.  The
  tangent sheaf of a CY or IHS variety is strongly stable by
  \cite[Prop.~8.20]{GKP16}.
\end{rem}

\begin{rem}[Varieties with linearly trivial canonical divisor and strongly stable tangent sheaf]
  Corollary~\ref{cor:dichotomy_CY_IHS} implies that a normal projective variety
  with at worst canonical singularities, \emph{linearly trivial canonical
    divisor}, and strongly stable tangent sheaf is either Calabi-Yau, or admits
  a finite, quasi-étale cover that is an irreducible holomorphic symplectic
  variety; Example~\ref{ex:no2form} describes a variety with linearly trivial
  canonical divisor and no two-form that admits a quasi-étale cover that is IHS.
  This shows that in Item~\ref{il:dicho2} above taking a quasi-étale cover in
  general cannot be avoided.  The reader is encouraged to compare this
  observation with the smooth situation, see \cite[Rem.~8.22]{GKP16}.
\end{rem}

\subsection{Characterisation of IHS and CY varieties in terms of holonomy}\label{prop:page}
\approvals{Daniel & yes \\ Henri & yes \\ Stefan & yes}

We recall that the definition of CY and IHS varieties,
Definition~\ref{def:CY_and_IHS}, is formulated in purely algebro-geometric
terms.  We are now in a position to give a complementary characterisation of
these two types of varieties purely in terms of differential-geometric holonomy,
as formulated in Proposition~\ref{prop:intrinsic} and in complete accordance
with the smooth theory.

\begin{prop}[Characterisation of CY and IHS varieties in terms of holonomy]\label{prop:intrinsicn}
  In the standard Setting~\ref{setting:holonomy}, the following conditions are
  equivalent.
  \begin{enumerate}
  \item\label{iln:intr1} $X$ is a Calabi-Yau variety.
  \item\label{iln:intr2} $\Hol(X_{\reg},g_H)$ is connected and
    $H⁰ \bigl( X, Ω_{ X}^{[p]} \bigr) = \{0\}$ for all $0 < p < n$.
  \item\label{iln:intr3} $\Hol(X_{\reg},g_H)$ is isomorphic to $\SU(n)$.
  \end{enumerate}
  Analogously, the following conditions are equivalent.
  \begin{enumerate}
  \item\label{iln:intr4} $X$ is an irreducible holomorphic symplectic variety.
  \item\label{iln:intr5} $\Hol(X_{\reg},g_H)$ is connected, and there exists a
    holomorphic symplectic two-form $σ ∈ H⁰ \bigl(X, Ω_X^{[2]} \bigr)$ such that
    $⊕_{p=0}^n H⁰(X,Ω_X^{[p]})=ℂ[σ]$.
  \item\label{iln:intr6} $\Hol(X_{\reg},g_H)$ is isomorphic to
    $\Sp\!\left(\frac n2\right)$.
  \end{enumerate}
\end{prop}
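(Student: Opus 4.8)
The plan is to prove the two equivalence chains simultaneously, exploiting the structural results already in hand. First I would observe that both CY and IHS varieties are, by Definition~\ref{def:CY_and_IHS}, klt (indeed canonical) with $\mathcal{O}_X \cong \omega_X$, so they fit into the standard Setting~\ref{setting:holonomy} and all of Part~\ref{part:II}--Part~\ref{part:III} applies. The key input is that CY and IHS varieties have strongly stable tangent sheaf: this is \cite[Prop.~8.20]{GKP16}, recorded in the remark after Corollary~\ref{cor:dichotomy_CY_IHS}. Hence for the implications starting from \ref{iln:intr1} or \ref{iln:intr4} I may invoke Theorem~\ref{thm:dichotomy-A} and Theorem~\ref{thm:dichotomy-B}.

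For the first chain: the implication \ref{iln:intr1} $\Rightarrow$ \ref{iln:intr3} goes as follows. Since $X$ is CY, $\mathcal{T}_X$ is strongly stable, so by Theorem~\ref{thm:dichotomy-A} the restricted holonomy $G°$ is either $\SU(n)$ or $\Sp(n/2)$; the latter is excluded because it would force $h^0(X, \Omega_X^{[2]}) \geq 1$ by Theorem~\ref{thm:dichotomy-B}, contradicting the vanishing $H^0(Y,\Omega_Y^{[p]}) = 0$ for $0 < p < n$ required of all quasi-étale covers $Y \to X$. So $G° \cong \SU(n)$. To upgrade $G°$ to $G$: since $\mathcal{O}_X \cong \omega_X$ there is a nowhere-zero holomorphic top-form on $X_{\reg}$, which by the Bochner principle Theorem~\ref{thm:holonomy} is parallel, forcing $G \subseteq \SU(n)$; combined with $G° \subseteq G$ and $G° = \SU(n)$ we get $G = \SU(n)$, in particular $G$ is connected and isomorphic to $\SU(n)$. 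This gives \ref{iln:intr3}, and \ref{iln:intr3} $\Rightarrow$ \ref{iln:intr2} is trivial using that $\SU(n)$ is connected together with the Bochner-principle identification $H^0(X,\Omega_X^{[p]}) \cong (\Lambda^p V^*)^{\SU(n)} = \{0\}$ for $0 < p < n$ (classical invariant theory, as in the proof of Theorem~\ref{thm:dichotomy-B}). For \ref{iln:intr2} $\Rightarrow$ \ref{iln:intr1}: connectedness of $\Hol$ means $G = G°$, so by Corollary~\ref{cor:flatDecomp} the augmented irregularity equals $\dim V_0$; if $V_0 \neq \{0\}$ Theorem~\ref{thm:augm_irreg_rev} would produce symmetric differentials and also, via $\Lambda^p$ invariants at a flat factor, a nonzero form in degree $0 < p < n$ — so one argues $V_0 = \{0\}$, then $\tilde q(X) = 0$; next, the vanishing of intermediate forms together with $G = G°$ and Corollary~\ref{cor:sirr2}/Theorem~\ref{thm:dichotomy-A} forces $G° = \SU(n)$ (if it were a nontrivial product or $\Sp$, Theorem~\ref{thm:dichotomy-B} would give intermediate forms on $X$ or a quasi-étale cover). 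Finally for any quasi-étale $Y \to X$, the restricted holonomy is unchanged (Remark~\ref{rem:cdqec0}), so $G_Y° \cong \SU(n)$ and $H^0(Y,\Omega_Y^{[p]}) \cong (\Lambda^p V^*)^{G_Y} = \{0\}$ for $0 < p < n$ by the same invariant theory; hence $X$ is CY.

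The second chain runs in parallel. \ref{iln:intr4} $\Rightarrow$ \ref{iln:intr6}: IHS implies $\mathcal{T}_X$ strongly stable and the existence of a holomorphic symplectic $\sigma \in H^0(X,\Omega_X^{[2]})$, so by Theorem~\ref{thm:dichotomy-A} we are in the $\Sp(n/2)$ case (the $\SU$ case has no two-form by Theorem~\ref{thm:dichotomy-B}). For connectedness, note that by Lemma~\ref{lem:symplectic} every reflexive form on $X$ — and, pulling back, on each quasi-étale cover, where $\mathcal{T}$ is still strongly stable with the same restricted holonomy — is a power of $\sigma$; thus $\bigoplus_p H^0(Y,\Omega_Y^{[p]})$ has the exact dimension count of $(\Lambda^\bullet V^*)^{\Sp(n/2)}$, which by the equality clause of Theorem~\ref{thm:dichotomy-B} forces $G_Y = G_Y°$ for a holonomy cover and in fact $G = G°$ on $X$ itself (otherwise the generic fibre forms would be strictly fewer than the $\Sp(n/2)$-invariants predict, as $(\Lambda^p V^*)^{G} \subsetneq (\Lambda^p V^*)^{G°}$ for some even $p$); so $G \cong \Sp(n/2)$. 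Then \ref{iln:intr6} $\Rightarrow$ \ref{iln:intr5} follows from the Corollary immediately after Lemma~\ref{lem:symplectic}, which gives the algebra isomorphism $\bigoplus_p H^0(Y,\Omega_Y^{[p]}) = \mathbb{C}[\gamma^{[*]}\sigma]$ for all quasi-étale covers — in particular for $Y = X$ — together with $\sigma$ being holomorphic symplectic by Lemma~\ref{lem:symplectic}. Finally \ref{iln:intr5} $\Rightarrow$ \ref{iln:intr4}: connectedness gives $G = G°$, so $V_0 = \{0\}$ and $\tilde q(X) = 0$ as before; the existence of a nonzero two-form $\sigma$ forces the $\Sp$ case of Theorem~\ref{thm:dichotomy-A}, and since $G$ is connected and irreducibly acting $G \cong \Sp(n/2)$ (ruling out nontrivial products, which would be detected by extra forms); then the Corollary after Lemma~\ref{lem:symplectic} shows the exterior algebra of reflexive forms on every quasi-étale cover is generated by the pullback of $\sigma$, which is exactly the IHS condition.

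The main obstacle I anticipate is the bookkeeping needed to rule out nontrivial product holonomy (i.e.\ to pass from "strongly stable $\Leftrightarrow$ irreducible restricted holonomy, which is $\SU$ or $\Sp$" via Corollary~\ref{cor:sirr2} and Proposition~\ref{prop:factors}, to the statement that the \emph{full} holonomy is connected and equal to a single $\SU(n)$ or $\Sp(n/2)$) purely from the form-theoretic hypotheses in \ref{iln:intr2} and \ref{iln:intr5}. The cleanest route is probably not to reason about $G$ directly but to go through quasi-étale covers: pass to a holonomy cover $\gamma: Y \to X$ (Theorem~\ref{thm:holonomyCover}), note $\mathcal{T}_Y$ is again strongly stable and the restricted holonomy is unchanged, apply the clean equalities of Theorem~\ref{thm:dichotomy-B} on $Y$, and then descend using the injectivity of $\gamma^{[*]}$ on reflexive forms together with the hypothesis that the form algebra on $X$ already realises the maximal dimension. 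One should also double-check the edge case $n = 2$, where the two chains describe the same varieties (K3-type surfaces), to make sure no statement becomes vacuous or contradictory; there Theorem~\ref{thm:dichotomy-A}'s two cases coincide and everything is consistent.
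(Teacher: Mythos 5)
Your proposal is correct and follows essentially the same route as the paper: reduce to a holonomy cover, invoke the Bochner principle for reflexive tensors and the classification of restricted holonomy (Proposition~\ref{prop:factors}), and use the form counts on the cover to pin down the full holonomy group. Your main variation is to front-load the observation that CY and IHS varieties have strongly stable tangent sheaf (\cite[Prop.~8.20]{GKP16}), so that Theorem~\ref{thm:dichotomy-A} and Corollary~\ref{cor:sirr2} deliver irreducibility of the restricted holonomy immediately; the paper instead bypasses strong stability and counts the $m$ independent forms on the holonomy cover produced by the factors $G_i^\circ$ directly. Both are fine, and you correctly identify the nowhere-zero top $n$-form on $X_{\reg}$ as giving the pincer $G^\circ = \SU(n) \subseteq G \subseteq \SU(n)$, which is also the remark the paper records after Theorem~\ref{thm:dichotomy-B}.

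Two imprecisions are worth tidying, though neither is a genuine gap. First, in the exclusion of the $\Sp$-case for \ref{iln:intr1}~$\Rightarrow$~\ref{iln:intr3}, the lower bound $h^0(X,\Omega_X^{[2]}) \geq 1$ does not come from Theorem~\ref{thm:dichotomy-B} on $X$ itself; the equality clause there requires $G = G^\circ$, so you must first pass to the holonomy cover $Y$ and derive $h^0(Y,\Omega_Y^{[2]}) = 1$, then contradict the CY condition \emph{for that cover}. You gesture at this but the wording names $X$. Second, in \ref{iln:intr5}~$\Rightarrow$~\ref{iln:intr4} you cannot appeal to Theorem~\ref{thm:dichotomy-A} until strong stability (equivalently, irreducibility of $G^\circ \curvearrowright V$) is known; the logical order should be: connectedness gives $G = G^\circ$, the form hypothesis rules out $V_0 \neq 0$ and multiple factors in the decomposition from Proposition~\ref{prop:factors}, \emph{then} Corollary~\ref{cor:sirr2} gives strong stability and Theorem~\ref{thm:dichotomy-A} the final identification of $G$. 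With these re-orderings your argument matches the paper's.
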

\begin{proof}
  Notice first that any of the conditions \ref{iln:intr1}--\ref{iln:intr6}
  implies that $K_X$ is linearly equivalent to zero, and in particular that $X$
  has canonical singularities.  This is clear for \ref{iln:intr1},
  \ref{iln:intr4} and \ref{iln:intr5}.  For \ref{iln:intr3} and \ref{iln:intr6},
  this is a consequence of the Bochner principle for reflexive forms,
  Theorem~\ref{thm:holonomy}.  Finally, for \ref{iln:intr2} this follows from
  Proposition~\ref{prop:factors} and the Bochner principle for reflexive forms.

  So in any case, there exists a nowhere vanishing $n$-form on $X$.  The Bochner
  principle hence implies that $G ⊆ \SU(n)$.  Using the notation of
  Proposition~\ref{prop:factors} (``Classification of restricted holonomy''), we
  obtain that $G° = \bigtimes_{i=1}^m G_i°$ acts as a product, and if
  $n_i = \dim V_i$ then either $G_i° = \SU(n_i)$ or $n_i$ is even and
  $G_i° = \Sp\!\left(\frac{n_i}2\right)$.

  \subsubsection*{Implication \ref{iln:intr1} $⇒$ \ref{iln:intr3}.}

  We know that $G° ⊆ G ⊆ \SU(n)$ and we need to prove that equality holds in
  both steps.  Using Theorem~\ref{thm:holonomyCover}, one can find a quasi-étale
  holonomy cover $γ : Y → X$ such that $G_Y = \bigtimes_{i=1}^m G_i°$.  By the
  Bochner principle for reflexive forms, this yields $m$ independent reflexive
  holomorphic forms of positive degree on $Y$.  Given the restrictions on the
  algebra of reflexive differential forms on $Y$ dictated by the CY condition,
  one gets successively that $m = 1$ and $G° = \SU(n)$.  The conclusion follows.

  \subsubsection*{Implication \ref{iln:intr3} $⇒$ \ref{iln:intr2}}
  
  This is a direct application of the Bochner principle for reflexive forms.

  \subsubsection*{Implication \ref{iln:intr2} $⇒$ \ref{iln:intr1}.}
  
  We have $G°=G ⊆ \SU(n)$.  Using Proposition~\ref{prop:factors}, one gets that
  for every $1≤ i ≤ m$, $G_i=G_i°$ is either $\SU(n_i)$ or
  $\Sp\!\left(\frac{n_i}2\right)$, where $n_i=\dim V_i$.  By the Bochner
  principle for reflexive forms, we have that $h⁰(X,Ω_X^{[n_i]}) ≥ 1$, which
  implies that for all $1≤ i≤ m$, one either has $n_i=0$ or $n_i=n$.  In
  particular, $m=1$.  Moreover, given the restrictions on the algebra of
  reflexive differential forms on $X$ dictated by the CY condition, $G$ cannot
  be the unitary symplectic group; it follows that $G=\SU(n)$.  Now, if $γ:Y→ X$
  is any quasi-étale cover, then $G°_Y=G°=\SU(n)$, and hence the Bochner
  principle implies that $X$ is CY.

  \Preprint{%
    \subsubsection*{Implication \ref{iln:intr4} $⇒$ \ref{iln:intr6}.}
    
    By the Bochner principle, $G°⊆ G ⊆ \Sp\!\left(\frac n2\right)$ and we need
    to prove the previous inclusions are equalities.  A short computation shows
    that all $n_i$'s are even and $G_i°⊆ \Sp(n_i/2)$.  Using
    Theorem~\ref{thm:holonomyCover}, one can then find a quasi-étale holonomy
    cover $γ : Y → X$ such that
    $G_Y = \bigtimes_{i=1}^m \Sp\!\left(\frac {n_i}2\right)$.  By the Bochner
    principle, this yields $m$ independent reflexive holomorphic two-forms on
    $Y$.  As $X$ is IHS, this implies that $m=1$ and
    $G°= \Sp\!\left(\frac n2\right)$.

    \subsubsection*{Implication \ref{iln:intr6} $⇒$ \ref{iln:intr5}.}

    Again, this is a direct application of the Bochner principle for reflexive
    forms.

    \subsubsection*{Implication \ref{iln:intr5} $⇒$ \ref{iln:intr4}.}

    First, the Bochner principle shows that one has
    $G° = G ⊂ \Sp\!\left(\frac n2\right)$.  As in the Implication
    ``\ref{iln:intr2} $⇒$ \ref{iln:intr1}'' above, one finds that the $n_i$'s
    are even and that $G_i = G°_i = \Sp\!\left(\frac {n_i}2\right)$.  By the
    Bochner principle again, $h⁰(X,Ω_X^{[2]})= m$.  From this it follows that
    $m = 1$, that is, $G= \Sp\!\left(\frac n2\right)$.  Now, if $γ:Y→ X$ is any
    quasi-étale cover, then $G°_Y=G°=\Sp\!\left(\frac n2\right)$ and the Bochner
    principle shows that $X$ is IHS.

    This concludes the proof of Proposition~\ref{prop:intrinsic}.}%
  \Publication{The second set of equivalences is proven analogously.  The
    preprint version of this paper,
    \href{https://arxiv.org/abs/1704.01408}{arXiv:1704.01408}, spells out all
    details.}
\end{proof}

\subsection{Characterisation in terms of algebraic holonomy groups}\label{sec:alghol}
\approvals{Daniel & yes \\ Henri & yes \\ Stefan & yes}
  
In this section, we characterise the two cases of the dichotomy in terms of
stability properties of powers of the (co)tangent sheaf.

\subsubsection{Holonomy groups of stable bundles}
\approvals{Daniel & yes \\ Henri & yes \\ Stefan & yes}

We start by giving a quick introduction to the theory of algebraic holonomy
groups of stable reflexive sheaves, as developed in \cite{BalajiKollar08}.  We
follow \cite[Sect.~6.19]{Dru16} and refer the reader to these two sources for
references of the classical results mentioned below.
  
\begin{thmdef}[Algebraic holonomy]\label{defn:alg_holonomy}
  Let $X$ be a normal projective variety, and let $ℰ$ be a reflexive sheaf on
  $X$, locally free away from a small subset $B ⊂ X$.  Suppose that $ℰ$ is
  stable and of slope $μ_H(ℰ) = 0$ with respect to an ample Cartier divisor $H$
  on $X$.  Let $x ∈ X_{\reg} ∖ B$ and let $ℰ_x$ be the fibre of $ℰ$ over
  $x$.

  Then, there exists a unique smallest subgroup $H_x(ℰ) ⊂ \GL_{ℂ}(ℰ_x)$, called
  \emph{algebraic holonomy group of $ℰ$ at $x$}, such that the following holds.
  For every smooth, pointed, projective curve $(D, y)$ and every pointed
  morphism $g: (D, y) → (X, x)$ where $ℰ$ is locally free along $g(D)$ and where
  $g^*(ℰ)$ is polystable (and hence unitary flat by a theorem of Narasimhan and
  Seshadri), the image of the resulting representation of
  $π_1(D, y) → \GL_ℂ((g^* ℰ)_y) = \GL_ℂ(ℰ_x)$ is contained in $H_x(ℰ)$.  \qed
\end{thmdef}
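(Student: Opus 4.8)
The statement to prove is Theorem and Definition~\ref{defn:alg_holonomy} (``Algebraic holonomy''), asserting the existence of a unique smallest subgroup $H_x(ℰ) ⊂ \GL_ℂ(ℰ_x)$ with the stated property. The plan is to follow the Tannakian / Narasimhan--Seshadri approach used by Balaji--Kollár, reducing the existence of the smallest such group to a general fact about algebraic subgroups of $\GL_ℂ(ℰ_x)$: an arbitrary intersection of algebraic subgroups is again algebraic (being a Zariski-closed subset stable under multiplication and inversion), hence there is a well-defined smallest algebraic subgroup containing the image of every admissible representation.

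First I would make precise the collection of representations at stake. For each smooth pointed projective curve $(D, y)$ and each pointed morphism $g: (D,y) → (X,x)$ such that $ℰ$ is locally free along $g(D)$ and $g^*ℰ$ is polystable of degree zero, the theorem of Narasimhan--Seshadri equips $g^*ℰ$ with a flat unitary structure, giving a representation $ρ_g: π_1(D,y) → \GL_ℂ\bigl((g^*ℰ)_y\bigr) = \GL_ℂ(ℰ_x)$, where the last identification uses the canonical isomorphism of fibres $(g^*ℰ)_y \cong ℰ_{g(y)} = ℰ_x$. Let $\mathcal{S}$ be the (nonempty, since constant maps or the Mehta--Ramanathan restriction give admissible curves) set of all Zariski closures $\overline{ρ_g(π_1(D,y))} ⊂ \GL_ℂ(ℰ_x)$ obtained this way. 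Then I would define $H_x(ℰ)$ to be the smallest algebraic subgroup of $\GL_ℂ(ℰ_x)$ containing every member of $\mathcal{S}$, i.e.\ the Zariski closure of the subgroup generated by $\bigcup_{g}ρ_g(π_1(D,y))$. Because each $ρ_g(π_1)$ lands in the unitary group of a flat Hermitian metric, $H_x(ℰ)$ is in fact contained in $\GL_ℂ(ℰ_x)$ as a group preserving a Hermitian form, so it is reductive; this is worth recording but not strictly needed for existence.

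The heart of the argument — and the step I expect to be the main obstacle to write cleanly — is the verification that $H_x(ℰ)$ so defined actually has the required universal property, namely that \emph{every} admissible $ρ_g$ has image inside $H_x(ℰ)$, and that $H_x(ℰ)$ is genuinely the smallest such: this is immediate from the construction as a generated subgroup, \emph{provided} one has checked that the class of admissible curves is rich enough that the definition does not depend on spurious choices (parametrisation of $D$, base point within the fibre). The delicate point inherited from \cite{BalajiKollar08} and \cite[Sect.~6.19]{Dru16} is the compatibility under change of point $x$ (parallel transport along admissible curves conjugates the holonomy groups at different points), and the fact that restricting a stable degree-zero reflexive sheaf to a general complete intersection curve of high degree stays polystable — this is where Mehta--Ramanathan (in Flenner's reflexive version, \cite[Thm.~7.1.1]{MR2665168}) enters. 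I would therefore structure the proof as: (1) recall Narasimhan--Seshadri to produce the representations; (2) invoke the Noetherian property of algebraic subgroups of $\GL_ℂ(ℰ_x)$ to get the smallest group containing all their images, which is $H_x(ℰ)$ by definition; (3) cite \cite{BalajiKollar08}, \cite[Sect.~6.19]{Dru16} for the independence of $x$ up to conjugation and for the existence of sufficiently many polystable restrictions, thereby justifying that $H_x(ℰ)$ is well-defined and has the claimed minimality. Since the statement is labelled as a recollection with a \qed in the excerpt, the expectation is that all substantive content is attributed to the cited sources, and the ``proof'' is essentially this organisational reduction.
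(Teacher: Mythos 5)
Your proposal is correct and takes essentially the same approach as the paper: the paper records this Theorem-and-Definition as a recollection with a terminal \qed and simply refers to Balaji--Kollár~\cite{BalajiKollar08} and Druel~\cite[Sect.~6.19]{Dru16} for the substance, which is exactly what you do. Your route (Narasimhan--Seshadri to produce the representations, Noetherianity of algebraic subgroups of $\GL_{ℂ}(ℰ_x)$ for the existence of a smallest one, Mehta--Ramanathan/Flenner for the supply of admissible polystable restrictions, and well-definedness under change of curve and basepoint delegated to the cited sources) is an accurate reconstruction of the content behind that citation.
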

  
We emphasise that in contrast to the discussion of differential-geometric
holonomy groups in previous parts of the paper, the above construction is
algebraic.

\begin{rem}
  \Publication{We refer the reader to \cite[Thm.~20]{BalajiKollar08} for a
    further characterisation of algebraic holonomy groups.  More explanations
    regarding this result are given in the preprint version of this
    paper.}\Preprint{Refining the restriction theorem of Mehta and Ramanathan,
    Graf has shown the following in \cite[Thm.~1.1]{MR3520712}.  If $m ≫ 0$ is
    sufficiently divisible and $C ⊂ X_{\reg} ∖ B$ is a curve containing
    $x$ that is obtained as a complete intersection of sufficiently general
    elements of $|mH|$ passing through $x$, the restriction $ℰ|_C$ is a stable
    vector bundle on $C$.  Again by the theorem of Narasimhan-Seshadri, $ℰ|_C$
    corresponds to a unique unitary representation $ρ: π_1(C, x) → U(ℰ_x)$ with
    respect to some Hermitian form on $ℰ_x$.  Balaji and Kollár prove in
    \cite[Thm.~20]{BalajiKollar08} that if $m ≫ 0$ is sufficiently large and if
    $C$ is a sufficiently general complete intersection curve through
    $x ∈ X_{\reg} ∖ B$, then the algebraic holonomy $H_x(ℰ)$ equals the
    Zariski-closure of $ρ\bigl(π_1(C, x) \bigr)$ in $\GL_ℂ(ℰ_x)$.  Moreover,
    $H_x(ℰ)$ is the smallest algebraic subgroup such that for every curve
    $x ∈ C ⊂ X$ such that $ℰ|_C$ is stable, the image of the Narasimhan-Seshadri
    representation is contained in $H_x(ℰ)$.  In particular, $H_x(ℰ)$ is
    independent of the ample divisor $H$.}
\end{rem}
    
\begin{rem}
  Connectivity properties of the algebraic holonomy groups are closely connected
  to the question whether a given stable sheaf of degree zero is actually
  strongly stable, see \cite[Lem.~6.22]{Dru16}.
\end{rem}
  
The following Bochner principle for the algebraic holonomy group provides a link
to differential-geometric holonomy groups.
  
\begin{prop}[\protect{Bochner principle for algebraic holonomy, \cite[Thm.~20(3)]{BalajiKollar08}}]\label{prop:algebraicholonomyprinciple}
  Setup as in Definition~\ref{defn:alg_holonomy}.  Then, for every $m, n ∈ ℕ$,
  the evaluation map gives a one-to-one correspondence between direct summands
  of the reflexive tensor product $\bigl(ℰ^{⊗ m} ⊗ (ℰ^*)^{⊗ n}\bigr)^{**}$ and
  $H_x(ℰ)$-invariant subspaces of $ℰ_x^{⊗ m} ⊗ (ℰ_x^*)^{⊗ n}$.  \qed
\end{prop}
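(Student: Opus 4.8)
The plan is to reduce the whole statement to a single sufficiently general complete intersection curve through $x$, where the Narasimhan--Seshadri correspondence converts it into elementary representation theory, and then to transport the resulting decompositions back to $X$ by a restriction theorem for global sections of reflexive sheaves. Throughout, write $ℰ_{m,n} := \bigl(ℰ^{⊗ m}⊗(ℰ^*)^{⊗ n}\bigr)^{**}$. Since $ℰ$ is stable of slope zero, hence polystable, and since in characteristic zero the reflexive tensor operations preserve polystability of slope zero, both $ℰ_{m,n}$ and $\sEnd(ℰ_{m,n}) = (ℰ_{m,n}⊗ℰ_{m,n}^*)^{**}$ are reflexive (and polystable of slope zero with respect to $H$). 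A direct summand $ℱ ⊆ ℰ_{m,n}$ is the same datum as an idempotent $e ∈ H⁰\bigl(X,\,\sEnd(ℰ_{m,n})\bigr)$, and the subspace attached to $ℱ$ by evaluation at $x$ is $ℱ_x = \img(e_x) ⊆ (ℰ_{m,n})_x$. So it suffices to understand $H⁰\bigl(X,\,\sEnd(ℰ_{m,n})\bigr)$ and its idempotents through evaluation at $x$.

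First I would pass to a curve. By the refined Mehta--Ramanathan restriction theorem for reflexive sheaves (Flenner's version \cite[Thm.~7.1.1]{MR2665168} together with Graf's refinement \cite[Thm.~1.1]{MR3520712}), for $m' ≫ 0$ and a sufficiently general complete intersection curve $C$ of $n-1$ members of $|m'H|$ passing through $x$, the curve $C$ avoids the small set $B ∪ X_{\sing}$, the restriction $ℰ|_C$ is a stable vector bundle of degree zero, and consequently $ℰ_{m,n}|_C = (ℰ|_C)^{⊗ m}⊗((ℰ|_C)^*)^{⊗ n}$ is polystable of degree zero. By Narasimhan--Seshadri, $ℰ|_C$ corresponds to an irreducible unitary representation $ρ_C : π_1(C,x) → \U(ℰ_x)$, and $ℰ_{m,n}|_C$ to $ρ_C^{⊗ m}⊗(ρ_C^*)^{⊗ n}$. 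On the curve the correspondence is immediate: $\sEnd(ℰ_{m,n})|_C$ is polystable of degree zero, so any nonzero global section $\mathcal{O}_C → \sEnd(ℰ_{m,n})|_C$ is a morphism of polystable degree-zero bundles, hence spans a trivial direct summand and is in particular parallel; thus $H⁰\bigl(C,\,\sEnd(ℰ_{m,n})|_C\bigr)$ consists exactly of the endomorphisms of $(ℰ_{m,n})_x$ commuting with $ρ_C(π_1(C,x))$, and the idempotents among them correspond precisely to the $ρ_C(π_1(C,x))$-invariant subspaces of $(ℰ_{m,n})_x$. Since the stabiliser of a linear subspace in $\GL$ is Zariski-closed, such a subspace is $ρ_C(π_1(C,x))$-invariant if and only if it is invariant under the Zariski closure of $ρ_C(π_1(C,x))$, which by the Balaji--Kollár description of the algebraic holonomy group (valid for $m' ≫ 0$ and $C$ general, \cite[Thm.~20]{BalajiKollar08}) equals $H_x(ℰ)$.

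The main point that remains---and the step I expect to be the real obstacle---is that \emph{restriction to $C$ is a bijection between direct summands of $ℰ_{m,n}$ and direct summands of $ℰ_{m,n}|_C$}, so that the correspondence just established on $C$ is in fact one on $X$. The key input is that for $m' ≫ 0$ and $C$ general as above, the restriction map $H⁰\bigl(X,\,\sEnd(ℰ_{m,n})\bigr) → H⁰\bigl(C,\,\sEnd(ℰ_{m,n})|_C\bigr)$ is an isomorphism. Injectivity is the Enriques--Severi--Zariski lemma applied to the reflexive sheaf $\sEnd(ℰ_{m,n})$; surjectivity follows by slicing $X$ with general members of $|m'H|$ one at a time, using the short exact sequences $0 → \sEnd(ℰ_{m,n})⊗\mathcal{O}_X(-m'H) → \sEnd(ℰ_{m,n}) → \sEnd(ℰ_{m,n})|_{H'} → 0$ away from a small set, Serre vanishing, and the $S_2$-property of reflexive sheaves to control the bad loci---exactly the mechanism behind the restriction theorems quoted above, where the delicate point is keeping all bad loci in codimension $≥ 2$ at each slicing step. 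Granting this, an idempotent $e_C$ on $C$ lifts uniquely to some $e ∈ H⁰\bigl(X,\,\sEnd(ℰ_{m,n})\bigr)$; the section $e^2 - e$ restricts to $0$ on $C$ and hence vanishes, so $e$ is idempotent and cuts out a direct summand of $ℰ_{m,n}$ restricting to that of $e_C$. Combining this bijection with the curve computation, and observing that forming a direct summand, restricting to $C$, and then evaluating at $x$ produces the same subspace as evaluating the summand at $x$ directly, one obtains the claimed evaluation correspondence between direct summands of $ℰ_{m,n}$ and $H_x(ℰ)$-invariant subspaces of $(ℰ_{m,n})_x = ℰ_x^{⊗ m}⊗(ℰ_x^*)^{⊗ n}$, for every $m,n ∈ ℕ$.
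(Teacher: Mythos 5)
The statement in the paper is a citation of \cite[Thm.~20(3)]{BalajiKollar08}; the paper provides no proof, using the result as a black box. Your attempt to establish it from scratch stumbles exactly at the spot you flag yourself: the claimed isomorphism
\[
H⁰\bigl(X,\,\sEnd(ℰ_{m,n})\bigr) \;\xrightarrow{\ \sim\ }\; H⁰\bigl(C,\,\sEnd(ℰ_{m,n})|_C\bigr).
\]
Injectivity is fine (general-position of $C$). But the surjectivity argument you give is not valid. Serre vanishing provides $H^i\bigl(X,\,\mathcal{G}\otimes\mathcal{O}_X(m'H)\bigr)=0$ for $m'\gg 0$, $i>0$ — a sufficiently \emph{positive} twist — whereas the slicing exact sequence you write needs $H^1\bigl(X,\,\sEnd(ℰ_{m,n})(-m'H)\bigr)=0$, i.e.\ a very \emph{negative} twist. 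That kind of vanishing (an Enriques--Severi--Zariski statement) does hold in one step for an $S_2$ sheaf on a normal projective variety of dimension $\ge 2$, but the iteration fails: after one hyperplane slice $H'$, the restricted sheaf $\sEnd(ℰ_{m,n})|_{H'}$ is not automatically $S_2$ on $H'$, and keeping the bad loci in codimension $\ge 2$ through each slicing step while maintaining the depth condition is precisely the hard technical content of restriction theory. There is no reason to expect the $H^1$-vanishing you need at each stage.

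More fundamentally, the restriction isomorphism is not an elementary fact you can feed into the proof; once the Balaji--Kollár theorem is known, the isomorphism \emph{follows} (both sides then compute $H_x(ℰ)$-invariants), but to argue in the other direction you would need to show, in particular, that non-isomorphic stable degree-zero summands of $ℰ_{m,n}$ on $X$ do not become isomorphic after restriction to a general complete intersection curve. This is exactly the kind of statement that the algebraic holonomy group was invented to control, and it is established in \cite{BalajiKollar08} via a Tannakian argument (their Section~4), not via hyperplane slicing of $\sEnd$. Your proposal would be complete if you could supply an independent proof of that restriction isomorphism, but the short exact sequence plus Serre vanishing route does not give it.
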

  
Observing that Tannakian duality and the knowledge of a small number of
representations determines a reductive group completely,
\cite[Sect.~4]{BalajiKollar08}, one obtains the following result.

\begin{prop}\label{prop:two_cases_for_algebraic_holonomy}
  Setup as in Definition~\ref{defn:alg_holonomy}.  Assume additionally that
  $\det ℰ_x ≅ 𝒪_X$ and that $H_x(ℰ)$ is connected.  Then, the following are
  equivalent.
  \begin{enumerate}
  \item For some (and a posteriori all) $m ≥ 2$, the $m$-th reflexive symmetric
    power $\Sym^{[m]} (ℰ) := \Sym^m(ℰ)^{**}$ is indecomposable.
  \item The algebraic holonomy $H_x(ℰ)$ is either $H_x(ℰ) ≅ \SL_{ℂ}(ℰ_x)$,
    or $H_x(ℰ) ≅ \Sp_{ℂ}(ℰ_x)$ for a suitable complex-symplectic form on
    $ℰ_x$.  In the second case, $\rank ℰ$ is even.
  \end{enumerate}
\end{prop}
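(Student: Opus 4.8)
The plan is to use the Tannakian reconstruction philosophy advertised just before the statement: a connected reductive subgroup of $\GL_ℂ(ℰ_x)$ is pinned down by the lattice of its invariant subspaces in the tensor algebra on $ℰ_x$, and the Bochner principle for algebraic holonomy (Proposition~\ref{prop:algebraicholonomyprinciple}) translates the sheaf-theoretic statements into invariant-theoretic ones. First I would observe that since $ℰ$ is stable of slope zero, it is simple, so $H_x(ℰ)$ acts irreducibly on $ℰ_x$ (a nontrivial invariant subspace would, via Proposition~\ref{prop:algebraicholonomyprinciple} with $m=1,n=0$, produce a proper direct summand of $ℰ$, contradicting indecomposability of $ℰ$ itself). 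Together with the assumption $\det ℰ_x \cong 𝒪_X$ applied through Proposition~\ref{prop:algebraicholonomyprinciple} with $(ℰ^{[\rank ℰ]}$ via $Λ^{\rank}$, which is a summand of the tensor power), the top exterior power is a trivial summand, hence $H_x(ℰ)$ fixes a generator of $Λ^{\rank ℰ} ℰ_x$, i.e. $H_x(ℰ) \subseteq \SL_ℂ(ℰ_x)$. So $H_x(ℰ)$ is a connected reductive irreducible subgroup of $\SL_ℂ(ℰ_x)$.

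Next I would carry out the equivalence. For the implication (2)$⇒$(1): if $H_x(ℰ) = \SL_ℂ(ℰ_x)$ then $\Sym^m ℰ_x$ is the irreducible $\SL$-representation with highest weight $mϖ_1$, hence has no invariant subspaces other than $0$ and itself, so by Proposition~\ref{prop:algebraicholonomyprinciple} $\Sym^{[m]}(ℰ)$ has no proper direct summands and is indecomposable; if $H_x(ℰ) = \Sp_ℂ(ℰ_x)$ one uses that $\Sym^m$ of the defining representation of $\Sp$ is again irreducible (highest weight $mϖ_1$ for $C_r$), giving the same conclusion, and $\rank ℰ = \dim ℰ_x$ is even because $\Sp_ℂ$ only exists in even dimension. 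For the converse (1)$⇒$(2): suppose $\Sym^{[m]}(ℰ)$ is indecomposable for some $m ≥ 2$; then $\Sym^m ℰ_x$ is an irreducible $H_x(ℰ)$-representation. Now invoke the classification of connected irreducible subgroups $H \subseteq \SL_ℂ(W)$ for which $\Sym^m W$ is still irreducible — this is exactly the kind of Tannakian input referenced in \cite[Sect.~4]{BalajiKollar08}: the standard fact (going back to Dynkin) is that the only connected simple subgroups of $\SL(W)$ acting irreducibly on $W$ with $\Sym^2 W$ irreducible are $\SL(W)$ itself and $\Sp(W)$ (the cases $\SO(W)$ being excluded since there $\Sym^2 W$ splits off a trivial summand, which would contradict $H \subseteq \SL$ combined with indecomposability failing, or rather since a symmetric form is $H$-invariant and $m=2$ then decomposes — one should phrase it so that the $\SO$ case is ruled out precisely by irreducibility of $\Sym^2$, not by $\Sym^m$ for larger $m$; for $m>2$ a short separate argument reduces to $m=2$ by noting $\Sym^2$ sits inside $\Sym^m \otimes \Sym^{m-2}$ appropriately, or one quotes the general Dynkin list directly). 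Reductivity plus irreducibility of $\Sym^m W$ also forces $H_x(ℰ)$ to be semisimple (a central torus would act by a nontrivial character on $\Sym^m W$ only if... — actually a positive-dimensional central torus in $\SL(W)$ acting irreducibly on $W$ forces $\dim W = 1$; so for $\dim W ≥ 2$ the group is semisimple), and then simplicity follows because a product $H_1 × H_2$ acting irreducibly on $W = W_1 \otimes W_2$ has $\Sym^m W$ decomposable for $m ≥ 2$ by the Cauchy/plethysm formula. This leaves exactly $\SL_ℂ(ℰ_x)$ and $\Sp_ℂ(ℰ_x)$, and in the latter case $\rank ℰ$ is even.

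The main obstacle I anticipate is the representation-theoretic classification step in (1)$⇒$(2): one must be careful to state precisely \emph{which} connected reductive irreducible linear groups have irreducible symmetric powers, handle the reduction from general $m ≥ 2$ to the decisive case $m = 2$, rule out the orthogonal group cleanly (it is automatically excluded once we know $H \subseteq \SL$ and that $\Sym^2$ is irreducible, since $\SO(W) \subseteq \SL(W)$ but $\Sym^2 W$ carries the invariant quadratic form as a trivial summand — wait, that summand is a \emph{quotient}, so the argument is that $\Sym^2 W$ for $\SO$ decomposes as trivial $\oplus$ traceless part, contradicting irreducibility), and dispose of the non-simple semisimple case via the standard fact that $\Sym^m(W_1 \otimes W_2)$ is never irreducible for $m ≥ 2$ and $\dim W_i ≥ 2$. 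All of these are classical, but assembling them into a clean citation-backed argument — rather than re-deriving Dynkin's classification — is where the care is needed; I would lean on \cite[Sect.~4]{BalajiKollar08} and \cite[Sect.~6.19]{Dru16} for the Tannakian packaging and for the precise list.
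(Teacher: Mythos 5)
Your proposal is correct in spirit and lands on the same invariant-theoretic dictionary (via Proposition~\ref{prop:algebraicholonomyprinciple}) and the same classification, but it re-derives the crucial group-theoretic step that the paper simply cites.  The paper's proof is one line of representation theory: \emph{from the proof of \cite[Prop.~41]{BalajiKollar08}} one gets that indecomposability of $\Sym^{[2]}ℰ$ forces $H_x(ℰ)$ to be one of $\SL_ℂ(ℰ_x)$, $\GL_ℂ(ℰ_x)$, $\Sp_ℂ(ℰ_x)$, or $\GSp_ℂ(ℰ_x)$; then $\GL$ and $\GSp$ are killed by the trivial-determinant hypothesis together with the Bochner principle for algebraic holonomy.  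What \cite[Prop.~41]{BalajiKollar08} is already packaging for you is precisely the chain you re-derive by hand: irreducibility of $ℰ_x$, semisimplicity, simplicity via the Cauchy plethysm, and Dynkin's list of connected simple irreducible linear groups with irreducible symmetric square.  So your route is workable and essentially reconstructs the internals of Balaji--Kollár, while the paper economises by citing them directly.

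Two concrete points to tighten if you keep your self-contained route.  First, the reduction from ``$\Sym^{[m]}$ indecomposable for \emph{some} $m\geq 2$'' to the case $m=2$ is not handled: you gesture at ``$\Sym^2$ sits inside $\Sym^m\otimes\Sym^{m-2}$'' but do not close the argument, and this is not automatic. In fact the clean way out is not to reduce to $m=2$ at all but to quote the stronger form of Dynkin's result recorded in \cite[Sect.~4]{BalajiKollar08} or in \cite[Prop.~41]{BalajiKollar08}, where the list is uniform in $m\geq 2$; alternatively, note that the ``a posteriori all $m$'' phrasing in the statement indicates that one proves ``$H_x(ℰ)\in\{\SL,\Sp\}\Rightarrow\Sym^{[m]}$ indecomposable for all $m$'' and ``$\Sym^{[m]}$ indecomposable for some $m\geq 2\Rightarrow H_x(ℰ)\in\{\SL,\Sp\}$'' separately, with the latter coming out of Balaji--Kollár as stated.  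Second, be careful with the bookkeeping of how $\det ℰ_x \cong 𝒪_X$ is applied through the Bochner principle: you want to produce a trivial line-bundle direct summand of $\bigl(ℰ^{\otimes \rank ℰ}\bigr)^{**}$ so that $H_x(ℰ)$ fixes a generator of $\Lambda^{\rank ℰ}ℰ_x$, which is exactly the step that excludes $\GL_ℂ$ and $\GSp_ℂ$.  That is the same mechanism the paper uses, and once you state it cleanly the $\SO$ and orthogonal-similitude cases never enter the list you need to trim.
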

\begin{proof}
  From the proof of \cite[Prop.~41]{BalajiKollar08}, it follows that $H_x(ℰ)$ is
  one of the following: $\SL_{ℂ}(ℰ_x)$, $\GL_{ℂ}(ℰ_x)$, $\Sp_{ℂ}(ℰ_x)$, or
  $\GSp_{ℂ}(ℰ_x)$.  The two groups not appearing in our list are excluded by the
  Bochner principle, Proposition~\ref{prop:algebraicholonomyprinciple}, and the
  assumption on the determinant of $ℰ$.
\end{proof}

\subsubsection{The basic dichotomy in terms of algebraic holonomy}
\approvals{Daniel & yes \\ Henri & yes \\ Stefan & yes}
  
Using the theory of algebraic holonomy groups summarised in the previous
section, we may now give another characterisation of the two cases in the basic
dichotomy.

\begin{thm}\label{thm:alg_holonomy_dichotomy}
  In the Standard setting~\ref{setting:holonomy}, additionally assume that
  $ω_X ≅ 𝒪_X$, that $𝒯_X$ is strongly stable, and that $G = G°$.  Then, the
  following holds.
  \begin{enumerate}
  \item The variety $X$ is CY if and only if the connected component of
    $H_x(𝒯_X)$ is equal to $\SL_{ℂ}(T_x X)$.
  \item The variety $X$ is IHS if and only if the connected component of
    $H_x(𝒯_X)$ is equal to $\Sp_{ℂ}(T_xX)$, where the linear complex-symplectic
    form on $T_x X$ is the evaluation of the holomorphic symplectic two-form.
  \end{enumerate}
\end{thm}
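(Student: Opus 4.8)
The plan is to reduce Theorem~\ref{thm:alg_holonomy_dichotomy} to the differential-geometric dichotomy already established (Theorem~\ref{thm:dichotomy-A}, Corollary~\ref{cor:dichotomy_CY_IHS}, Proposition~\ref{prop:intrinsicn}) together with the two Bochner principles—analytic (Theorem~\ref{thm:holonomy}) and algebraic (Proposition~\ref{prop:algebraicholonomyprinciple}). The key observation is that under the stated hypotheses ($ω_X \cong 𝒪_X$, $𝒯_X$ strongly stable, $G = G°$) the group $G$ is, by Theorem~\ref{thm:dichotomy-A}, either $\SU(n)$ or $\Sp\!\left(\frac n2\right)$ acting by the standard representation on $V = T_xX$; and both Bochner principles identify the \emph{same} lattice of subobjects of reflexive tensor powers of $𝒯_X$ with invariant subspaces—of $G = \Hol$ on the one hand, of $H_x(𝒯_X)$ on the other. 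Since a reductive group is determined by the tensor category of its representations (Tannakian reconstruction, as invoked in the proof of Proposition~\ref{prop:two_cases_for_algebraic_holonomy}), the \emph{complexification} $G^ℂ$ must coincide with the connected component $H_x(𝒯_X)°$, because $H_x(𝒯_X)$ is already known from Proposition~\ref{prop:two_cases_for_algebraic_holonomy} to lie among $\SL_ℂ$, $\GL_ℂ$, $\Sp_ℂ$, $\GSp_ℂ$, and the determinant constraint $\det 𝒯_X = 𝒪_X$ kills the two non-special candidates.

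Concretely, I would proceed as follows. First I would note that strong stability of $𝒯_X$ makes $𝒯_X$ stable of slope zero with respect to every polarisation (Corollary~\ref{cor:sirr0}, Proposition~\ref{prop:holbun2x}), so the algebraic holonomy group $H_x(𝒯_X)$ of Theorem and Definition~\ref{defn:alg_holonomy} is defined, and $\det 𝒯_X \cong 𝒪_X$ gives the hypothesis needed for Proposition~\ref{prop:two_cases_for_algebraic_holonomy}. Second, I would invoke Proposition~\ref{prop:two_cases_for_algebraic_holonomy}: $H_x(𝒯_X)°$ is either $\SL_ℂ(T_xX)$ or $\Sp_ℂ(T_xX)$, the two being distinguished by whether $\Sym^{[2]} 𝒯_X$ (equivalently $\Sym^{[2]} Ω¹_X$) is indecomposable, i.e.\ by whether $X$ carries a reflexive symplectic two-form. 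Third, I would show that \emph{this same dichotomy} is detected by $X$ being CY versus IHS: by Corollary~\ref{cor:dichotomy_CY_IHS} and Proposition~\ref{prop:intrinsicn}, under $G = G°$ the variety $X$ is CY iff $G \cong \SU(n)$ iff $H⁰(X, Ω_X^{[p]}) = 0$ for $0 < p < n$, and $X$ is IHS iff $G \cong \Sp\!\left(\frac n2\right)$ iff $X$ admits a holomorphic symplectic form generating all reflexive differentials (Lemma~\ref{lem:symplectic}). Finally, to pin down which algebraic-holonomy case occurs, I would use the algebraic Bochner principle (Proposition~\ref{prop:algebraicholonomyprinciple}) applied to $\bigl((𝒯_X^*)^{⊗ 2}\bigr)^{**}$: a direct summand $𝒪_X \subseteq \Sym^{[2]} Ω¹_X$ exists iff there is an $H_x(𝒯_X)$-invariant vector in $\Sym^2 (T_xX)^*$, iff $H_x(𝒯_X)°$ preserves a symplectic form, iff $H_x(𝒯_X)° = \Sp_ℂ(T_xX)$. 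Matching this with the analytic Bochner principle (Theorem~\ref{thm:holonomy}), which says such a summand exists iff $G$ fixes a vector in $\Sym^2 V^*$, iff $G \subseteq \Sp\!\left(\frac n2\right)$, closes the loop.

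Assembling: if $X$ is CY then $H⁰(X, Ω_X^{[2]}) = 0$, so by Proposition~\ref{prop:algebraicholonomyprinciple} there is no $H_x(𝒯_X)$-invariant element of $\Lambda^2(T_xX)^*$ nor (checking also $\Sym^2$) any symplectic invariant, forcing $H_x(𝒯_X)° = \SL_ℂ(T_xX)$ by Proposition~\ref{prop:two_cases_for_algebraic_holonomy}; conversely if $H_x(𝒯_X)° = \SL_ℂ(T_xX)$ then $\Lambda^p(T_xX)^*$ has no invariant vectors for $0 < p < n$ (classical $\SL$-invariant theory, as used in the proof of Theorem~\ref{thm:dichotomy-B}), so $H⁰(X, Ω_X^{[p]}) = 0$ there, and since any quasi-étale cover $Y \to X$ has $H_y(𝒯_Y)°$ equal to the pullback of $H_x(𝒯_X)°$—by stability being preserved and the curve-restriction description of algebraic holonomy—the same vanishing holds on all covers, so $X$ is CY by Definition~\ref{def:CY_and_IHS}. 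The IHS case is the mirror argument with $\Sp_ℂ$, using Lemma~\ref{lem:symplectic} to see that the symplectic form generates the full algebra of reflexive forms on $X$ and on every quasi-étale cover. The main obstacle I anticipate is the behaviour of the algebraic holonomy group under quasi-étale covers: one needs that for a holonomy cover $γ : Y \to X$ the group $H_y(γ^{[*]}𝒯_X)°$ equals (a conjugate of) $H_x(𝒯_X)°$, which should follow from the curve-restriction characterisation of $H_x$ together with $γ^{[*]}𝒯_X = 𝒯_Y$ and the fact that general complete-intersection curves on $Y$ push to such curves on $X$, but this compatibility must be stated carefully and is the one place where the argument is not a pure formal consequence of the two Bochner principles.
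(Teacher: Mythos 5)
The overall scaffolding matches the paper---reduce to the connected case, invoke Proposition~\ref{prop:two_cases_for_algebraic_holonomy}, and decide between $\SL_{ℂ}$ and $\Sp_{ℂ}$ by detecting a symplectic two-form, with the analytic and algebraic Bochner principles matching up---but the representation-theoretic step that is supposed to decide between the two groups is wrong. You assert that $\SL_{ℂ}$ versus $\Sp_{ℂ}$ is distinguished by whether $\Sym^{[2]}𝒯_X$ is indecomposable, and later run the chain ``a direct summand $𝒪_X ⊆ \Sym^{[2]} Ω¹_X$ exists iff there is an $H_x(𝒯_X)$-invariant vector in $\Sym^2 (T_xX)^*$, iff $H_x(𝒯_X)°$ preserves a symplectic form, iff $H_x(𝒯_X)° = \Sp_ℂ(T_xX)$.'' Both steps are false. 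Proposition~\ref{prop:two_cases_for_algebraic_holonomy} does \emph{not} say $\Sym^{[2]}$-indecomposability separates the two cases; it says indecomposability is equivalent to $H_x(𝒯_X)$ being $\SL_ℂ$ \emph{or} $\Sp_ℂ$, and in both cases $\Sym^2 V$ is irreducible. Furthermore, an invariant vector in $\Sym^2 V^*$ would be a \emph{symmetric} bilinear form, not a symplectic one; neither $\SL_{ℂ}(V)$ nor $\Sp_{ℂ}(V)$ has a nonzero invariant in $\Sym^2 V^*$ (for $\SL_n$ there are no nonconstant invariant polynomials on $V$; for $\Sp_{2m}$, $\Sym^2 V \cong \mathfrak{sp}_{2m}$ is irreducible nontrivial), so your criterion detects nothing and cannot separate the cases. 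The correct invariant space is $\Lambda^2(T_xX)^*$: $\Sp_{ℂ}$ fixes a line there, $\SL_{ℂ}$ does not. The paper makes exactly this move---it shows $\bigwedge^2 (T_x^*X)$ decomposes as an $H_x(𝒯_X)$-representation precisely in the symplectic case using \cite[Thm.~5.5.11]{MR2522486}, realised algebraically via the summand $Ω_X^{[2]} ⊂ \bigl(Ω¹_X ⊗ Ω¹_X\bigr)^{**}$, with Lemma~\ref{lem:symplectic} then certifying that the resulting two-form is nondegenerate. The indecomposability of $\Sym^{[2]}$ is used in the paper only as a hypothesis check to invoke Proposition~\ref{prop:two_cases_for_algebraic_holonomy}, established via the analytic Bochner principle and the irreducibility of $\Sym^2(T_xX)$ as $G$-representation for $G = \SU(n)$ or $\Sp\!\left(\frac n2\right)$.

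The second issue is the one you flag at the end and leave open: invoking Proposition~\ref{prop:two_cases_for_algebraic_holonomy} needs $H_x(𝒯_X)$ to be connected, not merely its identity component understood. The paper handles this as its first step by citing \cite[Lem.~6.20]{Dru16} and the proof of \cite[Lem.~40]{BalajiKollar08} for the existence of a quasi-étale cover $γ: Y → X$ and a point $y$ above $x$ with $H_y(γ^{[*]}𝒯_X) = H_y(𝒯_Y) = H_x(𝒯_X)°$, together with the observation that $X$ is CY (resp.\ IHS) iff $Y$ is, so one may replace $X$ by $Y$. Your Tannakian-matching idea in the opening paragraph is plausible as an alternative route, but it too requires this cover reduction to compare $G$ and $H_x(𝒯_X)$ at the level of connected groups, so it does not save you the appeal to these external results.
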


\begin{rem}
  We thank Stéphane Druel for explaining the following to us: in the setup of
  Theorem~\ref{thm:alg_holonomy_dichotomy}, it can be shown by a more detailed
  differential-geometric analysis that the structure group of $𝒯_{X_{\reg}}$ can
  be reduced to the complexification $G^{ℂ}$ of $G$, which is reductive as $G$
  is compact.  Therefore, by \cite[Thm.~1(4)]{BalajiKollar08} the subgroup
  $H_x(𝒯_X) ⊂ G^{ℂ}$ is in fact already connected.
\end{rem}

\begin{proof}[Proof of Theorem~\ref{thm:alg_holonomy_dichotomy}]
  From \cite[Lem.~6.20]{Dru16} and the proof of \cite[Lem.~40]{BalajiKollar08}
  it follows that there exists a quasi-étale cover $γ: Y → X$ and a point
  $y ∈ Y$ mapping to $x$ such that $H_{y}(γ^{[*]} 𝒯_X) = H_{y}(𝒯_Y)= H_x(𝒯_X)°$.
  Note that $X$ is CY (resp.\ IHS) if and only if $Y$ is.  Therefore one can
  assume without loss of generality that $H_x(𝒯_X)$ is connected.
  
  We claim that $\Sym^{[2]} 𝒯_X$ is indecomposable.  Indeed, by the Bochner
  principle for bundles, Theorem~\ref{generalized:holonomy}, a direct summand
  would give rise to a $G$-stable subspace of the $G$-representation
  $\Sym²(T_x X)$, which is irreducible by \cite[Sect.~24.1 and 24.2]{MR1153249},
  a contradiction.
 
  We may hence apply Proposition~\ref{prop:two_cases_for_algebraic_holonomy} to
  conclude that $H_x(𝒯_X)$ is either $\SL_{ℂ}(T_x X) $ or $ \Sp_{ℂ} (T_x X)$.
  We claim that the latter case occurs if and only if $X$ carries a holomorphic
  symplectic form.  Indeed, if $H_x(𝒯_X) = \Sp_{ℂ}(T_x X)$, then the Bochner
  principle for algebraic holonomy,
  Proposition~\ref{prop:algebraicholonomyprinciple}, implies that $Ω^{[2]}_X$
  has a one-dimensional trivial direct summand; the corresponding two-form is
  holomorphic symplectic owing to Lemma~\ref{lem:symplectic}.  Conversely, if
  there exists a non-vanishing two-form on $ X$, then the direct summand
  $Ω_X^{[2]} ⊂ \left(𝒯_X ⊗ 𝒯_X\right)^{**}$, which is polystable by
  Theorem~\ref{generalized:holonomy}, is decomposable.  As a consequence,
  $\bigwedge²(T_x^* X)$ is decomposable as $H_x(𝒯_X)$-representation, which in
  turn excludes the algebraic holonomy from being equal to $\SL_{ℂ}(T_x X)$ by
  \cite[Thm.~5.5.11]{MR2522486}.
\end{proof}

%
%
\svnid{$Id: 13-fundamentalGroups.tex 763 2018-10-27 13:39:14Z kebekus $}

\section{Fundamental groups}
\subversionInfo
\approvals{Daniel & yes \\ Henri & yes \\ Stefan & yes}

This section is devoted to studying the fundamental group of Calabi-Yau and
irreducible holomorphic symplectic varieties.  In the smooth case, these
varieties are by definition simply connected but in our singular setup, this
might not be the case anymore.  Actually, there are two relevant fundamental
groups to look at, $π_1(X)$ and $π_1(X_{\reg})$, and we will obtain finiteness
result concerning both of them.

\subsection{Fundamental groups of even-dimensional CY and IHS varieties}
\approvals{Daniel & yes \\ Henri & yes \\ Stefan & yes}

The following theorem summarises our results for varieties of even dimension.
Theorem~\ref{thm:finite_fundamental} is expected to hold also in odd dimensions,
and even more generally for varieties with vanishing augmented regularity, see
\cite[Conj.~4.16]{Kollar95s} and \cite[Qu.~5.12]{Ca95}.
Section~\vref{subsect:oddCY} contains partial results in this direction.

\begin{thm}[Fundamental groups of even-dim.~strongly stable varieties]\label{thm:finite_fundamental}
  In the standard Setting~\ref{setting:holonomy}, if $𝒯_X$ is strongly stable
  and if $\dim X$ is even, then $π_1(X)$ is finite.  In particular, the
  topological fundamental group of an IHS variety is finite.
\end{thm}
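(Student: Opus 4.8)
The plan is to bound the degrees of finite étale covers of a resolution of $X$ by means of the Bochner principle, and then to invoke Campana's structure theory for fundamental groups. Write $n := \dim X$.

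\emph{Step 1: consequences of strong stability.} Since $\mathcal{T}_X$ is strongly stable, Corollary~\ref{cor:sirr2} shows that the restricted holonomy representation of $G^{\circ}$ on $V$ is irreducible; as $n \geq 2$ this forces the subspace $V_0$ of $G^{\circ}$-invariant vectors to vanish, and Item~\ref{il:FDC2} of Corollary~\ref{cor:flatDecomp} then gives $\wtilde{q}(X) = \dim V_0 = 0$. Moreover, by Theorem~\ref{thm:dichotomy-A} the group $G^{\circ}$ is isomorphic to $\SU(n)$ or, with $n$ even, to $\Sp\!\left(\frac n2\right)$, and Theorem~\ref{thm:dichotomy-B} yields $h^0\!\bigl(X, \Omega^{[p]}_X\bigr) \leq 1$ for every $p$, with $h^0\!\bigl(X, \Omega^{[p]}_X\bigr) = 0$ for $0 < p < n$ in the first case and for every odd $p$ in the second. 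All of these statements apply verbatim to any quasi-étale cover $X'$ of $X$, since $X'$ is again a projective klt variety with numerically trivial canonical class (Reminder~\ref{remi:qec}) and $\mathcal{T}_{X'}$ is again strongly stable (any quasi-étale cover of $X'$ is also a quasi-étale cover of $X$).

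\emph{Step 2: a uniform bound on the degree of étale covers.} Fix a resolution $\pi \colon \wtilde{X} \to X$; since $\pi$ is proper and surjective with $X$ normal, $\pi_*\colon \pi_1(\wtilde{X}) \to \pi_1(X)$ is onto, so it suffices to show that $\pi_1(\wtilde{X})$ is finite. Let $\nu\colon W \to \wtilde{X}$ be an arbitrary connected finite étale cover and let $X_W$ be the normalisation of $X$ in the function field of $W$; then $X_W \to X$ is quasi-étale and $W$ is a resolution of $X_W$, possibly after a further blow-up of $W$ which affects none of the birational invariants used below. The extension theorem for reflexive differentials on klt spaces, \cite{GKKP11}, gives $h^0\!\bigl(W, \Omega^p_W\bigr) = h^0\!\bigl(X_W, \Omega^{[p]}_{X_W}\bigr)$ for all $p$, while Hodge symmetry on the smooth projective variety $W$ gives $\chi(\mathcal{O}_W) = \sum_p (-1)^p h^0\!\bigl(W, \Omega^p_W\bigr)$. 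Combining these with Step~1 applied to $X_W$ yields the uniform bound $\bigl|\chi(\mathcal{O}_W)\bigr| \leq n+1$; on the other hand, the same two facts applied to $X$ itself give
$$
\chi(\mathcal{O}_{\wtilde{X}}) \;=\; \sum_p (-1)^p h^0\!\bigl(X, \Omega^{[p]}_X\bigr) \;\geq\; 1,
$$
because every possibly non-zero summand occurs in even degree -- and this is precisely where evenness of $n$ is used, since in the $\SU(n)$ case the only candidates are $p = 0$ and $p = n$, and $(-1)^n = 1$. As $\chi(\mathcal{O}_W) = \deg(\nu)\cdot\chi(\mathcal{O}_{\wtilde{X}})$ by multiplicativity of the holomorphic Euler characteristic in unramified covers, we conclude $\deg(\nu) \leq n+1$. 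Thus every finite-index subgroup of $\pi_1(\wtilde{X})$ has index at most $n+1$; as Kähler groups are finitely generated, $\pi_1(\wtilde{X})$ then has only finitely many finite-index subgroups, so its finite residual $R$ is of finite index, $\pi_1(\wtilde{X})/R$ is finite, and $R$ has no proper subgroup of finite index.

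\emph{Step 3: finiteness, and the main obstacle.} It remains to exclude the possibility that $R$ is infinite, i.e. to upgrade the bounded-quotient property to genuine finiteness; for a general, possibly non-residually-finite, Kähler group this is strictly stronger. This is where Campana's methods from \cite{Ca95}, connecting positivity of cotangent sheaves to the size of fundamental groups, enter, fuelled by two outputs of the Bochner principle: first, since $\wtilde{q}(X) = 0$, Theorem~\ref{thm:augm_irreg_rev} shows that $X$ -- and hence every quasi-étale cover of $X$, hence every finite étale cover $W$ of $\wtilde{X}$ -- carries no non-zero symmetric differential and has $b_1(W) = 2\,q(X_W) = 0$; second, $X_{\reg}$ is a Zariski-open subset of $\wtilde{X}$ with complement of complex codimension at least one, so $\pi_1(\wtilde{X})$ is a quotient of $\pi_1(X_{\reg})$ and therefore, by the same absence of symmetric differentials, admits no finite-dimensional representation with infinite image. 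Together with the index bound of Step~2, Campana's analysis of the $\Gamma$-reduction (Shafarevich map) of $\wtilde{X}$ then forces that reduction to be a point, i.e. $\pi_1(\wtilde{X})$, and therefore $\pi_1(X)$, is finite. Finally, an irreducible holomorphic symplectic variety has even dimension and strongly stable tangent sheaf by \cite[Prop.~8.20]{GKP16}, so the statement just proven applies to it. The crux of the argument is this last step: the linear-algebraic constraints of Steps~1--2 (bounded finite quotients, no $H^1$ and no symmetric differentials on covers) do not by themselves exclude an infinite Kähler group with no proper finite-index subgroup, and it is Campana's geometric structure theory that closes this gap; a secondary technical point is the comparison $h^0(W, \Omega^p_W) = h^0(X_W, \Omega^{[p]}_{X_W})$ and the identification of finite étale covers of $\wtilde{X}$ with resolutions of quasi-étale covers of $X$.
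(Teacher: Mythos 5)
Your Steps~1 and~2 are sound: strong stability gives $\wtilde q(X)=0$ and restricted holonomy $\SU(n)$ or $\Sp\!\left(\frac n2\right)$, the evenness of $n$ forces every possibly non-zero $h⁰\bigl(X,\Omega^{[p]}_X\bigr)$ into even degree, and Hodge symmetry on a resolution $\wtilde X$ together with the extension theorem for reflexive differentials yields $\chi(\mathcal O_{\wtilde X})\geq 1$, hence a uniform bound on degrees of finite étale covers of $\wtilde X$. But Step~3 is a genuine gap, and you have put your finger on it yourself. You need to pass from ``every finite-index subgroup of $\pi_1(\wtilde X)$ has index at most $n+1$, and no finite étale cover of $\wtilde X$ carries a $1$-form or a symmetric differential'' to ``$\pi_1(\wtilde X)$ is finite'', and you do this by gesturing at ``Campana's analysis of the $\Gamma$-reduction'' without stating, citing, or proving a precise result that delivers it. As you yourself observe, a Kähler group with these properties could still {\it a priori} be infinite with no proper finite-index subgroup; the content that excludes this is exactly the finiteness criterion \cite[Prop.~8.23]{GKP16} (built on \cite{Ca95}), which the paper invokes as a black box. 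Your sketch re-derives its hypotheses on a particular resolution but not its conclusion.

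The paper's route is also noticeably shorter than yours. It first passes to a global index-one cover $\gamma\colon Y\to X$ via Proposition~\ref{prop:global_index_one_cover}, which makes $Y$ canonical with $K_Y\sim 0$; this both licenses the singular Hodge symmetry $h^q(Y,\mathcal O_Y)=h⁰(Y,\Omega^{[q]}_Y)$ of \cite[Prop.~6.9]{GKP16} and guarantees $h⁰(Y,\Omega^{[n]}_Y)=1$, so that Theorem~\ref{thm:dichotomy-B} directly gives $\chi(Y,\mathcal O_Y)\geq 2$. It then reduces $\pi_1(X)$ to $\pi_1(Y)$ via \cite[Prop.~1.3]{Campana91} and concludes by \cite[Prop.~8.23]{GKP16}. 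In particular the entire degree-bound detour of your Step~2 is unnecessary: once $\chi\neq 0$ is in hand the packaged finiteness criterion does all the remaining work, and the bound on indices of subgroups, while true, cannot conclude on its own. To complete your argument you would need to cite and apply \cite[Prop.~8.23]{GKP16} (or the underlying Campana theorem) precisely; as written the proof stops short at exactly the step you flagged.
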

\begin{proof}
  Let $γ: Y → X$ be a global index-one cover, as given by
  Proposition~\ref{prop:global_index_one_cover}.  The image of $π_1(Y)$ in
  $π_1(X)$ has finite index by \cite[Prop.~1.3]{Campana91}.  To prove finiteness
  of $π_1(X)$, it therefore suffices to show that $π_1(Y)$ is finite.  As $Y$
  has canonical singularities, this will follow from \cite[Prop.~8.23]{GKP16}
  once we show that $χ(Y, 𝒪_{Y}) ≠ 0$.  First, we note that $\dim Y$ is even and
  that the tangent sheaf $𝒯_Y$ is strongly stable.  In addition,
  \cite[Prop.~6.9]{GKP16} shows that
  $$
  h^q \bigl(Y,\, 𝒪_Y \bigr) = h⁰ \bigl(Y,\, Ω^{[q]}_Y\bigr) \qquad \text{for
    all } 0 ≤ q ≤ n,
  $$
  and so Theorem~\ref{thm:dichotomy-B} yields that
  $$
  χ \bigl(Y,\, 𝒪_Y \bigr) = h⁰\bigl(Y,\, 𝒪_Y\bigr) + h⁰\bigl(Y,\,
  Ω^{[2]}_Y\bigr) + ⋯ + (-1)^n·h⁰\bigl(Y,\, Ω^{[n]}_Y \bigr) ≥ 2.
  $$
  This concludes the proof of Theorem~\ref{thm:finite_fundamental}.
\end{proof}

Combining Theorem~\ref{thm:finite_fundamental} and \cite[Prop.~7.3]{GKP16} we
obtain the following result.

\begin{cor}[Étale fundamental groups of smooth locus]\label{cor:finite_fundamental}
  In the setting of Theorem~\ref{thm:finite_fundamental}, the étale fundamental
  group $\what{π}_1 (X_{\reg})$ is finite.  In particular, the étale fundamental
  group of the smooth locus of any IHS variety is finite.  \qed
\end{cor}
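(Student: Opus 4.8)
The plan is to obtain this as a short corollary of Theorem~\ref{thm:finite_fundamental} by feeding finiteness of $π_1(X)$ into a purely algebro-geometric comparison between $\what{π}_1(X_{\reg})$ and $π_1(X)$.

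First I would note that we are in the situation covered by Theorem~\ref{thm:finite_fundamental}: by hypothesis $𝒯_X$ is strongly stable and $\dim X$ is even, so that theorem gives that $π_1(X)$, and hence its profinite completion $\what{π}_1(X)$, is finite. It is worth recording in passing that strong stability of $𝒯_X$ already forces $\wtilde{q}(X) = 0$, by Corollary~\ref{cor:sirr2} together with Corollary~\ref{cor:flatDecomp}; in particular no abelian part can enter, which is consistent with the expectation that the answer is a finite group.

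The substance of the argument is then contained in \cite[Prop.~7.3]{GKP16}: for a projective klt variety with numerically trivial canonical divisor, finiteness of $π_1(X)$ implies finiteness of the étale fundamental group $\what{π}_1(X_{\reg})$ of the smooth locus. Combining this with the previous paragraph proves the first assertion. For the in-particular statement, recall that an irreducible holomorphic symplectic variety $X$ is even-dimensional and has strongly stable tangent sheaf by \cite[Prop.~8.20]{GKP16}, and that $𝒪_X \cong ω_X$; thus $X$ fits into the standard Setting~\ref{setting:holonomy} and the general case applies verbatim.

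I do not expect any genuine obstacle beyond invoking \cite[Prop.~7.3]{GKP16} correctly. The one delicate point, which is precisely what that proposition addresses, is that the smooth locus can a priori support far more finite étale covers than $X$ itself — indeed, as stressed in the introduction, even in the Calabi–Yau case one does not know $π_1(X_{\reg})$ to be finite, and only the étale (profinite) version is under control. The comparison goes through because $K_X \equiv 0$ places strong constraints on the local fundamental groups of the klt singularities of $X$, and hence on the kernel of the natural surjection $\what{π}_1(X_{\reg}) \twoheadrightarrow \what{π}_1(X)$.
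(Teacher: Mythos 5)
Your argument is exactly the paper's: apply Theorem~\ref{thm:finite_fundamental} to get finiteness of $π_1(X)$ and then invoke \cite[Prop.~7.3]{GKP16} to transfer this to $\what{π}_1(X_{\reg})$. The extra remarks you include (that $\wtilde{q}(X)=0$ under strong stability, and that IHS varieties satisfy the hypotheses by \cite[Prop.~8.20]{GKP16}) are correct but not needed beyond what the paper's one-line proof already requires.
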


\Publication{Once finiteness of the fundamental group is established, following
  the arguments of \cite[proof of Prop.~4(2)]{Bea83} one sees that the
  fundamental group is actually trivial; see the preprint version of this paper
  for a detailed proof.}

\begin{cor}\label{cor:IHS_simplyconnected}
  Let $X$ be a CY variety of even dimension or an IHS variety.  Then, $X$ is
  simply connected.  \Publication{\qed}
\end{cor}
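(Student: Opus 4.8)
The plan is to reduce to finiteness of $\pi_1(X)$, which is already available, and then to rule out a nontrivial finite fundamental group by a holomorphic Euler characteristic computation on the universal cover, in the spirit of the smooth argument of \cite{Bea83}. First I would record that the tangent sheaf of a Calabi-Yau or IHS variety is strongly stable by \cite[Prop.~8.20]{GKP16}, and that $X$ has even dimension in both cases considered (by hypothesis in the Calabi-Yau case, automatically in the IHS case); hence Theorem~\ref{thm:finite_fundamental} applies and $\pi_1(X)$ is finite, say of order $d$. Everything then comes down to showing $d = 1$.

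Next I would pass to the universal covering space $\gamma\colon Y \to X$. Since $X$ is a complex analytic space it is locally contractible, so $Y$ exists as a topological covering with deck group $\pi_1(X)$ acting freely, and as $\pi_1(X)$ is finite $\gamma$ is a finite covering map; by the Riemann existence theorem (cf.\ the discussion in Remark~\ref{rem:qevecxr} and \cite[Sect.~XII.5]{SGA1}) $Y$ is a normal projective variety and $\gamma$ is a finite morphism which is a local analytic isomorphism, hence étale. In particular $\gamma$ is quasi-étale, so $Y$ is klt by Reminder~\ref{remi:qec}, $\omega_Y \cong \gamma^{[*]}\omega_X \cong \sO_Y$, and $Y$ has canonical singularities; moreover every quasi-étale cover of $Y$ is a quasi-étale cover of $X$, so the vanishing (resp.\ generation) conditions of Definition~\ref{def:CY_and_IHS} are inherited, and $Y$ is again a Calabi-Yau (resp.\ IHS) variety, of the same dimension $n$ as $X$, and simply connected.

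With this in place I would compute. By \cite[Prop.~6.9]{GKP16} one has $h^q(W, \sO_W) = h^0(W, \Omega^{[q]}_W)$ for $W \in \{X, Y\}$, and combining this with Theorem~\ref{thm:dichotomy-B} (or directly with Definition~\ref{def:CY_and_IHS}) gives $\chi(W, \sO_W) = 2$ in the even-dimensional Calabi-Yau case and $\chi(W, \sO_W) = \tfrac n2 + 1$ in the IHS case, for $W = X$ and for $W = Y$ alike. On the other hand, pick a resolution $\pi\colon \wtilde X \to X$ that is an isomorphism over $X_{\reg}$; then $\wtilde Y := Y \times_X \wtilde X$ is a connected $d$-sheeted étale cover of the smooth projective variety $\wtilde X$, hence itself smooth, and it is a resolution of $Y$. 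Since canonical singularities are rational, $\chi(\wtilde X, \sO_{\wtilde X}) = \chi(X, \sO_X)$ and $\chi(\wtilde Y, \sO_{\wtilde Y}) = \chi(Y, \sO_Y)$, while multiplicativity of $\chi(\sO)$ along finite étale covers of smooth projective varieties gives $\chi(\wtilde Y, \sO_{\wtilde Y}) = d\cdot\chi(\wtilde X, \sO_{\wtilde X})$. Putting these together yields $2 = 2d$ (resp.\ $\tfrac n2 + 1 = d\,(\tfrac n2 + 1)$), whence $d = 1$, i.e.\ $\pi_1(X) = \{1\}$.

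The step I expect to need the most care is the passage to the universal cover: one must be certain that, for a \emph{singular} normal projective variety with finite fundamental group, the topological universal cover is an \emph{étale} morphism of algebraic varieties, since this is exactly what makes both the inheritance of the Calabi-Yau/IHS property and the multiplicativity of $\chi(\sO)$ through the smooth resolutions go through. The key observation is that a finite topological covering of $X$ is, locally on $X$, a disjoint union of copies of open subsets of $X$, each sheet mapping biholomorphically onto its image by Zariski's main theorem together with the normality of $X$; hence $\gamma$ is a local analytic isomorphism, therefore étale in the algebraic sense. The remaining ingredients are routine bookkeeping with results already established in this paper and in \cite{GKP16}.
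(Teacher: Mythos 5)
Your proof is correct and follows essentially the same route as the paper's: finiteness of $\pi_1(X)$ from Theorem~\ref{thm:finite_fundamental}, passage to the finite étale universal cover $Y$, multiplicativity of $\chi(\sO)$ along $\gamma$ via rational singularities and resolutions, and the Hodge-number computation from \cite[Prop.~6.9]{GKP16} combined with the Calabi-Yau/IHS restrictions to force $d = 1$. The extra care you devote to algebraizing the universal cover and to checking that $Y$ inherits the Calabi-Yau/IHS property is left implicit in the paper's proof but is precisely the bookkeeping needed to make that argument rigorous.
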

\Preprint{%
  \begin{proof}
    The argument follows \cite[proof of Prop.~4(2)]{Bea83}.  Let $γ: Y → X$ be
    the universal cover of $X$ and let $d ∈ ℕ$ be its degree, which is finite by
    Theorem~\ref{thm:finite_fundamental}.  Since klt spaces have rational
    singularities, the holomorphic Euler characteristics of $𝒪_X$ and $𝒪_Y$ can
    be computed by passing to a resolution of singularities and differ by a
    factor of $d$, that is,
    $χ \bigl(Y,\, 𝒪_Y \bigr) = d· χ\bigl(X,\, 𝒪_X \bigr)$.  On the other hand,
    as $Y$ is projective and $γ$ is étale, $Y$ is likewise CY or IHS, depending
    on the type of $X$.  It follows from \cite[Prop.~6.9]{GKP16} that
    $$
    χ \bigl(Y,\, 𝒪_Y \bigr) = χ\bigl(X,\, 𝒪_X \bigr) =
    \begin{cases}
      2 & \text{if $X$ and $Y$ are CY} \\
      \frac{1}{2}·\dim X+1 & \text{if $X$ and $Y$ are IHS} \\
    \end{cases}
    $$
    Either way, it follows that $d=1$.  This concludes the proof.
  \end{proof}}

\begin{rem}[Corollary~\ref{cor:IHS_simplyconnected} is optimal]
  There are examples of smooth odd-dimensional CY-manifolds with non-trivial,
  finite fundamental group.  For instance, there exists a fixed-point free
  action of $ℤ_5$ on the diagonal quintic threefold in $ℙ⁴$ such that the
  quotient has trivial canonical divisor.
\end{rem}

\begin{rem}
  The arguments and techniques of \cite[Sect.~2]{OguisoSchroeer} can be easily
  adapted to study klt varieties with numerically trivial canonical class that
  admit a quasi-étale cover by an IHS variety, or, in other words, to varieties
  whose restricted holonomy equals $\Sp\!\left(\frac n2\right)$.  In analogy
  with the smooth case, one might call these \emph{Enriques varieties}.  An
  analogue of \cite[Lem.~8.14]{GKP16} holds in this singular setup.
\end{rem}

\subsection{Fundamental groups of odd-dimensional CY varieties}\label{subsect:oddCY}
\approvals{Daniel & yes \\ Henri & yes \\ Stefan & yes}

After the discussion in the previous subsection, it remains to consider the
fundamental group of odd-dimensional CY varieties.  In the smooth case, it
follows from the Theorem of Cheeger-Gromoll that such varieties have finite
fundamental group, see \cite[proof of Thm.~1]{Bea83}.  Here, we gather some
partial information for the singular case.  All of these go back to
Hodge-theoretical arguments and use the non-existence of reflexive symmetric
differentials.  In fact, they hold for arbitrary varieties with vanishing
augmented irregularity.

\begin{thm}[Fundamental group of varieties with $\wtilde{q} = 0$, I]\label{thm:oddCY}
  Let $X$ be a projective klt variety with numerically trivial canonical divisor
  and vanishing augmented irregularity, $\wtilde{q}(X) = 0$.  Then, the
  following hold.
  \begin{enumerate}
  \item\label{il:no_infinite_representations} The fundamental group $π_1(X)$
    does not have any finite-dimensional representation with infinite image
    (over any field).
  \item\label{il:JH_finite} For each $n ∈ ℕ$, the fundamental group $π_1(X)$ has
    only finitely many $n$-dimensional complex representations up to
    conjugation.
  \item\label{il:polygrowth} If infinite, the group $π_1(X)$ cannot have weakly
    polynomial growth in the sense of \cite[Def.~1.1]{MR2629989}.
 \end{enumerate}
\end{thm}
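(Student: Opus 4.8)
The plan is to deduce all three assertions from a single input — the non-existence of non-trivial reflexive symmetric differentials on $X$, which holds by the equivalence ``\ref{il:irreg} $\Leftrightarrow$ \ref{il:symm}'' of Theorem~\ref{thm:augm_irreg_rev} applied to the hypothesis $\wtilde{q}(X) = 0$ — combined with the Hodge-theoretic machinery of Brunebarbe--Klingler--Totaro \cite{MR3127814}. First I would recall that the result of \cite{MR3127814} states: if a smooth projective variety (or, via \cite{GKKP11} and the extension theorem for differential forms, a klt variety, passing to a resolution and pushing forward) admits a finite-dimensional linear representation of its fundamental group with infinite image over some field, then it carries a non-zero symmetric differential $H^0(X, \Sym^m \Omega^1_X) \neq 0$ for some $m \in \mathbb{N}^+$. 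One has to be a little careful about the passage from the singular $X$ to a resolution $\widetilde{X} \to X$: since $X$ is klt it has rational singularities, so $\pi_1(\widetilde{X}) \cong \pi_1(X)$, and reflexive symmetric differentials on $X$ pull back to honest symmetric differentials on $\widetilde{X}$ (indeed $H^0(\widetilde{X}, \Sym^m \Omega^1_{\widetilde X}) = H^0(X, \Sym^{[m]} \Omega^1_X)$ by \cite{GKKP11} when $K_X$ is trivial, or at least one inclusion suffices here). Thus a representation of $\pi_1(X)$ with infinite image would force $H^0(X, \Sym^{[m]} \Omega^1_X) \neq 0$, contradicting $\wtilde{q}(X) = 0$ via Theorem~\ref{thm:augm_irreg_rev}. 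This gives Item~\ref{il:no_infinite_representations}.

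For Item~\ref{il:JH_finite}, I would invoke the rigidity/finiteness results on character varieties that are companion to \cite{MR3127814}: once every representation $\pi_1(X) \to \GL_n(\mathbb{C})$ has finite — equivalently, bounded — image (by Item~\ref{il:no_infinite_representations}), the representation variety $\mathrm{Hom}(\pi_1(X), \GL_n(\mathbb{C}))$ consists of semisimple representations, and a non-rigid positive-dimensional family of such would again, by the construction of symmetric differentials from a non-constant period map or from a non-isotrivial family (see \cite[Thm.~1.2 and its corollaries]{MR3127814}), produce a non-zero element of $H^0(X, \Sym^{[m]} \Omega^1_X)$; the absence of such forces each $\mathrm{Hom}(\pi_1(X), \GL_n(\mathbb{C}))$ to be a finite union of conjugacy-orbit closures, hence finitely many conjugacy classes of $n$-dimensional representations. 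For Item~\ref{il:polygrowth}, I would cite the dichotomy of \cite{MR2629989} (Brunebarbe's or the relevant reference's result): a Kähler group of weakly polynomial growth that is infinite admits a non-trivial representation to a real Lie group with unbounded image (e.g.\ via the structure of its Albanese or via Gromov's theorem on groups of polynomial growth being virtually nilpotent, hence virtually abelian up to finite index, producing infinite $H_1$), which again contradicts Item~\ref{il:no_infinite_representations} together with the fact — proven in the course of Lemma~\ref{lem:bfjjd0} — that $\wtilde{q}(X) = 0$ forces $H_1(X, \mathbb{Z})$ finite, so $\pi_1(X)$ cannot be virtually abelian and infinite, hence cannot be virtually nilpotent, hence (by Gromov) cannot have polynomial growth, and the ``weakly polynomial'' refinement is handled by the cited \cite[Def.~1.1]{MR2629989} formalism.

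The main obstacle I expect is the careful bookkeeping in the reduction from the singular variety $X$ to a resolution in a way compatible with all three statements — in particular, verifying that the hypothesis of the Brunebarbe--Klingler--Totaro theorem (which is stated for smooth projective varieties, and whose conclusion produces symmetric differentials on the smooth model) transfers faithfully across the klt-to-resolution comparison, i.e.\ that pulling back a representation of $\pi_1(X) = \pi_1(\widetilde X)$ and producing a symmetric differential on $\widetilde X$ really yields a \emph{reflexive} symmetric differential on $X$ that is governed by $\wtilde q(X)$. This uses the Greb--Kebekus--Kovács--Peternell extension theorem \cite{GKKP11} in an essential way and is the one place where the klt hypothesis (rather than merely normality) is doing real work. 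The rest is an assembly of citations: \cite{MR3127814} for Items~\ref{il:no_infinite_representations} and~\ref{il:JH_finite}, and \cite{MR2629989} together with Gromov's polynomial-growth theorem and the finiteness of $H_1(X,\mathbb Z)$ (from the proof of Lemma~\ref{lem:bfjjd0}) for Item~\ref{il:polygrowth}.
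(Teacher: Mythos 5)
Your proposal follows essentially the same route as the paper: resolution plus Takayama for Item~\ref{il:no_infinite_representations}, the character-variety finiteness results together with Maschke for Item~\ref{il:JH_finite}, and the group-theoretic input from \cite{MR2629989} for Item~\ref{il:polygrowth}. A few corrections to the bookkeeping. For Item~\ref{il:no_infinite_representations}, you identify the ``klt-to-resolution transfer'' as the place where \cite{GKKP11} does essential work, but this is backward: the direction you need --- from a nonzero section of $\Sym^m\Omega^1_{\wtilde X}$ to a nonzero section of $\Sym^{[m]}\Omega^1_X$ --- is the trivial one, since $\wtilde X\setminus\Exc(\pi)\cong X_{\reg}$ and reflexive sheaves on a normal variety are determined on big open sets. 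The GKKP-type extension would be the converse direction, and that statement is actually \emph{false} for symmetric differentials in general (as opposed to antisymmetric $p$-forms, which is all \cite{GKKP11} treats). Likewise, $\pi_1(\wtilde X)\cong\pi_1(X)$ is Takayama's theorem \cite[Thm.~1.1]{Takayama2003} for log terminal singularities, not a mere consequence of rational singularities. For Item~\ref{il:JH_finite}, the finiteness of $\Hom\bigl(\pi_1(X),\GL_r(ℂ)\bigr){/\hspace{-0.12cm}/}\GL_r(ℂ)$ in the absence of symmetric differentials is cited from Arapura \cite[Prop.~2.4]{MR1978714} or Klingler \cite[Thm.~1.6(i)]{MR3044124}; pointing vaguely at ``companions to \cite{MR3127814}'' should be sharpened to one of these. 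For Item~\ref{il:polygrowth}, the detour through Gromov's theorem on polynomial growth, virtual nilpotence, and $H_1$-finiteness is both unnecessary and inadequate for the \emph{weakly} polynomial growth notion actually in the statement; the correct input, which you also gesture at, is Kleiner's theorem that an infinite finitely generated group of weakly polynomial growth admits a finite-dimensional representation with infinite image, cf.\ \cite[Sect.~4]{MR2629989}, after which Item~\ref{il:no_infinite_representations} finishes in one line.
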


\begin{rem}
  There exist finitely generated, infinite groups that do not admit
  finite-dimensional representations.  One example is Higman's group,
  \cite[Ex.~1.1]{Berrick}.
\end{rem}

\begin{rem}
  Concerning Item~\ref{il:polygrowth}, cf.~also the discussion in
  \cite[p.~500]{Ca95}.
\end{rem}

\begin{proof}[Proof of Theorem~\ref{thm:oddCY}]
  Let $π: \wtilde{X} → X$ be a resolution of $X$ and recall from
  \cite[Thm.~1.1]{Takayama2003} that $π_1(\wtilde{X}) = π_1(X)$.

  \subsubsection*{Proof of \ref*{il:no_infinite_representations}}

  Argue by contradiction and suppose that there exists a representation
  $π_1(\wtilde{X}) → \GL_r(\bK)$ with infinite image, for some positive integer
  $r$ and some field $\bK$.  It then follows from a recent result of
  Brunebarbe-Klingler-Totaro, \cite[Thm.~0.1]{MR3127814}, that there exists a
  number $m > 0$ and a non-zero element
  $$
  0 ≠ \wtilde{σ} ∈ H⁰\bigl(\wtilde{X},\, \Sym^m Ω¹_{\wtilde{X}} \bigr).
  $$
  Restricting $\wtilde{σ}$ to the complement of the exceptional divisor of $π$
  we obtain a non-trivial element $σ ∈ H⁰\bigl(X,\, \Sym^{[m]}Ω¹_{X} \bigr)$,
  which by Theorem~\ref{thm:augm_irreg_rev} yields $\wtilde{q}(X) ≠ 0$,
  contradiction.

  \subsubsection*{Proof of \ref*{il:polygrowth}}

  Item~\ref{il:polygrowth} follows from \ref{il:no_infinite_representations} and
  from the (deep) fact that an infinite, finitely generated group with weakly
  polynomial growth admits a finite-dimensional (real) representation with
  infinite image, cf.\ \cite[Sect.~4]{MR2629989}.

  \subsubsection*{Proof of \ref*{il:JH_finite}}

  We saw in the proof of \ref{il:no_infinite_representations} that
  $H⁰\bigl(\wtilde{X},\, \Sym^m Ω_{\wtilde{X}}¹ \bigr) = \{0\}$ for all $m > 0$.
  It hence follows from \cite[Prop.~2.4]{MR1978714} or
  \cite[Thm.~1.6(i)]{MR3044124} that the variety
  $\Hom\bigl(π_1(X), \GL_r(ℂ)\bigr) {/\hspace{-0.12cm}/} \GL_r(ℂ)$, which
  parametrises representations up to Jordan-Hölder equivalence, consists of
  finitely many points.  By Item~\ref{il:no_infinite_representations} and
  Maschke's Theorem every complex representation of $π_1(X)$ is in fact
  semisimple, in which case Jordan-Hölder equivalence reduces to equivalence up
  to conjugation.  This establishes the claim made in item~\ref{il:JH_finite}.
\end{proof}

\begin{rem}
  The converse of Item~\ref{il:no_infinite_representations} is however false.
  The singular Kummer surface $X=A/\langle \pm 1\rangle$ that will be discussed
  in Example~\ref{ex:kummer} satisfies $\wtilde{q}(X)=2$ but $π_1(X)=\{1\}$.
  Indeed, the minimal resolution of $X$ is a $K3$ surface, hence it is simply
  connected and therefore so is $X$ by Takayama's result
  \cite[Thm.~1.1]{Takayama2003}.
\end{rem}

\begin{cor}[Fundamental group of varieties with $\wtilde{q} = 0$, II]\label{thm:oddCY2}
  Let $X$ be a projective klt variety with numerically trivial canonical divisor
  and vanishing augmented irregularity.  Then, the following hold.
  \begin{enumerate}
  \item\label{il:no_infinite_representations_smooth_part} The fundamental group
    $π_1(X_{\reg})$ does not have any finite-dimensional representation with
    infinite image (over any field).
  \item\label{il:JH_finite_smooth_part} For each $n ∈ ℕ$, the fundamental group
    $π_1(X_{\reg})$ has only finitely many $n$-dimensional complex
    representations up to conjugation.
  \item\label{il:polygrowth_smooth_part} If infinite, the group $π_1(X_{\reg})$
    cannot have weakly polynomial growth in the sense of
    \cite[Def.~1.1]{MR2629989}.
   \end{enumerate}
\end{cor}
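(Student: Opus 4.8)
The plan is to bootstrap from Theorem~\ref{thm:oddCY} after replacing $X$ by a \emph{maximally quasi-étale cover}, which is precisely the device that makes the fundamental group of the smooth locus accessible. First I would invoke \cite[Thm.~1.5]{GKP13} to produce a quasi-étale cover $γ : Y → X$ with $Y$ maximally quasi-étale. By Reminder~\ref{remi:qec} the (again projective) space $Y$ is klt with $K_Y = γ^* K_X$ numerically trivial, and Item~\ref{il:AUG2} of Lemma~\ref{lem:aug} gives $\wtilde{q}(Y) = \wtilde{q}(X) = 0$; hence all three conclusions of Theorem~\ref{thm:oddCY} hold for $Y$. Writing $Y° := γ^{-1}(X_{\reg})$, which is a big open subset of the smooth variety $Y_{\reg}$ and therefore satisfies $π_1(Y°) = π_1(Y_{\reg})$, the finite étale morphism $Y° → X_{\reg}$ exhibits $π_1(Y_{\reg})$ as a finite-index subgroup of $π_1(X_{\reg})$.

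The key additional input is \cite[Thm.~1.14 and its proof]{GKP13}: on the maximally quasi-étale klt space $Y$, every finite-dimensional complex representation of $π_1(Y_{\reg})$ factors through the natural surjection $π_1(Y_{\reg}) \twoheadrightarrow π_1(Y)$; equivalently, restriction along $Y_{\reg} \hookrightarrow Y$ induces, for each $n$, a bijection on conjugacy classes of $n$-dimensional complex representations of $π_1(Y)$ and of $π_1(Y_{\reg})$. Granting this, Item~\ref{il:no_infinite_representations_smooth_part} follows: a representation of $π_1(X_{\reg})$ with infinite image restricts to one of $π_1(Y_{\reg})$ whose image has finite index in the full infinite image, hence is infinite; in characteristic zero one realises it over $ℂ$ by embedding its finitely generated field of definition into $ℂ$, so it descends to a representation of $π_1(Y)$ with infinite image, contradicting Item~\ref{il:no_infinite_representations} of Theorem~\ref{thm:oddCY} for $Y$. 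Item~\ref{il:polygrowth_smooth_part} is then formal: $π_1(X_{\reg})$ is finitely generated since $X_{\reg}$ is quasi-projective, and an infinite, finitely generated group of weakly polynomial growth admits a finite-dimensional real representation with infinite image by \cite[Sect.~4]{MR2629989}, which \ref{il:no_infinite_representations_smooth_part} forbids.

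For Item~\ref{il:JH_finite_smooth_part} I would feed the bijection above into Item~\ref{il:JH_finite} of Theorem~\ref{thm:oddCY}: for fixed $n$, the $n$-dimensional complex representations of $π_1(Y_{\reg})$ form, up to conjugation, a finite set. It then remains to transfer this rigidity along the finite-index inclusion $π_1(Y_{\reg}) ≤ π_1(X_{\reg})$. This is a standard fact about representation-rigid groups: passing to the normal core $N$ of $π_1(Y_{\reg})$ in $π_1(X_{\reg})$, restriction of representations from $π_1(X_{\reg})$ to $N$ has finite fibres on conjugacy classes of $n$-dimensional representations, by a Clifford-theory argument using the finiteness of the relevant (nonabelian, characteristic-zero) first cohomology of the finite quotient $π_1(X_{\reg})/N$; I would isolate this as a short lemma.

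The heart of the matter, and the point I expect to need the most care, is not a computation but getting the logic right: one cannot deduce the statements for $π_1(X_{\reg})$ from those for $π_1(X)$ directly, because the canonical map $π_1(X_{\reg}) \twoheadrightarrow π_1(X)$ runs in the unhelpful direction; the way out is that a maximally quasi-étale cover makes $π_1$ of the smooth locus and $π_1$ of the whole space representation-theoretically indistinguishable, which is exactly \cite[Thm.~1.14]{GKP13}. The two remaining delicate strands are (i) checking that this extension statement, and hence the argument for Item~\ref{il:no_infinite_representations_smooth_part}, survives over fields of positive characteristic — where complex local systems should be replaced by the appropriate $ℓ$-adic or abstract analogue, or one argues by specialisation — and (ii) the elementary but non-vacuous passage of representation rigidity from the finite-index subgroup $π_1(Y_{\reg})$ back up to $π_1(X_{\reg})$.
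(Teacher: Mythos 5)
Your strategy for Items~\ref{il:no_infinite_representations_smooth_part} and \ref{il:polygrowth_smooth_part} matches the paper's: pass to a maximally quasi-étale cover $\gamma: Y \to X$ (the paper additionally takes it Galois, so that $1 \to \pi_1(Y_{\reg}) \to \pi_1(X_{\reg}) \to G \to 1$ is exact with $G$ finite), note $\wtilde q(Y)=0$, and use the fact that on a maximally quasi-étale klt space, representations of $\pi_1(Y_{\reg})$ factor through $\pi_1(Y)$. However, your digression about ``embedding the field of definition into $\bC$'' and your worry (i) about positive characteristic are not needed: the factorisation the paper invokes is Grothendieck's theorem \cite[Thm.~1.2]{MR0262386}, which applies to representations over any commutative ring once the profinite completions agree, and Theorem~\ref{thm:oddCY}\eqref{il:no_infinite_representations} is already stated over an arbitrary field; one can therefore run the argument over $\bK$ directly. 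The paper's treatment of the image is also a hair cleaner than your ``finite-index subgroup of an infinite group is infinite'': they set $\Gamma=\img(\rho)$, $\Gamma_Y=\img(\rho|_{\pi_1(Y_{\reg})})$, observe $\Gamma_Y$ is finite (by Theorem~\ref{thm:oddCY} for $Y$) and normal in $\Gamma$, and that the Galois group surjects onto $\Gamma/\Gamma_Y$, so $\Gamma$ is finite.

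For Item~\ref{il:JH_finite_smooth_part} you genuinely diverge. You propose to pass to the normal core of $\pi_1(Y_{\reg})$ and run a Clifford-theory argument, isolating an undemonstrated ``short lemma'' that restriction to a finite-index normal subgroup has finite fibres on conjugacy classes. That is the actual gap in your proposal: you are aware a lemma is needed but you neither state nor prove it, and nonabelian $H^1$ arguments over $\bC$ are not automatic. The paper takes a simpler route with no auxiliary lemma: every $\rho$ on $\Sigma := \pi_1(X_{\reg})$ is semisimple (by part~\ref{il:no_infinite_representations_smooth_part} and Maschke), so it suffices to bound simple $V$; there are only finitely many possibilities for $\rho|_{\Sigma_Y}$ up to conjugation, hence finitely many for $\operatorname{ind}_{\Sigma_Y}^{\Sigma}(\rho|_{\Sigma_Y})$; and since $\operatorname{ind}_{\Sigma_Y}^{\Sigma}(\rho|_{\Sigma_Y}) \cong \bC[\Sigma/\Sigma_Y]\otimes_{\bC} V$ as $\Sigma$-modules, the simple $V$ is realised as a direct summand of one of finitely many finite-dimensional semisimple modules. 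You would do well to replace your Clifford-theoretic sketch with this induction argument; it sidesteps the normal-core and cohomological considerations entirely.
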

\begin{proof}
  ---

  \subsubsection*{Proof of \ref*{il:no_infinite_representations_smooth_part}}
  
  Let $γ: Y → X$ be a Galois, maximally quasi-étale cover, as constructed in
  \cite[Thm.~1.5]{GKP13}.  As $γ$ is quasi-étale, the Galois group $G$ of $γ$
  fits into an exact sequence as follows
  \begin{equation}\label{eq:exact_group_sequence}
    1 \rightarrow π_1(Y_{\reg}) \overset{γ_*}{\longrightarrow} π_1(X_{\reg}) → G → 1.
  \end{equation}
  We let $ι: Y_{\reg} ↪ Y$ denote the inclusion map.  Let
  $ρ: π_1(X_{\reg}) → \GL_r(\bK)$ be any representation of $π_1(X_{\reg})$ over
  any field $\bK$.  As $Y$ is maximally quasi-étale, it follows from
  \cite[Sect.~8.1]{GKP13} or \cite[Thm.~1.2]{MR0262386} there exists a
  representation $ρ_Y: π_1(Y) → \GL_r(\bK)$ making the following diagram
  commutative
  $$
  \begin{xymatrix}
    { π_1(Y_{\reg}) \ar@{^(->}[d]_{γ_*}\ar@{->>}[r]^{ι_*} \ar^{ρ_Y°}[rd] & π_1(Y) \ar[d]^{ρ_Y} \\
      π_1(X_{\reg}) \ar[r]^{ρ}& \GL_r(\bK) .
    }
  \end{xymatrix}
  $$
  Let $Γ := \img(ρ)$.  Furthermore, as $\wtilde{q}(Y) = 0$, we may apply
  part~\ref{il:no_infinite_representations} of Theorem~\ref{thm:oddCY} to
  conclude that $Γ_Y := \img(ρ_Y°) = \img(ρ_Y)$ is a finite, normal subgroup of
  $Γ$.  Using the exact sequence \eqref{eq:exact_group_sequence}, from $ρ$ we
  obtain a surjective group homomorphism
  $\overline{ρ}: G \twoheadrightarrow Γ/Γ_Y$.  As $G$ is finite by definition,
  $Γ/Γ_Y$ is hence finite.  Together with the finiteness of $Γ_Y$ observed
  above, Item~\ref{il:no_infinite_representations_smooth_part} follows.

  \subsubsection*{Proof of \ref*{il:polygrowth_smooth_part}}
  
  The claim follows as in the proof of Item~\ref{il:polygrowth} above.

  \subsubsection*{Proof of \ref*{il:JH_finite_smooth_part}}

  Fix $n ∈ ℕ$ and set $Σ := π_1(X_{\reg})$ and $Σ_Y := π_1(Y_{\reg})$.  As every
  finite-dimensional complex representation of $Σ$ is semisimple by
  \ref{il:no_infinite_representations_smooth_part} and Maschke's Theorem, it
  suffices to show the claim for simple $Σ$-representations.  Because each
  $Γ_Y$-representation factors over $π_1(Y)$, Theorem~\ref{thm:oddCY} implies
  that the group $Σ_Y$ has only finitely many $n$-dimensional complex
  representations up to conjugation.  Therefore, if $ρ: Σ → \GL_{ℂ}(V)$ is a
  simple $n$-dimensional representation, there are only finitely many
  possibilities for $ρ|_{Σ_Y}$ up to conjugation.  Consequently, again up to
  conjugation, there are only finitely many possibilities for
  $\operatorname{ind}_{Σ_Y}^{Σ} (ρ|_{Σ_Y})$, the $Σ$-module obtained via
  induction\footnote{See for example \cite[Sect.~3.3]{MR1153249}} from $Σ_Y$ to
  $Σ$.  There exists a natural $Σ$-module isomorphism
  $$
  η: ℂ[Σ / Σ_Y] ⊗_{ℂ} V \overset{≅}{\longrightarrow}
  \operatorname{ind}_{Σ_Y}^{Σ} (ρ|_{Σ_Y}),
  $$
  where $ℂ[Σ / Σ_Y]$ is the $Σ$-module of $ℂ$-valued functions on the
  homogeneous $Σ$-space $Σ/Σ_Y$, see \cite[Ex.~3.6]{MR1153249}.  As the
  $Σ$-equivariant map $v ↦ η(\b1 ⊗ v)$ realises the simple representation $V$ as
  a direct summand of the semisimple representation
  $\operatorname{ind}_{Σ_Y}^{Σ} (ρ|_{Σ_Y})$, there are only finitely many
  possibilities for $ρ$ up to isomorphism, as claimed in
  \ref{il:JH_finite_smooth_part}.
\end{proof}

%
%
\svnid{$Id: 14-example.tex 764 2018-10-27 19:17:17Z greb $}

\section{Examples}\label{sec:examples}
\subversionInfo
\approvals{Daniel & yes \\ Henri & yes \\ Stefan & yes}

The present section gathers examples that illustrate the main results of this
paper.  As announced in the introduction, we begin in Section~\ref{ssec:loupbir}
with two examples that show how the holonomy changes under birational
transformation.  Perhaps more importantly, Section~\ref{ssec:finquot}
illustrates the classification scheme established in the previous sections,
underlines the necessity of using quasi-étale covers, and points out the
differences to other suggestions for a definition of ``irreducible holomorphic
symplectic variety'' that are found in the literature.  Finally,
Section~\ref{ssec:msosoK3} discusses moduli of sheaves on K3 surfaces.

\subsection{Change of holonomy under crepant resolutions}\label{ssec:loupbir}
\approvals{Daniel & yes \\ Henri & yes \\ Stefan & yes}

The singular Kähler-Einstein metric $ω_H$ discussed in the standard
Setting~\ref{setting:holonomy} does depend on the choice of the ample divisor
$H$.  However, we have seen in Proposition~\ref{prop:independence} that the
isomorphism class of the restricted holonomy group $G°$ is in fact independent
of $H$.  We can therefore speak of \emph{the} restricted holonomy, and ask how
it changes under birational transformation.  The following two examples show
that holonomy does in fact change, even for crepant resolutions of
singularities.

\begin{example}[Singular Kummer surface]\label{ex:kummer}
  Let $X := A/\langle \pm 1\rangle$ where $A$ is an Abelian surface, and let
  $π:\wtilde X → X$ be the (crepant) minimal resolution of $X$, which is a $K3$
  surface.  We analyse the relevant (singular) Kähler-Einstein metrics.
  \begin{description}
  \item[On the crepant resolution] If $ω_{\wtilde X}$ is any Ricci-flat Kähler
    metric on $\wtilde X$ with associated Riemannian metric $g_{\wtilde X}$,
    then the associated holonomy group
    $\Hol \bigl( \wtilde{X},\, g_{\wtilde X} \bigr)$ is isomorphic to $\SU(2)$.

  \item[On the singular Kummer surface] If $ω_A$ is any flat metric on $A$
    induced by a constant metric from $ℂ^n$, then $ω_A$ is invariant under the
    action of $\pm 1$ on $A$, and hence descends to a singular Ricci-flat metric
    $ω_X$ on $X$, in the sense of Theorem~\ref{thm:EGZ}.  The metric $ω_X$ is
    flat on $X_{\reg}$.
  \end{description}
\end{example}
  
\begin{example}[Symmetric square of a K3]\label{ex:symk3}
  Let $S$ be a $K3$ surface, and let $X := S⨯S /\langle i \rangle$ where
  $i : (s_1,s_2) ↦ (s_2,s_1)$.  The quotient map $γ: S⨯S → X$ is quasi-étale and
  Galois with group $ℤ_2$.  Recall from \cite[Sect.~6]{Bea83} that the Hilbert
  scheme $\wtilde{X}$ parametrising zero-dimensional subschemes of length two is
  an irreducible holomorphic symplectic manifold and admits a birational,
  crepant map $π: \wtilde{X} → X$.  Once again, we analyse the relevant
  Kähler-Einstein metrics.
  \begin{description}
  \item[On the resolution] Any smooth Ricci-flat Kähler metric $ω$ on
    $\wtilde{X}$ with associated Riemannian metric $g_{\wtilde X}$, satisfies
    $\Hol \bigl( \wtilde{X},\, g_{\wtilde X} \bigr) ≅ \Sp(2)$.
  \item[On the singular symmetric square] On the other hand, if $ω_S$ is a
    Ricci-flat Kähler metric on $S$, then $\pr_1^*ω_S + \pr_2^*ω_S$ defines a
    Kähler Ricci-flat metric on $S⨯S$ that descends to a singular Kähler
    Ricci-flat metric $ω_X$ on $X$, with associated Riemannian metric
    $g_{X_{\reg}}$.  One computes\footnote{See Remark~\ref{rem:bhuc} and
      Proposition~\ref{prop:dtkd} below, as well as the first few lines of
      Sect.~\ref{ssec:pothf}} that $\Hol \bigl( X_{\reg},\, g_{X_{\reg}} \bigr)$
    is an extension of $\SU(2)⨯\SU(2)$ by $ℤ_2$, hence the restricted holonomy
    is reducible.
  \end{description}
\end{example}

\subsection{Finite quotients}\label{ssec:finquot}
\approvals{Daniel & yes \\ Henri & yes \\ Stefan & yes}

Singular varieties with trivial canonical class can be easily be constructed by
taking quotients.  For an example, consider quotient of an even-dimensional
Abelian variety by the involution $x ↦ -x$ or a symmetric product of an
irreducible holomorphic symplectic manifold (resp.\ an even dimensional
Calabi-Yau manifold) as in Examples~\ref{ex:kummer} and \ref{ex:symk3}.
However, exhibiting strongly stable singular varieties with trivial canonical
class seems to require more work.

\subsubsection{Quotient of Abelian varieties}\label{ex:ab}
\approvals{Daniel & yes \\ Henri & yes \\ Stefan & yes}

We construct quotients of Abelian varieties that have the algebra of reflexive
holomorphic forms of a Calabi-Yau variety or of a irreducible holomorphic
symplectic variety.

\begin{example}[A fake IHS variety with canonical singularities]\label{ex:kummer2}
  This is a higher dimensional generalisation of the singular Kummer surfaces
  discussed in Example~\ref{ex:kummer}.  Take $A$ an Abelian surface, and
  consider $Y:=A/\langle \pm 1\rangle$.  A non-zero holomorphic two-form $ω_A$
  on $A$ descends to a symplectic form $ω_Y$ of $Y_{\reg}$.  Now, let us
  consider $X:=Y^{(n)}$ the $n$-th symmetric product of $Y$ for some $n ≥ 2$.
  The variety $X$ is realised as the quotient $Y^n/\mathfrak S_n$.  The two-form
  $\sum \pr_i^* ω_Y$ is a $\mathfrak S_n$-invariant symplectic form on
  $(Y_{\reg})^n$, where $\pr_i:Y^n→ Y$ is the projection to the $i$-th factor.
  Hence it descends to a symplectic form $ω$ on $X_{\reg}$, which we can
  interpret as a reflexive two-form on $X$.  It is not hard to see that
  $\bigoplus_{p=0}^{2n} H⁰(X,Ω_X^{[p]}) = ℂ[ω]$.  In particular, $X$ has the
  same algebra of reflexive forms as a smooth irreducible holomorphic symplectic
  manifold.  However, its augmented irregularity is maximal, that is, equal to
  the dimension of $X$.  The tangent sheaf $𝒯_{X_{\reg}}$ is flat and $X$ admits
  a quasi-étale, Galois cover that is an Abelian variety.
\end{example}

In \cite{MR3319924}, Matsushita studies what he calls \emph{cohomologically
  irreducible symplectic (CIHS) varieties}.  By definition, these are
projective, holomorphic symplectic varieties $X$ satisfying the following two
conditions.
\begin{itemize}
\item The variety $X$ has $ℚ$-factorial, terminal singularities.
\item We have an isomorphism of algebras,
  $\bigoplus_{p=0}^{2n} H⁰ \bigl(X,\, Ω_X^{[p]} \bigr) = ℂ[ω]$, where $ω$ is a
  holomorphic symplectic form.
\end{itemize}
While these varieties share many properties with smooth irreducible holomorphic
symplectic manifolds, they are not necessarily IHS, see the next example.  This
should be compared with the smooth case, where requiring the second condition to
hold already forces the manifold to be simply connected, and hence IHS,
see~\cite[Rem.~8.19]{GKP16}.

\begin{example}[A CIHS variety that has maximal augmented irregularity]\label{ex:cihs1}
  Let $A$ be an Abelian surface and $t_0 ∈ A$ be a two-torsion point.  Let
  $\tr_{t_0}$ be the translation by $(0, t_0)$, consider the morphism
  $τ : (s,t) ↦ (t,s)$ as well as $\varphi := \tr_{t_0}◦τ$.  Observe that
  $\varphi$ induces a free action of $ℤ_4$ on $A⨯A$.  We notice that $\varphi$
  commutes with $(-1): (s,t) ↦ (-s,-t)$, so that $\varphi$ and $(-1)$ generate
  an action of $G:= ℤ_4 ⨯ ℤ_2$ on $A⨯A$.  We set $X := (A⨯A)/G$ and let
  $π: A⨯A → X$ be the quotient map.  As the $ℤ_4$-action is free, the map $π$ is
  quasi-étale and the singularities of $X$ are exactly the images of the 256
  two-torsion points under $π$, which are therefore isolated.  Either by direct
  computation or by a result of
  Namikawa~\cite[Cor.~1]{NamikawaTerminalCodimensionSymplectic}, we see that $X$
  has terminal singularities, which are in addition obviously $ℚ$-factorial.  On
  the other hand, by construction
  $$
  H⁰ \bigl( A⨯A,\, Ω²_{A⨯ A}\bigr)^G = ℂ·ω \quad\text{where}\quad ω =
  \pr_1^*(dz_1Λdz_2) + \pr_2^*(dz_1Λdz_2).
  $$
  Observe that $ω$ is symplectic, and that
  $H⁰ \bigl( A⨯A,\, Ω^p_{A⨯ A}\bigr)^G = \{0\}$ for $p = 1,3$.
\end{example}

\begin{example}[A fake CY threefold]\label{ex:igusa}
  The Calabi-Yau case is a bit more involved, but still well-known,
  cf.~\cite[Ex.~2.17]{MR1868164}.  That example yields a free action of
  $G:=ℤ_2⨯ ℤ_2$ on a product $A = E_1⨯E_2⨯E_3$ of three elliptic curves such
  that
  $$
  H⁰\bigl(A,\,Ω_A^p\bigr)^{G} =
  \begin{cases}
    1 &\mbox{if } p=0,3\\
    0 &\mbox{if } p=1,2
  \end{cases}
  $$
  In particular, $A/G$ is a \emph{smooth} manifold with trivial canonical bundle
  and the algebra of holomorphic form of a Calabi-Yau threefold.
\end{example}

\subsubsection{Quotients of CY manifolds}
\approvals{Daniel & yes \\ Henri & yes \\ Stefan & yes}

A classical way to produce singular CY varieties with quotient singularities is
to start with a Fano manifold $X$ of dimension at least $3$ and a finite group
$G$ acting on $X$.  Then, one considers a general element $Y∈ |-K_X|^G$.  If $Y$
is smooth, then it is an irreducible Calabi-Yau manifold endowed with an action
of $G$.  Indeed, it has trivial canonical bundle by adjunction, it is simply
connected by the Lefschetz hyperplane theorem and as $X$ is Fano, one has
$h⁰\bigl(X,\,Ω^p_X\bigr) = 0$ for $p>0$ by Kodaira-Nakano vanishing, which in
turn implies that $h⁰ \bigl(Y,\, Ω_Y^p \bigr) = 0$ for $0 < p < \dim Y$ by the
Lefschetz theorem for Hodge groups, see~\cite[Lem.~4.2.2]{Laz04-I}.  The variety
$Y/G$ has klt singularities.  Moreover, if $G$ preserves the holomorphic volume
form on $Y$, then $Y/G$ has trivial canonical bundle; in particular it is
Gorenstein with canonical singularities.  In that case, $Y/G$ is automatically a
Calabi-Yau variety in the sense of Definition~\ref{def:CY_and_IHS}.

\begin{example}[A terminal quotient with non-Gorenstein isolated singularities]\label{ex:fav1}
  In \cite[Ex.~1]{Favale16}, Favale shows that $X = ℙ²⨯ℙ²$ admits an action of
  $G=ℤ_3$ such that a general element $Y ∈ |-K_X|^G$ is smooth and such that $G$
  \emph{does not} preserve the holomorphic volume form on $Y$.  The singular
  locus of the variety $Y/G$ consists of $9$ points, each of them being
  terminal.  The restricted holonomy is equal to $\SU(3)$, yet $Y/G$ is not CY.
\end{example}

The following example is constructed in a similar fashion, although the details
are more technical to work out.  It is Gorenstein, has trivial canonical bundle,
and possesses non-isolated singularities.

\begin{example}[A CY threefold with a one-dimensional singular locus]\label{ex:fav2}
  Now, following \cite[Ex.~4]{Favale16}, take $X=ℙ¹⨯ℙ¹⨯ℙ¹⨯ℙ¹$.  It admits an
  action of $G = D_{16}⨯ℤ_2$ where $D_{16}$ is the dihedral group.  As explained
  in \emph{loc.\ cit.}, a general element $Y ∈ |-K_X|^G$ is smooth and $G$
  preserves the holomorphic volume form on $Y$.  Moreover, the singular locus of
  the variety $Y/G$ has dimension one ---it is not irreducible and some of its
  components may be zero-dimensional though.
\end{example}

\subsubsection{Quotients of $K3 ⨯ K3$ and of IHS manifolds}\label{ex:ihs}
\approvals{Daniel & yes \\ Henri & yes \\ Stefan & yes}

The following example is an example of cohomologically irreducible holomorphic
symplectic variety in the sense of Matsushita~\cite{MR3319924}, cf.\ also
Section~\ref{ex:ab} and Example~\ref{ex:cihs1} above, yet it is covered by a
product of two $K3$-surfaces.

\begin{example}[A CIHS variety with restricted holonomy $\SU(2) ⨯ \SU(2)$]\label{ex:cihs2}
  Let $S$ be a $K3$-surface with a symplectic involution $τ$.  The fixed point
  locus of $τ$ consists of isolated points.  Consider the action of $ℤ_4$ on
  $S⨯S$ generated by the automorphism $σ$ defined by $(x,y) ↦ (τ(y), x)$.  The
  fixed points of $σ$ are of the form $(x,x)$ where $x ∈ \Fix(τ)$, hence
  $X:=(S⨯S)/\langle σ \rangle$ has isolated, $ℚ$-factorial singularities.
  Moreover, we have by construction
  $$
  \bigoplus\nolimits_{p} H⁰ \bigl( S⨯S,\, Ω^p_{S⨯ S}\bigr)^{\langle σ \rangle} =
  ℂ[ω] \quad\text{where}\quad ω = \pr_1^*ω_S+\pr_2^*ω_S
  $$
  for some symplectic form $ω_S$ on $S$.  Using Namikawa's
  result~\cite[Cor.1]{NamikawaTerminalCodimensionSymplectic} as before, we see
  that $X$ has terminal singularities, and therefore it is a CHIS variety.
  However, it is covered by the product of two $K3$-surfaces, so it is not a
  quotient of an IHS variety.
\end{example}

We conclude with an example that shows that given a variety with trivial
canonical bundle and restricted holonomy $\Sp\!\left(\frac n2\right)$, taking a
finite, quasi-étale cover is indeed necessary before a symplectic form will
necessarily exist, cf.\ Section~\ref{subsect:CY_and_IHS_cases} above.

\begin{example}[A quotient of an IHS manifold with $K_X$ trivial but no two-form]\label{ex:no2form}
  Let $S$ be a $K3$ surface endowed with an anti-symplectic involution $τ$.  For
  instance, take $S$ to be the minimal resolution of the the quotient
  $(E⨯E)/\langle σ\rangle$ where $E$ is the elliptic curve $ℂ/(ℤ⊕iℤ)$ and $σ$
  acts on $E⨯E$ by $\diag(i,-i)$.  Define then $τ$ to be the lift of
  $\diag(-1,1)$ to $S$, cf.~\cite[Ex.~2]{Oguiso-Zhang}.  Now, let us consider
  $S^{[2]}$ the Hilbert scheme parametrising length $2$ zero-dimensional
  subschemes of $S$.  The variety $S^{[2]}$ is an irreducible holomorphic
  symplectic manifold endowed with an anti-symplectic involution that we will
  still call $τ$.  The fixed locus of $τ$ is a smooth Lagrangian submanifold of
  $S^{[2]}$.  If $ω$ is a symplectic form on $S^{[2]}$, then $τ^*ω = -ω$ hence
  $τ^*ω² = ω²$.  In particular $X := S^{[2]}/\langle τ\rangle$ has canonical
  singularities (concentrated along a surface), trivial canonical bundle, but no
  non-zero two-form.  However, it has a quasi-étale cover that is an IHS
  manifold.
\end{example}

\subsection{Moduli spaces of sheaves on $K3$-surfaces}\label{ssec:msosoK3}
\approvals{Daniel & yes \\ Henri & yes \\ Stefan & yes}

Let $S$ be projective K3-surface.  As usual, we equip the even integral
cohomology of $S$ with the pairing
$$
\langle v,w \rangle:= -\int_X vw^*,
$$
where $w^* = (-1)ⁱ w$ for $w ∈ H^{2i}\bigl(S,\, ℤ \bigr)$.  To each coherent
sheaf $E$ we associate its Mukai vector
$v(E) = \ch(E) \sqrt{\td(S)} ∈ H^{even}\bigl(S,\, ℤ \bigr)$.  For a given $v$
and an ample Cartier divisor $H$ on $S$, we denote by $M_v(H)$ the
Gieseker-Maruyama moduli space of $H$-semistable sheaves with Mukai vector $v$
on $S$.  Any given $v$ can be decomposed as $v = mv_0$, where
$v_0 = (r_0, c_0, a_0)$ is primitive and $m ∈ ℕ^{+}$.  For simplicity, we assume
that $r_0 > 0$ in the following.  Under the additional assumption that $H$ is
``$v$-general'', every $H$-semistable sheaf is $H$-stable, and the corresponding
moduli space is non-empty if and only if $c_0 ∈ \NS(S)$ and
$\langle v_0, v_0\rangle ≥ -2$.  In \cite{MR2221132}, Kaledin, Lehn, and Sorger
show that if either $m ≥ 2$ and $\langle v_0, v_0 \rangle > 2$, or $m > 2$ and
$\langle v_0, v_0 \rangle ≥ 2$, then $M_{mv_0}(H)$ is a projective variety with
locally factorial, symplectic (and hence canonical) singularities that admits a
holomorphic symplectic two-form, but no symplectic resolution.  Assuming that
$\langle v_0, v_0\rangle ≥2$ and $m≥ 1$, Perego and Rapagnetta recently proved
in \cite[Thm.~1.19]{PeregoRapagnetta} that $M_{mv_0}(H)$ is an IHS-variety.
They also prove a similar statement for moduli of sheaves on abelian surfaces.



\begin{thebibliography}{BCHM10}

\bibitem[Ara02]{MR1978714}
Donu Arapura.
\newblock Higgs bundles, integrability, and holomorphic forms.
\newblock In {\em Motives, polylogarithms and {H}odge theory, {P}art {II}
  ({I}rvine, {CA}, 1998)}, volume~3 of {\em Int. Press Lect. Ser.}, pages
  605--624. Int. Press, Somerville, MA, 2002.

\bibitem[BCHM10]{BCHM10}
Caucher Birkar, Paolo Cascini, Christopher~D. Hacon, and James McKernan.
\newblock Existence of minimal models for varieties of log general type.
\newblock {\em J. Amer. Math. Soc.}, 23:405--468, 2010.
\newblock
  \href{http://dx.doi.org/10.1090/S0894-0347-09-00649-3}{DOI:10.1090/S0894-0347-09-00649-3}.

\bibitem[Bea83a]{Bea82}
Arnaud Beauville.
\newblock Some remarks on {K}ähler manifolds with {$c_{1}=0$}.
\newblock In {\em Classification of algebraic and analytic manifolds ({K}atata,
  1982)}, volume~39 of {\em Progr. Math.}, pages 1--26. Birkhäuser Boston,
  Boston, MA, 1983.

\bibitem[Bea83b]{Bea83}
Arnaud Beauville.
\newblock Variétés {K}ähleriennes dont la première classe de {C}hern est
  nulle.
\newblock {\em J. Differential Geom.}, 18(4):755--782 (1984), 1983.
\newblock
  \href{http://projecteuclid.org/getRecord?id=euclid.jdg/1214438181}{euclid.jdg/1214438181}.

\bibitem[BEG13]{BEG}
Sébastien Boucksom, Philippe Eyssidieux, and Vincent Guedj, editors.
\newblock {\em An introduction to the {K}ähler-{R}icci flow}, volume 2086 of
  {\em Lecture Notes in Mathematics}.
\newblock Springer, 2013.
\newblock
  \href{http://dx.doi.org/10.1007/978-3-319-00819-6}{DOI:10.1007/978-3-319-00819-6}.

\bibitem[Ber94]{Berrick}
A.~Jon Berrick.
\newblock Groups with no nontrivial linear representations.
\newblock {\em Bull. Austral. Math. Soc.}, 50(1):1--11, 1994.
\newblock
  \href{http://dx.doi.org/10.1017/S0004972700009503}{DOI:10.1017/S0004972700009503}.

\bibitem[Bes87]{MR867684}
Arthur~L. Besse.
\newblock {\em Einstein manifolds}, volume~10 of {\em Ergebnisse der Mathematik
  und ihrer Grenzgebiete (3) [Results in Mathematics and Related Areas (3)]}.
\newblock Springer-Verlag, Berlin, 1987.
\newblock
  \href{http://dx.doi.org/10.1007/978-3-540-74311-8}{DOI:10.1007/978-3-540-74311-8}.

\bibitem[BK08]{BalajiKollar08}
V.~Balaji and János Kollár.
\newblock Holonomy groups of stable vector bundles.
\newblock {\em Publ. Res. Inst. Math. Sci.}, 44(2):183--211, 2008.
\newblock
  \href{http://dx.doi.org/10.2977/prims/1210167326}{DOI:10.2977/prims/1210167326}.

\bibitem[BKT13]{MR3127814}
Yohan Brunebarbe, Bruno Klingler, and Burt Totaro.
\newblock Symmetric differentials and the fundamental group.
\newblock {\em Duke Math. J.}, 162(14):2797--2813, 2013.
\newblock
  \href{http://dx.doi.org/10.1215/00127094-2381442}{DOI:10.1215/00127094-2381442}.

\bibitem[BtD85]{MR781344}
Theodor Bröcker and Tammo tom Dieck.
\newblock {\em Representations of compact {L}ie groups}, volume~98 of {\em
  Graduate Texts in Mathematics}.
\newblock Springer-Verlag, New York, 1985.
\newblock
  \href{http://dx.doi.org/10.1007/978-3-662-12918-0}{DOI:10.1007/978-3-662-12918-0}.

\bibitem[Bum04]{MR2062813}
Daniel Bump.
\newblock {\em Lie groups}, volume 225 of {\em Graduate Texts in Mathematics}.
\newblock Springer-Verlag, New York, 2004.
\newblock
  \href{http://dx.doi.org/10.1007/978-1-4757-4094-3}{DOI:10.1007/978-1-4757-4094-3}.

\bibitem[Cam91]{Campana91}
Frédéric Campana.
\newblock On twistor spaces of the class {$\scr C$}.
\newblock {\em J. Differential Geom.}, 33(2):541--549, 1991.
\newblock
  \href{http://projecteuclid.org/getRecord?id=euclid.jdg/1214446329}{euclid.jdg/1214446329}.

\bibitem[Cam95]{Ca95}
Frédéric Campana.
\newblock Fundamental group and positivity of cotangent bundles of compact
  {K}ähler manifolds.
\newblock {\em J. Algebraic Geom.}, 4(3):487--502, 1995.

\bibitem[CGP13]{MR3134683}
Frédéric Campana, Henri Guenancia, and Mihai P{\u{a}}un.
\newblock Metrics with cone singularities along normal crossing divisors and
  holomorphic tensor fields.
\newblock {\em Ann. Sci. Éc. Norm. Supér. (4)}, 46(6):879--916, 2013.

\bibitem[CL06]{CaLa}
Paolo Cascini and Gabriele {La Nave}.
\newblock {Kähler-Ricci Flow and the Minimal Model Program for Projective
  Varieties}.
\newblock Preprint \href{https://arxiv.org/abs/math/0603064v1}{arXiv:0603064}.,
  2006.

\bibitem[Dem85]{Dem85}
Jean-Pierre Demailly.
\newblock Mesures de {M}onge-{A}mpère et caractérisation géométrique des
  variétés algébriques affines.
\newblock {\em Mém. Soc. Math. France (N.S.)}, 19:124, 1985.

\bibitem[Dem12]{DemaillyBook2012}
Jean-Pierre Demailly.
\newblock Complex {A}nalytic and {D}ifferential {G}eometry, June 2012.
\newblock OpenContent Book, freely available from the author's web site,
  \url{http://www-fourier.ujf-grenoble.fr/~demailly/books.html}.

\bibitem[DG94]{DethloffGrauert}
Gerd Dethloff and Hans Grauert.
\newblock Seminormal complex spaces.
\newblock In {\em Several {C}omplex {V}ariables {VII}}, volume~74 of {\em
  Encyclopaedia Math. Sci.}, pages 183--220. Springer, Berlin, 1994.

\bibitem[DP10]{DemPal}
Jean-Pierre Demailly and Nefton Pali.
\newblock Degenerate complex {M}onge-{A}mpère equations over compact
  {K}\"ahler manifolds.
\newblock {\em Internat. J. Math.}, 21(3):357--405, 2010.

\bibitem[Dru18]{Dru16}
Stéphane Druel.
\newblock A decomposition theorem for singular spaces with trivial canonical
  class of dimension at most five.
\newblock {\em Invent. Math.}, 211(1):245--296, 2018.
\newblock
  \href{https://doi.org/10.1007/s00222-017-0748-y}{DOI:10.1007/s00222-017-0748-y}.
  Preprint \href{http://arxiv.org/abs/1606.09006}{arXiv:1606.09006}.

\bibitem[EGZ09]{MR2505296}
Philippe Eyssidieux, Vincent Guedj, and Ahmed Zeriahi.
\newblock Singular {K}ähler-{E}instein metrics.
\newblock {\em J. Amer. Math. Soc.}, 22(3):607--639, 2009.
\newblock
  \href{http://dx.doi.org/10.1090/S0894-0347-09-00629-8}{DOI:10.1090/S0894-0347-09-00629-8}.

\bibitem[Fav16]{Favale16}
Filippo Favale.
\newblock Calabi-{Y}au quotients with terminal singularities.
\newblock Preprint \href{https://arxiv.org/abs/1306.3555}{arXiv:1306.3555}.,
  2016.

\bibitem[FH91]{MR1153249}
William Fulton and Joe Harris.
\newblock {\em Representation theory. A first course}, volume 129 of {\em
  Graduate Texts in Mathematics}.
\newblock Springer-Verlag, New York, 1991.
\newblock
  \href{http://dx.doi.org/10.1007/978-1-4612-0979-9}{DOI:10.1007/978-1-4612-0979-9}.

\bibitem[FL81]{FL81}
William Fulton and Robert Lazarsfeld.
\newblock Connectivity and its applications in algebraic geometry.
\newblock In {\em Algebraic geometry ({C}hicago, {I}ll., 1980)}, volume 862 of
  {\em Lecture Notes in Math.}, pages 26--92. Springer, Berlin, 1981.
\newblock \href{http://dx.doi.org/10.1007/BFb0090889}{DOI:10.1007/BFb0090889}.

\bibitem[GKKP11]{GKKP11}
Daniel Greb, Stefan Kebekus, Sándor~J. Kovács, and Thomas Peternell.
\newblock Differential forms on log canonical spaces.
\newblock {\em Inst. {H}autes {É}tudes Sci.~{P}ubl.~{M}ath.}, 114(1):87--169,
  November 2011.
\newblock
  \href{http://dx.doi.org/10.1007/s10240-011-0036-0}{DOI:10.1007/s10240-011-0036-0}
  An extended version with additional graphics is available as
  \href{http://arxiv.org/abs/1003.2913}{arXiv:1003.2913}.

\bibitem[GKP14]{ExtApplications}
Daniel Greb, Stefan Kebekus, and Thomas Peternell.
\newblock Reflexive differential forms on singular spaces. {G}eometry and
  cohomology.
\newblock {\em J. Reine Angew. Math.}, 697:57--89, 2014.
\newblock
  \href{http://dx.doi.org/10.1515/crelle-2012-0097}{DOI:10.1515/crelle-2012-0097}.
  Preprint \href{http://arxiv.org/abs/1202.3243}{arXiv:1202.3243}.

\bibitem[GKP16a]{GKP15}
Daniel Greb, Stefan Kebekus, and Thomas Peternell.
\newblock Movable curves and semistable sheaves.
\newblock {\em Int. Math. Res. Not.}, 2016(2):536--570, 2016.
\newblock
  \href{http://dx.doi.org/10.1093/imrn/rnv126}{DOI:10.1093/imrn/rnv126}.
  Preprint \href{http://arxiv.org/abs/1408.4308}{arXiv:1408.4308}.

\bibitem[GKP16b]{GKP16}
Daniel Greb, Stefan Kebekus, and Thomas Peternell.
\newblock Singular spaces with trivial canonical class.
\newblock In {\em Minimal Models and Extremal Rays, Kyoto, 2011}, volume~70 of
  {\em Adv. Stud. Pure Math.}, pages 67--113. Mathematical Society of Japan,
  Tokyo, 2016.
\newblock Preprint \href{http://arxiv.org/abs/1110.5250}{arXiv:1110.5250}.

\bibitem[GKP16c]{GKP13}
Daniel Greb, Stefan Kebekus, and Thomas Peternell.
\newblock Étale fundamental groups of {K}awamata log terminal spaces, flat
  sheaves, and quotients of abelian varieties.
\newblock {\em Duke Math. J.}, 165(10):1965--2004, 2016.
\newblock
  \href{http://dx.doi.org/10.1215/00127094-3450859}{DOI:10.1215/00127094-3450859}.
  Preprint \href{http://arxiv.org/abs/1307.5718}{arXiv:1307.5718}.

\bibitem[Gra16]{MR3520712}
Patrick Graf.
\newblock A {M}ehta-{R}amanathan theorem for linear systems with basepoints.
\newblock {\em Math. Nachr.}, 289(10):1208--1218, 2016.
\newblock
  \href{http://dx.doi.org/10.1002/mana.201500180}{DOI:10.1002/mana.201500180}.
  Preprint \href{http://arxiv.org/abs/1501.04210}{arXiv:1501.04210}.

\bibitem[Gro70]{MR0262386}
Alexander Grothendieck.
\newblock Représentations linéaires et compactification profinie des groupes
  discrets.
\newblock {\em Manuscripta Math.}, 2:375--396, 1970.
\newblock \href{http://dx.doi.org/10.1007/BF01719593}{DOI:10.1007/BF01719593}.

\bibitem[Gro71]{SGA1}
Alexandre Grothendieck.
\newblock {\em Rev{\^e}tements étales et groupe fondamental (SGA 1)}.
\newblock Springer-Verlag, Berlin, 1971.
\newblock Séminaire de Géométrie Algébrique du Bois Marie 1960--1961,
  Dirigé par Alexandre Grothendieck. Augmenté de deux exposés de Michèle
  Raynaud, Lecture Notes in Mathematics, Vol. 224. Also available as
  \href{http://arxiv.org/abs/math/0206203}{arXiv:math/0206203}.

\bibitem[Gue16]{Guenancia}
Henri Guenancia.
\newblock Semistability of the tangent sheaf of singular varieties.
\newblock {\em Algebraic Geometry}, 3(5):508--542, November 2016.
\newblock
  \href{http://dx.doi.org/10.14231/AG-2016-024}{DOI:10.14231/AG-2016-024}.
  Preprint \href{http://arxiv.org/abs/1502.03711}{arXiv:1502.03711}.

\bibitem[Gun90]{MR1059457}
Robert~C. Gunning.
\newblock {\em Introduction to holomorphic functions of several variables.
  {V}ol. {III}. Homological theory}.
\newblock The Wadsworth \& Brooks/Cole Mathematics Series. Wadsworth \&
  Brooks/Cole Advanced Books \& Software, Monterey, CA, 1990.

\bibitem[GW09]{MR2522486}
Roe Goodman and Nolan~R. Wallach.
\newblock {\em Symmetry, representations, and invariants}, volume 255 of {\em
  Graduate Texts in Mathematics}.
\newblock Springer, Dordrecht, 2009.
\newblock
  \href{http://dx.doi.org/10.1007/978-0-387-79852-3}{DOI:10.1007/978-0-387-79852-3}.

\bibitem[GZ07]{GZ07}
Vincent Guedj and Ahmed Zeriahi.
\newblock The weighted {M}onge-{A}mpère energy of quasiplurisubharmonic
  functions.
\newblock {\em J. Funct. Anal.}, 250(2):442--482, 2007.
\newblock
  \href{http://dx.doi.org/10.1016/j.jfa.2007.04.018}{DOI:10.1016/j.jfa.2007.04.018}.

\bibitem[Har77]{Ha77}
Robin Hartshorne.
\newblock {\em Algebraic geometry}.
\newblock Springer-Verlag, New York, 1977.
\newblock Graduate Texts in Mathematics, No. 52.
  \href{http://dx.doi.org/10.1007/978-1-4757-3849-0}{DOI:10.1007/978-1-4757-3849-0}.

\bibitem[HL10]{MR2665168}
Daniel Huybrechts and Manfred Lehn.
\newblock {\em The geometry of moduli spaces of sheaves}.
\newblock Cambridge Mathematical Library. Cambridge University Press,
  Cambridge, second edition, 2010.
\newblock
  \href{http://dx.doi.org/10.1017/CBO9780511711985}{DOI:10.1017/CBO9780511711985}.

\bibitem[HP17]{HoeringPeternellbbdecomp}
Andreas H\"oring and Thomas Peternell.
\newblock Algebraic integrability of foliations with numerically trivial
  canonical bundle.
\newblock Preprint \href{https://arxiv.org/abs/1710.06183}{arXiv:1710.06183},
  October 2017.

\bibitem[Huy05]{Huy05}
Daniel Huybrechts.
\newblock {\em Complex geometry}.
\newblock Universitext. Springer-Verlag, Berlin, 2005.
\newblock \href{http://dx.doi.org/10.1007/b137952}{DOI:10.1007/b137952}.

\bibitem[Joy00]{MR1787733}
Dominic~D. Joyce.
\newblock {\em Compact manifolds with special holonomy}.
\newblock Oxford Mathematical Monographs. Oxford University Press, Oxford,
  2000.

\bibitem[Kle10]{MR2629989}
Bruce Kleiner.
\newblock A new proof of {G}romov's theorem on groups of polynomial growth.
\newblock {\em J. Amer. Math. Soc.}, 23(3):815--829, 2010.
\newblock
  \href{http://dx.doi.org/10.1090/S0894-0347-09-00658-4}{DOI:10.1090/S0894-0347-09-00658-4}.

\bibitem[Kli13]{MR3044124}
Bruno Klingler.
\newblock Symmetric differentials, {K}ähler groups and ball quotients.
\newblock {\em Invent. Math.}, 192(2):257--286, 2013.
\newblock
  \href{http://dx.doi.org/10.1007/s00222-012-0411-6}{DOI:10.1007/s00222-012-0411-6}.

\bibitem[KLS06]{MR2221132}
Dmitry Kaledin, Manfred Lehn, and Christoph Sorger.
\newblock Singular symplectic moduli spaces.
\newblock {\em Invent. Math.}, 164(3):591--614, 2006.
\newblock
  \href{http://dx.doi.org/10.1007/s00222-005-0484-6}{DOI:10.1007/s00222-005-0484-6}.

\bibitem[KM98]{KM98}
János Kollár and Shigefumi Mori.
\newblock {\em Birational geometry of algebraic varieties}, volume 134 of {\em
  Cambridge Tracts in Mathematics}.
\newblock Cambridge University Press, Cambridge, 1998.
\newblock With the collaboration of C.\ H.\ Clemens and A.\ Corti, Translated
  from the 1998 Japanese original.
  \href{http://dx.doi.org/10.1017/CBO9780511662560}{DOI:10.1017/CBO9780511662560}.

\bibitem[KN96a]{MR1393940}
Shoshichi Kobayashi and Katsumi Nomizu.
\newblock {\em Foundations of differential geometry. {V}ol. {I}}.
\newblock Wiley Classics Library. John Wiley \& Sons, Inc., New York, 1996.
\newblock Reprint of the 1963 original.

\bibitem[KN96b]{MR1393941}
Shoshichi Kobayashi and Katsumi Nomizu.
\newblock {\em Foundations of differential geometry. {V}ol. {II}}.
\newblock Wiley Classics Library. John Wiley \& Sons, Inc., New York, 1996.
\newblock Reprint of the 1969 original.

\bibitem[Kob80]{MR567422}
Shoshichi Kobayashi.
\newblock The first {C}hern class and holomorphic symmetric tensor fields.
\newblock {\em J. Math. Soc. Japan}, 32(2):325--329, 1980.
\newblock
  \href{http://dx.doi.org/10.2969/jmsj/03220325}{DOI:10.2969/jmsj/03220325}.

\bibitem[Kob87]{Kob87}
Shoshichi Kobayashi.
\newblock {\em Differential geometry of complex vector bundles}, volume~15 of
  {\em Publications of the Mathematical Society of Japan}.
\newblock Iwanami Shoten and Princeton University Press, Princeton, NJ, 1987.
\newblock Kan{\^o} Memorial Lectures, 5.

\bibitem[Kol95]{Kollar95s}
János Kollár.
\newblock {\em Shafarevich maps and automorphic forms}.
\newblock M. B. Porter Lectures. Princeton University Press, Princeton, NJ,
  1995.

\bibitem[Ko{\l}98]{MR1618325}
Sławomir Ko{\l}odziej.
\newblock The complex {M}onge-{A}mpère equation.
\newblock {\em Acta Math.}, 180(1):69--117, 1998.
\newblock \href{http://dx.doi.org/10.1007/BF02392879}{DOI:10.1007/BF02392879}.

\bibitem[Laz04]{Laz04-I}
Robert Lazarsfeld.
\newblock {\em Positivity in algebraic geometry. {I}. Classical setting: line
  bundles and linear series}, volume~48 of {\em Ergebnisse der Mathematik und
  ihrer Grenzgebiete. 3. Folge.}
\newblock Springer-Verlag, Berlin, 2004.
\newblock
  \href{http://dx.doi.org/10.1007/978-3-642-18808-4}{DOI:10.1007/978-3-642-18808-4}.

\bibitem[Mat15]{MR3319924}
Daisuke Matsushita.
\newblock On base manifolds of {L}agrangian fibrations.
\newblock {\em Sci. China Math.}, 58(3):531--542, 2015.
\newblock
  \href{http://dx.doi.org/10.1007/s11425-014-4927-7}{DOI:10.1007/s11425-014-4927-7}.

\bibitem[Nak04]{Nakayama04}
Noboru Nakayama.
\newblock {\em Zariski-decomposition and abundance}, volume~14 of {\em MSJ
  Memoirs}.
\newblock Mathematical Society of Japan, Tokyo, 2004.

\bibitem[Nam01]{NamikawaTerminalCodimensionSymplectic}
Yoshinori Namikawa.
\newblock A note on symplectic singularities.
\newblock preprint, arXiv:math/0101028, 2001.

\bibitem[OS01]{MR1868164}
Keiji Oguiso and Jun Sakurai.
\newblock Calabi-{Y}au threefolds of quotient type.
\newblock {\em Asian J. Math.}, 5(1):43--77, 2001.
\newblock
  \href{http://dx.doi.org/10.4310/AJM.2001.v5.n1.a5}{DOI:10.4310/AJM.2001.v5.n1.a5}.

\bibitem[OS11]{OguisoSchroeer}
Keiji Oguiso and Stefan Schröer.
\newblock Enriques manifolds.
\newblock {\em J. Reine Angew. Math.}, 661:215--235, 2011.
\newblock
  \href{http://dx.doi.org/10.1515/CRELLE.2011.077}{DOI:10.1515/CRELLE.2011.077}.

\bibitem[OZ96]{Oguiso-Zhang}
Keiji Oguiso and De-Qi Zhang.
\newblock On the most algebraic {$K3$} surfaces and the most extremal log
  {E}nriques surfaces.
\newblock {\em Amer. J. Math.}, 118(6):1277--1297, 1996.
\newblock
  \href{http://dx.doi.org/10.1353/ajm.1996.0052}{DOI:10.1353/ajm.1996.0052}.

\bibitem[P{\u{a}}u08]{Paun}
Mihai P{\u{a}}un.
\newblock {Regularity properties of the degenerate Monge-Ampère equations on
  compact Kähler manifolds.}
\newblock {\em Chin. Ann. Math., Ser. B}, 29(6):623--630, 2008.

\bibitem[Pet94]{Pe94}
Thomas Peternell.
\newblock Minimal varieties with trivial canonical classes. {I}.
\newblock {\em Math. Z.}, 217(3):377--405, 1994.
\newblock \href{http://dx.doi.org/10.1007/BF02571950}{DOI:10.1007/BF02571950}.

\bibitem[PR18]{PeregoRapagnetta}
Arvid Perego and Antonio Rapagnetta.
\newblock The moduli spaces of sheaves on {K3} surfaces are irreducible
  symplectic varieties.
\newblock Preprint \href{http://arxiv.org/abs/1802.01182}{arXiv:1802.01182}.,
  February 2018.

\bibitem[RZ11]{RZ}
Xiaochun Rong and Yuguang Zhang.
\newblock Continuity of extremal transitions and flops for {C}alabi-{Y}au
  manifolds.
\newblock {\em J. Differential Geom.}, 89(2):233--269, 2011.
\newblock Appendix B by Mark Gross.
  \href{http://projecteuclid.org/getRecord?id=euclid.jdg/1324477411}{euclid.jdg/1324477411}.

\bibitem[Sim62]{Simons}
James Simons.
\newblock On the transitivity of holonomy systems.
\newblock {\em Ann. of Math. (2)}, 76:213--234, 1962.

\bibitem[SY94]{SY}
Richard~M. Schoen and Shing-Tung Yau.
\newblock {\em Lectures on differential geometry}.
\newblock Conference Proceedings and Lecture Notes in Geometry and Topology, I.
  International Press, Cambridge, MA, 1994.

\bibitem[Tak03]{Takayama2003}
Shigeharu Takayama.
\newblock Local simple connectedness of resolutions of log-terminal
  singularities.
\newblock {\em Internat. J. Math.}, 14(8):825--836, 2003.
\newblock
  \href{http://dx.doi.org/10.1142/S0129167X0300196X}{DOI:10.1142/S0129167X0300196X}.

\bibitem[Tia96]{MR1603624}
Gang Tian.
\newblock Kähler-{E}instein metrics on algebraic manifolds.
\newblock In {\em Transcendental methods in algebraic geometry ({C}etraro,
  1994)}, volume 1646 of {\em Lecture Notes in Math.}, pages 143--185.
  Springer, Berlin, 1996.
\newblock \href{http://dx.doi.org/10.1007/BFb0094304}{DOI:10.1007/BFb0094304}.

\bibitem[Tos09]{MR2538503}
Valentino Tosatti.
\newblock Limits of {C}alabi-{Y}au metrics when the {K}ähler class
  degenerates.
\newblock {\em J. Eur. Math. Soc. (JEMS)}, 11(4):755--776, 2009.
\newblock \href{http://dx.doi.org/10.4171/JEMS/165}{DOI:b10.4171/JEMS/165}.
  Preprint \href{http://arxiv.org/abs/0710.4579}{arXiv:0710.4579}.

\bibitem[Tsu88]{MR0944606}
Hajime Tsuji.
\newblock Existence and degeneration of kähler-einstein metrics on minimal
  algebraic varieties of general type.
\newblock {\em Math. Ann.}, 281(1):123--133, 1988.
\newblock \href{http://dx.doi.org/10.1007/BF01449219}{DOI:10.1007/BF01449219}.

\bibitem[TZ06]{Tian-Zhang}
Gang Tian and Zhou Zhang.
\newblock On the {K}ähler-{R}icci flow on projective manifolds of general
  type.
\newblock {\em Chinese Ann. Math. Ser. B}, 27(2):179--192, 2006.
\newblock
  \href{http://dx.doi.org/10.1007/s11401-005-0533-x}{DOI:10.1007/s11401-005-0533-x}.

\bibitem[Yau78]{MR480350}
Shing-Tung Yau.
\newblock On the {R}icci curvature of a compact {K}ähler manifold and the
  complex {M}onge-{A}mpère equation. {I}.
\newblock {\em Comm. Pure Appl. Math.}, 31(3):339--411, 1978.
\newblock
  \href{http://dx.doi.org/10.1002/cpa.3160310304}{DOI:10.1002/cpa.3160310304}.

\bibitem[Zha06]{Zhang06}
Zhou Zhang.
\newblock On degenerate {M}onge-{A}mpère equations over closed {K}ähler
  manifolds.
\newblock {\em Int. Math. Res. Not.}, pages 1--18, 2006.
\newblock
  \href{http://dx.doi.org/10.1155/IMRN/2006/63640}{DOI:10.1155/IMRN/2006/63640}.

\end{thebibliography}
\end{document}